\documentclass[11pt]{article}

\usepackage[a4paper,margin=1in]{geometry}
\usepackage{amsmath,amssymb,amsthm,mathtools}
\usepackage{enumitem}
\usepackage{microtype}
\usepackage{amsrefs}
\usepackage{thmtools}
\usepackage{thm-restate}
\usepackage{hyperref}
\makeatletter
\@ifundefined{newcounteralias}{}{%
	\renewcommand\thmt@autorefsetup{%
		\@xa\def\csname\thmt@envname autorefname\@xa\endcsname\@xa{\thmt@thmname}%
	}%
}
\makeatother
\usepackage[title]{appendix}

\hypersetup{
  colorlinks=true,
  linkcolor=blue,
  citecolor=blue,
  urlcolor=blue
}

\numberwithin{equation}{section}

\theoremstyle{plain}
\newtheorem{theorem}{Theorem}[section]

\newtheorem{proposition}[theorem]{Proposition}
\newtheorem{lemma}[theorem]{Lemma}
\newtheorem{corollary}[theorem]{Corollary}
\theoremstyle{definition}
\newtheorem{definition}[theorem]{Definition}
\newtheorem{example}[theorem]{Example}
\theoremstyle{remark}
\newtheorem{remark}[theorem]{Remark}

\theoremstyle{plain}
\newtheorem*{theorem*}{Theorem}

\allowdisplaybreaks

\newcommand{\R}{\mathbb{R}}
\newcommand{\Sph}{\mathbb{S}}
\newcommand{\Hyp}{\mathbb{H}}

\newcommand{\g}{\mathfrak{g}}
\newcommand{\kalg}{\mathfrak{k}}
\newcommand{\palg}{\mathfrak{p}}
\newcommand{\m}{\mathfrak{m}}

\newcommand{\tr}{\operatorname{tr}}
\newcommand{\diag}{\operatorname{diag}}
\newcommand{\inner}[2]{\left\langle #1,#2\right\rangle}

\newcommand{\C}{\mathbb C}

\newcommand{\Sn}{\mathbb S}
\newcommand{\Hn}{\mathbb H}
\newcommand{\Q}{\mathbb Q}
\newcommand{\eps}{\varepsilon}
\newcommand{\ii}{\mathbf i}

\newcommand{\Ker}{\operatorname{ker}}
\newcommand{\nab}{\bar\nabla}

\DeclareMathOperator{\id}{id}

\DeclareMathOperator{\rank}{rank}

\title{Isoparametric Submanifolds in Products of Space Forms}
\author{Jinxuan Chen, \,\,\, Xiaobo Liu\thanks{Research was partially supported by NSFC grants 12341105 and 12526302.}, \,\, \, Wanxu Yang}
\date{}

\begin{document}
\maketitle

\begin{abstract}
In this paper, we obtain a classification of isoparametric submanifolds with arbitrary codimensions in products of simply-connected space forms, except for isoparametric hypersurfaces in $\Sph^n\times\Sph^m$ and $\Hyp^n\times \Hyp^m$.
\end{abstract}

\section{Introduction}

Isoparametric hypersurfaces in space forms are hypersurfaces with constant principal curvatures.
The study of these hypersurfaces was initiated by Cartan, Levi-Civita, and Segre in 1930's, 
and their complete classification was achieved only recently by works of many mathematicians (see the survey article \cite{chi2020isoparametric} for details).
A definition of higher codimensional isoparametric submanifolds in space forms was given
by Terng in \cite{terng1985isoparametric}.
These submanifolds have also been classified by a combination of works by
Thorbergsson \cite{Th91}, Palais-Terng \cite{PT87}, Dadok \cite{Dadok}, and Wu \cite{Wu}.
According to a definition given by Heintze, Liu, and Olmos in \cite{HLO}, a submanifold $L$ in an
arbitrary Riemannian manifold $M$ is {\it isoparametric} if $L$ has flat normal bundle, image of
a small neighbourhood of the origin in every normal space of $L$  under the exponential map is totally geodesic, and close by local parallel submanifolds of $L$ have
parallel mean curvature vector fields. 
In recent years, there have been many works on isoparametric hypersurfaces in
products of space forms (see, for example, \cite{CS2019}, \cite{Cui2025},  \cite{de2022isoparametric}, 
\cite{delima2025}, \cite{SS2023}, \cite{gao2024isoparametric}, \cite{gao2024hypersurfaces}, \cite{JulioB},
\cite{MSSV2025}, \cite{TanXieYan2026},  \cite{urbano2019hypersurfaces}, \cite{Wang2026}, etc.).
The purpose of this paper is to study isoparametric submanifolds
with arbitrary codimensions in products of space forms.


Let $\Q^n_{\eps}$ be the simply connected space form of dimension $n$ with sectional curvature $\eps$. 
Then $\Q^n_{1} = \mathbb{S}^n$ is the $n$-dimensional unit sphere, 
$\Q^n_{0} = \mathbb{R}^n$ is the $n$-dimensional Euclidean space,
and $\Q^n_{-1} = \mathbb{H}^n$ is the $n$-dimensional hyperbolic space with sectional curvature equal to $-1$.
In this paper, we study isoparametric submanifolds in $\Q^n_{\eps_1}\times\Q^m_{\eps_2}$, where $\eps_1\in\{-1,1\}$, and $\eps_2\in\{-1,0,1\}$. Since the product
of two Euclidean spaces is again a Euclidean space, we do not need to consider the case
$\eps_1=\eps_2=0$. The main result of this paper is the following

\begin{restatable}{theorem}{maintheorem}\label{main}
    Let $L$ be an isoparametric submanifold in $\Q^n_{\epsilon_1}\times \Q^m_{\epsilon_2}$. Unless
    $\eps_1 = \eps_2$ and $\text{codim}\ L=1$, $L$ is an open subset of $L_1\times L_2$ where either

    (i) $L_1\subset \Q^n_{\epsilon_1}$ and $L_2\subset \Q^m_{\epsilon_2}$ are isoparametric submanifolds, or

    (ii) $L_1\subset \Hyp^n\times \R^{m_1}$ is a flat horospherical hypersurface (see Example \ref{ex:flat-horosphere} for definition), $L_2\subset \R^{m-m_1}$ is an  isoparametric submanifold, and $m_1$ is a positive integer less than $m$.
\end{restatable}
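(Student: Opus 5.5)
The plan is to split $TL$ and the normal bundle according to the product structure. Let $M=\Q^n_{\eps_1}\times\Q^m_{\eps_2}$ and let $P$ be the parallel product structure, i.e.\ the $(1,1)$-tensor equal to $+1$ on $T\Q^n_{\eps_1}$ and $-1$ on $T\Q^m_{\eps_2}$. Along $L$ I would decompose $PX=TX+FX$ for $X$ tangent and $P\xi=t\xi+f\xi$ for $\xi$ normal.

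The first step is to extract algebraic consequences from the isoparametric conditions of \cite{HLO}. Flatness of the normal bundle, together with the fact that $\exp(\nu_pL)$ is totally geodesic, forces the normal spaces to be curvature-invariant. Since the curvature of $M$ is $R=\tfrac{\eps_1}{4}R_{(I+P)}+\tfrac{\eps_2}{4}R_{(I-P)}$, I would show that the Ricci equation $R^\perp=0$ gives $\eps_1 R_1(\xi,\eta)^\perp+\eps_2R_2(\xi,\eta)^\perp=0$. Combined with totally geodesic normal slices, this should yield $[P|_{\nu},\ \cdot\ ]$-type identities. The key claim is that, unless $\eps_1=\eps_2$ and the codimension is one, the normal space $\nu_pL$ decomposes as $(\nu_pL\cap T\Q^n)\oplus(\nu_pL\cap T\Q^m)\oplus W$, where $W$ is nontrivial only in the mixed case $\eps_1=-1$, $\eps_2=0$.

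The argument for this claim is as follows. A totally geodesic submanifold of a product of space forms of dimension $\ge2$ must have a tangent space that is $P$-invariant, or else it lies in a flat $\R^2$ spanned by geodesics in each factor. So I would classify the totally geodesic submanifolds $\exp(\nu_p L)$ directly. If $\nu_p$ has dimension $\ge2$ and is not $P$-invariant, then there is a vector $\xi=\xi_1+\xi_2$ with both components nonzero. The curvature of planes containing $\xi$ must then be compatible with $\exp(\nu_p)$ being totally geodesic, which forces $\xi_1$ to lie in a direction of zero curvature contribution. That is impossible when both $\eps_i\neq0$, except in the hypersurface case. When $\eps_2=0$, the mixed directions are allowed, and this is exactly the origin of case (ii).

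The second step is to promote the splitting to a product decomposition of $L$. If $\nu L$ is $P$-invariant, then $TL$ is also $P$-invariant, so $TL=D_1\oplus D_2$ with $D_i$ the projections to each factor. These distributions are parallel because $P$ is parallel and the shape operators commute with $P$, which follows from the Codazzi equation and $\bar\nabla P=0$. A local de Rham-type argument (Moore's lemma for products) then gives $L=L_1\times L_2$ with $L_i\subset\Q_{\eps_i}$. Parallel submanifolds and the mean curvature then split componentwise, so each $L_i$ has flat normal bundle and parallel-foliation mean curvature. This makes each $L_i$ isoparametric in its space form, giving (i).

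In the non-$P$-invariant case with $\eps_1=-1$, $\eps_2=0$, the Euclidean factor splits off the part of $\nu L$ lying in $T\R^m$, and this produces $L_2\subset\R^{m-m_1}$ isoparametric. The remaining part is a hypersurface of $\Hyp^n\times\R^{m_1}$ whose normal vector is mixed in every direction. It remains to show that such a hypersurface is a flat horospherical hypersurface as in Example~\ref{ex:flat-horosphere}. Here I would use the constancy of principal curvatures of parallel hypersurfaces, computed through the Jacobi equation along normal geodesics, with curvature $-|\xi_1|^2$-type terms. This should force the $\Hyp$-component of the normal to have constant angle and the shape operator to be that of a horosphere, tilted.

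I expect the main obstacle to be the pointwise algebraic step: showing that $\nu_pL$ is $P$-invariant, up to the allowed mixed directions, for codimension $\ge2$ or $\eps_1\ne\eps_2$. This is where I would spend effort, by using the polynomial identities from parallel-mean-curvature of all nearby parallels, expanded in the normal distance.
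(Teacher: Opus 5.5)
\textbf{There are genuine gaps.} Your central ``key claim'' cannot be obtained the way you propose, because your classification of totally geodesic submanifolds is false. Three counterexamples:
\begin{itemize}
\item In $\Q^n_\eps\times\Q^m_\eps$, the graph $\{(x,\phi(x))\}$ of an isometry $\phi$ between totally geodesic $\Q^k_\eps$'s in the two factors is totally geodesic, of constant curvature $\eps/2$, for every $2\le k\le\min(n,m)$. This is type (v) of Theorem~\ref{thm:lie-triple-stable}, and it is neither $P$-invariant nor flat.
\item In $\Q^n_\eps\times\R^m$, the Lie triple systems $\R(v,w)\oplus(0\oplus B)$ give mixed flat sections of dimension $1+\dim B$, for \emph{both} signs of $\eps$.
\item In codimension one, every normal line is a Lie triple system.
\end{itemize}
So what you need is not a pointwise algebraic fact: no diagonal normal spaces when $\eps_1=\eps_2$ and $\operatorname{codim}L\ge2$, no tilted hypersurfaces in $\Sph^n\times\Hyp^m$, and no mixed normal directions at all in $\Sph^n\times\R^m$. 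Proving these is most of the paper.

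\emph{Diagonal case.} Proposition~\ref{prop:no-higher-codim-diagonal} uses the tangent fields $E_i=PN_i$. From $A_\eta(P\xi)+A_\xi(P\eta)=0$ one gets $\mathrm{II}(E_i,E_j)=0$, so the Gauss equation gives $K_L(E_1,E_2)=\eps_1/2$. An intrinsic computation gives $K_L(E_1,E_2)=-|\alpha|^2\le0$. This kills $\eps_1=1$. For $\eps_1=-1$ it only yields $|\alpha|^2=1/2$, and one must reduce to codimension two and push the Ricci and Codazzi equations through to a contradiction. No mean-curvature information is used here. The missing ingredient is an analysis of the Gauss, Ricci and Codazzi equations of $L$ along $PN_i$, not an expansion of parallel mean curvatures.

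\emph{Hypersurface cases.} Conversely, these exclusions must use the mean-curvature condition, since every hypersurface has flat normal bundle and geodesic sections. The paper first proves the angle function is constant (Theorem~\ref{thm:constant-angle}) from base-point independence of the Jacobi determinant. It then uses $A\mathcal T=0$, the Riccati equation for $A_D$ along $\mathcal T$, and the inverse $M=\widetilde A^{-1}$ to get $\mathcal T(\operatorname{tr}M_1)<0$ although $\operatorname{tr}M_1$ is constant (Theorem~\ref{thm:global-dem-SRH}(i)). You also need the section type to be constant along $L$; for hypersurfaces this again rests on the constant angle theorem.

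\emph{Mixed case $\Q^n_\eps\times\R^m$.} This is needed for $\eps=1$ too, where it produces a tilted hypersurface in $\Sph^n\times\R^{m_1}$ that is then excluded. Your sentence ``the Euclidean factor splits off the part of $\nu L$ lying in $T\R^m$'' skips the real difficulty. The bundle $E=\nu L\cap T\R^m$ only ends up as the normal bundle of $L_2$. The tangent directions of $L$ inside $T\R^m$ must still be sorted into those tangent to $L_2$ and those forming the Euclidean factor $\R^{m_1}$ of the hypersurface. The paper does this in four steps:
\begin{itemize}
\item it proves $E$ is $\nabla^\perp$-parallel (Lemma~\ref{lem:det-rigidity} plus Cartan's formula, Lemma~\ref{lem:E-parallel});
\item it shows no curvature normal has both an $N$- and an $E$-component (Lemma~\ref{lem:no-oblique}, via Codazzi);
\item it shows $\pi_{\R^m}(L)$ is isoparametric (Lemma~\ref{lem:Lhat-isoparametric-new});
\item it applies the Palais--Terng splitting (Corollary~\ref{cor:real-palais-terng}) to the zero curvature normal before lifting.
\end{itemize}

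\emph{Identifying the hypersurface factor.} You cannot start from ``constancy of principal curvatures of parallel hypersurfaces.'' Definition~\ref{defn:HLO} only gives constant mean curvature of the parallels, i.e.\ base-point independence of the single function $\det B_p(r)$. Extracting from it the constant angle, $\ker A_D=D_2$, the eigenvalues $\pm a$ of $A_1$, horospherical $D_1$-leaves (Cartan's formula excluding mixed signs, then Ryan's classification), and geodesic $\mathcal T$-curves is exactly the content of Sections~\ref{sec:constant-angle} and~\ref{sec:classification-hypersurface}.

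Finally, Moore's lemma only gives local products. A unique-continuation gluing argument (Lemmas~\ref{lem:gluing} and~\ref{lem:flat-horosphere}) is needed to reach the global conclusion.
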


\noindent
Note that (ii) occurs only if $\eps_1 = -1$ and $\eps_2=0$.
Since isoparametric submanifolds in 
space forms have been classified. This theorem also gives a complete classification for
all isoparametric submanifolds in $\Q^n_{\epsilon_1}\times \Q^m_{\epsilon_2}$
except for isoparametric hypersurfaces in $\Sph^n\times \Sph^m$ and $\Hyp^n\times \Hyp^m$. 

We would like to mention that isoparametric hypersurfaces in $\Q^{2}_{\epsilon_1}\times\Q^{2}_{\epsilon_2}$
have been classified in \cite{urbano2019hypersurfaces}, \cite{SS2023},  
\cite{gao2024isoparametric}, \cite{gao2024hypersurfaces}.
Isoparametric hypersurfaces in $\Q^{n}_{\epsilon_1}\times \mathbb{R}^m$
have been classified in \cite{de2022isoparametric} (for $m=1$) and \cite{TanXieYan2026} (for $m \geq 1$).
Under an extra condition that there exist distinguished points, a classification for isoparametric hypersurfaces 
in other spaces  $\Q^n_{\epsilon_1}\times \Q^m_{\epsilon_2}$ was also given 
in \cite{delima2025}. The codimension-1 case of Theorem \ref{main} recovers 
the above results except for hypersurfaces in $\Sph^n\times \Sph^m$ and $\Hyp^n\times \Hyp^m$.
The corresponding part in the proof of Theorem \ref{main}  simplifies arguments in
\cite{TanXieYan2026} and  \cite{delima2025}. Moreover, the codimension-1 case of 
Theorem \ref{main} also implies a classification
of isoparametric hypersurfaces in $\Sph^n \times \Hyp^m$, which was not obtained before.

This paper is organized as follows: In Section \ref{sec:preliminaries}, we review 
the definition of isoparametric submanifolds in general Riemannian manifolds given in \cite{HLO}, Lie triple systems in symmetric spaces, Riemannian curvature tensor on space forms, and many other basic results. In Section \ref{sec:lts}, we classify the Lie triple systems in $\Q^n_{\epsilon_1}\times \Q^m_{\epsilon_2}$. In Section \ref{sec:localdecomp}, we prove that there is a ``stability'' for Lie triple systems corresponding to sections of $L$ and use it to prove the local decomposition and the global decomposition. In Section \ref{sec:higher-codim-diagonal}, we show the non-existence of $L$ in higher codimensional diagonal cases in $\Q^n_{\eps_1}\times\Q^m_{\eps_2}$ with $\eps_1=\eps_2$. In Section \ref{sec:reduction}, we reduce the classification of isoparametric submanifolds in $\Q^n_{\eps_1}\times\Q^m_{\eps_2}$ with $\eps_2=0$ to the classification of isoparametric hypersurfaces. In Section \ref{sec:constant-angle}, we prove the constant angle theorems. In Section \ref{sec:classification-hypersurface}, we use the constant angle theorems to classify isoparametric hypersurfaces in the $\eps_1\ne\eps_2$ cases. In Section \ref{sec:theorem}, we prove the classification results. Appendix~\ref{app:algebraic} gives some basic algebraic results. A small portion of this paper was prepared with the help of artificial intelligence tools, a disclosure
of which is provided in Appendix~\ref{app:AI}.

\section{Preliminaries}\label{sec:preliminaries}

\subsection{Isoparametric Submanifolds}

In this subsection, we review the definition and some basic properties of isoparametric submanifolds in arbitrary Riemannian manifolds as given in \cite{HLO}.

\begin{definition} \label{defn:HLO}
Let $M$ be a complete Riemannian manifold.  An immersed submanifold $L\subset M$ is \emph{isoparametric} if:

\item (i) The normal bundle $\nu L$ is flat.
\item (ii) For every $x\in L$, there is small neighbourhood $V$ of the origin in $\nu_x L$ such that
$\Sigma_x := \exp (V) $ is a totally geodesic submanifold in $M$.  $\Sigma_x$ is called a {\it section}
of $L$ at $x$.
\item (iii) For every $x \in L$, there exists a neighbourhood $U$ of $x$ in $L$ and $\eps > 0$ such that
for any parallel normal vector field $\xi$ defined over $U$ with $\| \xi \| < \eps$, 
\begin{equation} \label{eqn:Uxi}
U_\xi : = \{ \exp (\xi(p)) \mid p \in U\}
\end{equation}
 is a smooth submanifold of $M$ with constant mean curvature in radial directions.
\end{definition}

\begin{remark}
The submanifold $U_\xi$ defined by equation \eqref{eqn:Uxi} is called a {\it local parallel submanifold} of $L$.
By \cite{HLO}*{Theorem 2.4}, condition (iii) can be replaced by requiring $U_\xi$
to have parallel mean curvature vector field.
Moreover we say
    $L$ is  \emph{almost isoparametric} if it satisfies conditions (i) and (iii).  
\end{remark}

\begin{proposition}[\cite{HLO}*{Proposition 2.3}]\label{prop:HLO}
    If $L$ has flat normal bundle and totally geodesic sections, then locally $M$ splits as $L\times \Sigma$ with metric $g_1(x)\oplus g_2$ where $g_1(x)$ is a metric on $L$ depending on $x\in \Sigma$ and $g_2$ is a fixed metric on $\Sigma$.
\end{proposition}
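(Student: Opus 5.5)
We will build the splitting from the normal exponential map. Fix $x_0\in L$ and a small neighbourhood $U\subset L$ of $x_0$. Because $\nu L$ is flat, after shrinking $U$ we can choose parallel orthonormal normal fields $\xi_1,\dots,\xi_k$ on $U$. Identify a small ball $V\subset\R^k$ with normal vectors via $t\mapsto\sum t_i\xi_i(p)$, and define
\[
\Phi:U\times V\to M,\qquad \Phi(p,t)=\exp\Big(\sum_i t_i\xi_i(p)\Big).
\]
At $(x_0,0)$ the differential is the identity on $T_{x_0}L\oplus\nu_{x_0}L$, so $\Phi$ is a local diffeomorphism. The aim is to show that the pullback metric has the form $g_1(t)\oplus g_2$.

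\textbf{Step 1: the sections are mutually isometric.} For fixed $p$, the slice $\Phi(\{p\}\times V)$ is the section $\Sigma_p$, which is totally geodesic. We define $g_2$ to be the metric that $\Sigma_{x_0}$ induces on $V$ through $\Phi(x_0,\cdot)$. To show every slice carries the same metric, take a curve $p(s)$ in $U$ and the family of maps $t\mapsto\Phi(p(s),t)$. The variation field $J$ along each radial geodesic $\gamma(r)=\exp(r\sum t_i\xi_i)$ is a Jacobi field. It satisfies $J(0)=p'(s)$, tangent to $L$, and $J'(0)=-A_{\xi}p'(s)+\nabla^\perp_{p'}\xi=-A_\xi p'(s)$, since $\xi$ is parallel. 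Both initial values lie in $T_{p}L$, which is orthogonal to $\Sigma_p$ at $p$.

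Because $\Sigma_p$ is totally geodesic, $R(\cdot,\dot\gamma)\dot\gamma$ preserves both $T\Sigma_p$ and its orthogonal complement along $\gamma$. Hence a Jacobi field with initial data normal to $\Sigma_p$ stays normal to $\Sigma_p$. Consequently $\partial_s\Phi$ is orthogonal to every slice, and this gives the block-diagonal form $g=g_1(p,t)\oplus h(p,t)$.

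It remains to show $h$ is independent of $p$. The derivative $\partial_s h(\partial_{t_i},\partial_{t_j})=\langle\nabla_{\partial_s}\partial_{t_i},\partial_{t_j}\rangle+\langle\partial_{t_i},\nabla_{\partial_s}\partial_{t_j}\rangle$. Using $\nabla_{\partial_s}\partial_{t_i}=\nabla_{\partial_{t_i}}\partial_s$, each term becomes $\langle\nabla_{\partial_{t_i}}\partial_s,\partial_{t_j}\rangle=-\langle\partial_s,\nabla_{\partial_{t_i}}\partial_{t_j}\rangle$. This vanishes because $\nabla_{\partial_{t_i}}\partial_{t_j}$ is tangent to the totally geodesic slice while $\partial_s$ is normal to it. So $h=g_2$ depends only on $t$.

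\textbf{Step 2: the form of $g_1$.} The first block $g_1(p,t)$ is a metric on $U$ depending on the point $t$ of $\Sigma$, which is exactly the claimed form. Relabelling via the local diffeomorphism $\Phi$ gives $M\cong L\times\Sigma$ locally, with metric $g_1(x)\oplus g_2$, $x\in\Sigma$.

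\textbf{Main obstacle.} The delicate point is the invariance claim in Step 1: that Jacobi fields orthogonal to a totally geodesic submanifold stay orthogonal. I will argue it by decomposing $J=J^\top+J^\perp$ along $\gamma$ relative to $T\Sigma_p$. The tangent bundle of $\Sigma_p$ is parallel along $\gamma$ (total geodesy) and is curvature-invariant (the Gauss equation with vanishing second fundamental form). Therefore $J^\top$ solves the Jacobi equation within $T\Sigma_p$ with zero initial data, and so $J^\top\equiv0$.

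A secondary check is that the full normal disc $\exp(V)$, and not merely the image of each radial line, lies in the single totally geodesic $\Sigma_p$. This is immediate from condition (ii) of Definition \ref{defn:HLO}, after shrinking $V$ uniformly in $p$, which is possible by continuity on a relatively compact $U$.
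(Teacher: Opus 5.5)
The paper does not prove this statement; it quotes it from \cite{HLO}. Your core argument is the standard one for it and is correct. You build the chart $\Phi(p,t)=\exp(\sum_i t_i\xi_i(p))$ from a parallel normal frame. You then use Jacobi fields to show that the parallel manifolds $\Phi(U\times\{t\})$ meet the slices orthogonally; this is where $\nabla^\perp\xi=0$ is essential, since otherwise $J'(0)$ would have a component in $T_p\Sigma_p$. Finally, the computation $\partial_s h_{ij}=-\langle\partial_s,\nabla_{\partial_{t_i}}\partial_{t_j}\rangle-\langle\partial_s,\nabla_{\partial_{t_j}}\partial_{t_i}\rangle=0$ uses total geodesy of the slices to make the transverse metric independent of $p$. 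One minor correction: the invariance of $T\Sigma_p$ under $R(\cdot,\dot\gamma)\dot\gamma$ comes from the Codazzi equation, or directly from $\bar\nabla_XY=\nabla^{\Sigma_p}_XY$ for $X,Y$ tangent to $\Sigma_p$, not from the Gauss equation.

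The step that is not justified is the one you call secondary, and it is a genuine gap. Condition (ii) of Definition \ref{defn:HLO} only gives, for each $p$, some radius $\delta(p)>0$ with $\exp_p(B_{\delta(p)}\cap\nu_pL)$ totally geodesic. ``Continuity on a relatively compact $U$'' does not give a uniform radius. Total geodesy of $\exp_p(B_\delta\cap\nu_pL)$ survives limits in $p$, but it is not stable under small perturbations of $p$, so there is no open cover for compactness to act on.

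In a general smooth $M$ the pointwise hypothesis alone does not prevent $\delta(p)\to 0$ as $p\to x_0$. If that happens, the conclusion fails near $x_0$: the $\Sigma$-slices of a metric $g_1(x)\oplus g_2$ are totally geodesic, tangent to $\nu_pL$ at $p$, and of uniform size. So the uniformity has to be supplied from elsewhere, in one of two ways:
\begin{itemize}
\item read the hypothesis as giving sections of locally uniform size, as is implicit in the cited statement; or
\item in the setting where the paper actually uses the proposition, exploit the symmetric-space structure of $M=\Q^n_{\eps_1}\times\Q^m_{\eps_2}$. By Theorem \ref{thm:LieTriple-TotalGeodesic}, $\nu_pL$ is a Lie triple system, so $\exp_p(\nu_pL)$ is totally geodesic. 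Hence any fixed ball in $\nu_pL$ below the injectivity radius of $M$ works for every $p$ simultaneously.
\end{itemize}
With that in place, the rest of your proof goes through unchanged.
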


We will always assume $L$ is connected. The following immediate corollary of Proposition \ref{prop:HLO} will be useful.

\begin{corollary}\label{cor:section}
    For any $x,y\in L$, let $\Sigma_x$ and $\Sigma_y$ be sections of $L$ at $x$ and $y$ respectively. There exist  small neighborhoods $U_x$ and $U_y$ such that $x \in U_x \subset \Sigma_x$,  $y \in U_y \subset \Sigma_y$,  and there exists an isometry $f: U_x \rightarrow U_y$ with $f(x)=y$. 
\end{corollary}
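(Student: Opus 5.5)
We derive Corollary \ref{cor:section} from the local product structure of Proposition \ref{prop:HLO}, together with connectedness of $L$. The argument runs in three steps.

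\emph{Step 1: nearby points.} Fix $x\in L$ and apply Proposition \ref{prop:HLO} at $x$. We obtain an open neighbourhood $W$ of $x$ in $M$ and a diffeomorphism $W\cong U\times\Sigma$. Here $U$ is a neighbourhood of $x$ in $L$, $\Sigma$ is a neighbourhood of $x$ in the section $\Sigma_x$, and $L\cap W$ corresponds to $U\times\{x\}$. The metric has the form $g_1(s)\oplus g_2$, where $g_2$ is a fixed metric on $\Sigma$ that does not depend on the $U$-coordinate.

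For each $p\in U$, the slice $\{p\}\times\Sigma$ is the image under $\exp$ of a small ball in $\nu_pL$. This holds because the metric is a product in the $\Sigma$-direction, so the slices are totally geodesic and orthogonal to $L$. Hence each slice is (a neighbourhood of $p$ in) the section $\Sigma_p$. The map $\{x\}\times\Sigma\to\{p\}\times\Sigma$, $(x,s)\mapsto(p,s)$, is then an isometry, since both slices carry the same metric $g_2$. It sends $x$ to $p$.

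Sections are determined near their base point by uniqueness of geodesics. So if $\Sigma_p$ is any prescribed section at $p$, the map above restricts to an isometry between small neighbourhoods of $x$ in $\Sigma_x$ and of $p$ in $\Sigma_p$.

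\emph{Step 2: chaining.} Define an equivalence relation on $L$: $x\sim y$ if there exist neighbourhoods $U_x\subset\Sigma_x$, $U_y\subset\Sigma_y$ and an isometry $U_x\to U_y$ with $x\mapsto y$. This is indeed an equivalence relation. Reflexivity and symmetry are clear. For transitivity, compose the two isometries after shrinking domains: restrict the second isometry to the image of the first, intersected with its domain. That intersection is still an open neighbourhood of the middle point.

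By Step 1, every equivalence class is open. A partition of $L$ into open classes makes each class closed as well. Since $L$ is connected, there is only one class, which proves the corollary.

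\emph{Main obstacle.} The only delicate point is Step 1: justifying that the $\Sigma$-slices of the splitting coincide with the exponential images $\Sigma_p$, and that the metric on them is literally independent of $p$. Both facts are part of the content of Proposition \ref{prop:HLO} as stated ($g_2$ is fixed). I would simply check that the slice through $p$ is totally geodesic, tangent to $\nu_pL$ at $p$, and of complementary dimension, which identifies it with $\Sigma_p$ locally. Everything else is formal.
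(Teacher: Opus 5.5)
Your proof is correct and is the argument the paper intends when it calls this an immediate corollary of Proposition~\ref{prop:HLO}. The fixed factor $g_2$ makes the $\Sigma$-slices totally geodesic and mutually isometric, which identifies nearby sections, and connectedness of $L$ (assumed just before the statement) propagates this to arbitrary $x,y$; the paper gives no further detail, so your open-equivalence-class chaining is a faithful write-out of that reasoning.
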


The following Proposition gives a characterization for almost isoparametric submanifolds.

\begin{proposition}[\cite{HLO}*{Proposition 2.1}] \label{prop:VolConst}
    Let $L$ be an immersed submanifold in $M$ with flat normal bundle. Then
    $L$ is almost isoparametric if and only if for each $p\in L$, there exists a neighborhood
    $U$ of $p$ in $L$ and $r >0$ such that for all parallel normal vector field $\xi$ defined over $U$ with
     $\|\xi\| < r$, the endpoint map $\phi_\xi: U\rightarrow U_\xi$ defined by 
     $\phi_\xi (x)= \exp (\xi(x))$ for $x \in U$ is volume preserving up to a constant factor.
\end{proposition}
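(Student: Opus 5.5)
The plan is to characterize the mean curvature of the local parallel submanifolds $U_\xi$ through the first variation of volume along the endpoint maps. Fix $p\in L$, a neighbourhood $U$ on which $\nu L$ admits a parallel orthonormal frame $e_1,\dots,e_k$, and write a parallel normal field as $\xi=\sum_i t_i e_i$ with $t=(t_1,\dots,t_k)$ small. Because the normal bundle is flat, the family $U_{\xi}$, $\xi=\xi(t)$, is a smooth $k$-parameter family of submanifolds. For $x\in U$ let $J(t,x)$ be the Jacobian of $\phi_{\xi(t)}$ at $x$, taken with respect to the induced metrics on $U$ and on $U_{\xi(t)}$.

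First I will establish the first-variation formula
\[
\frac{\partial}{\partial t_i}\log J(t,x) = -\inner{H_{\xi(t)}(\phi_{\xi(t)}(x))}{\gamma_i'(1)},
\]
where $H_{\xi(t)}$ is the mean curvature vector of $U_{\xi(t)}$ and $\gamma_i'(1)$ is the variation field of the endpoint map along $e_i$, i.e.\ the derivative of $s\mapsto \exp(\xi(t)+s e_i)$ at $s=0$. This is the standard computation of $\frac{d}{ds}\,dvol$ along a variation, applied to $U_{\xi(t)}$ with variation field $\partial_{t_i}\exp(\xi(t)(x))$. The tangential part of that field contributes only a divergence term. Since the comparison is pointwise, that term has to be absorbed by reparametrizing through $\phi$. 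The normal part is the radial direction of the parallel submanifold. Indeed, by the Gauss lemma the variation fields $\partial_{t_i}$ are orthogonal to $U_{\xi(t)}$, because they are tangent to the image of the normal-space ball, and $\langle \partial_{t_i}\exp, d\phi(v)\rangle=0$. So the variation is purely normal and the tangential issue disappears.

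With this in hand, both directions are short. If $J(t,\cdot)$ equals a constant $c(t)$ on $U$ for every small $t$, then $\partial_{t_i}\log J$ is independent of $x$. Hence the mean curvature of $U_\xi$ paired with each radial direction $\partial_{t_i}$ is constant, and that is exactly the requirement ``constant mean curvature in radial directions'' in Definition \ref{defn:HLO}(iii). Conversely, suppose every $U_{\xi(t)}$ has constant mean curvature in the radial directions. Then $\partial_{t_i}\log J(t,x)=f_i(t)$ is independent of $x$. Integrating from $t=0$, where $J\equiv 1$, gives $J(t,x)=\exp\bigl(\int f\bigr)$, which is independent of $x$, so $\phi_\xi$ is volume preserving up to a constant factor.

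The main obstacle is justifying that the radial directions at points of $U_\xi$ are exactly the fields $\partial_{t_i}\exp(\xi(t))$, and that they are normal to $U_\xi$, without assuming the sections are totally geodesic. I will argue this with the Gauss lemma for the exponential map restricted to $\nu_xL$, together with flatness of $\nu L$, which makes $\xi$ parallel and $d\phi_\xi$ well defined. Concretely, the derivative of $x\mapsto\exp(\xi(x))$ is computed by $L$-Jacobi fields whose initial normal derivative $\nabla^\perp\xi$ vanishes. Their inner product with the radial Jacobi fields $s\mapsto \partial_{t_i}\exp(s\xi)$ then has vanishing first and second $s$-derivatives at $s=0$. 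This follows from the symmetry of the curvature and the fact that the initial data are orthogonal, so the inner product vanishes identically. The resulting orthogonality is what makes the variation purely normal.
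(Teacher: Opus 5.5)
There is a genuine gap. Your mechanism is right: a first-variation formula for $\log J$, then integration from $\xi=0$ where $J\equiv 1$. It is the mechanism the paper relies on through \cite{HLO}; compare the identity $H_r(p)=-\frac{d}{dr}\log D_p(r)$ in Lemma \ref{lem:Dp=Dq}. But the step that is supposed to make the variation purely normal is false.

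\textbf{The non-radial variation fields are not normal.} For $e_i(x)$ not parallel to $\xi(x)$, the field $\partial_{t_i}\exp(\xi(t)(x))$ is $Z(1)$, where $Z$ is the Jacobi field along $\gamma_x(s)=\exp(s\xi(x))$ with $Z(0)=0$ and $Z'(0)=e_i(x)$. In general this vector is not orthogonal to $U_\xi$. Let $Y$ be the $L$-Jacobi field with $Y(0)=v\in T_xL$ and $Y'(0)=-A_\xi v$, and set $f=\langle Y,Z\rangle$. Then indeed $f(0)=f'(0)=f''(0)=0$. However, $f$ satisfies no second-order equation, so these three facts prove nothing. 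One more derivative gives $f'''(0)=4\langle\bar R(\xi,e_i)\xi,v\rangle$.

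For a concrete failure, let $u_1,u_2\in T\Sph^2$ and $w\in T\R$ be orthonormal. Take a geodesic of $\Sph^2\times\R$ with unit tangent $v=(u_1+w)/\sqrt2$, with $\xi(x)=u_2$ and $e_i(x)=(u_1-w)/\sqrt2$. This gives $f'''(0)=-2$.

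Vanishing of such terms requires curvature-invariant normal spaces. Orthogonality of $\exp_x(\nu_xL)$ to every $U_\xi$ is exactly what totally geodesic sections provide, through Proposition \ref{prop:HLO}. That is condition (ii) of Definition \ref{defn:HLO}, which an almost isoparametric submanifold need not satisfy. The Gauss lemma only gives orthogonality of the radial vector.

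\textbf{Consequences.} For non-radial $i$, the derivative $\partial_{t_i}\log J$ picks up the divergence on $U_{\xi(t)}$ of the tangential part of the variation field, so your formula fails. Your reading of Definition \ref{defn:HLO}(iii) is also off. The radial direction at $\exp(\xi(x))$ is the single vector $\dot\gamma_x(1)$, not the $k$ fields $\partial_{t_i}\exp$. What you sketched, namely constancy of the pairing of $H$ with all $k$ of these fields, is essentially the stronger parallel-mean-curvature statement behind \cite{HLO}*{Theorem 2.4}. That statement genuinely needs totally geodesic sections.

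\textbf{The repair.} Drop the multi-parameter family and differentiate only along rays $s\mapsto s\xi$. The variation field is then $\dot\gamma_x(s)$. Set $g=\langle Y,\dot\gamma_x\rangle$. It satisfies $g''=-\langle \bar R(Y,\dot\gamma_x)\dot\gamma_x,\dot\gamma_x\rangle=0$. Its initial values are $g(0)=\langle v,\xi\rangle=0$ and $g'(0)=\langle -A_\xi v+\nabla^\perp_v\xi,\xi\rangle=0$, using $\nabla^\perp\xi=0$. Hence $g\equiv 0$; this is the correct Gauss-lemma statement.

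It follows that $\frac{d}{ds}\log J(s\xi,x)=-\langle H_{s\xi}(\gamma_x(s)),\dot\gamma_x(s)\rangle$, with no divergence term. Both directions then go through as you intended. If $J(s\xi,\cdot)$ is constant for every $s\in[0,1]$, differentiate in $s$. For the converse, integrate from $s=0$, using that $s\xi$ is again a parallel normal field with $\|s\xi\|<r$.
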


Fix a parallel normal vector field $\xi$ over an open subset $U \subset L$. For every $p \in U$, let
$\gamma_p(t) := \exp_p (t \, \xi(p))$. Recall a {\it Jacobi field} $Y$
along the geodesic $\gamma_p$ is a vector field $Y(t) \in T_{\gamma_p(t)} M$  satisfying the {\it Jacobi equation} 
\begin{equation}\label{eq:jacobi-equation}
    Y''(t) + R(Y(t), \gamma'_p(t)) \, \gamma'_p(t) =0.
\end{equation}
We can choose Jacobi fields $Y_1, \ldots, Y_{\dim L}$ along $\gamma_p$ such that
$\{Y_1(0), \ldots, Y_{\dim L}(0) \}$ form an orthonormal basis of $T_p L$, and  
$Y_i'(0)=-A_{\xi(p)} Y_i(0)$ for all $i=1, \ldots, \dim L$.
Proposition \ref{prop:VolConst} and its proof imply that 
\begin{equation}\label{eq:HLO-determinant-condition}
\det (d_p \phi_{t \xi}) = |Y_1\wedge...\wedge Y_{\dim L}(t)|
\quad\text{is independent of }p
\end{equation}
for all sufficiently small $t$ and $U$ if $L$ is almost isoparametric. Here
$\phi_{t \xi}: U \to U_{t \xi}$ is the endpoint map and $d_p$ is the differential at point $p$.

The following is a straightforward check using the definition of isoparametric submanifold.
\begin{lemma} \label{lem:adding-euclidean-factor-no-change}
    Let $L$ be a submanifold in $M\times N$ such that $L$ is contained by $M\times \{q\}$ for a fixed point $q \in N$. Then $L$ is isoparametric in $M\times \{q\}$ if and only if $L$ is isoparametric in $M \times N$.
\end{lemma}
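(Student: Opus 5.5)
We check the three conditions of Definition \ref{defn:HLO} one at a time. The approach is to compare the normal geometry of $L$ in $M\times\{q\}$ with that in $M\times N$. Write $\nu^M L$ for the normal bundle of $L$ in $M\times\{q\}\cong M$. Since $M\times\{q\}$ is totally geodesic in $M\times N$, the normal bundle of $L$ in $M\times N$ splits orthogonally as
\[
\nu L=\nu^M L\oplus \bigl(L\times T_qN\bigr).
\]
The second summand is a trivial bundle. First we record how the normal connection acts on this splitting. For a section $\eta$ of $\nu^M L$, the ambient derivative $\nabla^{M\times N}_X\eta$ agrees with $\nabla^M_X\eta$, so its normal part stays in $\nu^M L$ and equals the normal connection of $L$ in $M$. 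For a constant section $v\in T_qN$, the product structure gives $\nabla_X v=0$. Hence the normal connection of $L$ in $M\times N$ is the direct sum of the normal connection in $M$ and the trivial flat connection. It follows that $\nu L$ is flat if and only if $\nu^M L$ is flat, which settles (i). Likewise the shape operators satisfy $A_{\eta+v}=A^M_\eta$, because $A_v=0$ for $v\in T_qN$.

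For (ii), take $x\in L$ and a normal vector $\eta+v\in\nu_xL$. The exponential map of the product is the product of the exponential maps, so
\[
\exp(\eta+v)=\bigl(\exp^M_x\eta,\ \exp^N_q v\bigr).
\]
A small neighbourhood of the origin in $\nu_xL$ therefore maps onto $\Sigma^M_x\times B$, where $\Sigma^M_x=\exp^M(V)$ and $B$ is a small geodesic ball around $q$ in $N$. A product $S\times B$ with $B$ open in $N$ is totally geodesic in $M\times N$ exactly when $S$ is totally geodesic in $M$. Indeed, its second fundamental form is that of $S$ in $M$. For the direction "only if", the slice $S\times\{q\}$ is the intersection of $S\times B$ with the totally geodesic submanifold $M\times\{q\}$. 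So the section condition holds in one setting if and only if it holds in the other.

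For (iii), a parallel normal field over $U$ has the form $\xi=\eta+v$. Here $\eta$ is a parallel section of $\nu^M L$ and $v\in T_qN$ is constant; this uses the connection splitting above and the connectedness of $U$. Then
\[
U_\xi=(U^M_\eta)\times\{\exp^N_q v\},
\]
which is a copy of $U^M_\eta$ sitting in the totally geodesic slice $M\times\{q'\}$. Its mean curvature vector in $M\times N$ is therefore the mean curvature vector of $U^M_\eta$ in $M$, viewed in the first factor. I find it cleanest to use the characterization in Proposition \ref{prop:VolConst}: the endpoint map $\phi_\xi$ is the endpoint map $\phi^M_\eta$ composed with an isometry between slices. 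Hence $\phi_\xi$ is volume preserving up to a constant factor if and only if $\phi^M_\eta$ is. One direction also uses the special case $v=0$, where $\xi$ is a field in $\nu^M L$ itself. Taking $v=0$, this gives condition (iii) in $M$ from condition (iii) in $M\times N$; conversely, the condition in $M$ gives it for all $\xi=\eta+v$.

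The only step needing care is (iii), specifically the claim that parallel normal fields of $L$ in $M\times N$ split as described. This follows from the block-diagonal form of the normal connection. Once that is in place, the remaining checks are routine.
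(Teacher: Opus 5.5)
Your proof is correct and is the same direct verification of conditions (i)--(iii) of Definition~\ref{defn:HLO} that the paper intends; the paper only calls the lemma a straightforward check. The verification rests on the splitting $\nu L=\nu^M L\oplus(L\times T_qN)$, whose normal connection is block diagonal. Routing (iii) through Proposition~\ref{prop:VolConst} is fine, and the radial mean curvature can also be checked directly: the mean curvature vector of $U_\xi$ is $(H^M,0)$, so $\langle (H^M,0),\gamma'(1)\rangle=\langle H^M,\gamma_\eta'(1)\rangle$, with $|\eta|$ and $|\xi|$ constant on $U$.
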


\begin{remark} \label{rem:Euclidean-isoparametric} 
According to a definition given by Terng, a submanifold $L$  in a Euclidean space is isoparametric 
if $\nu L$ is flat and the principal curvatures along every parallel normal vector field are constant (see \cite{terng1985isoparametric}*{Definition 1}). It was shown in \cite{HLO}*{Theorem 4.2} that this definition
coincides with Definition~\ref{defn:HLO} when the ambient space is a Euclidean space. In fact, this is also true in other space forms. 
\end{remark}

If $L$ is an isoparametric submanifold in $\mathbb{R}^n$, then
there is a decomposition $TL = \oplus_{i=1}^g E_i$ such that each $E_i$ is a distribution on $L$, and there exists
parallel normal vector field ${\bf n}_i$ such that 
$A_\xi \mid_{E_i} = \langle \xi, \, {\bf n}_i \rangle \, {\rm Id}_{E_i}$ for any normal vector field $\xi$. 
$E_i$ is called a {\it curvature distribution} with {\it curvature normal} ${\bf n}_i$.
$L$ is said to be \emph{full} if it is not contained in any affine hyperplane. 
By \cite{terng1985isoparametric}*{Proposition 1.3}, $L$ is full if and only if
 ${\rm Span}\{ {\bf n}_1(p), \ldots, {\bf n}_g(p)\} = \nu_p L$ for some $p \in L$.
Hence the fullness of $L$ is determined by curvature normals at a single point.
The following splitting theorem was proved by Terng:
\begin{theorem}[\cite{terng1985isoparametric}*{Theorem 1.20}]\label{thm:palais-terng}
    If $L$ is a complete full isoparametric submanifold in $\R^{n}$ and one of the curvature normals of $L$ 
    is zero with
     the dimension of the corresponding curvature distribution equal to $m$,
     then there exists a full isoparametric submanifold $L'$ of $\R^{n-m}$ such that $L = \R^{m} \times L'$
     up to an isometry on $\R^{n}$.
\end{theorem}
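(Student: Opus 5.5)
The statement is Terng's splitting theorem (\cite{terng1985isoparametric}*{Theorem 1.20}), cited rather than proved in this paper. Here is how I would reconstruct it from the curvature distribution structure recalled just above.

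\textbf{Setup.} Let $E_0$ be the curvature distribution whose curvature normal ${\bf n}_0$ vanishes, so $\dim E_0 = m$. Then $A_\xi|_{E_0}=0$ for every normal vector $\xi$. In particular, for any $X\in E_0$, the Euclidean derivative of a parallel normal field $\xi$ in direction $X$ is
\[
-A_\xi X + \nabla^\perp_X\xi = 0 .
\]

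\textbf{Step 1: $E_0$ is autoparallel and the leaves are affine $m$-planes.} Using the Codazzi equation for isoparametric submanifolds with flat normal bundle, I would show that each curvature distribution is integrable. I would also show that for $X,Y\in E_0$ and $Z\in E_j$ ($j\neq 0$),
\[
\langle \nabla_X Y, Z\rangle\,\langle {\bf n}_0-{\bf n}_j,\xi\rangle = 0 .
\]
Since ${\bf n}_j\neq {\bf n}_0=0$, this gives $\nabla_XY\in E_0$, so the leaves of $E_0$ are totally geodesic in $L$. Because the second fundamental form also vanishes on $E_0$, the leaves are totally geodesic in $\R^n$. Completeness of $L$ then makes each leaf a complete affine $m$-plane.

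\textbf{Step 2: the leaves are parallel and the normal frame is constant along them.} Choose a parallel normal frame $\xi_1,\dots,\xi_k$. By the setup identity, each $\xi_i$ is constant in $\R^n$ along leaves of $E_0$. The curvature normals are parallel, and the orthogonal complement $E_0^\perp=\bigoplus_{j\ne0}E_j$ is determined by them. Differentiating along $E_0$ and using Codazzi once more, I would show that $\nabla_X Z\in E_0^\perp$ for $X\in E_0$ and $Z\in E_0^\perp$. This means $E_0$ is parallel as a distribution, not merely autoparallel.

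Hence $L$ is locally a Riemannian product of a leaf of $E_0$ and a leaf of $E_0^\perp$, by de Rham. Moreover, the $m$-dimensional direction space $V$ of the $E_0$-leaves is constant in $\R^n$. I expect this step to be the main obstacle: one has to rule out the affine planes rotating along $E_0^\perp$. The way to do it is to show $\nabla_Z X\in E_0$ for $X\in E_0$ and $Z\in E_0^\perp$, using the Codazzi identity $(\nabla_Z A)_\xi X=(\nabla_X A)_\xi Z$ together with $\langle {\bf n}_j,\xi\rangle\neq0$ for suitable $\xi$.

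\textbf{Step 3: the splitting.} Fix $p\in L$ and set
\[
L' := L\cap (p+V^\perp).
\]
Then $L=\R^m\times L'$ with $\R^m=V$, and $L'$ is a submanifold of $\R^{n-m}\cong V^\perp$. Its normal bundle in $V^\perp$ is the restriction of $\nu L$. Its shape operators are the restrictions of $A_\xi$ to $E_0^\perp$, which have constant eigenvalues along parallel normals. So $L'$ is isoparametric in Terng's sense, hence in the sense of Definition \ref{defn:HLO} by Remark \ref{rem:Euclidean-isoparametric}.

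\textbf{Step 4: fullness of $L'$.} The nonzero curvature normals of $L'$ are ${\bf n}_j$ for $j\neq 0$. These span $\nu_pL$, since ${\bf n}_0=0$ contributes nothing to the span. By \cite{terng1985isoparametric}*{Proposition 1.3}, $L'$ is full in $\R^{n-m}$.
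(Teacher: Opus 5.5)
The paper does not prove this statement; it quotes it from Terng. So the question is whether your reconstruction stands on its own.

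\textbf{What works.} Steps 1, 3 and 4 are fine. Once $E_0$ is known to be parallel in $L$, constancy of $V$ is immediate: ${\rm II}(E_0,\cdot)=0$ gives $D_ZX=\nabla_ZX\in E_0$.

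\textbf{The gap is in Step 2}, exactly where you place the obstacle. First a slip: the condition ``$\nabla_XZ\in E_0^\perp$ for $X\in E_0$, $Z\in E_0^\perp$'' is equivalent to $E_0$ being autoparallel, which Step 1 already gave. What is actually needed is that $E_0^\perp$ is autoparallel.

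Your proposed Codazzi argument does not deliver this. Take $T\in E_0$, $Z\in E_j$, $W\in E_k$ with $j\ne k$ both nonzero, and a parallel $\xi$. Codazzi gives
\[
\langle \nabla_T Z, W\rangle\,(\mathbf{n}_j-\mathbf{n}_k)+\langle \nabla_Z T, W\rangle\,\mathbf{n}_k=0\qquad\text{in } \nu_pL .
\]
\begin{itemize}
\item If $k=j$, or if $\mathbf n_j,\mathbf n_k$ are linearly independent, this kills $\langle\nabla_ZT,W\rangle$.
\item If $\mathbf n_k=t\,\mathbf n_j$ with $t\notin\{0,1\}$, it collapses to the single relation $(1-t)\langle\nabla_TZ,W\rangle+t\langle\nabla_ZT,W\rangle=0$ in two unknowns.
\end{itemize}
The remaining Codazzi relations for the triple of indices $(0,j,k)$ are consistent with this one-parameter family and add nothing. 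So choosing $\xi$ with $\langle\mathbf n_j,\xi\rangle\neq 0$ cannot close the argument. Collinearity is not an artificial worry: in codimension one all curvature normals are collinear. There, ruling out principal curvatures $\{0,\lambda,\mu\}$ is Segre's theorem, which needs the Gauss equation, not just Codazzi.

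\textbf{How to repair Step 2.} You need an extra, second-order or global, ingredient. Two options:
\begin{itemize}
\item \emph{Terng's structure theory.} Reflections of $\nu_pL$ in the focal hyperplanes $\langle v,\mathbf n_i\rangle=1$ permute these finitely many hyperplanes. Two parallel ones would compose to a translation and generate infinitely many, so none are parallel. Hence distinct nonzero curvature normals are linearly independent, and then your Codazzi computation does give $\nabla_ZT\in E_0$.
\item \emph{A local argument via the Gauss equation.}
  \begin{enumerate}
  \item Gauss gives $R(\cdot,T)T=0$. So, with $\nabla_TT=0$ on the flat $E_0$-leaves, the splitting tensor $C_TX=-(\nabla_XT)_{E_0^\perp}$ satisfies $\nabla_TC_T=C_T^2$.
  \item Codazzi shows $C_T$ has zero diagonal blocks. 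Its off-diagonal blocks occur only between collinear normals $\mathbf n_j=s_ju$, $\mathbf n_k=s_ku$, with $C_{jk}=\frac{s_k}{s_j}C_{kj}^t$.
  \item Codazzi also shows that $\nabla_T$ rotates $E_j$ into $E_k$ by $\frac{s_k}{s_j-s_k}C_{kj}$.
  \item The $(j,j)$-block of the Riccati equation then reads $\sum_k\frac{s_k}{s_j-s_k}C_{kj}^tC_{kj}=0$.
  \item Choose $j$ with $|s_j|$ minimal among the $s$'s of one sign. Then every coefficient is negative, so all $C_{kj}=0$, and induction gives $C_T=0$.
  \end{enumerate}
\end{itemize}
Completeness plus the usual Riccati blow-up argument would not suffice on its own. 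When $s_js_k<0$, $C_T$ can have purely imaginary eigenvalues and nothing blows up.
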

\begin{remark}\label{rk:palais-terng-local}
     Terng also proved that $L'$ is compact.
\end{remark}
In this paper, we will need the following local version of Theorem \ref{thm:palais-terng} which does not
assume $L$ to be complete or full.
\begin{corollary}\label{cor:real-palais-terng} 
    If $L$ is an isoparametric submanifold in $\R^n$ and one of the curvature normals of $L$ 
    is zero with the dimension of the corresponding curvature distribution equal to $m$, then there exists a compact isoparametric submanifold $L'$ in $\R^{n-m}$ such that, up to an isometry on $\R^n$, $L$ is an open subset of $\R^{m}\times L'  \subset \R^{n}$.
\end{corollary}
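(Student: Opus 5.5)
We reduce to the global statement of Theorem \ref{thm:palais-terng} by first reducing to the full case, then extending $L$ to a complete isoparametric submanifold.

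\textbf{Reduction to the full case.} Fix $p\in L$ and let $W=\mathrm{Span}\{{\bf n}_1(p),\dots,{\bf n}_g(p)\}\subset\nu_pL$. Since the curvature normals are parallel, the span of the ${\bf n}_i$ defines a parallel subbundle $\mathcal W\subset\nu L$ of constant rank. Its orthogonal complement $\mathcal W^\perp$ is also parallel, and $A_\eta=0$ for every $\eta\in\mathcal W^\perp$. So for a parallel section $\eta$ of $\mathcal W^\perp$ we have $d\eta=-A_\eta+\nabla^\perp\eta=0$, i.e.\ $\eta$ is a constant vector of $\R^n$. Hence $\mathcal W^\perp$ is a constant subspace $V_0$, and $L$ lies in an affine subspace $p+V_0^\perp\cong\R^{n-k}$, $k=\dim V_0$. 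Inside this affine subspace $L$ is isoparametric, with the same curvature distributions and curvature normals (now spanning the normal space). In particular the zero curvature normal persists with the same multiplicity $m$, and $L$ is full there by \cite{terng1985isoparametric}*{Proposition 1.3}. Proving the claim in $\R^{n-k}$ gives it in $\R^n$, with $L'$ sitting in $\R^{n-m}$ via $\R^{n-k-m}\subset\R^{n-m}$.

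\textbf{Extension to a complete submanifold.} Now assume $L$ is full. Pick $p\in L$ and a unit normal $\xi$ generic enough that $\langle\xi,{\bf n}_i\rangle\neq0$ for every nonzero ${\bf n}_i$. The plan is to build a complete full isoparametric submanifold $\tilde L$ containing an open piece of $L$, using the standard Terng/Palais--Terng facts.
\begin{itemize}
\item The principal data (the curvature normals at $p$, with their multiplicities) satisfy the reflection-group structure: the reflections in the hyperplanes ${\bf n}_i^\perp$ of $\nu_pL$ generate a finite Coxeter group permuting the ${\bf n}_i$. This is a local statement, proved via focal parallel sets and the Gauss--Codazzi equations.
\item Equivalently, using analyticity: isoparametric submanifolds are real-analytic, since the Gauss--Codazzi--Ricci system with constant data is analytic. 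The maximal analytic continuation of $L$ is then complete and isoparametric. Indeed, the fundamental theorem of submanifolds applied to the constant curvature-normal data, together with the flat-normal-bundle structure, gives local rigidity, so any two pieces with the same data at a point coincide up to an ambient isometry.
\end{itemize}
Either way we obtain a complete full isoparametric $\tilde L\supset L$ (as an open subset) with the same curvature normals. Theorem \ref{thm:palais-terng} then gives $\tilde L=\R^m\times L'$ up to isometry, and $L'$ is compact by Remark \ref{rk:palais-terng-local}.

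\textbf{A more self-contained alternative.} One can also argue locally, avoiding completion. Let $E_0$ be the zero-curvature distribution.
\begin{itemize}
\item $E_0$ is integrable with totally geodesic leaves in $\R^n$: the second fundamental form vanishes on $E_0$, and Codazzi gives $\nabla_{E_0}E_0\subset E_0$.
\item $E_0$ is a constant $m$-dimensional subspace $V$. The Codazzi identity $\langle\nabla_XY,Z\rangle({\bf n}_0-{\bf n}_j)=\langle\nabla_ZX,Y\rangle({\bf n}_j-{\bf n}_i)$-type relations show that $\nabla_Y E_0\subset E_0$ for $Y\in E_j$ as well.
\end{itemize}
Then $L$ is locally invariant under translations in $V$, so $L$ is an open subset of $V\times L''$ with $L''=L\cap(p+V^\perp)$ an isoparametric submanifold of $\R^{n-m}$ with no zero curvature normal. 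One must still show that $L''$ is an open subset of a compact isoparametric submanifold, which requires the completion step above anyway.

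\textbf{Main obstacle.} The hardest step is the extension/completion: showing that a local isoparametric piece with no zero curvature normal extends to a compact one. I would first try the real-analytic continuation combined with local rigidity. If that fails, I would fall back to Terng's construction via the focal structure and the finite Coxeter group, which produces the compact orbit-type model directly from the data at one point.
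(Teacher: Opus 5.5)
Your main line is the paper's argument: pass to the affine span so that $L$ is full, extend to a complete isoparametric submanifold, and apply Theorem~\ref{thm:palais-terng} together with Remark~\ref{rk:palais-terng-local}. The paper extends first and restricts to the affine span afterwards, while you do it in the opposite order. The order does not matter, because fullness and the multiplicity of the zero curvature normal are read off at a single point of $L$. Your reduction to the full case is correct.

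The step you do not actually establish is the completion, and neither of your proposed justifications proves it.
\begin{itemize}
\item Real-analyticity plus local rigidity gives at most uniqueness of a continuation. It says nothing about the maximal continuation being complete, and completeness is the whole content of this step.
\item The reflection-group property of the curvature normals is proved in the standard way using entire curvature-leaf spheres and their antipodal points, so it presupposes completeness. It also does not by itself construct an extension. There is no general ``orbit model'' built from one-point data either, since inhomogeneous examples exist in codimension $2$.
\end{itemize}
The paper closes this step by citing Terng's extendability theorem \cite{terng1987submanifold}*{Theorem 3.4}. That theorem says precisely that $L$ is an open subset of a complete isoparametric submanifold of $\R^n$, and you should invoke it directly.

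Your local alternative has a further hole besides the completion. Take $X\in E_i$, $Y\in E_0$, $Z\in E_k$ with $i\neq k$ and ${\bf n}_i,{\bf n}_k\neq 0$. Codazzi only gives $\langle\nabla_XY,Z\rangle\,{\bf n}_k=\langle\nabla_YX,Z\rangle({\bf n}_k-{\bf n}_i)$. This forces $\langle\nabla_XY,Z\rangle=0$ only if ${\bf n}_i$ and ${\bf n}_k$ are linearly independent, which is again a fact that comes from the global theory.
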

\begin{proof}
    By the extendability of isoparametric submanifolds in Euclidean space
\cite{terng1987submanifold}*{Theorem 3.4}, $L$ is an open subset of a complete isoparametric submanifold 
$\widetilde L \subset \R^n$. Up to an isometry on $\R^n$, we may assume $\widetilde L$ is a full isoparametric
submanifold in a subspace $\R^{n'} \subset \R^n$ for some $n' \leq n$.
By Theorem \ref{thm:palais-terng},  there exists a compact isoparametric submanifold
$L'$ in $\R^{n'-m} \subset \R^{n-m}$ such that $\widetilde L = \R^{m}\times L'$ up to an isometry
on $\R^{n'}$.  This implies the corollary.
\end{proof}

\subsection{Totally Geodesic Submanifolds in Symmetric Spaces}

In the definition of isoparametric submanifolds, sections are totally geodesic submanifolds
of the ambient space $M$. If $M$ is a symmetric space, tangent spaces of totally geodesic submanifolds
are described by Lie triple systems. We refer to \cite{helgason}*{Chapter IV} for basic concepts and facts about symmetric spaces.

Let $M=G/K$ be a Riemannian symmetric space where $G$ and $K$ are Lie groups which form a symmetric pair.
Let $o=eK \in M$ where $e$ is the identity element of $G$.  
Let $\g$ and $\kalg$ be the Lie algebras of $G$ and $K$ respectively. 
We have the Cartan decomposition $\g=\kalg \oplus \palg$ with 
$\palg \cong T_o M$. 

\begin{definition}
A linear subspace $\m\subset\palg$ is a \emph{Lie triple system} if  $ [\m,[\m,\m]]\subset\m$
(i.e. $[[X,Y],Z]\in\m$ for all $X,Y,Z\in\m$).
\end{definition}

Lie triple system can also be defined using the curvature tensor. Recall that the curvature tensor $R$ on any Riemannian manifold $M$ is defined by
\begin{equation*}
      R(X,Y)Z := \nabla_X \nabla_Y Z- \nabla_Y \nabla_X Z- \nabla_{[X,Y]} Z
\end{equation*}
for any vector fields $X,Y,Z$ on $M$, where $\nabla$ is the Levi-Civita connection on $M$.
If $M=G/K$ is a symmetric space, the curvature tensor is given by a simple formula
 $ R(X,Y)Z=-[[X,Y],Z]$  for $X,Y,Z\in\palg$ (see \cite{helgason}*{Chapter IV Theorem 4.2}). Therefore $\m$ is a Lie triple system if and only if 
\begin{equation}\label{eq:Lie-triple}
     R(\m,\m)\m\subset\m.
\end{equation}
Sometimes this condition is easier to use since it does not rely on the choice of the base point
$o$ and the representation of $M$ as
the quotient of $G$ and $K$.

The following theorem gives a characterization of totally geodesic submanifolds in $M$.
\begin{theorem}[Theorem 7.2 of \cite{helgason}*{Chapter IV}]\label{thm:LieTriple-TotalGeodesic}
Let $M$ be a connected Riemannian symmetric space. Fix a point $p\in M$. For any totally geodesic submanifold 
$N$ containing $p$, $T_pN$ is a Lie triple system. On the other hand, given a Lie triple system $\m \subset T_p M$, $\exp_p(\m)$ is a totally geodesic submanifold in $M$.
\end{theorem}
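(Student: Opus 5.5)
The first direction is local differential geometry and the converse is a group construction. The step I expect to need the most care is identifying the abstract orbit with the image of $\exp_p$ and getting total geodesy at every point of it, not only at $p$.

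\emph{Totally geodesic implies Lie triple system.} I would use the Codazzi equation. Let $N\ni p$ be totally geodesic, so its second fundamental form $\mathrm{II}$ vanishes identically. For vector fields $X,Y,Z$ tangent to $N$, the normal component of $R(X,Y)Z$ is
\[
(\nabla^\perp_X\mathrm{II})(Y,Z)-(\nabla^\perp_Y\mathrm{II})(X,Z)=0 .
\]
Hence $R(X,Y)Z\in TN$, and in particular $R(T_pN,T_pN)T_pN\subset T_pN$. By \eqref{eq:Lie-triple}, $T_pN$ is a Lie triple system. This direction does not need the symmetric structure beyond the equivalence \eqref{eq:Lie-triple}.

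\emph{Lie triple system implies totally geodesic.} Write $M=G/K$ with $p=o$, and let $\m\subset\palg$ satisfy $[[\m,\m],\m]\subset\m$.

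First, I would show that $\g':=\m\oplus[\m,\m]$ is a Lie subalgebra of $\g$.
\begin{itemize}
\item $[\m,\m]\subset\kalg$, and $[[\m,\m],\m]\subset\m$ by hypothesis.
\item The Jacobi identity gives $[[\m,\m],[\m,\m]]\subset[[[\m,\m],\m],\m]\subset[\m,\m]$.
\end{itemize}

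Next, let $G'\subset G$ be the connected Lie subgroup with Lie algebra $\g'$, and let $N:=G'\cdot o$. This is an immersed submanifold with $T_oN=\g'/(\g'\cap\kalg)\cong\m$, since $\g'\cap\kalg=[\m,\m]$ and $\g'\cap\palg=\m$. Because $N$ is an orbit, for $g\in G'$ we have $T_{g\cdot o}N=dg(T_oN)=dg(\m)$.

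The geodesics of $M$ through $o$ are $t\mapsto\exp(tX)\cdot o$ for $X\in\palg$. For $X\in\m$ we have $\exp(tX)\in G'$, so these geodesics lie in $N$. Translating by $g\in G'$, which is an isometry of $M$ preserving $N$, every $M$-geodesic tangent to $N$ at any point $g\cdot o$ stays in $N$ for small time. A submanifold containing the ambient geodesics in all of its tangent directions has vanishing second fundamental form, since $\mathrm{II}(v,v)=\gamma_v''(0)^\perp=0$, and polarization gives $\mathrm{II}=0$. So $N$ is totally geodesic.

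\emph{Identification with $\exp_p(\m)$.} This is the step I expect to be the main obstacle. Since geodesics of $N$ are geodesics of $M$, we get $\exp_o(\m)=\exp^N_o(T_oN)$, which lies in $N$. For equality, note that $N$ with the induced metric is homogeneous under $G'$, hence complete, so by Hopf--Rinow every point of $N$ lies on an $N$-geodesic from $o$. Thus $\exp_p(\m)=N$ is a totally geodesic submanifold, possibly only immersed globally, which is all the statement requires. If a fully global argument turns out to be delicate, the first fallback is to restrict to a neighbourhood of $o$, where $\exp_o|_\m$ is an embedding onto an open piece of $N$; that local version is all that is used for sections in Definition~\ref{defn:HLO}.
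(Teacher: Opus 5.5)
The paper gives no proof of this statement; it simply quotes it from Helgason (Chapter IV, Theorem 7.2). Your proposal is correct and is essentially the standard argument from that source: the Codazzi/Gauss argument shows totally geodesic implies Lie triple system. For the converse, you take the orbit of the connected subgroup with Lie algebra $\m\oplus[\m,\m]$, note that $M$-geodesics in orbit directions stay in the orbit, and use completeness of the homogeneous orbit to identify it with $\exp_p(\m)$.
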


\noindent
Note that products of space forms are symmetric spaces. We will use this theorem to study sections of isoparametric
submanifolds in such spaces.

\subsection{Fundamental Equations in Submanifold Geometry}

Major tools in the study of submanifold geometry are the Gauss, Codazzi, and Ricci equations.
Assume $M$ is a Riemannian manifold and $L \subset M$ is an immersed submaniold 
 equipped with the  Riemannian metric induced from $M$.
We will use $\bar\nabla$ and $\bar R$ to represent the Levi-Civita connection and curvature tensor  on $M$. 
The Levi-Civita connection and curvature tensor on $L$ 
are denoted by $\nabla$ and $R$
respectively. Let $\nu L$ be the normal bundle of $L$.  The connection and
curvature tensor on $\nu L$ are denoted by $\nabla^\perp$ and $R^{\perp}$ respectively.
 The shape operator and second fundamental form on $L$ are denoted by
 $A$ and ${\rm II}$. For any $X\in TM|_L$, we write $X^T$ and $X^{\perp}$ for the projections
  of $X$ to $TL$ and $\nu L$ respectively. Let $X,Y,Z,W$ be tangent vector fields on $L$, and $\eta,\xi$ be normal vector fields on $L$. Then the {\it Gauss equation} can be written as
\begin{equation}\label{eq:gauss}
    \langle \bar R(X,Y)Z, W\rangle=\langle R(X,Y)Z,W\rangle-\langle \text{II}(X,W),\text{II}(Y,Z)\rangle+\langle \text{II}(X,Z),\text{II}(Y,W)\rangle. 
\end{equation}
For the Ricci equation and Codazzi equation, we will use the versions of these equations in terms of the shape-operator instead of the second fundamental form.
The {\it Ricci equation} is given by
\begin{equation}\label{eq:ricci}
    \langle R^{\perp}(X,Y)\eta,\xi\rangle=\langle \bar R(X,Y)\eta,\xi\rangle+\langle[A_\eta,A_\xi]X,Y\rangle,
\end{equation}
and the {\it Codazzi equation} is given by
\begin{equation}\label{eq:codazzi}
    (\nabla_XA)_\eta Y-(\nabla_Y A)_\eta X=-(\bar R(X,Y)\eta)^T,
\end{equation}
where  $(\nabla_XA)_\eta Y := \nabla_X(A_\eta Y)-A_\eta(\nabla_XY)-A_{\nabla^{\perp}_X\eta}Y$
(see, for example, \cite{tojeiro2019submanifold}*{Section 1.3}).

Throughout this paper, all submanifolds $L$ are assumed to be connected and have codimension at least $1$.

\subsection{Curvature Tensor for Products of Space Forms}\label{subsec:curvature}

Recall $\Q^n_{\eps}$ is the $n$-dimensional simply connected space form with
sectional curvature $\eps$.   The curvature tensor of $\Q^n_{\eps}$  has
a simple form
\begin{equation}
    R(X,Y)Z=\eps(\langle Y,Z\rangle X-\langle X,Z\rangle Y)
\end{equation}
for any $p \in \Q^n_{\eps}$ and $X,Y,Z \in T_p \Q^n_{\eps}$
(see, for example, \cite{lee2018introduction}*{Proposition 8.36}).
Consequently, the curvature tensor for $\Q^n_{\eps_1}\times \Q^m_{\eps_2}$ can also be derived easily.
For any $p=(p_1,p_2) \in \Q^n_{\epsilon_1}\times \Q^m_{\epsilon_2}$, 
\begin{equation} \label{eqn:DecompTangent}
T_{p}(\Q^n_{\epsilon_1}\times \Q^m_{\epsilon_2})
\cong T_{p_1} \Q^n_{\epsilon_1} \times T_{p_2} \Q^m_{\epsilon_2}.
\end{equation}
So every $X \in T_{p}(\Q^n_{\epsilon_1}\times \Q^m_{\epsilon_2})$ can be written as
$X=(X_1, X_2)$ with $X_1 \in T_{p_1} \Q^n_{\epsilon_1}$ and $X_2 \in T_{p_2} \Q^m_{\epsilon_2}$.
For any 
$X=(X_1,X_2), Y=(Y_1,Y_2),Z=(Z_1,Z_2) \in T_{p}(\Q^n_{\epsilon_1}\times \Q^m_{\epsilon_2})$,
we have 

\begin{equation}\label{eq:curvature-formula}
    R(X,Y)Z=\Big(\eps_1(\langle Y_1,Z_1\rangle X_1-\langle X_1,Z_1\rangle Y_1),\eps_2(\langle Y_2,Z_2\rangle X_2-\langle X_2,Z_2\rangle Y_2)\Big)
\end{equation}



A simple application of this formula is to calculate the curvature of the diagonal submanifold
in  $\Q^n_\eps\times\Q^n_\eps$ defined by
\begin{equation}\label{eq:defn-diagonal-sphere}
    \Delta \Q^n_\eps:=\{(x,x) \mid  x  \in \Q^n_\eps\} \subset \Q^n_\eps\times\Q^n_\eps.
\end{equation}
Since geodesics in $\Delta \Q^n_\eps$ are also geodesics in $\Q^n_\eps\times\Q^n_\eps$,
$\Delta \Q^n_\eps$ is totally geodesic in $\Q^n_\eps\times\Q^n_\eps$. 
Hence by the Gauss equation \eqref{eq:gauss} and equation \eqref{eq:curvature-formula},
the curvature tensor of $\Delta \Q_\eps^n$ is given by 
$$\left\langle R^{\Delta}\Big( (X,X), (Y,Y) \Big)(Z,Z),(W,W) \right\rangle
  = 2 \eps ( \langle Y, Z \rangle \langle X, W\rangle - \langle X, Z \rangle \langle Y, W\rangle) $$
for any $X,Y,Z,W\in T_x \Q^n_\eps$.
Consequently  $\Delta\Q^n_\eps$ is a space form with constant sectional curvature equal to $\eps/2$. 

\subsection{Product Structure and Angle Function}


In \cite{urbano2019hypersurfaces}, Urbano introduced the product structure on $\mathbb{S}^2 \times \mathbb{S}^2$,
which can be easily extended to more general spaces, such as $\Q^n_{\epsilon_1}\times\Q^m_{\epsilon_2}$.
Using decomposition \eqref{eqn:DecompTangent}, the {\it product structure} on $\Q^n_{\epsilon_1}\times\Q^m_{\epsilon_2}$ can be defined as  the map
$P:T(\Q^n_{\epsilon_1}\times\Q^m_{\epsilon_2})\rightarrow T(\Q^n_{\epsilon_1}\times\Q^m_{\epsilon_2})$
given by
\begin{equation} \label{equ:P-operator}
    P(X_1,X_2) :=(X_1,-X_2).
\end{equation}
for $(X_1,X_2)\in T(\Q^n_{\epsilon_1}\times\Q^m_{\epsilon_2})$.
One can check that $P$ is a parallel self-adjoint involutive bundle isomorphism which also preserves
the metric. Let $\bar\nabla$ be the Levi-Civita connection on $\Q^n_{\eps_1}\times \Q^m_{\eps_2}$.
 For any $z \in \Q^n_{\epsilon_1}\times\Q^m_{\epsilon_2}$ and $X,Y\in T_z(\Q^n_{\epsilon_1}\times\Q^m_{\epsilon_2})$,
 we have
\begin{equation}\label{eq:P-property}
    \bar\nabla P=0,\qquad P^2= {\rm Id}, \qquad \langle P(X),Y\rangle=\langle X,P(Y)\rangle, \qquad \langle P(X),P(Y)\rangle=\langle X,Y\rangle.
\end{equation}
Define two bundle maps $P_1$ and $P_2$ on $T (\Q^n_{\eps_1}\times\Q^m_{\eps_2})$ by
\begin{equation}\label{eq:dpi1dpi2-def}
    P_1:=\frac12({\rm Id} +P),\qquad P_2:=\frac12({\rm Id}-P).
\end{equation}
Both $P_1$ and $P_2$ are parallel, i.e. $\bar\nabla P_1 = \bar\nabla P_2 =0$,
since  $P$ and ${\rm Id}$ are parallel.



The product structure can be used to define the angle function
on hypersurfaces, which was also introduced by Urbano in \cite{urbano2019hypersurfaces} for
hypersurfaces in $\mathbb{S}^2 \times \mathbb{S}^2$.
More generally, for any hypersurface
$L\subset \Q^n_{\epsilon_1}\times\Q^m_{\epsilon_2}$, we can define the 
{\it angle function} $C$ on $L$ by
\begin{equation}\label{eq:angle-function}
    C(z):=\inner{P(N(z))}{N(z)}=|{N_1(z)}|^2-|{N_2(z)}|^2
\end{equation}
for all $z \in L$, where $N$ is a unit normal vector field on $L$, $N_1 := P_1(N)$, and $N_2 := P_2(N)$.
Note that $C$ does not depend on the choice of $N$ since it does not change if $N$ is replaced by $-N$.
Moreover, we have
\begin{equation}\label{eq:N=N1+N2}
    N=N_1+N_2, \hspace{20pt} \langle N_1, \, N_2 \rangle =0.
\end{equation} 
Since  $|N|^2=|{N_1(z)}|^2 + |{N_2(z)}|^2=1$, we have $|N_1|^2=\dfrac{1+C}{2}$ and $|N_2|^2=\dfrac{1-C}{2}$. We also define the tangential part of $P(N)$ as 
\begin{equation}\label{eq:T-defn}
    \mathcal{T}:=P(N)-CN.
\end{equation}
By definition of $C$, $\langle \mathcal{T}, \, N \rangle =0$. Hence $\mathcal{T} \in TL$.
Since $P(N)=\mathcal T+CN$ and $|P(N)|=|N|=1$, we get
\begin{align} \label{equ:T-length}
   |\mathcal T|^2=1-C^2.
\end{align}
Thus $C^2 \leq 1$ and  $\mathcal T\neq 0$ if and only if $-1<C<1$.

\begin{lemma} \label{lem:nabla-C}
Let $L$ be a hypersurface in $\Q^n_{\epsilon_1}\times\Q^m_{\epsilon_2}$. Then we have $\nabla C=-2A(\mathcal T)$, where $\nabla C$ is the gradient of $C$, and $A$ is the shape operator of $L$ along $N$.
\end{lemma}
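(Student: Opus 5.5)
The plan is a direct computation of $X(C)$ for an arbitrary tangent vector $X\in TL$, using that $P$ is parallel and self-adjoint together with the Weingarten formula. Fix a local unit normal $N$ and let $A$ be its shape operator, with sign convention $\bar\nabla_X N = -A X$. There is no normal-connection term because $L$ is a hypersurface and $|N|=1$. Since $C=\langle P(N),N\rangle$, differentiating along $X$ gives
\[
X(C)=\langle \bar\nabla_X (P(N)),N\rangle+\langle P(N),\bar\nabla_X N\rangle .
\]

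By \eqref{eq:P-property}, $\bar\nabla P=0$, so $\bar\nabla_X(P(N))=P(\bar\nabla_X N)$. Self-adjointness of $P$ then gives
\[
\langle P(\bar\nabla_X N),N\rangle=\langle \bar\nabla_X N,P(N)\rangle .
\]
Hence both terms coincide, and $X(C)=2\langle P(N),\bar\nabla_X N\rangle=-2\langle P(N),AX\rangle$.

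Next decompose $P(N)=\mathcal T+CN$ as in \eqref{eq:T-defn}. Since $AX\in TL$ is orthogonal to $N$, only the tangential part contributes, so $X(C)=-2\langle \mathcal T,AX\rangle$. Because $A$ is self-adjoint, this equals $-2\langle A\mathcal T,X\rangle$. As $X\in TL$ is arbitrary, $\nabla C=-2A(\mathcal T)$.

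The only real subtlety is independence of the choice of $N$. Replacing $N$ by $-N$ fixes $C$, changes the sign of $\mathcal T$ (since $P(-N)-C(-N)=-\mathcal T$), and changes the sign of $A$. So $A(\mathcal T)$ is unchanged and the formula is well defined. No step poses a serious obstacle; the main care needed is keeping the sign convention for $A$ consistent with the one used in the Codazzi equation \eqref{eq:codazzi}.
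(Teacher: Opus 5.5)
Your proof is correct and follows essentially the same computation as the paper: differentiate $C=\langle P(N),N\rangle$, use $\bar\nabla P=0$ and self-adjointness of $P$ to see that the two terms agree, then apply the Weingarten formula, the decomposition $P(N)=\mathcal T+CN$, and the symmetry of $A$. Your remark that $A(\mathcal T)$ is unchanged under $N\mapsto -N$ is a harmless addition that the paper does not spell out.
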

\begin{proof}
For any tangent vector field $X$ on $L$, we have
\begin{equation}  \label{eq:sec2-lemma}    
X(C)=X\left<P(N),N\right>=\left<\bar\nabla_{X}P(N),N\right>+\left<P(N),\bar\nabla_{X}N\right>.
\end{equation}
By equations \eqref{eq:P-property} and \eqref{eq:T-defn}, the first term on the right hand side of
equation \eqref{eq:sec2-lemma} is  
$$\left<\bar\nabla_{X}P(N),N\right>=\left<P(\bar\nabla_{X}N),N\right>
=\left<\bar\nabla_{X}N,P(N)\right>=\left<-A(X),{\mathcal T}\right>,$$ 
which is also equal to the second term on the right hand side of
equation \eqref{eq:sec2-lemma}.
Since $A$ is symmetric, we obtain  $X(C)=-2\left<A({\mathcal T}),X\right>$.
Therefore we have $\nabla C=-2A({\mathcal T})$.
 \end{proof}

\subsection{An Example of Isoparametric Hypersurface in \texorpdfstring{$\Hyp^n\times \R^m$}{Qne1 x M2} } 

In the statement of Theorem \ref{main}, we have used the
flat horospherical hypersurface in $\Hyp^n\times \R^m$, which is
an example of isoparametric hypersurface  constructed by de Lima and Pipoli in \cite{delima2025}. 
Recall a {\it horosphere} in $\Hyp^n$ is a totally umbilical hypersurface whose induced metric is flat
(see, for example, \cite{BCO}*{Section 1.6.1}).
The following example was given in \cite{delima2025}*{Example 11}:
\begin{example} \label{ex:flat-horosphere}
Choose a horosphere
\(H\subset \Hyp^{n}\) with a unit normal vector field \(\eta\) and an affine hyperplane
\(W\subset \R^{m}\) with a constant unit normal vector field \(\xi\). For any real number \(\alpha\ne 0\),  
a {\it flat horospherical hypersurface} $L_\alpha$ in $\Hyp^n\times \R^m$ is defined as the image of the map
\[ \Phi_\alpha(p,q,t) := \bigl(\exp_p(t \, \eta(p)), \,\, q+ \alpha t \xi \bigr) \in \Hyp^n\times \R^m, 
\hspace{20pt} {\rm for} \,\,\, 
         p\in H, \,\,\, q\in W, \,\,\, t\in \mathbb R.\]
Equivalently, 
\begin{equation} \label{eq:flat-horo-repre}
    L_\alpha=\cup_{t\in \mathbb R} H_{t}\times W_{\alpha t},
\end{equation}
where $H_{t}=\{\exp_{p}(t\eta_{p})\ |\ p\in H\} $ and $W_{\alpha t}=\{q + \alpha t\xi)\ |\ q\in W\} $.
\end{example}

A similar construction with $W \subset \mathbb{R}^m$ replaced by a horosphere 
in $\Hyp^m$ gives an example of isoparametric hypersurface in $\Hyp^n\times \Hyp^m$.
We will not need this example in this paper.

\section{Lie Triple Systems in \texorpdfstring{$\Q^n_{\epsilon_1}\times \Q^m_{\epsilon_2}$}{Qne1 x M2}}\label{sec:lts}

Let $L$ be an isoparametric submanifold in $\Q^n_{\epsilon_1}\times \Q^m_{\epsilon_2}$.
Recall that we have assumed $\epsilon_1 \neq 0$.
Fix an arbitrary point $p \in L$ and let $\m = \nu_p L$.
By Theorem \ref{thm:LieTriple-TotalGeodesic}, $\m$ is a Lie triple system
since it is the tangent space of the totally geodesic section $\Sigma_p$ at
$p$. 
Let
\begin{equation}\label{eq:pi1pi2-definition}
    \pi_1:\Q^n_{\eps_1}\times \Q^m_{\eps_2}\rightarrow\Q^n_{\eps_1},
              \quad \pi_2:\Q^n_{\eps_1}\times \Q^m_{\eps_2}\rightarrow\Q^m_{\eps_2}
\end{equation}
be the natural projection maps.
 We have a decomposition of $\m$ into three mutually orthogonal subspaces
\begin{equation}\label{eq:m0m1m2}
    \m=\m_0\oplus\m_1\oplus\m_2
\end{equation}
where $\m_1:=\{X\in\m:d\pi_2(X)=0\}$, $\m_2:=\{X\in\m:d\pi_1(X)=0\}$, and $\m_0$ is the orthogonal complement of $\m_1\oplus\m_2$ in $\m$. 

\begin{lemma} \label{lem:m0tau}
For $i=1, 2$, $d\pi_i(\m_i)$ is perpendicular to $d\pi_i(\m_0)$.
Moreover, there exists a linear isomorphism $\tau:d\pi_2(\m_0)\rightarrow d\pi_1(\m_0)$ such that
\begin{equation}\label{eq:m0general-fact}
    \m_0=\{(\tau (u), u): u\in d\pi_2(\m_0)\}.
\end{equation}
\end{lemma}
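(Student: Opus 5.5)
The statement is pure linear algebra about the decomposition \eqref{eq:m0m1m2}; the Lie triple system property of $\m$ is not needed. Throughout, I identify $T_p(\Q^n_{\eps_1}\times\Q^m_{\eps_2})$ with $T_{p_1}\Q^n_{\eps_1}\times T_{p_2}\Q^m_{\eps_2}$ via \eqref{eqn:DecompTangent}. Under this identification $d\pi_1(X_1,X_2)=X_1$ and $d\pi_2(X_1,X_2)=X_2$, and the metric is $\langle (X_1,X_2),(Y_1,Y_2)\rangle=\langle X_1,Y_1\rangle+\langle X_2,Y_2\rangle$.

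\emph{Orthogonality.} Take $X\in\m_1$ and $Y\in\m_0$. By definition of $\m_1$ we have $X=(X_1,0)$; write $Y=(Y_1,Y_2)$. Since $\m_0\perp\m_1$ inside $\m$,
\[
0=\langle X,Y\rangle=\langle X_1,Y_1\rangle=\langle d\pi_1(X),d\pi_1(Y)\rangle .
\]
Hence $d\pi_1(\m_1)\perp d\pi_1(\m_0)$. The same computation with $X=(0,X_2)\in\m_2$ gives $d\pi_2(\m_2)\perp d\pi_2(\m_0)$.

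\emph{Injectivity on $\m_0$.} Let $Y\in\m_0$ with $d\pi_2(Y)=0$. Then $Y\in\m$ and $d\pi_2(Y)=0$, so $Y\in\m_1$ by definition. Since $\m_0\cap\m_1=0$ (the two summands are orthogonal), $Y=0$. Thus $d\pi_2|_{\m_0}:\m_0\to d\pi_2(\m_0)$ is injective, hence a linear isomorphism onto its image. The same argument, using $\m_2$ in place of $\m_1$, shows $d\pi_1|_{\m_0}:\m_0\to d\pi_1(\m_0)$ is a linear isomorphism.

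\emph{Construction of $\tau$.} Define
\[
\tau:=d\pi_1\circ\bigl(d\pi_2|_{\m_0}\bigr)^{-1}:\ d\pi_2(\m_0)\to d\pi_1(\m_0).
\]
It is a composition of two linear isomorphisms, so it is a linear isomorphism. For $u\in d\pi_2(\m_0)$, let $Y=(d\pi_2|_{\m_0})^{-1}(u)\in\m_0$. Then $d\pi_2(Y)=u$ and $d\pi_1(Y)=\tau(u)$, so $Y=(\tau(u),u)$. Hence $\{(\tau(u),u)\}\subset\m_0$. Conversely, any $Y\in\m_0$ arises this way with $u=d\pi_2(Y)$, which gives \eqref{eq:m0general-fact}.

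No step is a real obstacle. The only point needing care is the observation that an element of $\m_0$ killed by $d\pi_2$ lies in $\m_1$, and that this intersection is trivial because the decomposition \eqref{eq:m0m1m2} is orthogonal.
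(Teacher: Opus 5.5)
Your proof is correct and follows the paper's argument: orthogonality comes from $\langle d\pi_i(X), d\pi_i(Z)\rangle = \langle X, Z\rangle = 0$ for $X\in\m_i$ and $Z\in\m_0$, injectivity of $d\pi_i|_{\m_0}$ comes from the definition of $\m_0$, and $\tau = d\pi_1\circ(d\pi_2|_{\m_0})^{-1}$. You spell out the injectivity step a little more explicitly than the paper does: a vector of $\m_0$ killed by $d\pi_2$ lies in $\m_1$, and $\m_1$ meets $\m_0$ trivially.
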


\begin{proof}
Let $i=1$ or $2$. For any $X \in \m_i$ and $Z \in \m_0$,   we have
\begin{equation}\label{eq:Lie-triple-decomposition-property} 
    \langle d \pi_i (X) , d \pi_i (Z) \rangle = \langle X, Z \rangle= 0,
\end{equation}
where the first equality follows from the definition of $\m_i$ and the second equality follows from
the definition of $\m_0$.
Hence $d\pi_i(\m_i) \perp d\pi_i(\m_0)$.

Moreover, by the definition of $\m_0$, $d \pi_i \mid_{\m_0}$ is injective. Hence
$d \pi_i: \m_0 \rightarrow d \pi_i(\m_0)$ is a linear isomorphism since it is apparently also surjective.
The linear isomorphism  
\[ \tau = d \pi_1 \circ (d \pi_2)^{-1}: d\pi_2(\m_0)\rightarrow d\pi_1(\m_0)\] 
satisfies the
requirement of the lemma.
\end{proof}

The following result gives a classification of $\m$.

\begin{proposition} \label{prop:lie-triple}
{\rm (i)} If $\dim \m_0 \geq 1$, then $\m_i = \{0\}$ whenever $\epsilon_i \neq 0$.
{\rm (ii)} If  $\dim \m_0 \geq 2$, then $\epsilon_1 = \epsilon_2$ and the linear map $\tau$ defined 
in Lemma \ref{lem:m0tau} is an isometry.
\end{proposition}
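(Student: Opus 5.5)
The plan is to apply the Lie triple system condition \eqref{eq:Lie-triple} directly, using the explicit curvature formula \eqref{eq:curvature-formula}, to carefully chosen triples of vectors in $\m$. Each computed vector $R(X,Y)Z$ must lie in $\m$. We then project onto the factors and compare with the description of $\m_0$ in Lemma \ref{lem:m0tau}, namely $\m_0=\{(\tau(u),u)\}$, together with the orthogonality $d\pi_i(\m_i)\perp d\pi_i(\m_0)$.

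For (i), take a nonzero $Z=(\tau(u),u)\in\m_0$ and some $X\in\m_1$, say $X=(x,0)$ with $x\neq 0$, and suppose $\eps_1\neq0$. By \eqref{eq:curvature-formula},
\[ R(X,Z)Z=\bigl(\eps_1(|\tau(u)|^2x-\langle x,\tau(u)\rangle\tau(u)),0\bigr)=(\eps_1|\tau(u)|^2x,0), \]
since $x\perp\tau(u)$ by Lemma \ref{lem:m0tau}. This is just a nonzero multiple of $X$, so it gives nothing new.

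Instead use $R(Z,X)X=(\eps_1(|x|^2\tau(u)-0),0)=\eps_1|x|^2(\tau(u),0)$. This vector lies in $\m$, so $(\tau(u),0)\in\m$. Its $\pi_2$-component vanishes, so $(\tau(u),0)\in\m_1$. But it is also orthogonal to $\m_1$, because $\langle(\tau(u),0),(x',0)\rangle=\langle\tau(u),x'\rangle=0$ for every $(x',0)\in\m_1$ by Lemma \ref{lem:m0tau}. Hence $\tau(u)=0$, which contradicts $\tau$ being an isomorphism with $u\neq0$. So $\m_1=0$ when $\eps_1\ne0$. The case $i=2$ is symmetric, using $R(Z,X)X$ with $X\in\m_2$.

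For (ii), take linearly independent $Z=(\tau(u),u)$ and $W=(\tau(v),v)$ in $\m_0$. Then
\[ R(Z,W)W=\bigl(\eps_1(|\tau v|^2\tau u-\langle\tau u,\tau v\rangle\tau v),\ \eps_2(|v|^2u-\langle u,v\rangle v)\bigr). \]
This vector lies in $\m$. Its components lie in $d\pi_1(\m_0)\times d\pi_2(\m_0)$, which is orthogonal to $\m_1\oplus\m_2$ by Lemma \ref{lem:m0tau}. Hence it lies in $\m_0$, so its first component equals $\tau$ of its second component. Writing $w=\eps_2(|v|^2u-\langle u,v\rangle v)$, linearity of $\tau$ gives
\[ \eps_1(|\tau v|^2\tau u-\langle\tau u,\tau v\rangle\tau v)=\eps_2(|v|^2\tau u-\langle u,v\rangle\tau v). \]
Since $\tau u$ and $\tau v$ are independent, we can compare coefficients:
\[ \eps_1|\tau v|^2=\eps_2|v|^2,\qquad \eps_1\langle\tau u,\tau v\rangle=\eps_2\langle u,v\rangle \]
for all independent $u,v$, and hence by continuity for all $u,v$.

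Because $\eps_1\neq0$ and $v\neq0$, the first identity forces $\eps_2\neq0$ and $\eps_2/\eps_1=|\tau v|^2/|v|^2>0$. Since $\eps_i\in\{\pm1\}$, this gives $\eps_1=\eps_2$, and then $|\tau v|=|v|$, so $\tau$ is an isometry. Dimension at least two is used precisely to have independent $u,v$. The main care point is justifying that $R(Z,W)W\in\m_0$; this uses the orthogonality in Lemma \ref{lem:m0tau} and the fact that both components lie in the spans of the images of $\m_0$.
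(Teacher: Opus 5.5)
Your proposal is correct and follows essentially the paper's argument: in (i) you use $R(Z,X)X$ with $X\in\m_i$ to force a nonzero vector of $d\pi_i(\m_0)$ into $d\pi_i(\m_i)$, contradicting Lemma~\ref{lem:m0tau}, and in (ii) you compute $R(Z,W)W$ for two independent vectors of $\m_0$ and compare coefficients using the orthogonality and graph description of $\m_0$. The only cosmetic difference is that the paper takes orthonormal $u,v$ and subtracts $\eps_2 X$ to land in $\m_1$, whereas you show directly that $R(Z,W)W\in\m_0$, which gives $\eps_1\langle\tau u,\tau v\rangle=\eps_2\langle u,v\rangle$ in one step.
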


\begin{proof}

Assume $\dim \m_0 \geq 1$. We can choose $Y = (Y_1,Y_2)\in \m_0$ with $Y_1 \neq 0$ and $Y_2 \neq 0$.
If $\epsilon_2 \neq 0$ and $\m_2 \neq \{0\}$, there exists a non-zero vector $X=(0,X_2)\in\m_2$.
By the curvature formula (\ref{eq:curvature-formula}),  
\begin{equation*}
    R(Y, X) X =(0,\eps_2|X_2|^2Y_2),
\end{equation*}
which implies $(0,Y_2)\in\m_2$ by equation \eqref{eq:Lie-triple}. Hence 
$ d \pi_2(\m_0) \bigcap d \pi_2(\m_2) \ni  Y_2 \neq 0 $, which contradicts
 Lemma \ref{lem:m0tau}. Therefore we must have $\m_2 = \{0\}$ if $\epsilon_2 \neq 0$. Similarly,
replacing $X$ by a vector in $\m_1$ in the above arguments, we can show that $\m_1=\{0\}$ as we have assumed $\epsilon_1 \neq 0$. This proves part (i).

Assume $\dim \m_0 \geq 2$. By Lemma \ref{lem:m0tau}, $\dim d \pi_2(\m_0) \geq 2$.
So there exist unit vectors $X_2, Y_2 \in d \pi_2(\m_0) $ such that $\langle X_2, \, Y_2 \rangle =0$.
Let $X_1 = \tau(X_2)$ and $Y_1=\tau(Y_2)$ where $\tau$ is defined in Lemma \ref{lem:m0tau}.
Then vectors $X=(X_1, X_2)$ and $Y=(Y_1, Y_2)$ lie in $\m_0 \subset \m$. Since $\tau$ is a linear isomorphism, $X_1$ and $Y_1$ are linearly independent.
By the curvature formula (\ref{eq:curvature-formula}),  
\begin{equation}\label{eq:lemma3.1}
    R(X, Y) Y=(\eps_1 (|Y_1|^2X_1 -\langle X_1,Y_1\rangle Y_1) , \,\, \eps_2  X_2),
\end{equation}
which also lies in $\m$ by equation (\ref{eq:Lie-triple}). Note that $\eps_2  X \in \m$.
The difference between these two vectors is a vector of the form $Z=(Z_1, 0) \in \m$ with
\[ Z_1 = (\eps_1 |Y_1|^2 - \eps_2 ) X_1 - \eps_1 \langle X_1,Y_1\rangle Y_1. \]
Since $Z \in \m_1$ and $X_1, Y_1 \in d \pi_1(\m_0)$, we have $Z_1 \in  d \pi_1(\m_1) \bigcap d \pi_1(\m_0)$.
By Lemma \ref{lem:m0tau}, $Z_1 = 0$. Since $X_1$ and $Y_1$ are linearly independent, we have
\[ \eps_1 |Y_1|^2 - \eps_2  =0, \hspace{20pt} \eps_1 \langle X_1,Y_1\rangle = 0.\]
Since $\eps_1 \neq 0$ and $\eps_2 \in \{0,  \pm \eps_1\}$ by assumption, we must have $\eps_2 = \eps_1$,
$|Y_1|^2  = 1$, and $\langle X_1,Y_1\rangle = 0$. Switching $X$ and $Y$ in the above arguments also shows
that $|X_1|^2  = 1$. Applying the above arguments to every pair of vectors in an orthonormal basis of
$d \pi_2(\m_0)$ shows that $\tau$ is an isometry. This finishes the proof of part (ii).
\end{proof}

\section{Local and Global Decomposition}\label{sec:localdecomp}


\begin{theorem}\label{thm:lie-triple-stable}
    Let $L$ be an isoparametric submanifold in $\Q^n_{\eps_1}\times\Q^m_{\eps_2}$. Then for any $p \in L$,
    $\m=\nu_p L$ belongs to one of the following types of Lie triple systems with respect to the decomposition \eqref{eq:m0m1m2}:

    \hspace{10pt} {\rm (i)} $\eps_2=0$, $\m=\m_0 \oplus \m_1 \oplus \m_2$ with $\dim \m_0 + \dim \m_1  \leq 1$;

    \hspace{10pt} {\rm (ii)} $\eps_2=0$,  $\m=\m_1\oplus\m_2$ with $\dim\m_1\ge 2$.

    \hspace{10pt} {\rm (iii)} $\eps_2 \neq 0$, $\m=\m_1\oplus\m_2$; 
    
    \hspace{10pt} {\rm (iv)}  $\eps_2 \neq 0$, $\m=\m_0$ with $\dim \m_0=1$;  
     
    \hspace{10pt} {\rm (v)} $\eps_2=\eps_1 \neq 0$ and $\m=\m_0=\{(\tau(u), u): u\in d\pi_2(\m_0)\}$ with $\dim \m_0\ge2$, 
    
    \hspace{28pt} where $\tau: d\pi_2(\m_0) \rightarrow d\pi_1(\m_0)$ is a linear isometry.
    
    \noindent
    We will always assume $L$ is connected. Then for any $p, q \in L$, $\nu_p L$ and $\nu_q L$ belong to the same type.
    
    
   
\end{theorem}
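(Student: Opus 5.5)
The first assertion, that each $\m=\nu_pL$ is of one of the types (i)--(v), should follow from Proposition \ref{prop:lie-triple} by splitting into cases on $\dim\m_0$. If $\dim\m_0\ge 2$, part (ii) of that proposition forces $\eps_1=\eps_2$ and $\tau$ to be an isometry. Part (i) (with $\eps_1\ne0$) then kills $\m_1$ and $\m_2$, which gives type (v). If $\dim\m_0=1$, part (i) gives $\m_1=\{0\}$. When $\eps_2\neq0$ it also gives $\m_2=\{0\}$, which is type (iv); when $\eps_2=0$ we get type (i) with $\dim\m_0+\dim\m_1=1$. If $\m_0=\{0\}$, then for $\eps_2\ne 0$ we are in type (iii), and for $\eps_2=0$ we are in (i) or (ii) according as $\dim\m_1\le1$ or $\ge2$.

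For the second assertion I would use intrinsic invariants of the section $\Sigma_p=\exp_p(\m)$. By Corollary \ref{cor:section}, sections at any two points of $L$ are locally isometric. By the Gauss equation with vanishing second fundamental form, the curvature of $\Sigma_p$ is the restriction of \eqref{eq:curvature-formula} to $\m$. I would tabulate this curvature type by type:
\begin{itemize}
\item In type (i) the first components of $\m$ span at most a line, so $\Sigma_p$ is flat.
\item In type (ii), $\m_1$ with $\dim\m_1\ge 2$ contains a plane of curvature $\eps_1\ne0$, so $\Sigma_p$ is not flat.
\item In type (v), $\Sigma_p$ has constant curvature $\eps_1/2$, as in the computation for $\Delta\Q^n_\eps$.
\item In type (iii), $\Sigma_p$ is locally a product $\Q^{a}_{\eps_1}\times\Q^b_{\eps_2}$, whose sectional curvatures take only the values $0,\eps_1,\eps_2$ (with $0$ attained as soon as $a,b\ge1$).
\end{itemize}
Since $\dim\m=\operatorname{codim}L$ is constant, local isometry of sections distinguishes all types whenever $\operatorname{codim}L\ge2$. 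The one subtlety is type (iii) with $\eps_1=\eps_2$ and $\min(a,b)=0$, which is a space form of curvature $\eps_1\neq\eps_1/2$ and so is still separated from (v).

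The remaining, and I expect hardest, case is $\operatorname{codim}L=1$ with $\eps_2\ne0$, where both (iii) and (iv) give a geodesic as section and intrinsic geometry sees nothing. Here the types are detected by the angle function: type (iii) means $C=\pm1$ and type (iv) means $-1<C<1$. Since $C$ is continuous and $L$ is connected, the sets $\{C=1\}$, $\{C=-1\}$ and $\{|C|<1\}$ partition $L$. The first two are closed and the third is open, so it suffices to show $\{C^2=1\}$ is open. My first attempt would go in two steps.
\begin{enumerate}
\item Using that $L$ has constant mean curvature in an analytic ambient space, $L$ and hence $C$ are real analytic. It then suffices to exclude a point $p$ with $C(p)^2=1$ lying in the closure of $\{|C|<1\}$, since if $C^2-1$ vanishes on an open set it vanishes identically.
\item To exclude this, use the determinant condition \eqref{eq:HLO-determinant-condition}. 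The Jacobi operator $R(\cdot,N)N$ restricted to $TL$ has eigenvalues $\eps_1|N_1|^2$ and $\eps_2|N_2|^2$ on the pure factor directions, and $0$ on the direction $\mathcal T$, when $|C|<1$. When $|C|=1$ its spectrum jumps (for instance $\eps_1$ with multiplicity $n-1$ and $0$ with multiplicity $m$). The $t$-Taylor coefficients of $|Y_1\wedge\dots\wedge Y_{\dim L}(t)|$ must be independent of the point. Comparing the coefficients that involve $\operatorname{tr}(A^k R_N)$ and the multiplicities of the eigenvalues of $R_N$, together with $\nabla C=-2A(\mathcal T)$ from Lemma \ref{lem:nabla-C}, should give a contradiction between the two regimes.
\end{enumerate}
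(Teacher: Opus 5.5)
Your derivation of types (i)--(v) from Proposition \ref{prop:lie-triple} is correct. So is your separation of (i) from (ii) by flatness, and of (iii) from (v) by curvature, and both agree with the paper. There is one slip in that part. A product $\Q^a_{\eps_1}\times\Q^b_{\eps_2}$ does not have only the sectional curvatures $0,\eps_1,\eps_2$; in $\Sph^2\times\Sph^2$ every value in $[0,1]$ occurs, including $1/2$. What you actually need is still true:
\begin{itemize}
\item if $a,b\ge1$, a mixed plane has curvature $0$;
\item if $\min(a,b)=0$, the section is a space form of curvature $\eps_1\neq\eps_1/2$.
\end{itemize}
Either way, a type-(iii) section is never of constant curvature $\eps_1/2$.

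The genuine gap is the case you flag: $\operatorname{codim}L=1$ with $\eps_2\neq0$, where you must show that $C=\pm1$ and $|C|<1$ cannot both occur on $L$. Your step 2 is a sketch whose key claim (``should give a contradiction'') is unsupported, and the mechanism it relies on is not there.

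On the region $|C|<1$, the operator $\bar R(\cdot,N)N|_{TL}$ has three eigenvalues, with $a^2=(1+C)/2$ and $b^2=(1-C)/2$:
\begin{itemize}
\item $\eps_1a^2$ on the pure first-factor tangent directions;
\item $\eps_2b^2$ on the pure second-factor tangent directions;
\item $0$ on $\mathcal T$.
\end{itemize}
As $C\to\pm1$ these eigenvalues simply coalesce. So the spectrum does not jump, and quantities such as $\tr \bar R(\cdot,N)N=\eps_1(n-1)a^2+\eps_2(m-1)b^2$ or $\tr(A^k\bar R(\cdot,N)N)$ are continuous on all of $L$. No finite set of Taylor coefficients of the Jacobi determinant will detect a discontinuity. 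Working at a boundary point is also awkward, since $\mathcal T=0$ there by \eqref{equ:T-length}. The analyticity step is harmless but unnecessary: if both sets are nonempty, connectedness alone gives a point of $\{C^2=1\}$ in the closure of $\{|C|<1\}$.

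The paper closes this case by citing Theorem \ref{thm:constant-angle}, proved independently in Section \ref{sec:constant-angle}: $C$ is constant on every isoparametric hypersurface. The missing idea is to treat the Jacobi determinant $D_p(z)$ as an entire exponential polynomial that is independent of $p$ (Lemma \ref{lem:Dp=Dq}). One then reads $a=|N_1|$ off its exponent structure at points with $|C|<1$.
\begin{itemize}
\item For $\Sph^n\times\Hyp^m$, the exponential growth rate of $|D_p(\ii t)|$ along a suitable sequence $t\to\infty$ is $(n-1)a$.
\item For $\Sph^n\times\Sph^m$, it is $(n-1)a+(m-1)b$.
\item For $\Hyp^n\times\Hyp^m$, one must also exclude the case where all nonzero exponents cancel. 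Via Schur complements this forces $A$ to preserve the first-factor directions, with eigenvalues $\pm a$ of equal multiplicity there, which contradicts Cartan's formula (Lemma \ref{lem:cartanformula}).
\end{itemize}
This gives local constancy of $C$ on the open set $\{|C|<1\}$. Continuity and connectedness then make that set closed, so the argument never has to work at the degenerate points your approach targets.
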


\begin{proof}
The fact that $\m=\nu_p L$ belongs to one of types (i)--(v) follows directly from Proposition~\ref{prop:lie-triple} and our assumption that $\epsilon_{1}\neq 0$.  We only need to show that the type of $\nu_p L$ does not change
     when $p$ varies in $L$. 
      
      We first prove that type (i) and type (ii) can not both occur in the same $L$. 
      In type (i), $\dim d \pi_1(\m) \leq 1$. So any vectors $Y, Z, W \in\m$
      can be written as 
      \[ Y=(a X_1,Y_2), \, Z=(b X_1, Z_2), \, W=(cX_1,W_2)\] 
      for some
       $a, b, c \in \mathbb{R}$, $X_1 \in T_{\pi_1(p)} \Q^n_{\eps_1}$,
        and $Y_2, Z_2, W_2 \in T_{\pi_2(p)} \Q^m_{\eps_2}$. Since $\eps_2 =0$, by curvature equation \eqref{eq:curvature-formula}, we have
        \[ R(Y,Z))W = (\eps_1abc|X_1|^2X_1-\eps_1abc|X_1|^2X_1, \,\,0) =0.\] 
    On the other hand, in type (ii), there exist non-zero vectors $X=(X_1,0), Y=(Y_1,0)\in \m$ with 
    $\langle X_1, \,  Y_1 \rangle =0$. 
    Since $\eps_1 \ne 0$, by curvature equation \eqref{eq:curvature-formula}, we have
    \[ R(X, Y) X = (-\eps_1|X_1|^2Y_1, \,  0) \, \ne 0. \]
    By Corollary \ref{cor:section}, type (i) and type (ii) cannot both occur in the same $L$.

    Next, we show that  type (iii) and type (iv) can not both occur in the same $L$. If codimension
    of $L$ is bigger than one, then only type (iii) can occur and type (iv) never occurs for dimension reason. 
    If $L$ is an isoparametric hypersurface, we can consider the angle function $C$ defined by equation 
    \eqref{eq:angle-function}.
      Since $\dim\m=1$,  in case (iii), we have $\m=\m_1$ or $\m=\m_2$, and $C=\pm 1$. On the other hand, in case (iv), we have $-1<C<1$ since $\m=\m_0$. 
      We will prove in Theorem \ref{thm:constant-angle} that $C$ is constant on any isoparametric hypersurface.
      Hence type (iii) and type (iv) do not occur simultaneously in the same $L$. Note that Theorem \ref{thm:constant-angle} does not depend on results in sections \ref{sec:lts}--\ref{sec:reduction}. Hence there is no risk of circular logic in using Theorem \ref{thm:constant-angle} here. This finishes the discussion for $\eps_2=0$ or $\eps_2 = - \eps_1$.

    If $\eps_2=\eps_1$, $\m$ can only be in types (iii), (iv), or (v). In the above, we have showed that type (iii) and 
    type (iv) can not occur simultaneously. This is also true for type (iv) and type (v) since they have different
    dimensions for $\m$. So  we only need to show that type (iii) and type (v)  can not occur simultaneously.
     Indeed, in type (v), $\exp\m$ is isometric to the diagonal space $\Delta \Q^{\dim(\m)}_{\eps_1}$ as defined in equation (\ref{eq:defn-diagonal-sphere}), which has constant sectional curvature $\eps_1 /2$. On the other hand, in type (iii), $\exp\m$ is isometric to $\Q^{\dim(\m_1)}_{\eps_1}\times\Q^{\dim(\m_2)}_{\eps_2}$. These two spaces do not have isometric neighborhoods since they have different sectional curvatures. 
     By Corollary~\ref{cor:section}, type (iii) and type (v)  can not occur simultaneously. This finishes the proof of the theorem.
\end{proof}

In particular, in type (ii) and type (iii), called the decomposable cases, $\exp\m$ is isometric to $\Q^{\dim(\m_1)}_{\eps_1}\times\Q^{\dim(\m_2)}_{\eps_2}$.  By Proposition \ref{prop:HLO}, the local model of isoparametric submanifolds gives that there is a diffeomorphism $\Phi$ from $U$ to $L'\times \Sigma$, where $L'$ is an open neighborhood of $p$ on $L$, $U$ is a neighborhood of $L'$ in $\Q^n_{\eps_1}\times\Q^m_{\eps_2}$, and $\Sigma$ is a section of $L$ at $p$. For each $q\in L'$ we have a map $F_q$ from $\Sigma$ to $\Phi^{-1}(\{q\}\times \Sigma)\subset \Q^n_{\eps_1}\times \Q^m_{\eps_2}$, sending $x$ to $\Phi^{-1}(q,x)$. $F_q$ is continuous on $q$ by the local model. In particular, $\pi_i\circ F_q$ is continuous on $q$, where $\pi_i$ is defined by \eqref{eq:pi1pi2-definition}. Note that the rank of $d(\pi_i\circ F_q)_p$ is exactly the rank of $d\pi_i|_{\m(q)}$. Here we treat $\m$ (and later $\m_1,\m_2$) as a map on $L$. Therefore by \cite{Pugh2015real}*{Chapter 5 Exercise 43(a)}, the rank of $d(\pi_i\circ F_q)_p$ is lower-semicontinuous on $L'$, for $i=1,2$. On the other hand, $\text{rank} (d\pi_i\m)=\dim(\m_i)+\dim(\m_0)=\dim(\m_i)$ since $\m_0=0$ for type (ii) and type (iii), for $i=1,2$. Hence $\dim(\m_1)$ and $\dim(\m_2)$ are lower-semicontinuous on $L'$. In particular, they are locally constant, since $\dim(\m_1)+\dim(\m_2)=\dim(\Sigma)$ and they only take integer values. Thus $\dim(\m_1)$ and $\dim(\m_2)$ are constant on $L$. Then we have that $\nu L$ decomposes into the direct sum $\nu^1L\oplus\nu^2L$ of two constant rank smooth distributions such that $\nu^1L|_{p}\subset T_{p_1}\Q^n_{\epsilon_1}\oplus \{0\}$ and $\nu^2L|_{p}\subset \{0\}\oplus T_{p_2}\Q^m_{\epsilon_2}$ for any $p=(p_1,p_2)\in L\subset\Q^n_{\eps_1}\times\Q^m_{\eps_2}$. Therefore $TL$ decomposes into the direct sum of two constant rank smooth distributions 
\begin{equation}
    TL=D_1\oplus D_2, \qquad D_1|_p\subset T_{p_1}\Q^n_{\epsilon_1}\oplus \{0\},\qquad D_2|_p\subset \{0\}\oplus T_{p_2}\Q^m_{\epsilon_2}
\end{equation}
for any $p=(p_1,p_2)\in L\subset\Q^n_{\eps_1}\times\Q^m_{\eps_2}$. The following Lemma extends the decomposition on the tangent bundle level to a local decomposition of $L$.

\begin{lemma}\label{lem:localproduct}
    Let $L$ be an isoparametric submanifold in $\Q^n_{\eps_1}\times\Q^m_{\eps_2}$. Assume that we have the decomposition $TL=D_1\oplus D_2$ into direct sum of two constant rank smooth distribution with $D_1|_p\subset T_{p_1}\Q^n_{\epsilon_1}\oplus \{0\}$ and $ D_2|_p\subset \{0\}\oplus T_{p_2}\Q^m_{\epsilon_2}$ for any $p=(p_1,p_2)\in L\subset\Q^n_{\eps_1}\times\Q^m_{\eps_2}$. Then for any $p\in L$, there exists a neighborhood $U$ of $p$ such that $U=L_1\times L_2$, where $L_1\subset \Q^n_{\epsilon_1}$ and $L_2\subset \Q^m_{\epsilon_2}$ are isoparametric submanifolds.
\end{lemma}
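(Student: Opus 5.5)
Under the hypotheses, the natural route is to show that the second fundamental form of $L$ respects the splitting $TL=D_1\oplus D_2$. Then a de Rham/Moore type splitting argument gives the local product structure, and a final check shows that each factor is isoparametric.

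\textbf{Step 1: $\mathrm{II}(D_1,D_2)=0$ and each $D_i$ is parallel.} By Theorem \ref{thm:lie-triple-stable} we are in type (ii) or (iii), so $\nu L=\nu^1L\oplus\nu^2L$ with $\nu^1L\subset T\Q^n_{\eps_1}\oplus\{0\}$ and $\nu^2L\subset\{0\}\oplus T\Q^m_{\eps_2}$. Hence
\begin{equation*}
P_1(TL)=D_1,\qquad P_1(\nu L)=\nu^1 L,
\end{equation*}
that is, $P_1$ preserves both $TL$ and $\nu L$ along $L$. Let $X\in TL$ and $Y\in D_1$. Since $\bar\nabla P_1=0$, we have $\bar\nabla_X Y=\bar\nabla_X(P_1Y)=P_1(\bar\nabla_XY)$. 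Taking tangential and normal parts and using that $P_1$ preserves both bundles gives
\begin{equation*}
\nabla_XY=P_1(\nabla_XY)\in D_1,\qquad \mathrm{II}(X,Y)=P_1\,\mathrm{II}(X,Y)\in\nu^1L.
\end{equation*}
By the same argument with $P_2$, for $Y\in D_2$ we get $\nabla_XY\in D_2$ and $\mathrm{II}(X,Y)\in\nu^2L$. For $X\in D_2$ and $Y\in D_1$, the vector $\mathrm{II}(X,Y)=\mathrm{II}(Y,X)$ lies in $\nu^1L\cap\nu^2L=0$. Thus $D_1$ and $D_2$ are parallel, hence integrable with totally geodesic leaves in $L$, and $\mathrm{II}(D_1,D_2)=0$.

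\textbf{Step 2: local product decomposition.} By the local de Rham theorem, a neighbourhood $U$ of $p$ is isometric to $U_1\times U_2$, where the $U_i$ are leaves of $D_i$. A leaf of $D_2$ has tangent vectors with vanishing first component, so $\pi_1$ is constant on it and it lies in a slice $\{x\}\times\Q^m_{\eps_2}$; symmetrically, a leaf of $D_1$ lies in a slice $\Q^n_{\eps_1}\times\{y\}$. Set $L_1=\pi_1(U)$ and $L_2=\pi_2(U)$. The map $\pi_1\times\pi_2$ restricted to $U$ is an immersion with image $L_1\times L_2$, since $d\pi_1|_{D_1}$ and $d\pi_2|_{D_2}$ are isometric embeddings. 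After shrinking $U$, the map $\pi_1\times\pi_2$ becomes an embedding of $U$, and a dimension count shows $U$ is open in $L_1\times L_2$, giving $U=L_1\times L_2$. Alternatively, one can simply invoke Moore's lemma for products, given $\mathrm{II}(D_1,D_2)=0$ and the curvature splitting.

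\textbf{Step 3: the factors are isoparametric.} For $L_1\times L_2$, the normal bundle is $\nu L_1\oplus\nu L_2$ and is flat exactly when each $\nu L_i$ is flat. For a parallel normal field $\xi=(\xi_1,\xi_2)$, the parallel submanifold is $(L_1)_{\xi_1}\times(L_2)_{\xi_2}$, and its mean curvature vector is $(H_1,H_2)$. Applying this to fields $(\xi_1,0)$ shows $H_1$ is parallel along $(L_1)_{\xi_1}$, so condition (iii) holds for $L_1$. For sections, $\exp(\nu_pL)=\exp(\nu_{p_1}L_1)\times\exp(\nu_{p_2}L_2)$ is totally geodesic if and only if both factors are. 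The remaining factor-by-factor checks are routine and work the same way for $L_2$.

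I expect the main obstacle to be Step 2, specifically showing that $U$ is genuinely an open subset of $L_1\times L_2$ rather than only being immersed into it. The cleanest route is Moore's lemma, or the direct argument that $(\pi_1,\pi_2)$ restricted to $U$ is a local diffeomorphism onto $L_1\times L_2$.
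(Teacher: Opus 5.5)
Your argument is correct, but it reaches the product structure by a different route than the paper. The paper never uses the second fundamental form for this step. Since $d\pi_1|_{TL}$ kills $D_2$ and is injective on $D_1$ (and symmetrically for $\pi_2$), each $\pi_i|_L$ has constant rank $\rank D_i$. The rank theorem then makes $\pi_1(U_0)$ and $\pi_2(U_0)$ embedded submanifolds. The map $(\pi_1,\pi_2)|_{U_0}$, which is just the inclusion, has bijective differential onto $T(\pi_1(U_0)\times\pi_2(U_0))$, so it is a local diffeomorphism whose image contains a product neighbourhood $L_1\times L_2$.

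Your Steps 1--2 instead use $\bar\nabla P_1=0$ to show that $D_1,D_2$ are $\nabla$-parallel with $\mathrm{II}(D_1,D_2)=0$, and then invoke the local de Rham theorem together with the slice observation. That is sound but heavier than necessary. $D_2=\ker(d\pi_1|_{TL})$ and $D_1=\ker(d\pi_2|_{TL})$ are automatically integrable as kernels of constant-rank maps, and the tangent splitting alone forces the local product; the parallelism is a consequence, not an input. What your route buys is an explicit splitting of $\nu L$ and of $\mathrm{II}$, which makes the factor-by-factor check in Step 3 (essentially the same as the paper's) transparent. The paper gets that splitting for free once $U=L_1\times L_2$ is established. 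The ``direct argument'' you mention at the end is exactly the paper's proof.

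One justification needs fixing: you cannot say ``by Theorem \ref{thm:lie-triple-stable} we are in type (ii) or (iii)''. The lemma is also applied when $\nu_pL$ is of type (i) with $\m_0=0$, namely in Lemma \ref{lem:no-type-a-decompose} and for hypersurfaces with $C\equiv\pm1$ in $\Q^n_{\eps_1}\times\R^m$ in Theorem \ref{thm:global-dem-SRH}. A concrete example is $L_1\times\R^m$ with $L_1$ a hypersurface of $\Hyp^n$. What you need follows directly from the hypothesis instead. $P_1|_{TL}$ is the projection onto $D_1$ along $D_2$, so $P_1$ preserves $TL$. Since $P_1$ is self-adjoint, it also preserves $\nu L=(TL)^\perp$. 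Hence $\nu L=P_1(\nu L)\oplus P_2(\nu L)$ with each summand lying in one factor, which is all Step 1 uses.
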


\begin{proof}
At any point $q\in L$ the tangent map $(d\pi_i|_L)_q$ restricted to $D_i|_q$ is an isomorphism onto its image, where $\pi_{i}$ is defined in \eqref{eq:pi1pi2-definition} for $i=1,2$. Therefore the rank of $(d\pi_i|_L)_q$ is the rank of $D_i$ which is a constant on $L$, for $i=1,2$. By the rank theorem \cite{lee2003smooth}*{Theorem 4.12} there exists a neighborhood $U_0$ of $p$ on $L$, such that $L_1^0=\pi_1(U_0)$ and $L_2^0=\pi_2(U_0)$ are submanifolds of \(\Q^n_{\epsilon_1}\) and \(\Q^m_{\epsilon_2}\), respectively.

Now consider an $F:U_0\to L_1^0\times L_2^0$ sending $q$ to $(\pi_1(q),\pi_2(q))$, which is a restriction of the inclusion of $L$ into $\Q^n_{\epsilon_1}\times \Q^m_{\epsilon_2}$. Since $L_1^0=\pi_1(U_0)$ and $L_2^0=\pi_2(U_0)$, the differential of $F$ at $p$, $(dF)_p=((d\pi_1)_p,(d\pi_2)_p)$, is an isomorphism from $T_pL=D_{1}|_p\oplus D_{2}|_p$ to $T_{\pi_1(p)}L_1^0\oplus T_{\pi_2(p)}L_2^0$. Hence \(F\) is a local diffeomorphism on an open neighborhood $U_1$ of \(p\). We can find open subsets $L_1\subset L^1_0$ and $L_2\subset L^2_0$ such that $p\in L_1\times L_2\subset F(U_1)$. Denote $F^{-1}(L_1\times L_2)$ by $U$. Then $F|_U$ is a diffeomorphism from the neighborhood $U$ of $p$ on $L$ to $L_1\times L_2$. On the other hand, since $F$ is the restriction of the inclusion $L\rightarrow \Q^n_{\eps_1}\times\Q^m_{\eps_2}$, $F|_U$ is an isometry.

Now we prove that $L_1$ and $L_2$ are isoparametric. Since $U$ splits into $L_1\times L_2\subset \Q^n_{\epsilon_1}\times \Q^m_{\epsilon_2}$, the curvature tensor $R^{\perp}$ on the normal bundle $\nu L$ splits, thus flatness of $\nu L$ implies flatness of $\nu(L_1\subset \Q^n_{\eps_1})$ and $\nu(L_2\subset\Q^m_{\eps_2})$. For any point $q\in U$, the totally geodesic section $\Sigma_q=\exp_q\nu_qL$ splits as $\Sigma_{q_1,1}\times\Sigma_{q_2,2}$ where $q_1=\pi_1(q)$ and $q_2=\pi_2(q)$, since the exponential map and the normal bundle splits. Both $\Sigma_{q_1,1}$ and $\Sigma_{q_2,2}$ are totally-geodesic by property of simply connected space forms \cite{BCO}*{Theorem 1.4.1}. Since the normal bundle $\nu L$ splits as product of flat normal bundles $\nu(L_1\subset \Q^n_{\eps_1})\times\nu(L_2\subset\Q^m_{\eps_2})$, we can collect the lifts of parallel normal frames on $\nu(L_1\subset \Q^n_{\eps_1})$ and $\nu(L_2\subset\Q^m_{\eps_2})$ to form a parallel normal frame on $\Q^n_{\epsilon_1}\times \Q^m_{\epsilon_2}$. For any lifted parallel normal vector field $(\eta_1,0)$, parallel submanifold $U_{(\eta_1,0)}$ for $U$ is simply $L_{1,\eta_1}\times L_2$, where $U_{(\eta_1,0)}$ is $\{\exp_q (\eta_1,0)_q:q\in U\}$ the parallel manifold of $U$ along parallel normal vector field $(\eta_1,0)$, and $L_{1,\eta_1}$ is $\{\exp_{q_1} \eta_1(q_1): q_1\in L_1\}$ the parallel manifold of $L_1$ along $\eta_1$. At any point $q=(q_1,q_2)\in U_{(\eta_1,0)}$, the mean curvature vector of $U_{(\eta_1,0)}$ also splits into $\Q^n_{\epsilon_1}$ component and $\Q^m_{\epsilon_2}$ component, with the former being the mean curvature vector of $L_{1,\eta_1}$ at $q_1$. Therefore, $U_{(\eta_1,0)}$ has constant mean curvature along $(\eta_1,0)$ implies that $L_{1,\eta_1}$ has constant mean curvature along $\eta_1$. Therefore $L_1$ is isoparametric. The same argument holds for $L_2$.
\end{proof}

The following Lemma extend the local decomposition to the global one.

\begin{lemma}\label{lem:gluing}
    Let $L$ be an isoparametric submanifold in $\Q^n_{\epsilon_1}\times \Q^m_{\epsilon_2}$. Suppose $L$ is covered by a collection of product open sets $\{L_{1,\alpha}\times L_{2,\alpha}\}_{\alpha\in I}$ for some index set $I$, where $L_{1,\alpha}\subset \Q^n_{\eps_1}$ and $L_{2,\alpha}\subset \Q^m_{\eps_2}$. Then $L$ is an open subset of $L_1\times L_2$ for some isoparametric submanifolds $L_1\subset \Q^n_{\epsilon_1}$ and $L_2\subset \Q^m_{\epsilon_2}$.
\end{lemma}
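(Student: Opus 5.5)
We would build $L_1$ and $L_2$ as the images of $L$ under the projections, and use connectedness to show that gluing the local product pieces produces a global product. The natural candidates are $L_1:=\pi_1(L)\subset\Q^n_{\eps_1}$ and $L_2:=\pi_2(L)\subset\Q^m_{\eps_2}$, with $\pi_i$ as in \eqref{eq:pi1pi2-definition}. Two things must be checked: that $L_1,L_2$ are (immersed) isoparametric submanifolds, and that $L\subset L_1\times L_2$ is open.

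\emph{Step 1: $\pi_1$ and $\pi_2$ have constant rank on $L$.} On each chart $L_{1,\alpha}\times L_{2,\alpha}$, the restriction $\pi_i|_L$ is just the projection onto the factor $L_{i,\alpha}$. Hence $\rank d(\pi_i|_L)=\dim L_{i,\alpha}$ there. Since the rank is locally constant and $L$ is connected, $\dim L_{i,\alpha}=:k_i$ is independent of $\alpha$. Moreover, the fibers of $\pi_1|_L$ are locally the slices $\{x\}\times L_{2,\alpha}$. So the distributions $D_i=\ker d(\pi_{j}|_L)$ ($j\neq i$) are integrable, complementary, and orthogonal, with leaves mapped isometrically by $\pi_i$.

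\emph{Step 2: globalize.} I will show that $L_1=\bigcup_\alpha L_{1,\alpha}$ is an immersed submanifold by gluing the pieces $L_{1,\alpha}$ along overlaps. If $L_{1,\alpha}\times L_{2,\alpha}$ and $L_{1,\beta}\times L_{2,\beta}$ intersect in $L$, then the intersection is open in both. Its $\pi_1$-image is open in both $L_{1,\alpha}$ and $L_{1,\beta}$, so the pieces are compatible wherever they are connected through $L$.

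The clean way to organize this is via leaf spaces. Take the leaf space of the foliation of $L$ by $D_2$-leaves; by the local product structure this is a (possibly non-Hausdorff) manifold $\tilde L_1$, and $\pi_1$ induces an immersion $\tilde L_1\to\Q^n_{\eps_1}$. Define $\tilde L_2$ analogously. Then $L\to\tilde L_1\times\tilde L_2$ is a local diffeomorphism, and composing with the immersions realizes $L$ locally as an open subset of the product.

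\emph{Step 3: isoparametricity of the factors.} This is local, so it follows from the argument in Lemma~\ref{lem:localproduct} applied to each chart: near any point of $L_1$, the factor coincides with some $L_{1,\alpha}$, which is isoparametric.

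The main obstacle is Step 2, namely justifying that the glued object is an honest submanifold and that $L$ sits openly in the product rather than wrapping around. Here I would exploit analyticity/extendability of isoparametric submanifolds in space forms (as in \cite{terng1987submanifold} in the Euclidean case, and its spherical/hyperbolic analogues). Any two local pieces $L_{1,\alpha}$, $L_{1,\beta}$ that overlap on an open set then lie in the same maximal connected analytic isoparametric submanifold. Taking $L_1$ to be that maximal extension, and $L_2$ likewise, each product $L_{1,\alpha}\times L_{2,\alpha}$ is open in $L_1\times L_2$. Connectedness of $L$ ensures that a single pair $(L_1,L_2)$ works for all $\alpha$, so $L$, as a union of open subsets of $L_1\times L_2$, is open in it. If the immersion issue cannot be handled globally, the fallback is to accept $L_1,L_2$ as immersed, which the statement allows.
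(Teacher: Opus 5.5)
Your proposal is correct and follows the paper's route. The paper's proof is exactly the last paragraph of your Step 2: it invokes unique continuation of isoparametric submanifolds in space forms to place overlapping pieces $L_{1,\alpha}$, $L_{1,\beta}$ in a common maximal extension, and then uses connectedness of $L$ to obtain a single $L_1$ (and likewise $L_2$). Your constant-rank and leaf-space scaffolding is not needed, and your Step 3 makes explicit the isoparametricity of the local factors (via the argument of Lemma~\ref{lem:localproduct}), which the paper uses implicitly.
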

\begin{proof}
    By the unique continuation of isoparametric submanifolds in simply connected space forms (\cite{terng1987submanifold}*{Theorem 3.4},\cite{will99isopara}*{Final Remarks}), for any $\alpha,\beta\in I$, $L_{1,\alpha}$ and $L_{1,\beta}$ shares a common maximal extension which is an isoparametric submanifold in $\Q^n_{\epsilon_1}$ if $L_{1,\alpha}\times L_{2,\alpha}$ intersects with $L_{1,\beta}\times L_{2,\beta}$. By connectivity of $L$, a common maximal extension is shared for all $L_{1,\alpha}$ for any $\alpha\in I$. The same applies to $L_{2,\alpha}$'s. Therefore $L\subset L_1\times L_2$ where $L_1=\cup_{\alpha\in I} L_{1,\alpha}$ and $L_2=\cup_{\alpha\in I}L_{2,\alpha}$.
\end{proof}

\section{Higher Codimensional Diagonal Cases}\label{sec:higher-codim-diagonal}

In this section, we treat case (v) of Theorem \ref{thm:lie-triple-stable}, called the higher codimensional diagonal case. In this case, $\exp\m$ is isometric to a subset of $\Delta\Q^{r}_{\eps_1}$ as defined in equation (\ref{eq:defn-diagonal-sphere}). Recall from Section \ref{sec:preliminaries} that we denote by $A$ the shape operator, ${\rm II}$ the second fundamental form of $L$, $\nabla$ the connection on $L$, $\nabla^{\perp}$ the normal connection, $\bar{\nabla}$ the connection on $\Q^n_{\eps_1}\times \Q^m_{\eps_2}$, and $X^T$ the projection of $X\in T(\Q^n_{\eps_1}\times \Q^m_{\eps_2})|_L$ onto $TL$.

In this case, for any point $p\in L$, the Lie triple system $\m=\nu_pL$ takes the form of 
\begin{equation}\label{eq:Tisometry}
    \{(\tau(u),u):u\in d\pi_2(\m)\}
\end{equation} for some isometry $\tau:d\pi_2(\m)\rightarrow d\pi_1(\m)$. Since $\tau$ is an isometry, for any two unit normal vectors $(\tau(u),u)$ and $(\tau(v),v)$ in $\nu_pL$
\begin{equation}\label{eq:property-uTu}
    \langle (\tau(u),u),P(\tau(v),v)\rangle=\langle \tau(u),\tau(v)\rangle-\langle u,v\rangle=0,\qquad |u|=|v|=|\tau(u)|=|\tau(v)|=\dfrac12,
\end{equation}
where $P$ is defined by equation (\ref{equ:P-operator}). Therefore $P(\nu L)\perp\nu L$. Since $\nu L$ is a smooth constant rank distribution and $P$ is a smooth bundle map, $P(\nu L)$ is also such a distribution. We have the following decomposition for $TL$
\begin{equation}\label{eq:TL-decomposition}
    TL=P(\nu L)\oplus F,\qquad \text{where }F:=\Big(\nu L\oplus P(\nu L)\Big)^\perp.
\end{equation}
$F$ is $P$-invariant since its orthogonal complement $\nu L\oplus P(\nu L)$ is $P$-invariant. By equation (\ref{eq:TL-decomposition}) and that $F$ is $P$-invariant, for any $p\in L$, $N=(N_1,N_2)\in\nu_pL$, and $X=(X_1,X_2)\in F_p$, we have $\langle N_1,X_1\rangle=\frac12(\langle X,N\rangle +\langle PX, N\rangle)=0$ and $\langle N_2,X_2\rangle=\frac12(\langle X,N\rangle -\langle PX, N\rangle)=0$. To summarize the above, we have
\begin{equation}\label{eq:property-of-F}
    P(F)=F,\qquad d\pi_i(\nu_p L)=d\pi_i\circ (P(\nu L))|_p\perp d\pi_i(F_p),
\end{equation}
for $i=1,2$, and for any point $p\in L$

The following computations will be used.
\begin{lemma}\label{lem:SS-computations}
For any parallel normal vector fields $\eta$ and $\xi$ on $L$,

    (i) $A_\eta(P\xi)+A_\xi(P\eta)=0$. In particular $A_\eta(P\eta)=0$.

    (ii) $A_\eta(P\xi)$ lies in $F$.
\end{lemma}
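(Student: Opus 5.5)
The plan is to differentiate the identity ``$P\xi$ is tangent'' using $\bar\nabla P=0$. That gives a formula for $\mathrm{II}(\cdot,P\xi)$, from which (i) follows directly. For (ii), I would combine (i) with the self-adjointness of the shape operators to get a trilinear form with conflicting symmetries, which must vanish.

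\emph{Step 1 (part (i)).} Let $\xi$ be a parallel normal field and $X$ a tangent field on $L$. Since $\nabla^\perp\xi=0$, we have $\bar\nabla_X\xi=-A_\xi X$. Because $P$ is parallel by \eqref{eq:P-property},
\[
\bar\nabla_X(P\xi)=P(\bar\nabla_X\xi)=-P(A_\xi X).
\]
By \eqref{eq:property-uTu} we have $P(\nu L)\perp \nu L$, so $P\xi$ is a tangent vector field on $L$. Hence the normal part of $\bar\nabla_X(P\xi)$ is $\mathrm{II}(X,P\xi)$. Pairing with a parallel normal field $\eta$ and using that $P$ is self-adjoint, we get
\[
\langle A_\eta(P\xi),X\rangle=\langle \mathrm{II}(X,P\xi),\eta\rangle=-\langle P(A_\xi X),\eta\rangle=-\langle A_\xi X,P\eta\rangle=-\langle X,A_\xi(P\eta)\rangle .
\]
Since $X$ is arbitrary, $A_\eta(P\xi)=-A_\xi(P\eta)$. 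Setting $\xi=\eta$ gives $A_\eta(P\eta)=0$.

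\emph{Step 2 (part (ii)).} By \eqref{eq:TL-decomposition}, $TL=P(\nu L)\oplus F$ is an orthogonal decomposition. So it suffices to show that $A_\eta(P\xi)\perp P\zeta$ for every normal vector $\zeta$. Because $\nu L$ is flat, parallel normal fields through $p$ span $\nu_pL$ pointwise, so we may take $\zeta$ parallel as well. Define
\[
T(\eta,\xi,\zeta):=\langle A_\eta(P\xi),P\zeta\rangle .
\]
Since $A_\eta$ is self-adjoint, $T$ is symmetric in $(\xi,\zeta)$. By part (i), $T$ is antisymmetric in $(\eta,\xi)$. A trilinear form with these two properties vanishes, by the standard chain of swaps:
\[
T(\eta,\xi,\zeta)=-T(\xi,\eta,\zeta)=-T(\xi,\zeta,\eta)=T(\zeta,\xi,\eta)=T(\zeta,\eta,\xi)=-T(\eta,\zeta,\xi)=-T(\eta,\xi,\zeta).
\]
Hence $T\equiv0$, so $A_\eta(P\xi)$ is orthogonal to $P(\nu L)$ inside $TL$ and therefore lies in $F$.

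The only point needing care is the first step: correctly identifying $P\xi$ as a tangent field so that the normal component of its ambient derivative is the second fundamental form. After that, the argument is purely algebraic, and I do not expect a real obstacle.
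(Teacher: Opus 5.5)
Your proof is correct and follows essentially the same route as the paper. Part (ii) is the identical trilinear-form argument with the same chain of swaps. Part (i), where you apply the Gauss formula to the tangent field $P\xi$, is just a repackaging of the paper's computation: the paper differentiates $\langle P\eta,\xi\rangle=0$ along $X$, then uses $\bar\nabla P=0$, $\nabla^\perp\eta=\nabla^\perp\xi=0$, and the self-adjointness of the shape operators.
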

    
\begin{proof}
    For (i), differentiating $\langle P\eta,\xi\rangle=0$ along any tangent vector field $X$, using $\nabla^{\perp}\eta=\nabla^\perp\xi=0$ and equation (\ref{eq:P-property}), we have 
\[0=X\langle P\eta,\xi\rangle 
=\langle \bar\nabla_XP\eta,\xi\rangle
  +\langle P\eta,\bar\nabla_X\xi\rangle
  =\langle \bar\nabla_X\eta,P\xi\rangle
  +\langle P\eta,\bar\nabla_X\xi\rangle
=-\langle A_\eta X,P\xi\rangle
  -\langle P\eta,A_\xi X\rangle\]
Using the self-adjointness of the shape operators, we have $0=-\langle X,A_\eta(P\xi)+A_\xi(P\eta)\rangle$. Therefore $A_\eta(P\xi)+A_\xi(P\eta)=0$. In particular $A_\eta(P\eta)=0$.

For (ii), define $B(\eta,\xi,\gamma)=\langle A_\eta(P\xi),P\gamma\rangle$ for parallel normal vector fields $\eta,\xi,\gamma$ on $L$. Self-adjoint-ness of $A_\eta$ gives $B(\eta,\xi,\gamma)=B(\eta,\gamma,\xi)$ while (i) gives $B(\eta,\xi,\gamma)=-B(\xi,\eta,\gamma)$. Therefore, $$B(\eta,\xi,\gamma)=-B(\xi,\eta,\gamma)=-B(\xi,\gamma,\eta)=B(\gamma,\xi,\eta)=B(\gamma,\eta,\xi)=-B(\eta,\gamma,\xi)=-B(\eta,\xi,\gamma)$$ which means $B=0$. Hence $A_\eta(P\xi)$ is orthogonal to $P(\nu L)$. Since it is tangent, it lies in $F$.
\end{proof}

In this section we will prove the following.

\begin{proposition}\label{prop:no-higher-codim-diagonal}
    In $\Q^n_{\eps_1}\times\Q^m_{\eps_2}$ with $\eps_1=\eps_2\ne 0$, there is no submanifold $L$ with codimension greater than $1$, flat normal bundle, and totally geodesic sections that are isometric to a subset of $\Delta \Q^r_{\eps_1}$.
\end{proposition}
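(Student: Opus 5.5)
The plan is to derive a contradiction from the Ricci equation \eqref{eq:ricci} evaluated on the special tangent vectors $P\eta$, $P\xi$, using the algebraic identities of Lemma~\ref{lem:SS-computations}. If needed (when $\eps_1<0$), this will be supplemented by the Codazzi equation \eqref{eq:codazzi}.

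Since the codimension is at least $2$ and the normal bundle is flat, locally I pick two orthonormal parallel normal fields
\[
\eta=(\tau(u),u), \qquad \xi=(\tau(v),v).
\]
By \eqref{eq:property-uTu}, $u\perp v$ and $|u|^2=|\tau u|^2=|v|^2=|\tau v|^2=\tfrac12$. By \eqref{eq:property-uTu} again, $P\eta=(\tau u,-u)$ and $P\xi=(\tau v,-v)$ are tangent to $L$.

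\textbf{Step 1: the ambient curvature term.} Take $X=P\eta$ and $Y=P\xi$ in the Ricci equation. The curvature formula \eqref{eq:curvature-formula} with $\eps_1=\eps_2=\eps$ gives $\langle \bar R(P\eta,P\xi)\eta,\xi\rangle$ as a sum of factor-wise terms. Mixed inner products such as $\langle \tau v,\tau u\rangle$ vanish by orthogonality. Only terms like $-\langle \tau u,\tau u\rangle\langle \tau v,\tau v\rangle$ and $-\langle u,u\rangle\langle v,v\rangle$ survive, so the result is a nonzero constant multiple of $\eps$, roughly $-\eps/2$.

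\textbf{Step 2: the shape-operator term.} We have
\[
\langle [A_\eta,A_\xi]P\eta,P\xi\rangle=\langle A_\xi P\eta, A_\eta P\xi\rangle-\langle A_\eta P\eta, A_\xi P\xi\rangle .
\]
By Lemma~\ref{lem:SS-computations}(i), $A_\eta P\eta=0$ and $A_\xi P\eta=-A_\eta P\xi$. Hence this term equals $-|A_\eta P\xi|^2$. Flatness ($R^\perp=0$) then gives
\[
|A_\eta P\xi|^2 = c\,\eps
\]
for an explicit nonzero constant $c$ depending only on the normalization. When the sign of $c\,\eps$ is negative, this is an immediate contradiction. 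I expect this to settle one of the cases $\eps=\pm1$ at once, presumably $\eps=1$.

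\textbf{Step 3: the other sign.} In the remaining case, Step 2 only shows that $A_\eta P\xi$ is a nonzero vector of constant length lying in $F$ (Lemma~\ref{lem:SS-computations}(ii)). To get a contradiction I would use the Codazzi equation \eqref{eq:codazzi} with normal $\eta$ and tangent vectors $P\eta$, $P\xi$, together with the same equation with normal $\xi$.

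The covariant derivatives of the tangent fields $P\eta$ are computable from $\bar\nabla P=0$ and parallelism of $\eta$:
\[
\bar\nabla_X(P\eta)=-P(A_\eta X).
\]
Splitting this into tangential and normal parts shows that $\nabla_X(P\eta)=-(PA_\eta X)^T$. Its normal part is $\mathrm{II}(X,P\eta)$, and $PF=F$ with \eqref{eq:property-of-F} controls which components survive.

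Pairing the Codazzi identities with $P\eta$, $P\xi$ and with $A_\eta P\xi\in F$, and using the symmetry identities of Lemma~\ref{lem:SS-computations}, should yield a second relation, for instance that $|A_\eta P\xi|^2$ also equals a multiple of $\eps$ with the opposite sign, or equals zero. One may also differentiate the constant $|A_\eta P\xi|^2$ along $P\eta$.

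I expect this step to be the main obstacle. The bookkeeping of the terms $(PA_\eta X)^T$, and the check that all contributions from $F$ cancel or have a definite sign, is where the argument may need care, and possibly a third orthogonal normal direction or the Gauss equation \eqref{eq:gauss}. If Codazzi alone does not close the argument, I would fall back on the almost-isoparametric determinant condition \eqref{eq:HLO-determinant-condition} along the geodesic in direction $\eta$.
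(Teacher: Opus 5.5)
Your Steps 1 and 2 are correct. With $X=P\eta$ and $Y=P\xi$ you get $\langle\bar R(P\eta,P\xi)\eta,\xi\rangle=-\eps/2$ and $\langle[A_\eta,A_\xi]P\eta,P\xi\rangle=-|\alpha|^2$, where $\alpha:=A_\eta P\xi=-A_\xi P\eta\in F$; hence $|\alpha|^2=-\eps/2$. This is exactly the identity the paper reaches by computing $K_L(P\eta,P\xi)$ twice: once by the Gauss equation, giving $\eps/2$, and once intrinsically via $\nabla_{P\eta}P\xi=P\alpha$, giving $-|\alpha|^2$. Your Ricci-equation route is a bit quicker, and it disposes of $\eps=1$.

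The gap is Step 3, i.e.\ the case $\eps_1=\eps_2=-1$, which is the real content of the proposition. What you offer there is a hope, and the computations you actually name cannot produce a contradiction. The Codazzi equation for $A_\eta$ on $(P\eta,P\xi)$ has zero curvature term, since $\bar R(P\eta,P\xi)\eta=-\tfrac{\eps}{2}\xi$ is normal, and it reduces to $\nabla_{P\eta}\alpha=2A_\eta P\alpha$. Pairing this with $P\eta$, $P\xi$, $\alpha$ gives only $0=0$, $\langle\alpha,P\alpha\rangle=0$ and $\langle P\alpha,A_\eta\alpha\rangle=0$. Differentiating the constant $|\alpha|^2$ along $P\eta$ just reproduces the last relation. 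None of this contradicts $|\alpha|^2=\tfrac12$.

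What is missing is a second layer of information.

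(a) Use the Ricci equation with one or both arguments in $F$, where the curvature term vanishes by \eqref{eq:property-of-F}. In codimension $2$ this gives $A_\eta\alpha=\tfrac12P\xi$ and $A_\xi\alpha=-\tfrac12P\eta$. It also shows that the $F$-blocks $S_\eta:=\pi_F A_\eta|_F$ and $S_\xi:=\pi_F A_\xi|_F$ annihilate $\alpha$ and commute.

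(b) Use the Codazzi equation with $\alpha$ itself as one direction. On $(P\eta,\alpha)$ for $A_\eta$, and on $(P\xi,\alpha)$ for $A_\xi$, the curvature term is the tangent vector $\tfrac12P\alpha$; this yields $S_\eta^2P\alpha=S_\xi^2P\alpha=\tfrac12P\alpha$. On $(P\xi,\alpha)$ for $A_\eta$ and $(P\eta,\alpha)$ for $A_\xi$ one gets $2S_\eta S_\xi P\alpha=-\nabla_\alpha\alpha=-2S_\xi S_\eta P\alpha$.

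(c) Commutativity then gives $S_\eta S_\xi P\alpha=0$. Hence $\tfrac12S_\xi P\alpha=S_\eta^2S_\xi P\alpha=0$, and so $P\alpha=2S_\xi^2P\alpha=0$, contradicting $|P\alpha|^2=\tfrac12$.

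This bookkeeping relies on the splitting $TL=\R P\eta\oplus\R P\xi\oplus F$, which holds only in codimension $2$. Otherwise the blocks pick up extra vectors $A_\eta P\zeta$ for the other normals $\zeta$. So you also need the paper's reduction to codimension $2$ via $V\times\Sigma'\subset V\times\Sigma$.

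Finally, your fallback on \eqref{eq:HLO-determinant-condition} is not available. The proposition assumes only a flat normal bundle and totally geodesic sections, not condition (iii). Moreover, the codimension-$2$ submanifold produced by the reduction is not known to be almost isoparametric.
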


For the rest of the section, assume there exists such an $L$. Since $\nu L$ is flat and $\dim \nu_pL\ge 2$ for any $p\in L$, let $N_1$ and $N_2$ be mutually orthogonal parallel local unit normal fields in $\nu L$. Denote the shape operators
$$A_1:=A_{N_1},\qquad A_2:=A_{N_2}.$$
We also consider the following two tangent vector fields:
\begin{equation}\label{eq:defn-of-Ei}
    E_1:=P(N_1),\qquad E_2:=P(N_2).
\end{equation} By equation (\ref{eq:P-property}), we have
\begin{equation}\label{eq:PnuL-property}
    |E_1|=|E_2|=1,\qquad \langle E_1,E_2\rangle=\langle N_1,N_2\rangle=0, \end{equation}
    and
    \begin{equation}\label{eq:dpiiE1E2=0}
    \begin{aligned}
    \langle d\pi_i (E_1|_p),d\pi_i(E_2|_p)\rangle=\dfrac12\Big(\langle E_1|_p,E_2|_p\rangle+(-1)^{i+1}\langle E_1|_p,P(E_2|_p)\rangle\Big)=0,\\
    \langle d\pi_i (N_1|_p),d\pi_i(N_2|_p)\rangle=\dfrac12\Big(\langle N_1|_p,N_2|_p\rangle+(-1)^{i+1}\langle N_1|_p,P(N_2|_p)\rangle\Big)=0,
    \end{aligned}
\end{equation}
for $i=1,2$ and for any point $p\in L$.
By Lemma \ref{lem:SS-computations} (i) and (ii), we define a local vector field
\begin{equation}\label{eq:defn-of-alpha}
    \alpha:=A_{1}E_2=-A_{2}E_1\in \Gamma(F).
\end{equation}

\begin{lemma}\label{lem:SS-computations-2}
    $\bar\nabla_{E_1}E_1=\nabla_{E_1}E_1=\bar\nabla_{E_2}E_2=\nabla_{E_2}E_2=0$, $\bar\nabla_{E_1}E_2=\nabla_{E_1}E_2=P(\alpha)=-\bar\nabla_{E_2}E_1=-\nabla_{E_2}E_1$, and $[E_1,E_2]=2P(\alpha)$.
\end{lemma}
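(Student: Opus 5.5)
The plan is to compute $\bar\nabla_X E_j$ directly from the fact that $P$ is parallel, and then split the result into tangential and normal parts. Since $\bar\nabla P=0$ and $N_j$ is parallel in the normal bundle, for any tangent $X$ we have
\[
\bar\nabla_X E_j=\bar\nabla_X P(N_j)=P(\bar\nabla_X N_j)=P\big(-A_{N_j}X+\nabla^\perp_X N_j\big)=-P(A_jX).
\]
So the key formula is $\bar\nabla_XE_j=-P(A_jX)$, and everything reduces to evaluating $A_jX$ for $X=E_1,E_2$ via Lemma \ref{lem:SS-computations}.

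For $X=E_j$, Lemma \ref{lem:SS-computations}(i) gives $A_jE_j=A_{N_j}(PN_j)=0$. Hence $\bar\nabla_{E_j}E_j=0$. Its tangential part $\nabla_{E_j}E_j$ then also vanishes.

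For the mixed terms, by \eqref{eq:defn-of-alpha} we have $\bar\nabla_{E_1}E_2=-P(A_2E_1)=P(\alpha)$ and $\bar\nabla_{E_2}E_1=-P(A_1E_2)=-P(\alpha)$. Since $\alpha\in\Gamma(F)$ and $P(F)=F\subset TL$ by \eqref{eq:property-of-F}, the vector $P(\alpha)$ is tangent to $L$. So the normal components vanish, i.e. the second fundamental form terms $\mathrm{II}(E_1,E_2)$, $\mathrm{II}(E_2,E_1)$ are zero. Therefore $\nabla_{E_1}E_2=\bar\nabla_{E_1}E_2$, and likewise for the other order.

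Finally, the bracket follows from torsion-freeness: $[E_1,E_2]=\nabla_{E_1}E_2-\nabla_{E_2}E_1=2P(\alpha)$.

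The only step needing care is checking that $P(\alpha)$ is tangent. This is exactly where Lemma \ref{lem:SS-computations}(ii) and the $P$-invariance of $F$ enter. As a consistency check, the vanishing normal parts match $\langle A_kE_1,E_2\rangle=0$, which holds because $\langle A_kE_1,E_2\rangle=B(N_k,N_1,N_2)=0$ in the notation of the proof of Lemma \ref{lem:SS-computations}.
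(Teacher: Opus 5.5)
Your proposal is correct and follows essentially the same route as the paper: you differentiate $E_j=P(N_j)$ using $\bar\nabla P=0$ and $\nabla^\perp N_j=0$, evaluate $A_jE_k$ via Lemma~\ref{lem:SS-computations} and the definition of $\alpha$, and use $\alpha\in\Gamma(F)$ with $P(F)=F$ to see that $P(\alpha)$ is tangent. The only addition is that you explicitly derive $[E_1,E_2]=2P(\alpha)$ from torsion-freeness, which the paper's proof leaves implicit.
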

\begin{proof}
    By property of $P$ (\ref{eq:P-property}), for $i=1,2$, 
    \begin{equation}\label{eq:SS-computations-2}
        \bar\nabla_{E_i} E_i=\bar\nabla_{E_i}(PN_i)= P(\bar\nabla_{E_i}N_i)=P(-A_{i}E_i+\nabla^{\perp}_{E_i}N_i).
    \end{equation}
    By Lemma \ref{lem:SS-computations} (i), and $\nabla^\perp N_i=0$ for $i=1,2$, both terms in the last expression of equation (\ref{eq:SS-computations-2}) vanish.
    Therefore $\bar\nabla_{E_1}E_1=\nabla_{E_1}E_1=\bar\nabla_{E_2}E_2=\nabla_{E_2}E_2=0$.

    Again by property of $P$ (\ref{eq:P-property}),
    \begin{equation}\label{eq:SS-computations-2-1}
   \bar\nabla_{E_1}E_2=\bar\nabla_{E_1}(PN_2)=P(\bar\nabla_{E_1}N_2)=P(-A_{2}E_1+\nabla^{\perp}_{E_1}N_2).
    \end{equation} By $\nabla^\perp N_2=0$, the second term in the last expression of equation (\ref{eq:SS-computations-2-1}) vanishes, and the first term is exactly $P(\alpha)$.

    Again by property of $P$ equation (\ref{eq:P-property}),
    \begin{equation}\label{eq:SS-computations-2-2}
   \bar\nabla_{E_2}E_1=\bar\nabla_{E_2}PN_1=P\bar\nabla_{E_2}N_1=P(-A_{1}E_2+\nabla^{\perp}_{E_2}N_1).
    \end{equation} By $\nabla^\perp N_1=0$, the second term in the last expression of equation (\ref{eq:SS-computations-2-2}) vanishes, and the first term is exactly $-P(\alpha)$.

    Since $\alpha\in \Gamma(F)$ and $P$ preserves $F$, $P(\alpha)$ is still in $\Gamma(F)$, in particular it is in $TL$, which implies $\bar\nabla_{E_1}E_2=P(\alpha)=\nabla_{E_1}E_2$ and $\bar\nabla_{E_2}E_1=-P(\alpha)=\nabla_{E_2}E_1$.
\end{proof}
Therefore $$\text{II}(E_1,E_1)=\text{II}(E_1,E_2)=\text{II}(E_2,E_2)=0.$$
The Gauss equation equation (\ref{eq:gauss}) gives the sectional curvature $K_L(E_1,E_2)=K_{\Q^n_{\eps_1}\times \Q^m_{\eps_2}}(E_1,E_2)$. At any fixed point $p=(p_1,p_2)\in L$ we can write 
\begin{equation}\label{eq:notations-u1u2}
    N_1=(\tau(u),u), \qquad N_2=(\tau(v),v)
\end{equation} where $u,v$ are some vectors in $d\pi_2(\nu_pL)$, and $\tau $ defined as in equation (\ref{eq:Tisometry}). Then by equations (\ref{eq:curvature-formula}) and \eqref{eq:dpiiE1E2=0},
\begin{equation}\label{eq:KL=KSS}
    K_{\Q^n_{\eps_1}\times \Q^m_{\eps_2}}(E_1,E_2)=\dfrac{\eps_1(\langle \tau(u),\tau(u)\rangle\langle \tau(v),\tau(v)\rangle-\langle \tau(u),\tau(v)\rangle^2)+\eps_2(\langle u,u\rangle\langle v,v\rangle-\langle u,v\rangle^2)}{|E_1|^2|E_2|^2-\langle E_1,E_2\rangle^2}.
\end{equation}
Apply equations (\ref{eq:property-uTu}), (\ref{eq:PnuL-property}), and $K_L(E_1,E_2)=K_{\Q^n_{\eps_1}\times \Q^m_{\eps_2}}(E_1,E_2)$, to equation (\ref{eq:KL=KSS}) gives
\begin{equation}\label{eq:KL=fraceps2}
    K_L(E_1,E_2)=\dfrac{\eps_1}2.
\end{equation}

On the other hand, by Lemma \ref{lem:SS-computations-2} and equation \eqref{eq:PnuL-property},
\begin{equation}
    K_L(E_1,E_2)=\dfrac{\langle R_L(E_1,E_2)E_2,E_1\rangle}{|E_1|^2|E_2|^2-\langle E_1,E_2\rangle^2}=\langle \nabla_{E_1}\nabla_{E_2}E_2-\nabla_{E_2}(P(\alpha))-\nabla_{2P(\alpha)}E_2,E_1\rangle
\end{equation}
    
    The first term vanishes since $\nabla_{E_2}E_2=0$. For the third term, by equation (\ref{eq:P-property}) and $\nabla^\perp N_2=0$,
    $$\langle\nabla_{P(\alpha)}E_2,E_1\rangle=\langle\bar\nabla_{P(\alpha)}E_2,E_1\rangle=\langle \bar\nabla_{P(\alpha)}N_2, N_1\rangle=\langle\nabla^\perp_{P(\alpha)}N_2, N_1\rangle=0.$$
Therefore by Lemma \ref{lem:SS-computations-2} and equation \eqref{eq:TL-decomposition},
\begin{equation}\label{eq:KL=-alpha2}
    K_L(E_1,E_2)=\langle-\nabla_{E_2}(P(\alpha)),E_1\rangle=\langle P(\alpha),\nabla_{E_2}E_1\rangle=-|P(\alpha)|^2= -|\alpha|^2\leq 0.
\end{equation}

Now if $\eps_1=\eps_2=1$, equations (\ref{eq:KL=fraceps2}) and (\ref{eq:KL=-alpha2}) immediately gives a contradiction, which proves Proposition \ref{prop:no-higher-codim-diagonal}. 
However if $\eps_1=\eps_2=-1$, we only get
\begin{equation}\label{eq:alpha2=frac12}
    |\alpha|^2=\frac12.
\end{equation}
We will need to dive deeper in the $\eps_1=\eps_2=-1$ case.

\subsection{\texorpdfstring{$\eps_1=\eps_2=-1$}{e1=e2=-1} Case}\label{sec:HH-higher-codimension}
We first reduce the general case to the codimension $2$ case, then prove that the codimension $2$ case is impossible.
\begin{lemma}
    If there exists a submanifold $L$ as in Proposition \ref{prop:no-higher-codim-diagonal}, then there exists such an $L'$ with codimension $2$.
\end{lemma}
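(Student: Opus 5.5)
Given $L$ of codimension $k\ge 3$, we produce a codimension-$2$ example by intersecting $L$ with a totally geodesic factor-type submanifold.

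\textbf{Choice of ambient factor.} Fix $p\in L$. Take a parallel orthonormal normal frame $N_1,\dots,N_k$ near $p$, and write $N_j=(\tau(u_j),u_j)$ as in \eqref{eq:notations-u1u2}. Set
\[
S:=\Q^{n-k+2}_{\eps_1}\times\Q^{m-k+2}_{\eps_2}
\]
through $p$ with tangent space $W=\bigl(d\pi_1(\nu_pL)\bigr)^{\perp}\ldots$. More precisely, we want to delete the directions $\tau(u_j)$ and $u_j$ for $j\ge 3$. So we choose $S$ to be the totally geodesic product of $\exp$ of the orthogonal complement of $\operatorname{span}\{\tau(u_3),\dots,\tau(u_k)\}$ in the first factor with $\exp$ of the orthogonal complement of $\operatorname{span}\{u_3,\dots,u_k\}$ in the second. $S$ is totally geodesic by \cite{BCO}*{Theorem 1.4.1}.

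\textbf{The candidate $L'$.} Set $L':=L\cap S$ near $p$. The key claim is that $L$ meets $S$ cleanly and that $T_pL'=T_pL\cap T_pS$. At $p$ we have
\[
T_pS=\nu_pL\cap\{N_3,\dots,N_k,PN_3,\dots,PN_k\}^{\perp}\oplus\ldots
\]
Since $P(\nu L)\subset TL$ and $\nu L\oplus P(\nu L)$ is orthogonal, $T_pS$ is the orthogonal complement of $\operatorname{span}\{N_j,PN_j\}_{j\ge3}$. Hence $T_pL+T_pS$ is everything, the intersection is transversal, and $L'$ is a submanifold.

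\textbf{Main obstacle: transversality at every point.} Transversality at $p$ gives a neighbourhood, but we need $\nu_qL'=\operatorname{span}\{N_1,N_2\}$ at every $q\in L'$. That is, we need $N_j(q)$ for $j\ge3$ to be normal to $S$ (tangent to its normal space), so that $\nu_qL'=\nu_qL\cap T_qS$. I would argue as follows. The section $\Sigma_q$ is the diagonal-type space form. The normal bundle of $S$ in the ambient space is spanned by parallel fields along $S$, since $S$ is a product of totally geodesic space-form pieces. I would try to show that $S$ is invariant under the reflection structure, or alternatively replace $S$ by
\[
\exp_p\bigl(\operatorname{span}\{N_1,N_2\}\bigr)\ \text{moved along}\ L,
\]
that is, take $L'$ to be a leaf of the distribution $TL\cap\{PN_3,\dots,PN_k\}^{\perp}$. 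For this leaf approach:

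1. Integrability would follow from Lemma \ref{lem:SS-computations}. Since $A_\eta(P\xi)\in F$ and $\bar\nabla_X PN_j=-P A_{N_j}X$, one gets $\langle \nabla_X Y, PN_j\rangle=\langle Y, PA_{N_j}X\rangle$, which is symmetric in $X,Y$.
2. Show that $N_j$ and $PN_j$ for $j\ge3$ are parallel along the leaf as ambient fields. This uses $A_{N_j}X\perp\ldots$; I expect this is where the real difficulty lies.
3. Conclude that the leaf lies in a totally geodesic $S\cong\Q_{\eps_1}\times\Q_{\eps_2}$.

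\textbf{Verifying the properties of $L'$.} Finally, $L'$ inside $S$ should have:

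1. flat normal bundle, spanned by the parallel fields $N_1,N_2$;
2. sections $\exp(\operatorname{span}\{N_1,N_2\})$, which are totally geodesic and diagonal, i.e.\ isometric to a subset of $\Delta\Q^2_{\eps_1}$.

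This gives $L'$ of codimension $2$ as required.
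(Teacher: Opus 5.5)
Your proposal has a genuine gap, and cutting $L$ down inside a totally geodesic product $S$ cannot be repaired in this form.

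\textbf{The transversality claim is false.} By your own description $T_pS=\operatorname{span}\{N_j,PN_j\}_{j\ge 3}^{\perp}$. So $N_3,\dots,N_k$ are orthogonal to both $T_pL$ and $T_pS$, and $T_pL+T_pS$ has codimension $k-2$. A transversal intersection would have dimension $n+m-3k+4$, not the $n+m-2k+2$ you need. Nothing therefore guarantees that $L\cap S$ is a submanifold.

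\textbf{The leaf fallback cannot work either.} Suppose some $L'\subset L\cap S$ had $\nu^S L'=\operatorname{span}\{N_1,N_2\}$.
\begin{enumerate}
\item Then $TS^\perp|_{L'}$ contains $N_3,\dots,N_k$, and it is $P$-invariant because $S$ is a product. Hence it contains $PN_3,\dots,PN_k$.
\item Counting dimensions, this forces $TL'=\mathcal D:=TL\cap\{PN_3,\dots,PN_k\}^\perp$.
\item But $\mathcal D$ is not integrable. Your step~1 formula gives $\langle [X,Y],PN_j\rangle=\langle PY,A_{N_j}X\rangle-\langle PX,A_{N_j}Y\rangle$ for $X,Y\in\Gamma(\mathcal D)$, and this is not symmetric.
\item Take $X=PN_1$ and $Y=P(A_{N_j}PN_1)$. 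By Lemma \ref{lem:SS-computations}(ii) and $P(F)=F$, $Y$ lies in $F\subset\mathcal D$.
\item Since $PX=N_1$ is normal, the second term vanishes. The first term equals $|A_{N_1}PN_j|^2$.
\item By \eqref{eq:KL=fraceps2} and \eqref{eq:KL=-alpha2} applied to the pair $(N_1,N_j)$, this equals $-\eps_1/2=\tfrac12$ when $\eps_1=-1$. For $\eps_1=1$ no such $L$ exists at all, so that is the only relevant case.
\end{enumerate}
So the obstruction already appears at your step~1, before the parallelism question of step~2.

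\textbf{The missing idea is to go up in dimension rather than down.} The paper uses the local splitting $U\cong V\times\Sigma$ from Proposition \ref{prop:HLO}. It takes $L'=\Phi(V\times\Sigma')$ with $\Sigma'=\exp_p\operatorname{span}\{N_1,\dots,N_{k-2}\}$, that is, the union of the parallel submanifolds of $L$ in the directions $N_1,\dots,N_{k-2}$. The normal bundle of $L'$ is spanned by the extensions of $N_{k-1},N_k$ and is flat. Its sections $\{q_1\}\times\exp_q\operatorname{span}\{N_{k-1},N_k\}$ are totally geodesic $2$-dimensional pieces of the original diagonal sections, so they are automatically of diagonal type. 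This keeps $L'$ in the original ambient space and avoids any choice of a smaller product $S$.
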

\begin{proof}
    Let $L$ be a submanifold in $\Q^n_{\eps_1}\times\Q^m_{\eps_2}$ with flat normal bundle and totally geodesic sections, for any $p\in L$, let $\Sigma$ be a section of $L$ at $p$ such that $\exp_p$ is a diffeomorphism on the preimage of $\Sigma$ by $\exp_p$. By \cite{HLO}*{Proposition 2.3}, there exists a neighborhood $U$ of $p$ in $\Q^n_{\eps_1}\times\Q^m_{\eps_2}$ and an isometry
    \begin{equation}\label{eq:defn-of-Phi}
        \Phi: V\times\Sigma\rightarrow U,\qquad \Phi(q_1,q_2)=\exp^U_{q_1}(N_{q_2}|_{q_1}),
    \end{equation}
    where $V$ is a neighborhood of $p$ in $L$, and $N_{q_2}$ is a local parallel normal vector field on $L$ constructed by parallel transport of the tangent vector $\exp^{-1}_p(q_2)\in T_p\Sigma$ in the normal bundle. Here the Riemannian metric on $V\times\Sigma$ is 
    \begin{equation}\label{eq:g-decomposition}
        g_{V\times\Sigma}=g_V(q_2)\oplus g_\Sigma,
    \end{equation} where $g_V(q_2)$ is a Riemannian metric on $V$ changing according to $q_2\in \Sigma$.

    In Proposition \ref{prop:no-higher-codim-diagonal}, the codimension of $L$, denoted by $r$, is assumed to be greater than $1$. If $r=2$ to begin with, then we are done. From now on, assume $r>2$. 

    Let $\{N_1,N_2,...,N_r\}$ be a parallel normal frame on $\nu(V\times\{q_{2}\}\subset V\times \Sigma)$. By \cite{HLO}*{Proposition 2.2}, $\{N_1,N_2,...,N_r\}$ extends to a collection of vector fields in $\Gamma(T(V\times\Sigma))$, still denoted by $\{N_1,N_2,...,N_r\}$, such that its restriction $\{N_1,N_2,...,N_r\}|_{V\times\{q_2\}}$ on the parallel submanifold $V\times \{q_2\}$ for any $q_2\in\Sigma$ is a parallel normal frame on $\nu (V\times \{q_2\})$.  $\exp_{p}\R\{N_1,N_2,...,N_{r}\}$ is isometric to $\Hyp^{r}(\sqrt2)$ by assumption on $L$, then $\exp_{p}\R\{N_1,N_2,...,N_{r-2}\}=:\Sigma'$ is isometric to a $\Hyp^{r-2}(\sqrt2)$ embedded in $\Hyp^{r}(\sqrt2)$ with codimension $2$. We have the following codimension-$2$ submanifold
    \begin{equation}\label{eq:defn-Vsigma}
        V\times \Sigma'\subset V\times \Sigma.
    \end{equation}
    Since $T_{p,p}(V\times \Sigma')=T_pV\oplus \R\{N_1,...,N_{r-2}\}_p$, by equation (\ref{eq:defn-of-Phi}), for any point $q=(q_1,q_2)\in V\times\Sigma'$,
    \begin{equation}\label{eq:RN1N2N3}
        T_{q}(V\times \Sigma')=T_{q_1}V\oplus T_{q}(\{q_1\}\times\Sigma')=T_{q_1}V\oplus \R\{N_1,...,N_{r-2}\}_{q_2}.
    \end{equation}

    We only need to prove that $V\times \Sigma'$ satisfies the assumptions on $L$ from Proposition \ref{prop:no-higher-codim-diagonal}, then $\Phi$ maps $V\times\Sigma'$ to a submanifold in $\Q^n_{\eps_1}\times\Q^m_{\eps_2}$ satisfying the same assumptions.
    
    By properties of simply connected space forms \cite{BCO}*{Theorem 1.4.1}, at any point $q=(q_1,q_2)\in V\times\Sigma'$, there is a submanifold $H_{q}:=\exp_q\R\{N_{r-1},N_r\}$ isometric to $\Hyp^2(\sqrt2)$ that is orthogonal to $\{q_1\}\times\Sigma'$ at $q$, by equation \eqref{eq:RN1N2N3}. For any point $q=(q_1,q_2)\in V\times\Sigma$, $\{q_1\}\times \Sigma$ is a totally geodesic submanifold in $V\times \Sigma$, and $\{q_1\}\times H_q$ is a totally geodesic submanifold in $\{q_1\}\times \Sigma$, then $\{q_1\}\times H_q$ is a totally geodesic submanifold in $V\times \Sigma$. Also, $\{q_1\}\times H_{q}$ is orthogonal to $\{q_1\}\times \Sigma'$ at $q$. $\{q_1\}\times H_{q}$ is also orthogonal to $V\times\{q_2\}$ at $q$ by equation (\ref{eq:g-decomposition}). Therefore $\{q_1\}\times H_{q}$ is orthogonal to $V\times\Sigma'$ since $T_{q}(V\times\Sigma')$ is the direct sum $T_q(\{q_1\}\times \Sigma')\oplus T_q(V\times\{q_2\})$. This concludes the totally geodesic sections assumption on $L$ from Proposition \ref{prop:no-higher-codim-diagonal}. The section $\{q_1\}\times H_{q}$ is isometric to $H_{q}$, which is isometric to $\Hyp^2(\sqrt2)$, which concludes the section type assumption on $L$ from Proposition \ref{prop:no-higher-codim-diagonal}.

    Fix a $q=(q_1,q_2)\in V\times \Sigma'$, denote the Riemannian connection on $V\times\Sigma$ by $\bar\nabla$, the connection on the normal bundle $\nu(V\times\{q_2\})$ by $\nabla^{\perp,0}$, the connection on the normal bundle $\nu(V\times\Sigma')$ by $\nabla^{\perp,1}$, the connection on the normal bundle $\nu(\{q_1\}\times \Sigma'\subset\{q_1\}\times\Sigma)$ by $\nabla^{\perp,2}$, and the connection on $\{q_1\}\times\Sigma$ by $\nabla^{1}$. Also consider the following orthogonal projections $d\pi_1^\perp: T_q(V\times\Sigma)\rightarrow \nu_q(V\times\{q_2\})$, $d\pi_2^\perp:\nu_q(V\times\{q_2\})\rightarrow \nu_q(V\times\Sigma')$, and  $d\pi_0^\perp:T_q(V\times\Sigma)|_{V\times\Sigma'}\rightarrow \nu_q(V\times\Sigma')$. Note that 
    \begin{equation}\label{eq:pi0pi1pi2}
        d\pi^\perp_0=d\pi_2^\perp\circ d\pi_1^\perp.
    \end{equation}
    For any $X\in T_{q}(V\times\Sigma')$, 
    \begin{equation}\label{eq:X=x1+x2}
        X=X_1+X_2
    \end{equation} where $X_1\in T_{q_1}V\oplus\{0_{q_2}\}$ and $X_2\in \{0_{q_1}\}\oplus T_{q_2}\Sigma'$. By the construction equation (\ref{eq:defn-Vsigma}), the normal bundle $\nu(V\times\Sigma')$ is $\R\{N_{r-1},N_r\}|_{V\times\Sigma'}$. Then for $i=r-1,r$, by equations (\ref{eq:pi0pi1pi2}),  (\ref{eq:X=x1+x2}), and the fact that $\{q_1\}\times\Sigma$ is totally geodesic in $V\times\Sigma$, we have \begin{align}
        \nabla^{\perp,1}_XN_{i}=&d\pi_0^\perp(\bar\nabla_XN_i)=d\pi_0^\perp(\bar\nabla_{X_1}N_i+\bar\nabla_{X_2}N_i)
        =d\pi_2^\perp\circ d\pi_1^\perp(\bar\nabla_{X_1}N_i)+d\pi_2^\perp\circ d\pi_1^\perp(\bar\nabla_{X_2}N_i)\nonumber\\
        =&d\pi_2^\perp(\nabla^{\perp,0}_{X_1}N_i)+d\pi_2^\perp\circ d\pi_1^\perp(\nabla^1_{X_2}N_i)=d\pi_2^\perp(\nabla^{\perp,0}_{X_1}N_i)+\nabla^{\perp,2}_{X_2}N_i,
    \end{align}
    in which the first term vanishes by flatness of $\nu(V\times\{q_2\})$, and the second term vanishes by flatness of $\nu(\{q_1\}\times \Sigma'\subset\{q_1\}\times\Sigma)$. We conclude the flat normal bundle assumption on $L$ from Proposition \ref{prop:no-higher-codim-diagonal}.
\end{proof}

Now we may assume that $L$ has codimension $2$.

By equations (\ref{eq:TL-decomposition}) and (\ref{eq:PnuL-property}), we have the following decomposition of $TL$:
\begin{equation}\label{eq:dim2-decomp-of-TL}
    TL=\R E_1\oplus \R E_2\oplus F.
\end{equation}
By Lemma \ref{lem:SS-computations}, we can decompose the shape operators $A_1$ and $A_2$ as
\begin{equation}
    A_1(X)=\langle A_1(X),E_2\rangle E_2+S_1(X),\qquad A_2(X)=\langle A_2(X),E_1\rangle E_1+S_2(X)
\end{equation}
for any $X\in F$, where $S_1$ and $S_2$ are defined by
\begin{equation}\label{eq:defn-of-Si}
    S_1:=\pi_F\circ A_1|_F,\qquad S_2:=\pi_F\circ A_2|_F.
\end{equation} where $\pi_F$ is the bundle map, projection vectors in $TL$ orthogonally into $F$. Note that $S_i$ is symmetric for $i=1,2$. Indeed, for $X,Y\in F$ and $i=1,2$,
\begin{equation*}
    \langle S_i (X),Y\rangle=\langle \pi_F\circ A_i(X),Y\rangle=\langle A_i(X),Y\rangle=\langle X,A_i(Y)\rangle=\langle X,\pi_F\circ A_i(Y)\rangle=\langle X,S_i(Y)\rangle.
\end{equation*}
By self-adjoint-ness of shape operators and equation (\ref{eq:defn-of-alpha}), we also have
\begin{equation}\label{eq:property-of-Si}
    A_1(X)=\langle X,\alpha\rangle E_2+S_1(X),\qquad A_2(X)=-\langle X,\alpha\rangle E_1+S_2(X).
\end{equation}

We will study the behavior of $\alpha$, $S_1$, and $S_2$ via the Ricci equation and the Codazzi equation.

\begin{lemma}\label{lem:HH-computations-2} 
    $[S_1,S_2]=0$.
\end{lemma}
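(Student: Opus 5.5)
The plan is to read off $[S_1,S_2]$ from the Ricci equation \eqref{eq:ricci}, applied to the parallel normal fields $N_1,N_2$ with both tangent arguments taken in $F$. Since $\nu L$ is flat, $R^\perp=0$, so for all $X,Y\in\Gamma(F)$
\[
\langle [A_1,A_2]X,Y\rangle=-\langle \bar R(X,Y)N_1,N_2\rangle .
\]
This leaves two steps: show the ambient term vanishes on $F$, and identify $\pi_F\circ[A_1,A_2]|_F$ with $[S_1,S_2]$.

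\emph{Step 1: the ambient curvature term vanishes.} At $p$, write $N_1=(\tau(u),u)$ as in \eqref{eq:notations-u1u2}, and write $X=(X_1,X_2)$, $Y=(Y_1,Y_2)$. By \eqref{eq:curvature-formula}, the first component of $\bar R(X,Y)N_1$ is
\[
\eps_1\big(\langle Y_1,\tau(u)\rangle X_1-\langle X_1,\tau(u)\rangle Y_1\big),
\]
and the second component is
\[
\eps_2\big(\langle Y_2,u\rangle X_2-\langle X_2,u\rangle Y_2\big).
\]
By \eqref{eq:property-of-F}, $d\pi_i(F_p)\perp d\pi_i(\nu_pL)$ for $i=1,2$. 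Hence every inner product appearing here is zero, so $\bar R(X,Y)N_1=0$ and the right-hand side of the Ricci equation vanishes.

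\emph{Step 2: compute $[A_1,A_2]$ on $F$ using \eqref{eq:property-of-Si}.} For $X\in F$,
\[
A_1A_2X=A_1\big(-\langle X,\alpha\rangle E_1+S_2X\big).
\]
By Lemma \ref{lem:SS-computations}(i), $A_1E_1=A_{N_1}(PN_1)=0$. Since $S_2X\in F$, \eqref{eq:property-of-Si} gives $A_1S_2X=\langle S_2X,\alpha\rangle E_2+S_1S_2X$. So $\pi_F(A_1A_2X)=S_1S_2X$.

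Symmetrically, $A_2E_2=0$ gives
\[
A_2A_1X=-\langle S_1X,\alpha\rangle E_1+S_2S_1X,
\]
so $\pi_F(A_2A_1X)=S_2S_1X$.

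Therefore, for $X,Y\in F$,
\[
\langle [S_1,S_2]X,Y\rangle=\langle [A_1,A_2]X,Y\rangle=0.
\]
Since $[S_1,S_2]$ maps $F$ into $F$, this gives $[S_1,S_2]=0$.

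The only point that needs care is Step 1. It relies on the orthogonality \eqref{eq:property-of-F} holding componentwise, which kills every term of $\bar R$ rather than just its total. Everything else is direct bookkeeping with Lemma \ref{lem:SS-computations}.
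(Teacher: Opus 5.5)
Your proof is correct and is essentially the paper's argument: apply the Ricci equation with both tangent arguments in $F$, use the componentwise orthogonality \eqref{eq:property-of-F} to kill the ambient curvature term, and use the block form \eqref{eq:property-of-Si} to identify the $F$-part of $[A_1,A_2]$ with $[S_1,S_2]$. The paper also plugs $X=E_1,E_2$ into the Ricci equation to obtain $S_1(\alpha)=S_2(\alpha)=0$ and cites these in its final step; your bookkeeping (via $A_1E_1=A_2E_2=0$ and $E_1,E_2\perp F$) shows they are not needed for the commutator itself and matter only later, in Lemma \ref{lem:HH-computations-3}.
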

\begin{proof}
    Using the Ricci equation (\ref{eq:ricci}) and the flat normal bundle, we have \begin{equation}\label{eq:ricci-HH}
        \langle \bar R(X,Y)N_1,N_2\rangle+\langle [A_1,A_2]X,Y\rangle=0
    \end{equation} for any tangent vectors $X=(X_1,X_2)$ and $Y=(Y_1,Y_2)$ at any fixed $p\in L$. At $p\in L$, we also set $N_1=(\tau(u),u)$ and $N_2=(\tau(v),v)$ where $u,v$ are defined the same way as equation (\ref{eq:notations-u1u2}). The curvature formula equation (\ref{eq:curvature-formula}) now gives
$$\langle \bar R(X,Y)N_1,N_2\rangle=$$
$$(-\langle Y_1,\tau(u)\rangle\langle X_1,\tau(v)\rangle + \langle X_1,\tau(u)\rangle\langle Y_1,\tau(v)\rangle,-\langle Y_2,u\rangle\langle X_2,v\rangle+\langle X_2,u\rangle\langle Y_2,v\rangle).$$
By equation (\ref{eq:property-of-F}), this is $0$ if $X$ or $Y$ is in $F$. Now, plug in $X=E_1$ and $Y\in F$ to equation (\ref{eq:ricci-HH}), and by equation (\ref{eq:defn-of-alpha}), we have
$$0+\langle -A_1(\alpha)-A_2A_1(E_1),Y\rangle=0.$$ Combining with Lemma \ref{lem:SS-computations-2} and equation (\ref{eq:defn-of-Si}), we have
\begin{equation}\label{eq:S1alpha=0}
    S_1(\alpha)=0.
\end{equation}
Together with equation (\ref{eq:property-of-Si}), we immediately get 
\begin{equation}\label{eq:A1G=frac12E2}
    A_1(\alpha)=\frac12E_2.
\end{equation} Similarly, plug in $X=E_2$ and $Y\in F$ to equation (\ref{eq:ricci-HH}), by equation (\ref{eq:defn-of-alpha}), we have
$$0+\langle A_1A_2(E_2)-A_2(\alpha),Y\rangle=0.$$
By Lemma \ref{lem:SS-computations-2} and equation (\ref{eq:defn-of-Si}), we have
\begin{equation}\label{eq:S2alpha=0}
    S_2(\alpha)=0.
\end{equation}
Combining with equation (\ref{eq:property-of-Si}) and the fact that $|\alpha|^2=\frac12$, we get 
\begin{equation}\label{eq:A2G=-frac12E1}
    A_2(\alpha)=-\frac12E_1.
\end{equation} 
Next we plug in $X$ and $Y$ both in $F$ to equation (\ref{eq:ricci-HH}), we have
\begin{equation}\label{eq:A1A2XY=0}
    0+\langle[A_1,A_2]X,Y\rangle=0.
\end{equation}
Applying equations (\ref{eq:property-of-Si}), \eqref{eq:S1alpha=0}, \eqref{eq:S2alpha=0}, and the self-adjoint-ness of $S_1$ and $S_2$, to equation \eqref{eq:A1A2XY=0}, gives $[S_1,S_2]=0$.
\end{proof}

For $i=1,2$, by equation (\ref{eq:P-property}) and $\nabla^\perp N_i=0$,
$\bar\nabla_\alpha E_i=P(\bar\nabla_\alpha N_i)=-P(A_i\alpha)$, which is $-P(\frac12E_2)$ by equation (\ref{eq:A1G=frac12E2}) if $i=1$, and $-P(-\frac12E_1)$ by equation (\ref{eq:A2G=-frac12E1}) if $i=2$. In both cases they are normal by equation (\ref{eq:defn-of-Ei}). Therefore for $i=1,2,$ 
\begin{equation}\label{eq:nablaalphaEi=0}
    \bar\nabla_\alpha E_1=-\frac12N_2,\qquad \bar\nabla_\alpha E_2=\frac12N_1,\qquad\nabla_\alpha E_1=\nabla_\alpha E_2=0.
\end{equation}

\begin{lemma}\label{lem:HH-computations-3}
    $S_1^2(P(\alpha))=S^2_2(P(\alpha))=\dfrac{P(\alpha)}2$, $S_1S_2(P(\alpha))+S_2S_1(P(\alpha))=0$.
\end{lemma}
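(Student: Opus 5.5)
The plan is to extract these identities from the Codazzi equation \eqref{eq:codazzi}, applied to $\eta=N_1$ and $\eta=N_2$ on pairs of tangent fields drawn from $E_1$, $E_2$ and $\alpha$. These pairs are chosen because their covariant derivatives are already known. Lemma \ref{lem:SS-computations-2} gives $\nabla_{E_i}E_i=0$ and $\nabla_{E_1}E_2=-\nabla_{E_2}E_1=P(\alpha)$. Equation \eqref{eq:nablaalphaEi=0} gives $\nabla_\alpha E_i=0$. We also have $A_iE_i=0$, $A_1E_2=\alpha=-A_2E_1$, and, from equations \eqref{eq:A1G=frac12E2} and \eqref{eq:A2G=-frac12E1}, $A_1\alpha=\frac12E_2$ and $A_2\alpha=-\frac12E_1$. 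With these, every term in such a Codazzi equation reduces to a shape operator applied to $P(\alpha)$ or to a derivative of $\alpha$. The ambient curvature terms $(\bar R(X,Y)N_i)^T$ can be computed explicitly from \eqref{eq:curvature-formula}. For this I will write $N_1=(\tau(u),u)$, $N_2=(\tau(v),v)$ and use \eqref{eq:property-uTu}, \eqref{eq:dpiiE1E2=0} and \eqref{eq:property-of-F}. Since $\eps_1=\eps_2=-1$, these terms come out as simple multiples of $E_1,E_2,N_1,N_2$ or of components of $X,Y$.

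First I apply Codazzi with $\eta=N_1$ and $(X,Y)=(E_1,E_2)$. Because $A_1E_1=0$, the left side reduces to
\[
\nabla_{E_1}\alpha-A_1P(\alpha)-A_1P(\alpha)=\nabla_{E_1}\alpha-2A_1P(\alpha).
\]
This determines $\nabla_{E_1}\alpha$ in terms of $A_1P(\alpha)$ plus a curvature term. The analogous equation with $\eta=N_2$ determines $\nabla_{E_2}\alpha$ in terms of $A_2P(\alpha)$.

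Second, I apply Codazzi with $\eta=N_1$ and $(X,Y)=(E_1,\alpha)$. Using $A_1E_1=0$, $\nabla_\alpha E_1=0$ and $A_1\alpha=\frac12E_2$, the left side becomes
\[
\tfrac12\nabla_{E_1}E_2-A_1\nabla_{E_1}\alpha=\tfrac12P(\alpha)-A_1\nabla_{E_1}\alpha.
\]
Substituting the first step gives an identity of the form $\tfrac12P(\alpha)-2A_1^2P(\alpha)-(\text{curvature})=-(\bar R(E_1,\alpha)N_1)^T$.

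Third, I decompose via \eqref{eq:property-of-Si}. Writing $A_1P(\alpha)=\langle P(\alpha),\alpha\rangle E_2+S_1P(\alpha)$, I apply $A_1$ again and use $S_1\alpha=0$. Taking the $F$-component then isolates $S_1^2P(\alpha)$, and the claim is that what remains is exactly $\frac12P(\alpha)$. The same computation with $N_2$ and the pair $(E_2,\alpha)$ gives $S_2^2P(\alpha)=\frac12P(\alpha)$. For the mixed identity I use the cross equations: Codazzi with $\eta=N_1$ on $(E_2,\alpha)$ and with $\eta=N_2$ on $(E_1,\alpha)$. These involve $A_1\nabla_{E_2}\alpha$ and $A_2\nabla_{E_1}\alpha$, hence $S_1S_2P(\alpha)$ and $S_2S_1P(\alpha)$. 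Adding them, and using $[S_1,S_2]=0$ from Lemma \ref{lem:HH-computations-2} together with $|\alpha|^2=\frac12$ from \eqref{eq:alpha2=frac12}, should give $S_1S_2P(\alpha)+S_2S_1P(\alpha)=0$.

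I expect the main obstacle to be the bookkeeping. The components along $E_1$, $E_2$ and $F$ must be tracked separately. It must be checked that $\langle P(\alpha),\alpha\rangle$ contributes only to the $E_i$-directions and drops out of the $F$-component. The curvature terms must be shown to contribute precisely the needed multiple of $P(\alpha)$ in $F$; this is where $\eps_1=\eps_2=-1$ enters, and a sign error there would spoil the constant $\frac12$. If the pair $(E_1,\alpha)$ turns out not to close up cleanly, I would instead use the pair $(E_1,P(\alpha))$, together with the Gauss equation for $K_L(E_1,E_2)$ computed as in \eqref{eq:KL=-alpha2}.
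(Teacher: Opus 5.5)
Your overall route matches the paper's: Codazzi for $A_1$ and $A_2$ on the pairs $(E_1,E_2)$, $(E_1,\alpha)$, $(E_2,\alpha)$. The relevant curvature terms are $\bar R(E_1,E_2)N_1=\frac12N_2$ and $\bar R(E_1,E_2)N_2=-\frac12N_1$, which are normal, and $\bar R(E_1,\alpha)N_1=\bar R(E_2,\alpha)N_2=\frac12P(\alpha)$. However, your third step rests on a false expectation. Since $P(\alpha)\in F$ and $A_1E_2=\alpha\in F$, applying $A_1$ twice and using $S_1\alpha=0$ gives
\[
A_1^2P(\alpha)=\langle P(\alpha),\alpha\rangle\,\alpha+S_1^2P(\alpha).
\]
So the $\langle P(\alpha),\alpha\rangle$-term lands in $F$, not in the $E_i$-directions, and taking the $F$-component does not isolate $S_1^2P(\alpha)$. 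The $(E_1,\alpha)$ Codazzi equation, which reads $2A_1^2P(\alpha)=P(\alpha)$, therefore only yields $S_1^2P(\alpha)=\frac12P(\alpha)-\langle P(\alpha),\alpha\rangle\alpha$. Pairing this with $\alpha$ does not help: using $S_1\alpha=0$ and $|\alpha|^2=\frac12$ just gives $0=0$.

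The missing ingredient is $\langle\alpha,P(\alpha)\rangle=0$. The paper extracts it from your first step by taking the $E_2$-component of $\nabla_{E_1}\alpha=2A_1P(\alpha)$. The left side is $E_1\langle\alpha,E_2\rangle-\langle\alpha,\nabla_{E_1}E_2\rangle=-\langle\alpha,P(\alpha)\rangle$, and the right side is $2\langle P(\alpha),A_1E_2\rangle=2\langle P(\alpha),\alpha\rangle$. Hence $\langle\alpha,P(\alpha)\rangle=0$, so $A_iP(\alpha)=S_iP(\alpha)$ and $A_i^2P(\alpha)=S_i^2P(\alpha)$ for $i=1,2$, which gives both squared identities.

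For the mixed identity you need neither this input nor $[S_1,S_2]=0$ or $|\alpha|^2=\frac12$. The cross Codazzi equations have vanishing curvature terms and read $2S_1S_2P(\alpha)=-\nabla_\alpha\alpha$ and $2S_2S_1P(\alpha)=\nabla_\alpha\alpha$. Here the $\langle P(\alpha),\alpha\rangle$-terms are killed by $A_1E_1=A_2E_2=0$. Adding the two equations cancels $\nabla_\alpha\alpha$.
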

\begin{proof}
The proof is done by applying the Codazzi equation (\ref{eq:codazzi}) to $A_i$ for $i=1,2$. Note that since $\nabla^{\perp}N_i=0$, $(\nabla_XA)_{N_i}Y=\nabla_X(A_iY)-A_i(\nabla_XY)-A_{\nabla^{\perp}_XN_i}Y=\nabla_X(A_iY)-A_i(\nabla_XY)$ for any tangent vector field $X,Y$ on $L$ and $i=1,2$. Therefore equation (\ref{eq:codazzi}) on $A_i$ becomes
\begin{equation}\label{eq:codazzi-1}
    \nabla_X(A_iY)-A_i(\nabla_XY)-\nabla_Y(A_iX)+A_i(\nabla_YX)=-(\bar R(X,Y)N_i)^T.
\end{equation} for $i=1,2$. Fix a point $p\in L$, we can write 
\begin{equation}
    (N_1)_p=(\tau(u),u), \qquad (N_2)_p=(\tau(v),v),\qquad (E_1)_p=(\tau(u),-u),\qquad (E_2)_p=(\tau(v),-v),
\end{equation}
where $u=d\pi_2(N_1)_p$, $v=d\pi_2(N_2)_p$, and $\tau $ defined as in equation (\ref{eq:Tisometry}).

Apply the Codazzi equation (\ref{eq:codazzi-1}) to $A_1$ with $X=E_1$, $Y=E_2$ at $p$. For the right hand side of equation (\ref{eq:codazzi-1}), by the curvature formula equation (\ref{eq:curvature-formula}), equations (\ref{eq:property-uTu}), and (\ref{eq:dpiiE1E2=0}),
\begin{equation}
    \bar R(E_1,E_2)N_1=(-\langle \tau(v),\tau(u)\rangle \tau(u)+\langle \tau(u),\tau(u)\rangle \tau(v),-\langle -v,u\rangle (-u)+\langle-u,u\rangle (-v)) =\frac12N_2,
\end{equation}
which is normal. Therefore the right hand side of equation (\ref{eq:codazzi-1}) is $0$. By equation (\ref{eq:defn-of-alpha}), Lemma \ref{lem:SS-computations-2}, and Lemma \ref{lem:SS-computations}, the left hand side of equation (\ref{eq:codazzi-1}) becomes
\begin{equation}
    \nabla_{E_1}\alpha-A_1(P(\alpha))-\nabla_{E_2}0+A_1(-P(\alpha)).
\end{equation}
Therefore  \begin{equation}\label{eq:nablaE1alpha=2A1Palpha}
    \nabla_{E_1}\alpha=2A_1(P(\alpha)).
\end{equation}
Taking inner product with $E_2$, the left hand side of equation (\ref{eq:nablaE1alpha=2A1Palpha}) becomes $\langle \nabla_{E_1}\alpha,E_2\rangle=E_1\langle\alpha,E_2\rangle-\langle\alpha,\nabla_{E_1}E_2\rangle$, which is $-\langle\alpha,P(\alpha)\rangle$ by the decomposition of $TL$ equation (\ref{eq:TL-decomposition}) and Lemma \ref{lem:SS-computations-2}. The right hand side of equation (\ref{eq:nablaE1alpha=2A1Palpha}) becomes $2\langle P(\alpha),\alpha\rangle$ by self-adjoint-ness of $A_1$ and equation \eqref{eq:defn-of-alpha}. Therefore 
\begin{equation}\label{eq:alphaPalpha=0}
    \langle\alpha,P(\alpha)\rangle=0.
\end{equation} Now plug in $X=P(\alpha)$ to equation (\ref{eq:property-of-Si}), we get, for $i=1,2$, 
\begin{equation}\label{eq:AiPalpha=SiPalpha}
    A_i(P(\alpha))=S_i(P(\alpha)).
\end{equation}

Similarly, we apply the Codazzi equation equation (\ref{eq:codazzi-1}) to $A_2$ with $X=E_1$, $Y=E_2$ at $p$. For the right hand side of equation (\ref{eq:codazzi-1}), by the curvature formula equation (\ref{eq:curvature-formula}), equation (\ref{eq:property-uTu}), and (\ref{eq:dpiiE1E2=0}), \begin{equation}
    \bar R(E_1,E_2)N_2=(-\langle \tau(v),\tau(v)\rangle \tau(u)+\langle \tau(u),\tau(v)\rangle \tau(v),-\langle -v,v\rangle (-u)+\langle-u,v\rangle (-v)) =-\frac12N_1,
\end{equation} which is normal. Hence the right hand side of (\ref {eq:codazzi-1}) vanishes.  By equation (\ref{eq:defn-of-alpha}), Lemma \ref{lem:SS-computations-2}, and Lemma \ref{lem:SS-computations}, the left hand side of equation (\ref{eq:codazzi-1}) becomes\begin{equation}
    \nabla_{E_1}0-A_2(P(\alpha))-\nabla_{E_2}(-\alpha)+A_2(-P(\alpha)).
\end{equation}
Therefore \begin{equation}\label{eq:nablaE2alpha=2A2Palpha}
    \nabla_{E_2}\alpha=2A_2(P(\alpha)).
\end{equation}

Next apply the Codazzi equation equation (\ref{eq:codazzi-1}) to $A_1$ with $X=E_1$, $Y=\alpha$ at $p$. Write $\alpha_p=(\alpha_1,\alpha_2)$ where $\alpha_i=d\pi_i(\alpha_p)$ for $i=1,2$. For the right hand side of equation (\ref{eq:codazzi-1}), by the curvature formula equation (\ref{eq:curvature-formula}), equations (\ref{eq:property-uTu}), and (\ref{eq:property-of-F}), we have
\begin{equation}
    \bar R(E_1,\alpha)N_1=(-\langle \alpha_1,\tau(u)\rangle \tau(u)+\langle \tau(u),\tau(u)\rangle \alpha_1,-\langle \alpha_2,u\rangle (-u)+\langle-u,u\rangle \alpha_2) =\frac12P(\alpha),
\end{equation}
 which is tangent. By equations (\ref{eq:A1G=frac12E2}),  (\ref{eq:nablaE1alpha=2A1Palpha}), (\ref{eq:nablaalphaEi=0}), and Lemma \ref{lem:SS-computations}, the left-hand side of equation (\ref{eq:codazzi-1}) gives, 
 $$\nabla_{E_1}(\frac12E_2)-2A_1^2(P(\alpha))-\nabla_\alpha0+A_1(0).$$
 Then apply equation (\ref{eq:SS-computations-2}) to (\ref{eq:codazzi-1}) gives
 $$\frac12P(\alpha)-2A_1^2(P(\alpha))=-\frac12P(\alpha).$$ By equation (\ref{eq:AiPalpha=SiPalpha}), we have $2S_1^2(P(\alpha))=P(\alpha)$.

Next apply the Codazzi equation equation (\ref{eq:codazzi-1}) to $A_2$ with $X=E_2$, $Y=\alpha$ at $p$.
For the right hand side of equation (\ref{eq:codazzi-1}), by the curvature formula equation (\ref{eq:curvature-formula}), equations (\ref{eq:property-uTu}), and  (\ref{eq:property-of-F}), we have
\begin{equation}
    \bar R(E_2,\alpha)N_2=(-\langle \alpha_1,\tau(v)\rangle \tau(v)+\langle \tau(v),\tau(v)\rangle \alpha_1,-\langle \alpha_2,v\rangle (-v)+\langle-v,v\rangle \alpha_2) =\frac12P(\alpha),
\end{equation}
 which is tangent. Here we take a fixed point $p\in L$ and write $\alpha_p=(\alpha_1,\alpha_2)$ for the computation, but the computation holds for all $p$. By equations (\ref{eq:A2G=-frac12E1}),  (\ref{eq:nablaE2alpha=2A2Palpha}),  (\ref{eq:nablaalphaEi=0}), and Lemma \ref{lem:SS-computations}, the left-hand side of equation (\ref{eq:codazzi-1}) gives, 
 $$\nabla_{E_2}(-\frac12E_1)-2A_2^2(P(\alpha))-\nabla_\alpha0+A_2(0).$$
 Then by  Lemma \ref{lem:SS-computations-2}, equation (\ref{eq:codazzi-1}) gives
 $$\frac12P(\alpha)-2A_2^2(P(\alpha))=-\frac12P(\alpha).$$ By equation (\ref{eq:AiPalpha=SiPalpha}), we have $2S_2^2(P(\alpha))=P(\alpha)$.

Apply the Codazzi equation equation (\ref{eq:codazzi-1}) to $A_1$ with $X=E_2$, $Y=\alpha$ at $p$. The curvature term vanishes, since by equation (\ref{eq:dim2-decomp-of-TL}), $\alpha\in F$, $E_2$, and $N_1$ are mutually orthogonal componentwise. Apply equations (\ref{eq:A1G=frac12E2}), (\ref{eq:nablaE2alpha=2A2Palpha}), (\ref{eq:nablaalphaEi=0}), and (\ref{eq:defn-of-alpha}) to (\ref{eq:codazzi-1}) gives $$\nabla_{E_2}(\frac12E_2)-2A_1\circ A_2(P(\alpha))-\nabla_\alpha\alpha+A_1(0)=0,$$ which by Lemma \ref{lem:SS-computations-2} gives $2A_1A_2(P(\alpha))=-\nabla_\alpha\alpha$, which implies $2A_1S_2(P(\alpha))=-\nabla_\alpha\alpha$ by equation (\ref{eq:AiPalpha=SiPalpha}), then $2S_1S_2(P(\alpha))=-\nabla_\alpha\alpha$ by equation \eqref{eq:defn-of-Si}, $S_i$ is symmetric, and equation \eqref{eq:S2alpha=0}.

At last, apply the Codazzi equation equation (\ref{eq:codazzi-1}) to $A_2$ with $X=E_1$ and $Y=\alpha$ at $p$. The curvature term vanishes, since by equations (\ref{eq:property-of-F}) and (\ref{eq:dpiiE1E2=0}), $\alpha\in F$, $E_2$, and $N_1$ are mutually orthogonal componentwise. Apply equations (\ref{eq:A2G=-frac12E1}),  (\ref{eq:nablaE1alpha=2A1Palpha}),  (\ref{eq:nablaalphaEi=0}), and  (\ref{eq:defn-of-alpha}) to (\ref{eq:codazzi-1}) gives 
$$\nabla_{E_1}(-\frac12E_1)-2A_2A_1(P(\alpha))-\nabla_\alpha(-\alpha)+A_2(0)=0,$$
which by Lemma \ref{lem:SS-computations-2} gives $2A_2A_1(P(\alpha))=\nabla_\alpha\alpha$, which implies $2A_2S_1(P(\alpha))=\nabla_\alpha\alpha$ by by equation (\ref{eq:AiPalpha=SiPalpha}), then $2S_2S_1(P(\alpha))=\nabla_\alpha\alpha$ by equation \eqref{eq:defn-of-Si}, $S_i$ is symmetric, and equation \eqref{eq:S1alpha=0}. Therefore $2S_2S_1(P(\alpha))+2S_1S_2(P(\alpha))=\nabla_\alpha\alpha-\nabla_\alpha\alpha=0$.
\end{proof}

Now we can finally prove Proposition \ref{prop:no-higher-codim-diagonal}. By Lemma \ref{lem:HH-computations-2} and Lemma \ref{lem:HH-computations-3}, $$S_1S_2(P(\alpha))=\frac12S_1S_2(P(\alpha))+\frac12S_2S_1(P(\alpha))+\frac12[S_1,S_2](P(\alpha))=0.$$
Therefore $0=S_1^2S_2(P(\alpha))=S_2S_1^2(P(\alpha))=\frac12S_2(P(\alpha))$ by $S_1^2(P(\alpha))=\frac12P(\alpha)$ and $[S_1,S_2]=0$. Therefore $S_2^2(P(\alpha))=0$, which implies $P(\alpha)=0$ since $2S^2_2(P(\alpha))=P(\alpha)$, which contradicts equation (\ref{eq:alpha2=frac12}), since $|P(\alpha)|^2=|\alpha|^2$. This contradiction rules out
codimension $2$, and by the reduction above it rules out all codimensions $r>1$.

\section{Reduction to Isoparametric Hypersurfaces}\label{sec:reduction}

In this section, we treat case (i) of Theorem \ref{thm:lie-triple-stable}. In this case, $\Q^n_{\eps_1}\times\Q^m_{\eps_2}$ can be written as $\Q^n_\eps\times\R^m$, and by definition of $\m_0,\m_1,$ and $\m_2$, a Lie triple system corresponding to a totally geodesic section of $L$ is one of the following type:
\begin{equation}\label{eq:caseitypes}
    \begin{aligned}
        (a)&\ \m_0\oplus \m_2 \text{ with }\dim \m_0=1;\\
        (b)&\ \m_1\oplus \m_2 \text{ with }\dim \m_1=1;\\
        (c)&\ \m_2.
    \end{aligned}
\end{equation}
\begin{lemma}\label{lem:no-type-a-decompose}
    If there is no type (a) Lie triple system, then $L$ is an open subset of $L_1\times L_2$ where $L_1\subset \Q^n_{\eps_1}$ and $L_2\subset\Q^m_{\eps_2}$ are isoparametric submanifolds.
\end{lemma}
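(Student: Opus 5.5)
If no normal space is of type (a), then $\m_0=0$ at every point of $L$, so every $\nu_pL$ has the form $\m_1\oplus\m_2$. By Theorem \ref{thm:lie-triple-stable} the type is the same at all points of $L$, so either every normal space is of type (b), with $\dim\m_1=1$, or every normal space is of type (c), with $\m_1=0$. In both cases $\m_0=0$ everywhere, which puts us in the decomposable situation already handled after Theorem \ref{thm:lie-triple-stable}. I will rerun that argument here.

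\textbf{Step 1 (constant ranks).} Since $\m_0=0$, we have $\rank\, d\pi_i|_{\m(q)}=\dim\m_i(q)$ for $i=1,2$. Using the local model of Proposition \ref{prop:HLO} and the maps $F_q$, both ranks are lower-semicontinuous in $q$. Their sum is the constant codimension of $L$, so each is locally constant. Connectedness of $L$ then makes each constant. In our setting this is also visible directly: $\dim\m_1\in\{0,1\}$ is fixed once the type, (b) or (c), is fixed.

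\textbf{Step 2 (splitting of the bundles).} Step 1 gives smooth constant-rank subbundles $\nu^1L$ and $\nu^2L$ with
\[
\nu L=\nu^1L\oplus\nu^2L,\qquad \nu^1L\subset T\Q^n_{\eps_1}\oplus 0,\qquad \nu^2L\subset 0\oplus T\Q^m_{\eps_2}.
\]
Taking orthogonal complements inside each factor, I define
\[
D_1|_p=\bigl(\nu^1L|_p\bigr)^{\perp}\cap\bigl(T_{p_1}\Q^n_{\eps_1}\oplus 0\bigr),
\]
and $D_2$ analogously. These are smooth constant-rank distributions with $TL=D_1\oplus D_2$. Here one checks that $T_{p_1}\Q^n_{\eps_1}\oplus T_{p_2}\Q^m_{\eps_2}$ splits as $\nu^1\oplus D_1\oplus\nu^2\oplus D_2$, which holds because the normal space is a direct sum of factor-aligned pieces.

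\textbf{Step 3 (local and global product structure).} Lemma \ref{lem:localproduct} now shows that every point of $L$ has a neighborhood of the form $L_{1,\alpha}\times L_{2,\alpha}$ with isoparametric factors. Lemma \ref{lem:gluing} assembles these local pieces into $L\subset L_1\times L_2$ as an open subset, where $L_1\subset\Q^n_{\eps_1}$ and $L_2\subset\Q^m_{\eps_2}$ are isoparametric.

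\textbf{Main obstacle.} Step 2 is where care is needed: the splitting of $\nu L$ into factor-aligned pieces must be smooth. I would verify this by noting that $P$ preserves $\nu L$ when $\m_0=0$, since then $\nu L=P_1(\nu L)\oplus P_2(\nu L)$. This gives $\nu^i L=P_i(\nu L)$, and these are smooth because $P_i$ are parallel bundle maps and their ranks are constant.
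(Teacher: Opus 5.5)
Your proof is correct and follows essentially the paper's route: with $\m_0=0$, lower semicontinuity of $\rank d\pi_i|_{\nu_qL}$ forces $\dim\m_1$ and $\dim\m_2$ to be constant, which gives the splitting $TL=P_1(TL)\oplus P_2(TL)=D_1\oplus D_2$, and then Lemmas~\ref{lem:localproduct} and~\ref{lem:gluing} finish. One correction: Theorem~\ref{thm:lie-triple-stable} does not tell you that the type, (b) or (c), is constant on $L$, because (a), (b) and (c) are all subcases of its type (i); what actually rules out mixing (b) and (c) is your Step~1 argument (two lower-semicontinuous integer-valued functions with constant sum are locally constant), so drop the appeal to that theorem in your first paragraph and the circular remark that closes Step~1.
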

\begin{proof}
By Proposition \ref{prop:HLO}, the local model of isoparametric submanifolds gives that there is a diffeomorphism $\Phi$ from $U$ to $L'\times \Sigma$, where $L'$ is an open neighborhood on $L$, $U$ is a neighborhood of $L'$ in $\Q^n_\eps\times\R^m$, and $\Sigma$ is the section of $L$ at $p$, for some $p\in L$. For each $q\in L$ we have a map $F_q$ from $\Sigma$ to $\Phi^{-1}(\{q\}\times \Sigma)\subset \Q^n_\eps\times \R^m$, sending $x$ to $\Phi^{-1}(q,x)$. $F_q$ is continuous on $q$ by the local model. In particular, $\pi_i\circ F_q$ is continuous on $q$, where $\pi_i$ is defined by \eqref{eq:pi1pi2-definition}. Note that the rank of $d(\pi_i\circ F_q)_p$ is exactly the rank of $d\pi_i|_{\m(q)}$. Here we treat $\m$ (and later $\m_1,\m_2$) as a map on $L$. Therefore by \cite{Pugh2015real}*{Chapter 5 Exercise 43}, the rank of $d(\pi_i\circ F_q)_p$ is lower-semicontinuous on $L$, for $i=1,2$. On the other hand, $\text{rank} (d\pi_i\m)=\dim\m_i+\dim\m_0=\dim\m_i$ since $\m_0=0$ for types (b) and (c), for $i=1,2$. Hence $\dim\m_1$ and $\dim\m_2$ lower-semicontinuous on $L$. Since $\dim\m_1$ is lower-semicontinuous, for any point of type (c), nearby points has lower or equal $\dim\m_1$, hence not in type (b). Since $\dim\m_2$ is lower semi-continuous, for any point of type (b), nearby points has lower or equal $\dim\m_2$, hence not in type (c). By connectivity of $L$, these two types cannot occur in the same $L$. In both cases, since $\nu L$ decomposes as $\m_1\oplus\m_2$, $TL$ decomposes globally as $P_1(TL)\oplus P_2(TL)$ a direct sum of two constant rank distributions, where $P_1$ and $P_2$ are defined by equation \eqref{eq:pi1pi2-definition}. Then Lemma \ref{lem:localproduct} and Lemma \ref{lem:gluing} finish the proof.
\end{proof}

Therefore we may assume that there is a point $p\in L$ with  type (a) Lie triple system. Under this assumption we have the following Theorem.

\begin{theorem}\label{thm:reduction}
Let $L$ be an isoparametric submanifold in $\Q^n_{\eps}\times\R^m$ ($\varepsilon=\pm1$) such that its section at a point $p^0=(p^0_1,p^0_2)\in L\subset \Q^n_{\eps}\times\R^m$ corresponds to a Lie triple system of the form $\m=R(v,w)\oplus(0\oplus B)$, where $v$ and $w$ are nonzero vector in $T_{p^0_1}\Q^n_{\eps}$ and $T_{p^0_2}\R^m$, and $B$ is a subspace of $T_{p^0_2}R^m$ with $w\notin B$. Then up to an isometry induced by translation and rotation in the Euclidean factor, around $p^0$, $L$ is locally an open subset of $L_1\times L_2\times\{0\}\subset Q^n_\eps \times \R^{m_1}\times \R^{m_2}\times\R^{m_3}$ where $L_1\subset Q_\varepsilon^n\times\R^{m_1}$ is an isoparametric hypersurface in $Q_\eps^n\times R^{m_1}$ with its angle function satisfies that $-1<C<1$, and $L_2\subset\R^{m_2}$ is an open set of a full compact isoparametric submanifold. Here $m=m_1+m_2+m_3$, and the angle function is defined by equation \eqref{eq:angle-function}.
\end{theorem}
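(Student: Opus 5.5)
Throughout, $\m_0=\R(v,w)$ and $\m_2=0\oplus B$.

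\emph{Step 1: the type is constant near $p^0$.} By the stability argument of Theorem~\ref{thm:lie-triple-stable} and Lemma~\ref{lem:no-type-a-decompose}, the type of $\nu_qL$ is constant for $q$ near $p^0$. The reason is semicontinuity: $\operatorname{rank} d\pi_1|_{\m(q)}=\dim\m_0+\dim\m_1$ is lower semicontinuous, and types (b), (c) are separated from type (a) by the vanishing or nonvanishing of the curvature on sections. Hence locally every normal space has the form $\R(v(q),w(q))\oplus(0\oplus B(q))$ with $\dim B$ constant. We then obtain two smooth constant-rank normal subbundles: $\nu^0$ of rank $1$, spanned by a unit field $\xi=(\xi_1,\xi_2)$, and $\nu^2=0\oplus B$.

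\emph{Step 2: splitting off the Euclidean directions.} We show that $\nu^2$ is parallel and that its Euclidean directions are independent of the $\Q^n_\eps$ factor.
\begin{itemize}[nosep]
\item Take a parallel normal field $\eta=(0,\eta_2)\in\nu^2$. It stays in $\nu^2$: this uses flatness of $\nu L$ together with the fact that $\nu^2$ is exactly the kernel of $d\pi_1$ on $\nu L$.
\item Differentiate $P\eta=-\eta$. Since $P$ is parallel, we get $A_\eta$ commuting with $P$ on tangent components, and $\nabla^\perp$ preserving $\nu^2$.
\item Use the analogue of Lemma~\ref{lem:SS-computations}: differentiate $\langle P\xi,\eta\rangle=-\langle \xi_2,\eta_2\rangle$ and the orthogonality relations. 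Combined with the Codazzi equation, where $\bar R(\cdot,\cdot)\eta=0$ because $\eps_2=0$, this should show that $TL$ splits into $D'\oplus D_2$.
\item Here $D_2\subset 0\oplus\R^m$ is the common eigenspace of the $A_\eta$ for which $A_\xi|_{D_2}$ and the $\Q^n$-components vanish. So $D_2$ is the sum of the curvature distributions of the "$\R^m$-part" with curvature normals in $\nu^2$.
\end{itemize}

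\emph{Step 3: the product decomposition.} Via the rank theorem, as in Lemma~\ref{lem:localproduct}, this gives locally $L=L_1'\times L_2$. Here $L_2\subset\R^{m_2+m_3}$ satisfies:
\begin{itemize}[nosep]
\item it has flat normal bundle;
\item the parallel normals from $\nu^2$ have constant principal curvatures, since the local parallel manifolds of $L$ have constant mean curvature for all such $\eta$;
\item hence it is isoparametric in Euclidean space.
\end{itemize}
Corollary~\ref{cor:real-palais-terng} then writes $L_2$ as an open subset of $\R^{k}\times L_2'$ with $L_2'$ compact and full in some $\R^{m_2}$. Absorbing the $\R^k$ factor, and the remaining flat directions of $L_1'$ in $\R^m$, into $\Q^n_\eps\times\R^{m_1}$ leaves a factor $L_1\subset\Q^n_\eps\times\R^{m_1}$ of codimension one, whose normal is $\xi$. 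The leftover normal directions of $B$, along which $L$ is totally geodesic and constant, give the $\{0\}\in\R^{m_3}$ factor after a translation and rotation.

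\emph{Step 4: properties of $L_1$.} Apply Lemma~\ref{lem:adding-euclidean-factor-no-change} and the splitting of parallel submanifolds and mean curvatures, as in the proof of Lemma~\ref{lem:localproduct}, to conclude that $L_1$ is an isoparametric hypersurface. Its angle function satisfies $-1<C<1$ because $\xi$ has both components $\xi_1,\xi_2$ nonzero, since $v\ne0\ne w$ and $\xi\in\m_0$.

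The main obstacle is Step 2: proving that the curvature distributions attached to $\nu^2$ decouple from $\xi$, i.e.\ that $A_\xi$ vanishes on $D_2$ and that $D'$, $D_2$ are integrable and parallel. To get this I would first try the Codazzi equation for $A_\eta$ with $X\in D_2$, $Y\in D'$, exploiting $\bar R(\cdot,\cdot)\eta=0$, together with the commutation $[A_\xi,A_\eta]=0$ from the Ricci equation.
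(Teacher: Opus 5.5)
Your Step 2 has a genuine gap at its first bullet, and that step is the heart of the theorem. The claim that a parallel normal field $\eta$ with $\eta(q^0)\in\nu^2=E$ stays in $E$ does not follow from flatness of $\nu L$ together with $E=\ker(d\pi_1|_{\nu L})$.

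To see why, take any section $\eta$ of $E$. Since $P_1$ is parallel, $P_1(\bar\nabla_X\eta)=0$. Writing $N=a\nu_1+b\nu_2$ and $\mathcal T=b\nu_1-a\nu_2$, this gives exactly
\[
a\,\langle\nabla^\perp_X\eta,N\rangle=b\,\langle A_\eta\mathcal T,X\rangle .
\]
So $E$ is $\nabla^\perp$-parallel if and only if $A_\eta\mathcal T=0$ for all $\eta\in\Gamma(E)$. That is a first-order condition on the second fundamental form, and neither $R^\perp=0$ nor the kernel description supplies it. Differentiating $P\eta=-\eta$ does not help either: it presupposes $\eta\in E$ everywhere, so it is circular.

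This is exactly where the isoparametric hypothesis must enter, and the paper uses a mechanism absent from your plan:
\begin{itemize}
\item For the parallel field $\zeta$ with $\zeta(q^0)\in E$, the Jacobi determinant at $q^0$ is the polynomial $\det(I-tA_\zeta)$. By \eqref{eq:HLO-determinant-condition} it is therefore a polynomial at every nearby point.
\item Where $\zeta$ acquires a $\Q^n_\eps$-component $h\ne0$, the $D_1$-block produces $\cos(ht)$ or $\cosh(ht)$ factors. This is impossible for $\eps=1$.
\item For $\eps=-1$ it forces $A_ND_1\subset D_1$ with eigenvalues $\pm a$ of equal multiplicity (Lemma~\ref{lem:det-rigidity}). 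That contradicts Cartan's identity for the $D_1$-leaves in $\Hyp^n$ (Lemmas~\ref{lem:cartanformula} and \ref{lem:E-parallel}).
\end{itemize}

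The later steps are also underdetermined.
\begin{itemize}
\item \textbf{Decoupling.} Besides parallelism of $E$, you need the $E$-parts $\mu_i$ of the curvature normals to be parallel of constant length. This comes from base-point independence of $\det(I-tA_\zeta)$ for $\zeta\in E$ (Lemma~\ref{lem:E-shape-and-spectrum}). Only then does Codazzi with $X\in\mathcal E_i$ and $\mathcal T$ give $h_i\mu_i=0$ (Lemma~\ref{lem:no-oblique}).
\item \textbf{Step 3 is the more serious problem.} It cannot be run ``as in'' Lemma~\ref{lem:localproduct}, because that lemma presupposes a fixed ambient product. Here you must first find a \emph{constant} orthogonal splitting $\R^m=\R^{m_1}\oplus\R^{m_2}$. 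The non-$E$ part of $TL$ must lie in $T\Q^n_\eps\oplus T\R^{m_1}$, and the $E$-curvature distributions in $T\R^{m_2}$. Note that $\mathcal T$ itself has the $\R^m$-component $-a\nu_2$.
\item The paper gets this splitting by projecting. It shows $\widehat L=\pi_{\R^m}(L)$ is isoparametric with zero-curvature-normal distribution $df(D_N)$, applies Corollary~\ref{cor:real-palais-terng} to $\widehat L$, and then lifts. Your order (split $L$ first, apply Palais--Terng to $L_2$ afterwards) has no source for the fixed splitting.
\item \textbf{Minor, Step 1.} Types (a), (b), (c) all have flat sections, so curvature does not separate them. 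Openness of type (a) follows from lower semicontinuity of the ranks of $d\pi_1|_\m$ and $d\pi_2|_\m$ alone.
\end{itemize}
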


Fix $p^0$. Since the result is local around $p^0$, we may restrict $L$ to a smaller neighborhood around $p^0$. This is done several times in this section, and for simplicity, we will still denote the smaller neighborhood as $L$. For starter, we restrict to a smaller $L$ so that $\nu L$ is globally flat, that is, any normal vector can be extended to a global parallel normal vector field.

We first introduce some notations. Note that they can be defined when $L$ is an isoparametric hypersurface as well, in which case, the $E$ below will simply be a trivial distribution.

For a vector field $X$ in $\Gamma(T(\Q^n_{\eps}\times\R^m))$, we have
\begin{equation}\label{eq:factor-components}
X=P_1(X)+P_2(X).
\end{equation}
Here $P_1(X_q)\in T_{q_1}\Q^n_\eps\oplus 0$, $P_2(X_q)\in 0\oplus T_{q_2}\R^m$, for any $q=(q_1,q_2)\subset \Q^n_{\eps}\times\R^m$.

Let $E:=\ker(d\pi_1|_{\nu L})$. There is a neighborhood of $p^0$ in $L$, still denoted by $L$, such that on $L$, the Lie triple system decomposes as $\m=\m_0\oplus\m_2$ with $\dim\m_0=1$, and there is a unit normal field $N$ 
along $L$ with the property that, 
\begin{equation}\label{eq:normal-decomposition}
\nu L=\mathbb{R}N\oplus E,
\end{equation}
and for any $q=(q_1,q_2)\in L$,
\begin{equation}\label{eq:diagonal-normal}
\begin{aligned}
E&=\ker(d\pi_1|_{\nu L}),\qquad
N=a\nu_1+b\nu_2,\\
\nu_1|_q&\in T_{q_1}Q_\varepsilon^n\oplus0,\qquad \nu_2|_q\in0\oplus T_{q_2}\mathbb{R}^m,\\
E_q&\subset (0\oplus T_{q_2}\mathbb{R}^m)\cap (\nu_2|_q)^\perp.
\end{aligned}
\end{equation}
Here we can choose $N$ so that $N_{p^0}=\frac{(v,w)}{|(v,w)|}\in \m$ as appeared in Theorem \ref{thm:reduction}, and $L$ small enough so that $a$ and $b$ are non-zero on $L$. The smooth-ness of $E$ follows from that it is the constant-rank kernel of $d\pi_1|_{\nu L}$, and the smooth-ness of $N$ follows from the fact that $\R N$ is the orthogonal complement of $E$ in $\nu L$. 

The diagonal tangent direction paired with $N$ is
\begin{equation}\label{eq:diagonal-tangent}
{\mathcal T}=b\nu_1-a\nu_2.
\end{equation}
One can check that the ${\mathcal T}$ here is in fact the normalized version of ${\mathcal T}$ defined in equation \eqref{eq:T-defn}. In this section, for computational simplicity, we use the normalized ${\mathcal T}$ here. We have that, ${\mathcal T}$ has unit length, and is tangent to $L$, since $\nu_1,\nu_2\perp E$ and $\mathcal T\perp N$. Moreover
\begin{equation}\label{eq:rotation}
 \nu_1=aN+b{\mathcal T},
 \qquad
 \nu_2=bN-a{\mathcal T}.
\end{equation}
The tangent bundle decomposes orthogonally as
\begin{equation}\label{eq:tangent-decomposition}
TL=D_1\oplus\mathbb{R}{\mathcal T}\oplus D_2,
\end{equation}
where, at $q=(q_1,q_2)$,
\begin{equation}\label{eq:D1D2-definitions}
(D_1)_q=T_qL\cap (T_{q_1}Q_\varepsilon^n\oplus0),
\qquad
(D_2)_q=T_qL\cap (0\oplus T_{q_2}\mathbb{R}^m).
\end{equation}
Equivalently, $(D_1)_q$ is the orthogonal complement of $\nu_1|_q$ inside $T_{q_1}Q_\varepsilon^n\oplus0$, while $(D_2)_q$ is the orthogonal complement of $E_q\oplus\mathbb{R}\nu_2|_q$ inside $0\oplus T_{q_2}\mathbb{R}^m$. We also have that 
\begin{equation}\label{eq:dpi-blocks}
d\pi_{\mathbb{R}^m}(D_1)=0,
\qquad
d\pi_{\mathbb{R}^m}({\mathcal T})=-a \nu_2
\end{equation}
where $\pi_{\R^m}$ is the projection from $\Q^n_{\eps}\times\R^m$ to $\R^m$.
Since  $a\ne0$, $\ker(d\pi_{\mathbb{R}^m}|_{TL})=D_1$. Hence $D_1$ is integrable. Since $D_1$ is tangent to the fibers of the constant-rank map $\pi_{\mathbb{R}^m}|_L$, its integrated manifolds lie in first-factor slices of the form $\Q^n_\eps\times\{z\}$ and are hypersurfaces of $\Q^n_\eps$.

If a vector $Z$ on $L$ lies in $0\oplus T\R^m$ and decomposes as $Z=Z_2+l\nu_2+Z_E$, with $Z_2\in D_2$, $Z_E\in E$, and $l$ a real number, then equation \eqref{eq:rotation} gives a projection formula
\begin{equation}\label{eq:projection-formula}
Z^\top=Z_2-al{\mathcal T},
\qquad
Z^\perp=Z_E+blN.
\end{equation}

We shall use curvature distributions. For normal vector fields $\rho,\sigma\in \nu L$, then $P_1\rho$ and $P_1\sigma$ are both scalars of $\nu_1$. The product curvature tensor therefore gives
\begin{equation}\label{eq:ricci-curvature-term-zero}
\langle \overline R(X,Y)\rho,\sigma\rangle=0
\end{equation}
for all tangent vectors $X,Y$, because the only nonzero curvature contribution comes from the $Q_\varepsilon^n$-factor and the two normal first-factor components are scalar of each other. The Ricci equation equation \eqref{eq:ricci} and the flat-ness of normal bundle then gives
\begin{equation}\label{eq:shape-commute}
[A_\rho,A_\sigma]=0.
\end{equation}
There exists a neighborhood in $L$, still denoted by $L$, such that on $L$ we have smooth simultaneous eigendistributions $\mathcal E_1,\ldots,\mathcal E_s$ for all shape operators of $L$ for some integer $s$
\begin{equation}\label{eq:simultaneous-eigendistribution}
    TL=\oplus_{i=1}^s \mathcal E_i.
\end{equation}
For each $\mathcal E_i$, there is a vector field $\eta_i\in\Gamma(\nu L)$, called the curvature normal, characterized by
\begin{equation}\label{eq:curvature-normal}
A_\rho|_{\mathcal E_i}=\langle\rho,\eta_i\rangle\operatorname{id}_{\mathcal E_i}
\end{equation}
for any $\rho\in\Gamma(\nu L)$. Equivalently, 
\begin{equation}\label{eq:alphaXY=}
    \text{II}(X,Y)=\langle X,Y\rangle\eta_i,\qquad \text{II}(X,Z)=0,
\end{equation}for $X,Y\in \Gamma(\mathcal E_i)$, and $Z\in \Gamma(\mathcal E_j)$ with $i\ne j$, where $\text{II}$ is the second fundamental form on $L$.

\subsection{Determinant Rigidity}\label{subsec:determinant-rigidity}


Fix $q\in L$. Let $\zeta\in\Gamma(\nu L)$ be a parallel normal vector field. Write $P_1\zeta=h\nu_1$ where $h$ is a real-valued function on $L$. Along the normal geodesic $\exp_q t\zeta_q$, by the curvature formula equation \eqref{eq:curvature-formula} and the fact that the second factor of the ambient space is Euclidean, the curvature operator $X\mapsto\overline R(X,\zeta)\zeta$ equals $\varepsilon h^2I$ on $(D_1)_q$ and vanishes on $(\mathbb{R}{\mathcal T}\oplus D_2)_q$. Let $Y$ be a Jacobi field along $\exp_pt\zeta_p$ with initial value $Y(0)$ and $Y'(0)=-A_\zeta Y(0)$. Solving the Jacobi equation \eqref{eq:jacobi-equation} for $Y$ gives 
$Y(t)=c_\eps(t)\Psi_t (Y(0))-s_\eps(t)\Psi_t(A_\zeta Y(0))$ for 
$Y(0)\in D_1$, and $Y(t)=\Psi_t (Y(0))-t\Psi_t (A_\zeta Y(0))$ for $Y(0)\in D_2$, 
where $\Psi_t (\cdot)$ is the parallel transport from $T_pL$ to $\exp_pt\zeta$, where, for $h\neq0$,
\begin{equation}\label{eq:cs-functions}
(c_\varepsilon,s_\varepsilon)=
\begin{cases}
\left(\cos(ht),\dfrac{\sin(ht)}{h}\right),&\varepsilon=1,\\[1.1em]
\left(\cosh(ht),\dfrac{\sinh(ht)}{h}\right),&\varepsilon=-1.
\end{cases}
\end{equation} 

In other words, the operator $J_\zeta(q,t)$ ($J(t)$ for short)  defined by
$J(t)(Y(0)) := \Psi_t^{-1}(Y(t))$  has the form
\begin{equation}\label{eq:Jacobi}
J(t)=c_\varepsilon(t)P_{D_1}+P_{\mathbb{R}{\mathcal T}\oplus D_2}
-\bigl(s_\varepsilon(t)P_{D_1}+tP_{\mathbb{R}{\mathcal T}\oplus D_2}\bigr)A_\zeta,
\end{equation}
where $P_{D_1}$ and $P_{\mathbb{R}{\mathcal T}\oplus D_2}$ are the projections from $TL$ to $D_1$ and $\mathbb{R}{\mathcal T}\oplus D_2$, respectively. Set 
\begin{equation}\label{eq:h=0cases}
    c_\eps=1, \quad s_\eps=t, \quad \text{if }h=0.
\end{equation}

\begin{lemma}\label{lem:det-rigidity}
Assume $h\neq0$. If $J(t)$ is given by equation \eqref{eq:Jacobi} and $\det J(t)$ is a polynomial in $t$, then it is not possible that $\varepsilon=1$. If $\varepsilon=-1$, then
\begin{equation}\label{eq:det-rigidity-conclusion}
A_\zeta (D_1)\subset D_1,
\qquad
(A_\zeta|_{D_1})^2=h^2I,
\end{equation}
and the $+h$- and $-h$-eigenspaces of $A_\zeta|_{D_1}$ have the same dimension.
\end{lemma}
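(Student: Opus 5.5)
Write $TL=D_1\oplus W$ with $W:=\mathbb{R}{\mathcal T}\oplus D_2$, and set $k:=\dim D_1$. Here $k=n-1\ge 1$, since $D_1$ is the orthogonal complement of $\nu_1$ in the first factor. In block form relative to this splitting, write
\[
A_\zeta=\begin{pmatrix}A_{11}&A_{12}\\ A_{21}&A_{22}\end{pmatrix},\qquad A_{21}=A_{12}^{T},
\]
with $A_{11},A_{22}$ symmetric. The plan is to replace the transcendental functions $c_\eps,s_\eps$ by formal variables. Define the polynomial
\[
F(x,y,t):=\det\begin{pmatrix}xI-yA_{11}&-yA_{12}\\ -tA_{21}&I-tA_{22}\end{pmatrix}.
\]
It is homogeneous of degree $k$ in $(x,y)$, and by equation \eqref{eq:Jacobi} we have $\det J(t)=F(c_\eps(t),s_\eps(t),t)$.

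\textbf{Step 1: remove the exponentials.} Put $X:=e^{ht}$ when $\eps=-1$ and $X:=e^{iht}$ when $\eps=1$. Then $c_\eps\pm h s_\eps=X^{\pm1}$ in the first case and $c_\eps\pm i h s_\eps=X^{\pm1}$ in the second. Hence $\det J(t)=\sum_{r=-k}^{k}X^{r}P_r(t)$ for polynomials $P_r$. Functions of the form $e^{\lambda t}p(t)$ with distinct $\lambda$ are linearly independent, so if $\det J$ is a polynomial then $P_r\equiv0$ for every $r\ne0$. Equivalently, $F$ restricted to the conic
\[
\Gamma_\eps:=\{x^2+\eps h^2y^2=1\}
\]
(a hyperbola for $\eps=-1$, an ellipse for $\eps=1$) is a function $G(t)$ of $t$ alone. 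This holds as a polynomial identity, since the parametrization $X\mapsto(x,y)$ of the complexified conic is Zariski dense. Taking $(x,y)=(1,0)$ gives $G(t)=\det(I-tA_{22})$.

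Next use homogeneity. $F$ is homogeneous of degree $k$ in $(x,y)$ and constant on $\Gamma_\eps$ for each fixed $t$. Since $(x,y)\mapsto(-x,-y)$ preserves $\Gamma_\eps$, $k$ odd would force $G\equiv 0$, contradicting $G(0)=1$. So $k$ is even and
\[
F(x,y,t)=(x^2+\eps h^2y^2)^{k/2}\det(I-tA_{22}).
\]

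\textbf{Step 2: the block $A_{11}$.} Setting $t=0$ gives $\det(xI-yA_{11})=(x^2+\eps h^2y^2)^{k/2}$.

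For $\eps=1$, the characteristic polynomial of the real symmetric matrix $A_{11}$ would then have only the nonreal roots $\pm ih$ (as $h\ne0$). Since $k\ge1$ this is impossible, which rules out $\eps=1$.

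For $\eps=-1$, all eigenvalues of $A_{11}$ are $\pm h$, each with multiplicity $k/2$. Because $A_{11}$ is symmetric, this gives $A_{11}^2=h^2I$ and equal dimensions of the two eigenspaces.

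\textbf{Step 3: $A_{12}=0$, the main obstacle.} This is the step I expect to need the most care. For small $t$, take the Schur complement of the invertible block $I-tA_{22}$:
\[
F=\det(I-tA_{22})\,\det\bigl(xI-y\,B(t)\bigr),\qquad B(t):=A_{11}+tA_{12}(I-tA_{22})^{-1}A_{21}.
\]
Comparing with Step 1, $B(t)$ has the same characteristic polynomial as $A_{11}$ for all small $t$. In particular $\operatorname{tr}B(t)$ is constant in $t$. Differentiating at $t=0$ gives
\[
\operatorname{tr}(A_{12}A_{12}^{T})=0,
\]
so $A_{12}=0$. Therefore $A_\zeta(D_1)\subset D_1$ and $A_\zeta|_{D_1}=A_{11}$, which together with Step 2 yields \eqref{eq:det-rigidity-conclusion} and the equality of the dimensions of the $\pm h$-eigenspaces.
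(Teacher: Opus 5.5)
Your argument is correct, and it reaches the statement by a noticeably different route from the paper.

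The paper works with the transcendental entries directly. It isolates the $t$-degree-zero part $\det(c_\eps I-s_\eps A_1)$ and rules out $\eps=1$ by exhibiting a nonvanishing $e^{\ii hdt}$ coefficient. It obtains the eigenvalues $\pm h$ with equal multiplicities from Lemma~\ref{lem:S2=a2I-eigenvalues-a--a}. Only then does it take the Schur complement of the upper-left block, which is invertible because it is now known to be $e^{-ht}I_++e^{ht}I_-$. It kills $A_2$ by splitting $A_2=A_{2,+}+A_{2,-}$ and reading off the coefficients of $te^{2ht}$ and $te^{-2ht}$.

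You instead homogenize, and the single identity $F(x,y,t)=(x^2+\eps h^2y^2)^{k/2}\det(I-tA_{22})$ carries all the information. The slice $t=0$ gives the characteristic polynomial of $A_{11}$ outright. So $\eps=1$ fails because a real symmetric matrix cannot have eigenvalues $\pm\ii h$, and no auxiliary eigenvalue lemma is needed for $\eps=-1$. You then take the Schur complement of the lower-right block, which is invertible for small $t$ without any prior knowledge of $A_{11}$. This converts the identity into isospectrality of $B(t)$, so the derivative of $\operatorname{tr}B(t)$ at $t=0$, namely $\operatorname{tr}(A_{12}A_{12}^T)$, must vanish.

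Your version treats both signs of $\eps$ uniformly and avoids the $\pm$ splitting of $A_{12}$. It also makes precise the paper's somewhat informal ``coefficient of $t^0$'' step, in which $c_\eps,s_\eps$ themselves depend on $t$. The paper's version is more hands-on and parallels the coefficient-extraction and Schur-complement computations it reuses in the $\Hyp^n\times\Hyp^m$ case (Lemmas~\ref{lem:S2=a2I}, \ref{lem:B=0}, \ref{lem:u=0v=0}).

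The one step you assert without proof is passing from constancy on $\Gamma_\eps$ to the global factorization, and it is fine. Every $(x,y)\in\C^2$ with $x^2+\eps h^2y^2\neq 0$ rescales onto the complex conic. Hence the difference of the two degree-$k$ homogeneous polynomials vanishes on a Zariski-dense set.
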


\begin{proof}
Take orthonormal basis of $D_1|_q$ and $(\mathbb{R}{\mathcal T}\oplus D_2)_q$, respectively, which combined to be an orthonormal basis on $T_qL$. Write $A=A_\zeta$ in terms of this orthonormal basis in block matrix form relative to $D_1|_q\oplus(\mathbb{R}{\mathcal T}\oplus D_2)_q$:
\begin{equation}\label{eq:A-blocks}
A=\begin{pmatrix}A_1&A_2\\ A_2^T&A_3\end{pmatrix},
\end{equation}
with $A_1,A_3$ self-adjoint. In particular we choose orthonormal basis so that $A_1$ and $A_3$ are diagonal. Therefore 
\begin{equation}\label{eq:Jt-blocks}
    J(t)=\begin{pmatrix}c_\eps(t) I-s_\eps(t) A_1& -s_\eps(t) A_2\\ -tA_2^T&I-tA_3\end{pmatrix},
\end{equation}
by equation \eqref{eq:Jacobi}, where $I$ is the identity matrix in respective blocks. Therefore the determinant of $J(t)$ is a finite sum of terms $t^re^{\lambda t}$, and these are linearly independent functions of $t$ for distinct pairs $(r,\lambda)$ by Lemma \ref{lem:spectrum-unique}. By our assumption that $J(t)$ is a polynomial, all terms with $t^re^{\lambda t}$ with $\lambda\ne0$ vanishes.

From equation \eqref{eq:Jacobi}, the coefficient of the $t^0$ term in $\det J(t)$ is
\begin{equation}\label{eq:t0-coefficient}
\det(c_\varepsilon(t)I-s_\varepsilon(t)A_1),
\end{equation}
because every contribution from the lower-left block or from $A_3$ carries a factor of $t$. If $\lambda_1,\ldots,\lambda_d$ are the eigenvalues of $A_1$, then $(c_\eps(t)-s_\eps(t)\lambda_k)$ is an eigenvalue of $(c_\eps(t)I-s_\eps(t)A_1)$ for $k=1,2,...,d$. Here $d$ is the dimension of $D_1$. Then if $\eps=1$, the diagonal terms in equation \eqref{eq:t0-coefficient} are
\begin{equation}\label{eq:spherical-factor}
\cos(ht)-\frac{\lambda_k}{h}\sin(ht)
=\frac{1+\ii\lambda_k/h}{2}e^{\ii ht}+\frac{1-\ii\lambda_k/h}{2}e^{-\ii ht}.
\end{equation}
for $k=1,2,...,d$. Both coefficients of exponential terms are nonzero because $\lambda_k$ is real, so equation \eqref{eq:t0-coefficient} has nonzero $e^{\ii hd t}$ terms. Hence by Lemma \ref{lem:spectrum-unique}, $\det (c_\eps(t)I-s_\eps(t)A_1)$ is not a polynomial, which implies that $\varepsilon=1$ is impossible.

For $\varepsilon=-1$, the diagonal terms of equation \eqref{eq:t0-coefficient} are
\begin{equation}\label{eq:hyperbolic-factor}
\cosh(ht)-\frac{\lambda_k}{h}\sinh(ht)
=\frac{1-\lambda_k/h}{2}e^{ht}+\frac{1+\lambda_k/h}{2}e^{-ht}.
\end{equation}
Putting $x=e^{ht}$, since $A_1$ is symmetric, we may apply Lemma \ref{lem:S2=a2I-eigenvalues-a--a} to $\det(c_{-1}(t)I-s_{-1}(t)A_1)$, which gives that the eigenvectors of $A_1$ are $\lambda_k=\pm h$, and the two signs have equal multiplicity.

Let $D_1^+=\ker(A_1-hI)$ and $D_1^-=\ker(A_1+hI)$, and set $A_{2,+}=I_+  A_2$, $A_{2,-}=I_-  A_2$, where $I_\pm$ is the diagonal matric with  entries $1$ on rows corresponding to $D_1^\pm$ and entries $0$ on rows corresponding to $D_1^\mp$. Since $D_1^+\perp D_1^-$, $I_+I_-=0$, consequently
\begin{equation}\label{eq:cross-terms-zero}
A_{2,+}^TA_{2,-}=A_{2,-}^TA_{2,+}=0.
\end{equation}
Note that $I=I_++I_-$ and $A_2=A_{2,+}+A_{2,-}$.
$(c_{-1}I-s_{-1}A_1)$, the upper-left block of $J(t)$, is a diagonal matrix $e^{-ht}I_+ +e^{ht}I_-$. Then its inverse is $e^{ht}I_++e^{-ht}I_-$. Its determinant is $1$ because the two eigenspaces $D_\pm$ have the same dimension. Therefore the Schur complement formula \cite{horn2005basic}*{Theorem 1.1}  gives
\begin{align}\label{eq:Schur}
\det J(t)=&\det\left(I-tA_3-(-t\big(A_{2,+}^T+A_{2,-}^T\big))(e^{ht}I_++e^{-ht}I_-)(-s_{-1}\big(A_{2,+}+A_{2,-}\big))\right)\nonumber\\
=&\det\left(I-t\left(A_3+\frac{e^{2ht}-1}{2h}
A_{2,+}^TA_{2,+}
+\frac{1-e^{-2ht}}{2h}A_{2,-}^TA_{2,-}
\right)\right)
\end{align}
where equation \eqref{eq:cross-terms-zero} removes the cross terms. Denote $(A_3+\frac{e^{2ht}-1}{2h}
A_{2,+}^TA_{2,+}
+\frac{1-e^{-2ht}}{2h}A_{2,-}^TA_{2,-})$ by $G(t)$, whose entries does not contain power of $t$. Then $\det J(t)=1+t \tr G(t)+R(t)$, where $R(t)$ consists of terms with $t^k$ coefficient with $k\ge 2$. The terms of the form $t e^{2ht}$ and $t e^{-2ht}$ in equation \eqref{eq:Schur} can only be found in the $t \tr G(t)$ term, and their coefficient are respectively $-\operatorname{tr}(A_{2,+}^TA_{2,+})/(2h)$ and $\operatorname{tr}(A_{2,-}^TA_{2,-})/(2h)$, by equation \eqref{eq:Schur}. Polynomiality of $\det J(t)$ forces both to vanish, and since $h\ne0$, we have $A_{2,+}=A_{2,-}=0$. Hence $A_2=0$.
\end{proof}

\subsection{Parallel-ness of the \texorpdfstring{$E$}{E}}\label{subsec:parallel-ness}
The following Lemma will be useful. The Lemma has a different assumption on $\eps_1$ and $\eps_2$, as opposed to the general assumption $\eps_2=0$ of this section, for it is used in later sections as well.
\begin{lemma}\label{lem:cartanformula}
    Let $L\subset \Hyp^n\times \Q^m_\eps$ be an isoparametric submanifold with $\dim(P_1(\nu L))=1$. Suppose that there is a point $p\in L$ and  a local unit normal vector field $N=a\nu_1+b\nu_2$  around $p$ on $L$, where $N$ is perpendicular to $E:=\ker(d\pi_1|_{\nu L})$ in $\nu L$, $\nu_1$ lies in $T\Hyp^n\oplus 0$, $\nu_2$ lies in $0\oplus T\Q^m_\eps$ are unit vector fields on $T(\Hyp^n\times \Q^m_\eps)|_L$, and $a,b$ are non-zero functions. If $A_N(D_1)\subset D_1$ and $A_N|_{D_1}$ has eigenvalue $a$ and $-a$ with the same multiplicities, then such $L$ does not exists. In particular, if $L$ is an isoparametric hypersurface satisfying the same assumption, $N$ is a local unit normal vector field of $L$,  and $A_N(D_1)\subset D_1$ and $A_N|_{D_1}$ has eigenvalue $a$ and $-a$ with the same multiplicities, then such $L$ does not exists.
\end{lemma}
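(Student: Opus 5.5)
The plan is to reduce the statement to Cartan's identity for hypersurfaces with two constant principal curvatures in $\Hyp^{n}$. The leaves of $D_1$ lie in first-factor slices $\Hyp^n\times\{z\}$. Their shape operator is $\frac1a A_N|_{D_1}$, which has constant eigenvalues $\pm1$ with equal multiplicity. That is incompatible with the curvature $-1$ of $\Hyp^n$.

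\textbf{Step 1: the leaves of $D_1$.} As in Section~\ref{sec:reduction}, set $(D_1)_q=T_qL\cap(T_{q_1}\Hyp^n\oplus 0)$. By hypothesis $\dim P_1(\nu L)=1$, so $P_1(\nu L)=\R\nu_1$ and $a\neq0$. Hence $D_1=\ker(d\pi_2|_{TL})$ has constant rank, and $D_1$ is the orthogonal complement of $\nu_1$ in $T\Hyp^n\oplus0$. In particular $D_1$ is integrable. Each leaf is a hypersurface of a slice $\Hyp^n\times\{z\}$ with unit normal $\nu_1$.

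I read the hypothesis on the eigenvalues $\pm a$ as forcing $\dim D_1\ge 2$, since both multiplicities are equal and hence positive. If $D_1=0$ the statement is vacuous, so I will assume $\dim D_1\ge2$.

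\textbf{Step 2: the leaf shape operator and constancy of $a$.} Since $\bar\nabla P_1=0$ and $P_1N=a\nu_1$, for $X\in D_1$ we have
\[
X(a)\,\nu_1+a\,\bar\nabla_X\nu_1=P_1(\bar\nabla_XN)=-P_1(A_NX)+P_1(\nabla^\perp_XN).
\]
Because $N$ is a unit field and $\nu L=\R N\oplus E$ with $E\subset 0\oplus T\Q^m_\eps$, we get $\nabla^\perp_XN\in E$, and therefore $P_1(\nabla^\perp_XN)=0$. Because $A_N(D_1)\subset D_1\subset T\Hyp^n\oplus0$, we get $P_1(A_NX)=A_NX$.

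Compare components along $\nu_1$ and along $D_1$. This gives $X(a)=0$ and $\bar\nabla_X\nu_1=-\frac1a A_NX$. So the shape operator of each leaf in $\Hyp^n$ with respect to $\nu_1$ is $\frac1a A_N|_{D_1}$. Its eigenvalues are exactly $+1$ and $-1$, with equal multiplicity, and these values are constant.

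\textbf{Step 3: Cartan's identity.} A hypersurface of $\Hyp^n$ with exactly two distinct constant principal curvatures $\lambda,\mu$ must satisfy $\lambda\mu=1$. This is Cartan's identity $c+\lambda\mu=0$ with $c=-1$. I will re-derive it locally from the Codazzi equation of the leaf:
\begin{itemize}
\item The two eigendistributions are integrable, and each is totally geodesic within the leaf.
\item Take unit $X\in\ker(S-\lambda)$ and $Y\in\ker(S-\mu)$. The Gauss equation for the leaf gives $K(X,Y)=-1+\lambda\mu$.
\item The Codazzi equation gives $\langle\nabla_XX,Y\rangle=0$ and $\langle\nabla_YY,X\rangle=0$. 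Computing the curvature of the product-like splitting then forces $K(X,Y)=0$.
\end{itemize}
With $\lambda=1$ and $\mu=-1$ we have $\lambda\mu=-1\neq1$, a contradiction. Hence no such $L$ exists.

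The hypersurface case is the special case $E=0$, where $N$ is the unit normal field of $L$, so the same argument applies verbatim.

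\textbf{Main obstacle.} I expect Step 3 to need the most care, because the leaf is only a local piece of a hypersurface. The local Codazzi argument has to be written out rather than quoting a global classification.

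I do not expect surfaces ($\dim D_1=2$) to need separate handling, even though each eigendistribution is then a line field. The Codazzi equation still shows the curvature lines are geodesics of the leaf. Those lines are $\nabla$-parallel in their own direction, and the leaf Gauss curvature then vanishes, again giving $\lambda\mu=1$.
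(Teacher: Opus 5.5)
Your proposal is correct and follows the paper's proof. The paper likewise shows that a leaf of $D_1$ in a slice $\Hyp^n\times\{y_0\}$ has shape operator $\frac1a A_N|_{D_1}$, with principal curvatures $\pm1$ of equal multiplicity, and then contradicts Cartan's formula $\sum_{\mu\ne\lambda} m_\mu\frac{-1+\lambda\mu}{\lambda-\mu}=0$, which for $\lambda=1$ gives $-m_{-1}\neq0$. The only differences are minor: you also extract $X(a)=0$ along $D_1$, which is not needed since $\pm a/a=\pm1$ is constant anyway, and you re-derive the two-curvature case $\lambda\mu=1$ locally from the autoparallel curvature distributions instead of citing Cartan's identity.
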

\begin{proof}
    
Since $D_{1}=\ker(P_{2}|_{TL})$, it is 
integrable. Let $M_{0}$ be an integrated submanifold of $D_{1}$, then $M_{0}$ lies in a slice
$\Hn^n\times \{y_0\}$, which is totally geodesic in $\Hyp^{n}\times \Q^m_\eps$.
For any tangent vector $X,Y\in D_{1}$, using the fact that $\nu_2$ lies in $0\oplus T\Q^m_\eps$, we have 
$$\langle A_NX,Y\rangle=\langle -\bar\nabla_XN,Y\rangle=\langle-\bar\nabla_X(a\nu_1),Y\rangle=\langle -X(a)\nu_1,Y\rangle-\langle a\bar\nabla_X\nu_1,Y\rangle.$$
Here the first term vanishes since $D_1\perp \nu_1$, and the second term gives $a\langle A^{M_0}X,Y\rangle$, where \(A^{M_0}\) is the shape operator of $M_0$ in $\Hyp^n\times y_{0}$ along $\nu_1|_{M_0}$.
By  our assumptions, the eigenvalues of \(A|_{D_{1}}\) are \(+a\) and \(-a\), with equal positive multiplicities.  Hence the principal curvatures of \(M_0\subset\Hn^n \times \{y_{0}\}\) are $1$ and $-1$,
with equal multiplicities.
This is impossible since Cartan's formula \cite{cecil2015geometry}*{Section 3.1, Equation (3.13)} for a hypersurface in hyperbolic space with constant principal curvatures gives that, for each principal curvature \(\lambda\),
\[
\sum_{\mu\ne\lambda}m_\mu\frac{-1+\lambda\mu}{\lambda-\mu}=0,
\]
where $m_{\mu}$ is the multiplicity of $\mu$.
Taking \(\lambda=1\) and \(\mu=-1\), we get
$m_{-1}\frac{-1+1\cdot(-1)}{1-(-1)}=-m_{-1}$,
which is not zero because \(2m_{-1}=\dim D_1=n-1\ne0\).
\end{proof}

For any local normal field $\psi\in\Gamma(E)$, any tangent vector $X\in D_1$, any tangent vector $Y\in TL$, we have
$$\langle A_\psi X,Y\rangle=\langle X,A_\psi Y\rangle=-\langle \bar\nabla_Y\psi,X\rangle=-\langle P_1(\bar\nabla_Y\psi),P_1(X)\rangle=0,$$
where the last equality follows from the fact that $P_1(\psi)=0$ and $\bar\nabla P_1=0$. In other words, we have
\begin{equation}\label{eq:ApsiD1zero}
A_\psi D_1=0.
\end{equation}

\begin{lemma}\label{lem:E-parallel}
The subbundle $E$ is parallel with respect to the normal connection $\nabla^\perp$ of $\nu L$. Consequently, $\mathbb{R}N$ is also parallel.
\end{lemma}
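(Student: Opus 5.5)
The goal is to show that the normal connection form
\[
\omega(X):=\langle \nabla^\perp_X\psi,N\rangle, \qquad \psi\in\Gamma(E),\ X\in TL,
\]
vanishes identically. Once $E$ is parallel, its orthogonal complement $\mathbb{R}N$ in the flat bundle $\nu L$ is parallel as well. The proof has two parts: first reduce $\omega$ to a single shape-operator term, then rule out $\omega\neq 0$ using the determinant rigidity of Lemma \ref{lem:det-rigidity} together with Lemma \ref{lem:cartanformula}.

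\textbf{Step 1: reduce $\omega$ to $\langle A_\psi\mathcal T,X\rangle$.} Since $P_1\psi=0$ and $\bar\nabla P_1=0$, the vector $\bar\nabla_X\psi$ lies in $0\oplus T\R^m$. Hence $\langle\bar\nabla_X\psi,\nu_1\rangle=0$, and so $\omega(X)=b\langle\bar\nabla_X\psi,\nu_2\rangle$.

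Next substitute $\nu_2=bN-a\mathcal T$ from \eqref{eq:rotation}, and use $\langle\bar\nabla_X\psi,\mathcal T\rangle=-\langle A_\psi X,\mathcal T\rangle$. This gives $\omega=b^2\omega+ab\langle A_\psi X,\mathcal T\rangle$. Since $1-b^2=a^2\neq 0$,
\[
a\,\omega(X)=b\,\langle A_\psi\mathcal T,X\rangle .
\]
So it suffices to prove $A_\psi\mathcal T=0$, or equivalently $\omega=0$ directly. I would argue by contradiction and assume $\omega\neq0$ at some point $q$.

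\textbf{Step 2: choose a parallel normal field whose first-factor component vanishes at one point but not nearby.} Let $\zeta$ be a parallel normal field and write $\langle\zeta,N\rangle=f$, so that $h=af$. If $\omega(q)\neq0$, parallel fields genuinely rotate between the $N$ and $E$ directions near $q$. Concretely:
\begin{enumerate}[label=(\alph*)]
\item choose a parallel $\zeta$ with $\zeta_{q'}\in E$ at a point $q'$ near $q$, so $h(q')=0$;
\item since $df(X)=\langle\zeta,\nabla^\perp_XN\rangle=-\omega(X)\langle\zeta,\psi\rangle$, this derivative is nonzero at $q'$ for suitable $\psi$;
\item therefore $h\neq 0$ on an open set $U$ accumulating at $q'$.
\end{enumerate}

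At $q'$ we are in the case $h=0$ of \eqref{eq:h=0cases}. There $J(t)$ in \eqref{eq:Jacobi} has entries polynomial in $t$, so $\det J_\zeta(q',t)$ is a polynomial. By \eqref{eq:HL0-determinant-condition}, this determinant is independent of the base point for small $t$. It is also real analytic in $t$, so $\det J_\zeta(p,t)$ is the same polynomial for every $p\in U$.

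\textbf{Step 3: derive the contradiction.} Apply Lemma \ref{lem:det-rigidity} at a point of $U$, where $h\neq0$.
\begin{itemize}
\item If $\varepsilon=1$, the lemma already gives a contradiction.
\item If $\varepsilon=-1$, the lemma gives $A_\zeta(D_1)\subset D_1$, and $A_\zeta|_{D_1}$ has eigenvalues $\pm h$ with equal multiplicity.
\end{itemize}
In the second case, decompose $\zeta=fN+\zeta_E$ with $\zeta_E\in E$. By \eqref{eq:ApsiD1zero}, $A_{\zeta_E}D_1=0$, so $A_\zeta|_{D_1}=fA_N|_{D_1}$. Moreover, for $X\in D_1$ and $Y\in TL$,
\[
\langle A_N X,Y\rangle=f^{-1}\langle A_\zeta X,Y\rangle ,
\]
so $A_N(D_1)\subset D_1$ as well. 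Since $h=af$, the eigenvalues of $A_N|_{D_1}$ are $\pm a$ with equal multiplicities. Lemma \ref{lem:cartanformula} then shows that no such $L$ exists, which is the desired contradiction. Hence $\omega\equiv0$, so $E$ is parallel, and therefore $\mathbb{R}N$ is parallel too.

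\textbf{Main obstacle.} The delicate step is Step 2. One must arrange, from $\omega\neq0$, a parallel $\zeta$ with $h(q')=0$ but $h\not\equiv0$ near $q'$. One must also justify transporting polynomiality of $\det J$ from $q'$ to nearby points. This second point relies on the determinant in \eqref{eq:HL0-determinant-condition} being independent of $p$ for all small $t$, plus analyticity in $t$ to extend the identity from small $t$ to all $t$ as needed by Lemma \ref{lem:det-rigidity}. If the $h=0$ point cannot be arranged this way, my fallback is to compare the exponential-type coefficients of $\det J(p,t)$ at two points where $h$ takes different nonzero values.
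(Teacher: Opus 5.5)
Your argument is correct and is essentially the paper's proof. You take a parallel $\zeta$ that lies in $E$ at one point, so $h=0$ there and $\det J_\zeta$ is a polynomial. You then use base-point independence (plus analyticity in $t$) to carry polynomiality to nearby points where $\zeta$ has a nonzero $N$-component, and finish with Lemma~\ref{lem:det-rigidity} and Lemma~\ref{lem:cartanformula}.

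Your Step 1 identity $a\,\omega=b\langle A_\psi\mathcal T,X\rangle$ is correct but never used. Your Step 2 just makes explicit, through $df$ at $q'$, why such nearby points with $h\neq0$ exist; the paper simply assumes $\zeta$ leaves $E$ and derives a contradiction from that.
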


\begin{proof}
Fix $q^0=(q^0_1,q^0_2)\in U$ and $\zeta_{q^0}\in E_{q^0}$, and let $\zeta$ be the local parallel normal field on $U\subset L$ with this initial value. By equation \eqref{eq:HLO-determinant-condition}, $\det J_\zeta(q,t)$ is independent of $q$. At $q^0$, the vector $\zeta_{q^0}$ lies in $0\oplus T_{q^0_2}\mathbb{R}^m$, apply equation \eqref{eq:h=0cases} to \eqref{eq:Jacobi} gives
\begin{equation}\label{eq:vertical-J}
J_\zeta(q^0,t)=I-tA_\zeta(q^0).
\end{equation}
Thus $\det J_\zeta(q^0,t)$ is a polynomial in $t$, and therefore $\det J_\zeta(q,t)$ for every nearby $q$, is a polynomial in $t$.

Assume that $\zeta_q\notin E_q$ at some point $q$. Write $\zeta=sN+\varphi$, with $\varphi$ lies in $E$ and $s\neq0$. Its first-factor component is $s a \nu_1$, so Lemma \ref{lem:det-rigidity} applies at $q$ with $h=s a(q)\neq0$. The $\eps=1$ case is impossible. In the $\eps=-1$ case, on an open subset of $U$ where $s\neq0$, Lemma \ref{lem:det-rigidity} gives $A_\zeta D_1\subset D_1$ and eigenvalues $\pm s a$ on $D_1$, with equal multiplicities. Still denote this subset by $U$. Below we assume $\eps=-1$.

Applying equation \eqref{eq:ApsiD1zero} to $\varphi$ gives $A_{\zeta_q}|_{D_1}=sA_N|_{D_1}$, so $A_N|_{D_1}$ has eigenvalues $\pm a$ with equal multiplicities. If $\dim D_1$ is odd, this is already impossible; otherwise both signs occur with positive equal multiplicity. Therefore we can apply Lemma \ref{lem:cartanformula} to conclude that such $L$ does not exists. This contradiction proves that $\zeta$ remains in $E$. Thus $E$ is parallel with respect to $\nabla^\perp$. Since $\nu L$ is flat and $\nu L=\mathbb{R}N\oplus E$ orthogonally, $\mathbb{R}N$ is also parallel with respect to $\nabla^\perp$.
\end{proof}

From now on the local unit normal vector $N$ of the line $\mathbb{R}N$ can be assumed to be parallel. Also by Lemma \ref{lem:E-parallel}, for any parallel normal vector field $\zeta$ with $\zeta_q\in E_q$ at some point $q\in L$, $\zeta$ lies in $E$.

\subsection{Euclidean Shape Operators and No Tilted Curvature Normals}\label{subsec:EuclideanShapeOperator-No-tilted-curv}

Recall that in equation \eqref{eq:simultaneous-eigendistribution}, we have restrict $L$ to a smaller neighborhood such that simultaneous eigendistributions $\mathcal E_i$'s of shape operators of $L$ can be defined on $L$. In particular, the simultaneous eigendistributions $F_i$'s for the family $\{A_\zeta:\zeta\in \Gamma(E)\}$ can be defined on $L$.

\begin{lemma}\label{lem:E-shape-and-spectrum}
Let $\zeta\in\Gamma(E)$ be a parallel normal vector field on $L$. Then 
\begin{equation}\label{eq:Azeta-basic}
A_\zeta D_1=0,
\qquad
A_\zeta {\mathcal T}=0,
\qquad
A_\zeta D_2\subset D_2.
\end{equation}
Moreover the eigenvalues of $A_\zeta$ are constant. Consequently, for any $i$, the curvature normal corresponding to the simultaneous eigendistribution for the family $\{A_\zeta:\zeta\in \Gamma(E)\}$ is parallel and is in $E$.
\end{lemma}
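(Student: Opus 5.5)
We split the statement into three parts: the invariance statements \eqref{eq:Azeta-basic}, constancy of the spectrum, and the consequence for curvature normals.

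\textbf{Invariance.} The identity $A_\zeta D_1=0$ is \eqref{eq:ApsiD1zero} applied to $\psi=\zeta\in\Gamma(E)$.

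For $A_\zeta\mathcal T$, we use that $N$ is parallel by Lemma \ref{lem:E-parallel} and that $\zeta$ is parallel. Since $E\subset 0\oplus T\R^m$ and $P_2$ is parallel, we have, for any tangent $Y$,
\[
\langle A_\zeta Y,\mathcal T\rangle=-\langle\bar\nabla_Y\zeta,\mathcal T\rangle=-\langle\bar\nabla_Y\zeta,P_2\mathcal T\rangle=a\langle\bar\nabla_Y\zeta,\nu_2\rangle .
\]
By \eqref{eq:rotation}, $\nu_2=bN-a\mathcal T$. Also $\langle\bar\nabla_Y\zeta,N\rangle=\langle\nabla^\perp_Y\zeta,N\rangle=0$. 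Hence
\[
\langle A_\zeta Y,\mathcal T\rangle=-a^2\langle\bar\nabla_Y\zeta,\mathcal T\rangle=a^2\langle A_\zeta Y,\mathcal T\rangle .
\]
If $a^2\neq1$ on an open set, this forces $\langle A_\zeta Y,\mathcal T\rangle=0$ for all $Y$, so $A_\zeta\mathcal T=0$. The degenerate case $a^2=1$ is excluded because $b\ne0$ and $a^2+b^2=1$.

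The same computation handles $D_2$. For $Y\in D_2$ we have $A_\zeta Y\perp D_1$ (by self-adjointness and $A_\zeta D_1=0$) and $A_\zeta Y\perp\mathcal T$. So $A_\zeta Y\in D_2$.

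\textbf{Constancy of the spectrum.} By the invariance just proved, $A_\zeta$ vanishes on $D_1\oplus\R\mathcal T$ and preserves $D_2$. Since $P_1\zeta=0$ we have $h=0$, so \eqref{eq:h=0cases} and \eqref{eq:Jacobi} give $J_\zeta(q,t)=I-tA_\zeta(q)$ at every $q$. By \eqref{eq:HLO-determinant-condition}, the polynomial
\[
\det J_\zeta(q,t)=\prod_k\bigl(1-t\lambda_k(q)\bigr)
\]
is independent of $q$ for small $t$, hence for all $t$ by polynomiality. Its roots $1/\lambda_k$ with multiplicities therefore do not depend on $q$, so the nonzero eigenvalues of $A_\zeta$ and their multiplicities are constant. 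The multiplicity of $0$ is then also constant, since $\dim L$ is fixed.

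This is the step where care is needed. One has to make sure condition (iii) applies to $t\zeta$ for arbitrary parallel $\zeta$ in $E$ over the shrunken $L$, and the rank bookkeeping for the zero eigenvalue relies on $\dim L$ being constant. Both are routine.

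\textbf{Curvature normals.} Since the $A_\zeta$ commute by \eqref{eq:shape-commute}, they have common eigendistributions $F_i$. Take a parallel orthonormal frame $\zeta_1,\dots,\zeta_k$ of $E$, which exists because $E$ is parallel and $\nu L$ is flat. Define
\[
\eta_i'=\sum_j\lambda_{ij}\zeta_j ,
\]
where $\lambda_{ij}$ is the eigenvalue of $A_{\zeta_j}$ on $F_i$.

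The $\lambda_{ij}$ are constant: the joint spectrum is constant because the spectrum of every constant linear combination $\sum c_j\zeta_j$ is constant, and we can choose a generic combination that separates the $F_i$. Hence each $\eta_i'$ is a constant combination of parallel sections, so it is parallel and lies in $E$. By construction $A_\zeta|_{F_i}=\langle\zeta,\eta_i'\rangle\,\mathrm{id}$ for all $\zeta\in\Gamma(E)$, which is the defining property of the curvature normal for this family.
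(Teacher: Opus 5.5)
Your proposal is correct and follows essentially the same route as the paper: parallelism of $P_2$ together with $\nabla^\perp\zeta=0$ gives the invariance relations, $J_\zeta=I-tA_\zeta$ with \eqref{eq:HLO-determinant-condition} gives constancy of the spectrum, and a parallel orthonormal frame of $E$ gives parallel curvature normals in $E$. Your inner-product computation with $\nu_2=bN-a\mathcal T$ is an unpacked version of the paper's one-line observation that $-A_\zeta X=\bar\nabla_X\zeta$ lies in $TL\cap(0\oplus T\R^m)=D_2$, which gives all three relations in \eqref{eq:Azeta-basic} at once.
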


\begin{proof}
$\zeta$ lies in $0\oplus T\R^m$. Since $0\oplus T\R^m$ is a parallel distribution with respect to $\bar\nabla$. Hence $\overline\nabla_X\zeta$ also lies in $0\oplus T\R^n$ for every $X\in TL$. Since $\nabla^{\perp}\zeta=0$, $\overline\nabla_X\zeta=-A_\zeta X$, thus $A_\zeta X$ lies in $TL\cap(0\oplus T\R^n)$, hence lies in $D_2$. Equation \eqref{eq:ApsiD1zero} and the symmetry of $A_\zeta$ gives equation \eqref{eq:Azeta-basic}.

By $\zeta\in 0\oplus T\R^m$ and equations \eqref{eq:Jacobi} and \eqref{eq:h=0cases}, $J_\zeta(t)=I-tA_\zeta$. By equation \eqref{eq:HLO-determinant-condition}, $\det(I-tA_\zeta)$ is independent of the base point, and the eigenvalues of $A_\zeta$ are constant. Let $\zeta_1,\ldots,\zeta_{\dim E}$ is a parallel orthonormal frame of $E$. By equation \eqref{eq:shape-commute}, their corresponding shape operators have simultaneous eigendistributions $\oplus_j F_j$. For $F_j$, we have $A_{\zeta_\alpha}|_{F_j}=\lambda_{j\alpha}I|_{F_j}$ for all $j$ and all $\alpha$.  Then each $\lambda_{j\alpha}$ is constant. The curvature normal $\mu_j=\sum_\alpha\lambda_{j\alpha}\zeta_\alpha$ corresponding to eigendistribution $F_j$, is therefore parallel and in $E$, for any $j$.
\end{proof}

Let $\eta_i=h_iN+\mu_i$ be the curvature normal corresponding to simultaneous eigendistribution $\mathcal E_i$ of all shape operators of $L$, with $\mu_i\in \Gamma(E)$ and $h_i$ be a function on $L$, for all $i$.

For $i$ with $\mu_i\neq0$, for any normal vector fields $\zeta$ in $\Gamma(E)$, $A_\zeta|_{\mathcal E_i}=\langle\zeta,h_iN+\mu_i\rangle I|_{\mathcal E_i}=\langle\zeta,\mu_i\rangle I|_{\mathcal E_i}$. Therefore, the distribution $\mathcal E_i$ is also an (possibly non-maximal) eigendistribution for the family $\{A_\zeta:\zeta\in \Gamma(E)\}$, and $\mu_i$ is a curvature normal corresponding to eigendistribution $\mathcal E_i$ for the family $\{A_\zeta:\zeta\in \Gamma(E)\}$. By Lemma \ref{lem:E-shape-and-spectrum}, $\mu_i$ is parallel in $E$, and $|\mu_i|$ is constant. Equation \eqref{eq:curvature-normal} gives $A_{\mu_i}|_{\mathcal E_i}=|\mu_i|^2I$, while equation \eqref{eq:Azeta-basic} gives $A_{\mu_i}(D_1\oplus\mathbb{R}{\mathcal T})=0$. Since $|\mu_i|\neq0$ and eigendistributions are mutually orthogonal,
\begin{equation}\label{eq:Ei-in-D2}
\mathcal E_i\subset D_2.
\end{equation}

\begin{lemma}\label{lem:no-oblique}
Either $h_i=0$ or $\mu_i=0$ for all $i$. 
\end{lemma}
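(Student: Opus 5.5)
Suppose, for contradiction, that some index $i$ has both $h_i\neq0$ and $\mu_i\neq0$ at a point, and hence on a neighbourhood, which we keep calling $L$. By \eqref{eq:Ei-in-D2}, $\mathcal E_i\subset D_2$. Since $N$ is parallel (Lemma \ref{lem:E-parallel}), the plan is to apply the determinant condition \eqref{eq:HLO-determinant-condition} to $\zeta=N$ and to the mixed fields $\zeta=N+c\,\mu_i/|\mu_i|$. For every parallel $\zeta$ the first-factor component is $h\nu_1$ with $h=s\,a$, where $s=\langle\zeta,N\rangle$ is constant. So \eqref{eq:Jacobi} applies, and $\det J_\zeta(q,t)$ does not depend on $q$.

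The first step is to show that the $D_2$-block, and in particular $\mathcal E_i$, splits off in the Jacobi operator. For $X\in\mathcal E_i\subset D_2$ we have $A_\zeta X=\langle\zeta,\eta_i\rangle X\in D_2$. By \eqref{eq:Jacobi}, $J(t)X=(1-t\langle\zeta,\eta_i\rangle)X$, so $\mathcal E_i$ is an invariant subspace of $J(t)$. Because $A_\zeta$ is symmetric, $\mathcal E_i^\perp$ is $A_\zeta$-invariant, and $\mathcal E_i^\perp\supset D_1$ is preserved by $P_{D_1}$. Therefore $J(t)$ preserves $\mathcal E_i^\perp$ as well, and
\[
\det J_\zeta(t)=\bigl(1-t(s h_i+\langle\zeta,\mu_i\rangle)\bigr)^{\dim\mathcal E_i}\cdot\det\bigl(J_\zeta(t)|_{\mathcal E_i^\perp}\bigr).
\]
Carrying this out for every $\mathcal E_j$ contained in $D_2$ isolates the block $D_1\oplus\mathbb R\mathcal T\oplus(\text{rest})$, which is where the exponential terms live.

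The key step is to prove that the roots $t=1/\langle\zeta,\eta_j\rangle$ of these linear factors are constant in $q$, and from this that $h_i$ is constant. Take $\zeta$ with $\langle\zeta,\mu_i\rangle$ varying while $s$ is fixed. Since $\langle\zeta,\mu_i\rangle$ is constant in $q$ (both vectors are parallel), constancy of $sh_i+\langle\zeta,\mu_i\rangle$ forces $h_i$ to be constant. To extract these roots from a $q$-independent function, I would expand $\det J_\zeta(t)$ as a quasi-polynomial $\sum t^r e^{\lambda t}$ (Lemma \ref{lem:spectrum-unique}) and use that the factor $\prod_j(1-t\langle\zeta,\eta_j\rangle)^{m_j}$ over the $D_2$-eigendistributions is a common polynomial factor of every exponential coefficient. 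This is the main obstacle. The $D_1\oplus\mathbb R\mathcal T$ block also contributes polynomial factors, so I would first argue that its contribution at $t$-order zero is $\det(c_\eps I-s_\eps A_1)$, as in the proof of Lemma \ref{lem:det-rigidity}, which has no real polynomial roots in general. If this direct route stalls, the fallback is to use the Codazzi equation \eqref{eq:codazzi} for $A_N$ together with $\mathcal E_i\subset D_2$. Since $\bar R(X,Y)N$ involves only first-factor components, and these vanish for $X,Y\in D_2$, the Codazzi equation yields the standard isoparametric-type identity $X(h_i)=0$ for $X\in\mathcal E_i$, and more generally it relates $\nabla h_i$ to $h_i-h_j$ along the other distributions.

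To conclude, once $h_i$ is known to be constant and nonzero, use that $\mathcal E_i\subset D_2$ and $\mathcal T\perp\mathcal E_i$. Differentiating $A_N X=h_i X$ along $\mathcal T$, via Codazzi with the curvature term $\bar R(\mathcal T,X)N=0$ (because $X$ lies in the flat $\R^m$-factor, which is orthogonal to the first-factor components), gives $(h_i-\langle A_N\mathcal T,\mathcal T\rangle)\langle\nabla_X X,\mathcal T\rangle=0$. On the other hand, $\langle\nabla_XX,\mathcal T\rangle=\langle\bar\nabla_X X,\mathcal T\rangle$. Since $X$ is a Euclidean-factor vector, $\bar\nabla_X X$ lies in $0\oplus T\R^m$, and by \eqref{eq:projection-formula} its $\mathcal T$-component is $-a$ times its $\nu_2$-component, which equals $-\langle \mathrm{II}(X,X),\nu_2\rangle\cdot\ldots=-b\,h_i|X|^2\neq0$ up to the factor $a$. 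This forces $h_i=\langle A_N\mathcal T,\mathcal T\rangle$, which means $\mathcal T$ would have to lie in the same $N$-eigenspace. The last step is to compare this with the $\mu_i$-eigenvalue. Since $A_{\mu_i}\mathcal T=0$ by \eqref{eq:Azeta-basic} but $A_{\mu_i}|_{\mathcal E_i}=|\mu_i|^2I\neq0$, $\mathcal T$ and $\mathcal E_i$ lie in distinct simultaneous eigendistributions. The resulting Codazzi relation then yields $a b h_i=0$, a contradiction since $a$ and $b$ are nonzero. I expect getting the Codazzi bookkeeping between the $\mathcal T$-direction and $\mathcal E_i$ exactly right to be the delicate point.
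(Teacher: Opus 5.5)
\textbf{There is a genuine gap, in two places.}

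First, your strategy depends on proving that $h_i$ is constant, and neither route you give achieves this. The quasi-polynomial extraction of the roots $1/\langle\zeta,\eta_j\rangle$ is left open; you call it the main obstacle, and the $D_1\oplus\mathbb R\mathcal T$ block really does interfere. The Codazzi fallback only yields $X(h_i)=0$ for $X\in\mathcal E_i$. It does not yield $\mathcal T(h_i)=0$, which is the derivative that actually appears when you pair $\mathcal T$ with $\mathcal E_i$.

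Second, even granting constancy, your concluding step is wrong. Apply Codazzi for $A_N$ to $(X,\mathcal T)$ and pair with a unit $X\in\Gamma(\mathcal E_i)$. Using $\langle A_N\mathcal T,X\rangle=h_i\langle\mathcal T,X\rangle=0$ and $\bar R(X,\mathcal T)N=0$, you get
\[
\mathcal T(h_i)=\bigl\langle (h_iI-A_N)\mathcal T,\nabla_XX\bigr\rangle .
\]
Nothing at this stage makes $\mathcal T$ an eigenvector of $A_N$. Indeed $\nabla(a^2)=-2ab\,A_N\mathcal T$, and $a$ is not yet known to be constant. So $\mathcal T$ can be spread over several eigendistributions with different $h_j$, and $\nabla_XX$ can have components there. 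The identity therefore does not reduce to $(h_i-\langle A_N\mathcal T,\mathcal T\rangle)\langle\nabla_XX,\mathcal T\rangle=0$. Moreover, $h_i=\langle A_N\mathcal T,\mathcal T\rangle$ would not place $\mathcal T$ in the $h_i$-eigenspace. The final relation $abh_i=0$ is asserted, not derived.

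\textbf{The missing idea} is to run the same Codazzi argument with $A_{\mu_i}$ instead of $A_N$; this is what the paper does. By Lemma \ref{lem:E-shape-and-spectrum}:
\begin{itemize}
\item $\mu_i$ is parallel with constant length;
\item $A_{\mu_i}=|\mu_i|^2 I$ on $\mathcal E_i$;
\item $A_{\mu_i}\mathcal T=0$ exactly, so $\mathcal T$ is an honest eigenvector.
\end{itemize}
Also $\bar R(X,\mathcal T)\mu_i=0$, because $\mu_i$ lies in the flat factor. The identity above then reads $0=\mathcal T(|\mu_i|^2)=|\mu_i|^2\langle\mathcal T,\nabla_XX\rangle$, so $\langle\nabla_XX,\mathcal T\rangle=0$.

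Your last computation supplies the other side, with one correction: the $\nu_2$-coefficient of $\bar\nabla_XX$ is $g=h_i/b$, not $-bh_i$. To see this, compare the $N$-components of $(\bar\nabla_XX)^\perp=Z_E+bgN$ from \eqref{eq:projection-formula} and $\mathrm{II}(X,X)=h_iN+\mu_i$. Hence $\langle\nabla_XX,\mathcal T\rangle=-ag=-ah_i/b$. Combining the two gives $h_i=0$, since $a,b\neq0$.

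This route needs no constancy of $h_i$ and no determinant analysis. You had every ingredient, but used $A_{\mu_i}\mathcal T=0$ only to separate eigendistributions rather than inside the Codazzi equation.
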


\begin{proof}
Fix an $i$ and assume $\mu_i\neq0$. Let $X$ be a local unit vector field in $\mathcal E_i$. Since $X$ lies in $0\oplus T\R^m$, so does $\overline\nabla_XX$. By equation \eqref{eq:TL-decomposition}, we can write $\overline\nabla_XX=Z_2+g\nu_2+Z_E$, with $Z_2\in \Gamma(D_2)$, $Z_E\in \Gamma(E)$, and $g$ being a function on the neighborhood of $L$ where $X$ is defined. From equation \eqref{eq:projection-formula} we have $(\bar\nabla_XX)^\perp=Z_E+bgN$. On the other hand, by equation \eqref{eq:alphaXY=}, $\text{II}(X,X)=h_iN+\mu_i$. Comparing the $N$-components gives
\begin{equation}\label{eq:q-value}
g=\frac{h_i}{b},
\qquad
\langle\overline\nabla_XX,\nu_2\rangle=\frac{h_i}{b}.
\end{equation}
By equation \eqref{eq:Ei-in-D2}, $\langle {\mathcal T},X\rangle=0$. Therefore, by ${\mathcal T}=b\nu_1- a\nu_2$, and equation \eqref{eq:q-value}, we have
\begin{equation}\label{eq:nablaTX-value}
\langle\nabla_X{\mathcal T},X\rangle=\langle\bar\nabla_X{\mathcal T},X\rangle=-\langle {\mathcal T},\bar\nabla_XX\rangle=\frac{h_i a}{b}.
\end{equation}
Apply the Codazzi equation \eqref{eq:codazzi} to $A_{\mu_i}$, $X$ and ${\mathcal T}$ gives,
\begin{equation}\label{eq:codazzi-sec6}
    (\nabla_XA)_{\mu_i} \mathcal T-(\nabla_{\mathcal T} A)_{\mu_i} X=-(\bar R(X,\mathcal T)\mu_i)^T
\end{equation}
Since $\mu_i$  lies in $0\oplus T\R^m$ the right hand side of equation \eqref{eq:codazzi-sec6} vanishes. By equation \eqref{eq:Azeta-basic} and the fact that $\mu_i$ is parallel, the left hand side of equation \eqref{eq:codazzi-sec6} gives
\begin{equation*}\label{eq:Codazzi-mu}
\nabla_X(A_{\mu_i} {\mathcal T})-A_{\mu_i}(\nabla_X{\mathcal T})-\nabla_{\mathcal T}(A_{\mu_i} X)+A_{\mu_i}(\nabla_{\mathcal T} X)=-A_{\mu_i}(\nabla_X{\mathcal T})-\nabla_{\mathcal T}(A_{\mu_i} X)+A_{\mu_i}(\nabla_{\mathcal T} X).
\end{equation*}
Taking the inner product with $X$, and using $A_{\mu_i} X=|\mu_i|^2X$,  symmetry of $A_{\mu_i}$, the constancy of $|\mu_i|$, and equation \eqref{eq:nablaTX-value}, we have
\begin{align*}
   & -\langle A_{\mu_i}(\nabla_X{\mathcal T}),X\rangle-\langle\nabla_{\mathcal T}(|{\mu_i}|^2X),X\rangle+\langle A_{\mu_i}(\nabla_{\mathcal T}X),X\rangle\\
    =&-\langle \nabla_X{\mathcal T},A_{\mu_i} X\rangle-\langle\nabla_{\mathcal T}(|{\mu_i}|^2X),X\rangle+\langle \nabla_{\mathcal T}X,A_{\mu_i} X\rangle\\
    =&|{\mu_i}|^2(-\langle \nabla_X{\mathcal T},X\rangle-\langle\nabla_{\mathcal T}X,X\rangle+\langle \nabla_{\mathcal T}X, X\rangle)
    =-|{\mu_i}|^2\langle \nabla_X{\mathcal T},X\rangle=-|{\mu_i}|^2\frac{h_i a}{b}.
\end{align*}
This along with the right hand side of equation \eqref{eq:codazzi-sec6}
being $0$  gives, \begin{equation}\label{eq:oblique-contradiction}
-|{\mu_i}|^2\frac{h_i a}{b}=0,
\end{equation}
which force $h_i=0$ since $|{\mu_i}|\ne0$, and $a,b\ne 0$.
\end{proof}

\subsection{Projection to the Euclidean factor}

Let $f:=\pi_{\R^m}|_L:L\to{\mathbb{R}}^m.$ By $\ker df=D_1$, \(\rank df\) is constant. By the rank theorem \cite{lee2003smooth}*{Theorem 4.12}, there exists an open neighborhood on $L$, still denoted by $L$, such that 
\begin{equation*}
        \widehat L:=f(L),
\end{equation*}
is an immersed submanifold in $\R^m$, and $f:L\to\widehat L$ is a submersion with connected fibers. For any point $q\in L$, using the decompositions \eqref{eq:normal-decomposition} and \eqref{eq:tangent-decomposition} and the definition of $f$, we immediately get the following pointwise characterizations
\begin{equation}\label{eq:pointwise-df-on-components}
    \begin{aligned}
        &\ker((df)_q)=D_1|_q,\quad (df)_q(\mathcal T_q)=-a\,(d\pi_{2})_q(\nu_2|_q),\quad \ker((df)_q|_{D_2|_q})=0,\\ 
        &(d\pi_{2})_q(N_q)=b\,\,(d\pi_{2})_q(\nu_2|_q),\quad \ker((d\pi_{2})_q|_{E_q})=0,\\
        & T_{f(q)}\R^m=\R(d\pi_{2})_q(\nu_2|_q)\oplus (df)_q(D_2|_q)\oplus(d\pi_{2})_q(E_q)\\
        & T_{f(q)}\widehat L=\R(d\pi_{2})_q(\nu_2|_q)\oplus (df)_q(D_2|_q),\quad \nu_{f(q)}^{\R^m}\widehat L=(d\pi_{2})_q(E_q),
    \end{aligned}
\end{equation}
where $\nu^{\R^m}\widehat L$ denotes the normal bundle of $\widehat L$ in $\R^m$. The decompositions in \eqref{eq:pointwise-df-on-components} are all orthogonal, since $\nu_2|_q$ $D_2|_q$ and $E_q$ are mutually orthogonal and they do not have $\Q^n_\eps$ components. Denote 
\begin{equation}\label{eq:defn-widehat-E}
    \widehat E:=d\pi_{2}(E).
\end{equation}

\begin{lemma}\label{prop:E-descends}
For any parallel unit normal vector field $\zeta$ in $E$, $d\pi_{2}(\zeta)$ is a well-defined unit normal vector field in $\nu^{\R^m} \widehat{L}$. Consequently, $\widehat E=\nu^{\R^m}\widehat L$. 
\end{lemma}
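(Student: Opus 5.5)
The claim says two things. First, for a parallel unit normal field $\zeta\in\Gamma(E)$, the vector $d\pi_2(\zeta_q)$ at $f(q)$ depends only on the image point $f(q)$, not on $q$ itself. Second, it is a unit normal vector to $\widehat L$ in $\R^m$.

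\textbf{Unit and normal.} Since $\zeta\in E\subset 0\oplus T\R^m$, we have $|d\pi_2(\zeta)|=|\zeta|=1$. By the last line of \eqref{eq:pointwise-df-on-components}, $d\pi_2(\zeta_q)$ lies in $(d\pi_2)_q(E_q)=\nu^{\R^m}_{f(q)}\widehat L$. So the only real content is well-definedness.

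\textbf{Well-definedness.} Because $f:L\to\widehat L$ is a submersion with connected fibers, it suffices to show that the $\R^m$-valued function $q\mapsto d\pi_2(\zeta_q)$ (using the canonical trivialization of $T\R^m$) is constant along fibers. The fibers are the integral manifolds of $D_1=\ker df$. So I will show that $X(d\pi_2(\zeta))=0$ for every $X\in\Gamma(D_1)$.

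The connection on $0\oplus T\R^m$ is flat. Since $P_2$ is parallel, this derivative is
\[d\pi_2(\bar\nabla_X\zeta)=d\pi_2\bigl(-A_\zeta X+\nabla^\perp_X\zeta\bigr).\]
Here $\nabla^\perp\zeta=0$ by parallelism, and $A_\zeta X=0$ by \eqref{eq:ApsiD1zero}, equivalently \eqref{eq:Azeta-basic} in Lemma \ref{lem:E-shape-and-spectrum}. Hence the derivative vanishes. Connectedness of the fibers then gives constancy on each fiber, so $\hat\zeta(f(q)):=d\pi_2(\zeta_q)$ is a well-defined smooth field on $\widehat L$. Smoothness follows because $f$ is a submersion, so $\hat\zeta$ admits local smooth sections through $f$.

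\textbf{Consequence $\widehat E=\nu^{\R^m}\widehat L$.} Take a parallel orthonormal frame $\zeta_1,\dots,\zeta_{\dim E}$ of $E$. This is possible because $E$ is parallel by Lemma \ref{lem:E-parallel} and $\nu L$ is flat. The descended fields $\hat\zeta_\alpha$ are orthonormal, since $d\pi_2$ is isometric on $E$. Their span at $f(q)$ equals $(d\pi_2)_q(E_q)$, which is $\nu^{\R^m}_{f(q)}\widehat L$ by \eqref{eq:pointwise-df-on-components}, and this holds independently of the choice of $q$ in the fiber. Therefore $\widehat E$ is a well-defined bundle over $\widehat L$ and equals the normal bundle.

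\textbf{Main subtlety.} The one point needing care is that the derivative computation only covers directions in $D_1$. That is all that is needed, provided the fibers of $f$ are connected, which the rank-theorem shrinking of $L$ guarantees.
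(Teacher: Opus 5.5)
Your proposal is correct and follows essentially the same route as the paper. The paper likewise shows $\bar\nabla_X\zeta=-A_\zeta X+\nabla^\perp_X\zeta=0$ for $X\in D_1$ and uses connectedness of the fibers of $f$ to conclude that $d\pi_2(\zeta)$ is constant on each fiber. It then reads off unit length, normality, and $\widehat E=\nu^{\R^m}\widehat L$ from \eqref{eq:pointwise-df-on-components} using a parallel orthonormal frame of $E$.
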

\begin{proof}
Let $\widehat q$ be any point in $\widehat L$, and $q$ be in $f^{-1}(\widehat q)$. Let \(X\in\Gamma(D_1)\) be a local tangent vector field defined around $q$. By equation \eqref{eq:Azeta-basic} and $\nabla^\perp\zeta=0$, 
\begin{equation}\label{eq:XinD1-zetainE-nabla=0}
    \bar\nabla_X\zeta=-A_{\zeta}X+\nabla^\perp_X\zeta=0.
\end{equation}
Let \(\gamma:[0,1]\to L\) be any \(C^1\)-curve contained in $f^{-1}(\widehat q)$. Since $\ker (df)=D_1$, tangent vectors of $\gamma$ lie in $D_1$. Since $\zeta\in E \subset 0\oplus T\R^m$, at any point $p=(p_1,\widehat q)\in f^{-1}(\widehat q)$, $\zeta_p\in 0_{p_1}\oplus T_{\widehat q}\R^m$ can be identified naturally with $(d\pi_{2})_p\zeta_p\in T_{\widehat q}\R^m$.  By equation \eqref{eq:XinD1-zetainE-nabla=0}, $\zeta\circ \gamma$, identified with $d\pi_{2}(\zeta\circ\gamma)\in T_{\widehat q}\R^m$, is constant on $\gamma$. Therefore, for any $q,q'\in f^{-1}(\widehat q)$, by the connectivity assumption on the fibers of $f$, we can choose $\gamma$ such that it connects $q,q'$ and conclude
\begin{equation}\label{eq:pi-zeta-q=pi-zeta-q'}
        d\pi_{2}(\zeta(q'))=d\pi_{2}(\zeta(q)).
\end{equation}
Therefore $d\pi_{2}\zeta$ is a well-defined vector field. The normality of $d\pi_{2}\zeta$ follows from the pointwise description $\nu_{f(q)}^{\R^m}\widehat L=(d\pi_{2})_q(E_q)$ from equation \eqref{eq:pointwise-df-on-components}. Since $\zeta\in E$ and $1=|\zeta|$, we have $|d\pi_{2}\zeta|=|\zeta|=1$.

Let $\{\zeta_i\}$ be a parallel unit normal frame on $E$. For any $\widehat{q}\in\widehat L$ and any $q,q'\in f^{-1}(\widehat q)$, by equation \eqref{eq:pi-zeta-q=pi-zeta-q'}, $\text{span}\{d\pi_{2}(\zeta_i(q))\}=\text{span}\{d\pi_{2}(\zeta_i(q'))\}$.  On the other hand, $\text{span}\{d\pi_{2}(\zeta_i(q))\}=\widehat{E}_{\widehat q}$ by the definition of $\widehat{E}$ \eqref{eq:defn-widehat-E}. Therefore $\widehat{E}_{\widehat q}=d\pi_{2}(E_q)$ is independent of $q\in f^{-1}(\widehat{q})$, which by $\nu_{f(q)}^{\R^m}\widehat L=(d\pi_{2})_q(E_q)$ \eqref{eq:pointwise-df-on-components}, implies $\widehat{E}_{\widehat q}=\nu_{\widehat q}^{\R^m}\widehat L$ and $\widehat{E}=\nu^{\R^m}\widehat L$.
\end{proof}

Define the following sum of all simultaneous eigendistribution of shape operators of $L$ whose curvature normal is a non-zero normal vector field in $\Gamma(E)$, as well as its orthogonal complement in $TL$,
\begin{equation}\label{eq:input-DE-def}
        D_E:=\bigoplus_{\mu_i\neq 0}\mathcal E_i,\qquad D_N=D_E^{\perp_{TL}}.
\end{equation}
Recall that, under the assumption that $\mu_i\ne0$, we have equation \eqref{eq:Ei-in-D2}, which implies
\begin{equation}\label{eq:input-DE-D2}
        D_E\subset D_2.
\end{equation}
By the definition of $D_N$, we also have that, for any point $p\in L$
\begin{equation}\label{eq:DN-property}
    (D_N)_p=\bigcap_{\zeta\in\Gamma(E)}\ker(A_{\zeta_p}).
\end{equation}

\begin{lemma}\label{lem:Lhat-isoparametric-new}
The submanifold \(\widehat L\subset{\mathbb{R}}^m\) is isoparametric. Moreover, $T\widehat L=df(D_N)\oplus df(D_E)$, $df(D_E)$ is the sum of eigendistributions whose curvature normals are non-zero, and $df(D_N)$ is the sum of eigendistributions whose curvature normals are zero.
\end{lemma}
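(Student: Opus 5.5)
The plan is to compute the shape operators of $\widehat L\subset\R^m$ directly from those of $L$, using Lemma~\ref{prop:E-descends}. Then I will check Terng's definition: flat normal bundle, and constant principal curvatures along parallel normals (Remark~\ref{rem:Euclidean-isoparametric}).

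\textbf{Step 1: parallel normals descend.} Fix a parallel orthonormal frame $\zeta_1,\dots,\zeta_k$ of $E$, which exists by Lemma~\ref{lem:E-parallel}. By Lemma~\ref{prop:E-descends}, $\widehat\zeta_\alpha:=d\pi_2(\zeta_\alpha)$ are well-defined unit normal fields of $\widehat L$ spanning $\nu^{\R^m}\widehat L$, and $\zeta_\alpha=(0,\widehat\zeta_\alpha\circ f)$. For $X\in T_qL$ with $\widehat X:=df(X)$, the Euclidean derivative satisfies
\[
D_{\widehat X}\widehat\zeta_\alpha=d\pi_2(\bar\nabla_X\zeta_\alpha)=-d\pi_2(A_{\zeta_\alpha}X).
\]
Here I use that $0\oplus T\R^m$ is parallel and that $\bar\nabla_X\zeta_\alpha=-A_{\zeta_\alpha}X$ because $\zeta_\alpha$ is parallel. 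By \eqref{eq:Azeta-basic}, $A_{\zeta_\alpha}X\in D_2$, so $d\pi_2(A_{\zeta_\alpha}X)=df(A_{\zeta_\alpha}X)$, which is tangent to $\widehat L$. Hence the normal part of $D_{\widehat X}\widehat\zeta_\alpha$ vanishes. So the $\widehat\zeta_\alpha$ form a parallel normal frame, $\nu^{\R^m}\widehat L$ is flat, and
\[
\widehat A_{\widehat\zeta_\alpha}\circ df=df\circ A_{\zeta_\alpha}.
\]
Since $df$ is injective on $\R\mathcal T\oplus D_2$ with image $T\widehat L$ by \eqref{eq:pointwise-df-on-components}, and $A_{\zeta_\alpha}$ preserves $\R\mathcal T\oplus D_2$ (killing $\mathcal T$), the operator $\widehat A_{\widehat\zeta_\alpha}$ at $f(q)$ is conjugate via $df|_{\R\mathcal T\oplus D_2}$ to $A_{\zeta_\alpha}|_{\R\mathcal T\oplus D_2}$.

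\textbf{Step 2: constant principal curvatures and the eigenspace description.} Lemma~\ref{lem:E-shape-and-spectrum} says the eigenvalues of each $A_{\zeta_\alpha}$ are constant, and $A_{\zeta_\alpha}$ vanishes on $D_1\oplus\R\mathcal T$. So the eigenvalues of $\widehat A_{\widehat\zeta_\alpha}$ are constant, and the same holds for any parallel normal $\sum c_\alpha\widehat\zeta_\alpha$ with constant $c_\alpha$. This shows $\widehat L$ is isoparametric in Terng's sense, hence in the sense of Definition~\ref{defn:HLO} by Remark~\ref{rem:Euclidean-isoparametric}.

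For the splitting, each $\mathcal E_i\subset D_E$ lies in $D_2$ by \eqref{eq:input-DE-D2}. Therefore $df$ maps $\mathcal E_i$ isomorphically onto a common eigenspace of the $\widehat A_{\widehat\zeta}$ with curvature normal $d\pi_2(\mu_i)\neq0$. By \eqref{eq:DN-property}, $D_N=\bigcap_\zeta\ker A_\zeta$, so $df(D_N)=\bigcap\ker\widehat A_{\widehat\zeta}$, which is the zero-curvature-normal eigendistribution. Also $TL=D_N\oplus D_E$ with $D_1\subset D_N$ and $\ker df=D_1$, so $T\widehat L=df(D_N)\oplus df(D_E)$.

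\textbf{Main obstacle.} The one point that needs care is whether the eigenspaces of $\widehat A$ are well defined along a fiber of $f$. Different points $q,q'$ in the same fiber give isomorphic spectral data, but the identification $df(\mathcal E_i)$ could a priori depend on $q$. Since $\widehat A_{\widehat\zeta}$ is intrinsic to $\widehat L$, its eigenspaces are canonical. The computation at any single $q$ identifies them, so they are automatically independent of the choice of $q$, and no extra argument is needed.
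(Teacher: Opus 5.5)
Your proposal is correct and follows the paper's own argument: descend the parallel frame of $E$ via Lemma~\ref{prop:E-descends}, use the product connection and Weingarten formula to get $\widehat A_{\widehat\zeta}\circ df=df\circ A_\zeta$ (hence flatness), read off constant principal curvatures from Lemma~\ref{lem:E-shape-and-spectrum}, and then identify the zero and non-zero curvature distributions. Your version is slightly tighter than the paper's in two places. The conjugacy through $df|_{\R\mathcal T\oplus D_2}$ gives constancy of the eigenvalues with multiplicities, whereas the paper only notes that the spectrum of $\widehat A_{\widehat\zeta}$ is a subset of that of $A_\zeta$. Identifying $df(D_N)$ through \eqref{eq:DN-property} also makes the paper's appeal to Lemma~\ref{lem:no-oblique} unnecessary at this point.
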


\begin{proof}
Let $\zeta$ be a parallel normal vector field in $E$, and $\widehat \zeta:=d\pi_{2}(\zeta)$, which is a well-defined vector field in $\Gamma(\widehat E)$ by Lemma \ref{prop:E-descends}. Let \(\widehat X\in T_{\widehat q}\widehat L\). Choose \(q\in f^{-1}(\widehat q)\), and choose an \(X
\in T_qL\) with \(df_q(X)=\widehat X\).  By \(\nabla^\perp\zeta=0\),
\begin{equation}\label{eq:zhat-derivative-shape}
    A_\zeta X=-\bar\nabla_{(P_1X+P_2X)}\zeta
\end{equation}
By $E\subset 0\oplus T\R^m$ and \cite{doCarmo1992}*{Chapter 6 Exercise 1(a)}, the last term in equation \eqref{eq:zhat-derivative-shape} is simply $(0,-\nabla^{\R^m}_{\widehat{X}}\widehat \zeta)$.  Since $A_\zeta X\in TL$, we have $-\nabla^{\R^m}_{\widehat X}\widehat \zeta=(df)_q(A_\zeta X)\in T\widehat L$, which gives
\begin{equation}\label{eq:zhat-normal-parallel-shape-identification}
        \widehat\nabla^{\perp}_{\widehat X}\widehat\zeta=0,\quad \widehat A_{\widehat \zeta}((df)_qX)=(df)_q(A_\zeta X),
\end{equation}
where $\widehat A_{\widehat \zeta}$ is the shape operator of $\widehat L$ along $\widehat \zeta$, and $\widehat\nabla^{\perp}$ is the Riemannian connection on $\nu^{\R^m}\widehat L$. Therefore a parallel normal frame on $E$ gives a parallel normal frame on $\nu^{\R^m}\widehat L$. Thus \(\nu^{{\mathbb{R}}^m}\widehat L\) is flat.

For each parallel \(\widehat\zeta\), equation \eqref{eq:zhat-normal-parallel-shape-identification} shows that the set of eigenvalues of \(\widehat A_{\widehat\zeta}\)  is a subset of eigenvalues of \(A_\zeta\).  The latter are constant by Lemma \ref{lem:E-shape-and-spectrum}. Then by Remark \ref{rem:Euclidean-isoparametric}, $\widehat L$ is an isoparametric submanifold.

It remains to identify the nonzero curvature distributions. Since $\widehat L$ is an isoparametric submanifold in Euclidean space, by \cite{palais2006critical}*{Proposition 2.1.2}, shape operators of $\widehat{L}$ have local simultaneous eigendistributions. Let \(\mathcal E_i\) be a simultaneous eigendistribution of \(L\), with curvature normal \(\eta_i=h_iN+\mu_i\), and $\mu_i$ lies in $E$. For $\zeta$ lies in $E$, equation \eqref{eq:curvature-normal} gives
\begin{equation}\label{eq:E-eigenvalue-mui}
        A_\zeta|_{\mathcal E_i}=\langle \zeta,\eta_i\rangle\id_{\mathcal E_i}=\inner{\zeta}{\mu_i}\id_{\mathcal E_i}.
\end{equation}
By Lemma \ref{lem:no-oblique}, either $\mu_i=0$ or $h_i=0$. If \(\mu_i=0\), that is, $\mathcal E_i\subset D_N$, then by equation \eqref{eq:E-eigenvalue-mui}, \(df(\mathcal E_i)\) is contained in the simultaneous eigendistribution of shape operators of \(\widehat L\) with eigenvalue $0$.  If \(\mu_i\neq0\), that is, $\mathcal E_i\subset D_E$, then \(\eta_i=\mu_i\in E\). Taking \(\zeta=\mu_i\) in equation \eqref{eq:E-eigenvalue-mui} and using equation \eqref{eq:zhat-normal-parallel-shape-identification} gives
\begin{equation}\label{eq:nonzero-hat-curv-dist}
        \widehat A_{\widehat\mu_i}(df(Y))=|\mu_i|^2df(Y)
        \qquad (Y\in\mathcal E_i).
\end{equation}
By $\ker (df)=D_1|_{TL}$ and equation \eqref{eq:input-DE-D2}, $df(Y)$ is non-zero, and \(df(\mathcal E_i)\) is a curvature distribution of \(\widehat L\) with non-zero curvature normal. Since $TL=D_E\oplus D_N$ and $T\widehat{L}=df(TL)$, we have that $df(D_N)$ and $df(D_E)$ are exactly the sum of curvature distributions of shape operators of $\widehat L$ with zero and non-zero curvature normals, that is, $T\widehat{L}=df(D_E)\oplus df(D_N)$.
\end{proof}

Now we may apply Corollary \ref{cor:real-palais-terng} to $\widehat L$. Up to an isometry by translation and rotation on $\R^{m}$, we get a local product decomposition
\begin{equation}\label{eq:PT-Lhat-product}
        \widehat L\subset\R^{m_1}\times L_2\times\{0\},
        \qquad
        0\oplus TL_2|_{\widehat L}=df(D_E),
        \qquad
        T\R^{m_1}|_{\widehat L}\oplus 0=df(D_N)
\end{equation}
where $\widehat L$ is an open subset in $\R^{m_1}\times L_2\times\{0\}$, \(L_2\subset{\mathbb{R}}^{m_2}\) is an open subset of a compact isoparametric submanifold in $\R^{m_2}$, and $m_1+m_2+m_3=m$.

\begin{lemma}
    Let $L'$ be an isoparametric submanifold satisfying the assumptions on $L$ in Theorem \ref{thm:reduction}, and additionally satisfying the assumption that $\widehat{L}'=\pi_{\R^m}(L')$ is full. If Theorem \ref{thm:reduction} is true for all such $L'$, then Theorem \ref{thm:reduction} is true.
\end{lemma}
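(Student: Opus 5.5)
The plan is to reduce to the full case by restricting everything to the smallest affine subspace of $\R^m$ that contains the Euclidean projection. Let $L$ satisfy the hypotheses of Theorem \ref{thm:reduction}, and let $\widehat L=\pi_{\R^m}(L)$. Let $V\subset\R^m$ be the affine hull of $\widehat L$, of dimension $m'\le m$. After a translation and rotation of $\R^m$ (an isometry of $\Q^n_\eps\times\R^m$ acting only on the Euclidean factor), I take $V=\R^{m'}\times\{0\}$ with $\R^m=\R^{m'}\times\R^{m-m'}$. Then $L\subset\Q^n_\eps\times\R^{m'}\times\{0\}$, and I set $L'$ to be $L$ regarded as a submanifold of $\Q^n_\eps\times\R^{m'}$. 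By Lemma \ref{lem:adding-euclidean-factor-no-change}, applied with $M=\Q^n_\eps\times\R^{m'}$ and $N=\R^{m-m'}$, the submanifold $L'$ is isoparametric in $\Q^n_\eps\times\R^{m'}$. By construction $\pi_{\R^{m'}}(L')=\widehat L$ is full in $\R^{m'}$.

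Next I check that $L'$ satisfies the hypotheses of Theorem \ref{thm:reduction} in $\Q^n_\eps\times\R^{m'}$. The normal space $\nu_{p^0}L$ in the big space splits orthogonally as $\nu_{p^0}L'\oplus(0\oplus 0\oplus\R^{m-m'})$. The vector $(v,w)$ is normal to $L$, so I need $w\in\R^{m'}$. I get this from the structure already established: by \eqref{eq:pointwise-df-on-components}, $d\pi_2(\nu_2)$ is tangent to $\widehat L$, hence lies in $T V=\R^{m'}$. So $N_{p^0}=(a\nu_1,b\,d\pi_2\nu_2)$ lies in $T\Q^n_\eps\oplus\R^{m'}$, and therefore $w\in\R^{m'}$.

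The subspace $B$ must be replaced by $B'=B\cap\R^{m'}$, possibly after adjusting it. Since $\m$ splits as $\R(v,w)\oplus(0\oplus B)$ with $B=E_{p^0}$ up to the $N$-direction, intersecting with $\nu_{p^0}L'$ gives $\R(v,w)\oplus(0\oplus B')$. Here $B'$ is the orthogonal complement of $\R^{m-m'}$ inside $E_{p^0}$. This uses that $\R^{m-m'}\subset E_{p^0}$, because these directions are normal to $L$ with no first-factor component. It also uses $w\notin B'$. So the section type of $L'$ at $p^0$ is again of type (a).

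Applying the assumed full case to $L'$ gives that, near $p^0$, $L'$ is an open subset of $L_1\times L_2\times\{0\}\subset\Q^n_\eps\times\R^{m_1}\times\R^{m_2}\times\R^{m_3'}$ with $m_1+m_2+m_3'=m'$. Here $L_1$ is an isoparametric hypersurface of $\Q^n_\eps\times\R^{m_1}$ with $-1<C<1$, and $L_2$ is an open subset of a full compact isoparametric submanifold of $\R^{m_2}$. Appending the factor $\R^{m-m'}$ gives $m_3=m_3'+m-m'$. Composing with the Euclidean isometry used at the start yields the conclusion of Theorem \ref{thm:reduction} for $L$.

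The main obstacle is verifying that the Lie triple system hypothesis passes to $L'$. That is, I must confirm that $w\in\R^{m'}$ and that the extra directions $\R^{m-m'}$ really lie in $E_{p^0}$ rather than mixing with $N$. I would settle this first, using \eqref{eq:pointwise-df-on-components}, Lemma \ref{prop:E-descends} ($\widehat E=\nu^{\R^m}\widehat L$), and the fact that constant vectors orthogonal to $V$ are normal to $\widehat L$, hence lie in $\widehat E$, hence in $E$.
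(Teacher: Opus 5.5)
Your proposal is correct and follows essentially the same route as the paper. The paper restricts to $\R^{m_1+m_2}\times\{0\}$, obtained from the Palais--Terng splitting \eqref{eq:PT-Lhat-product}; this is the same subspace as your affine hull $V$. Like you, it then applies Lemma \ref{lem:adding-euclidean-factor-no-change}, uses the full case, and multiplies by the origin of the extra Euclidean factor. Your explicit check that the Lie triple system at $p^0$ keeps the form $\R(v,w)\oplus(0\oplus B')$ with $w\notin B'$ fills in a step the paper leaves implicit. That check uses the tangency of $\mathcal T$ to force $w\in\R^{m'}$, and the inclusion $\R^{m-m'}\subset E_{p^0}$.
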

\begin{proof}
    Let $L$ be an isoparametric submanifold satisfying the assumptions in Theorem \ref{thm:reduction}. $\widehat{L}$ is full in $\R^{m_1+m_2}\times\{0\}$ by equation \eqref{eq:PT-Lhat-product}. By Lemma \ref{lem:adding-euclidean-factor-no-change}, $L$ is an isoparametric submanifold in $\Q^n_\eps\times\R^{m_1+m_2}\times\{0\}$. If $L$ is locally decomposable as an open subset of $L_1\times L_2\times \{0\}$ in $\Q^n_\eps\times\R^{m_1+m_2}\times\{0\}$ as required by Theorem \ref{thm:reduction}, then $L$ is also decomposable in $\Q^n_\eps\times\R^{m}$, simply by multiplying $L_1\times L_2\times\{0\}$ with the origin in the extra factor $\R^{m_3}$.
\end{proof}
Therefore we may reduce the Theorem \ref{thm:reduction} to the case in which $\widehat L$ is full. From now on, we assume that 
\begin{equation*}
    m_3=0, \qquad m=m_1+m_2.
\end{equation*}

The following computations will be useful.

Since $df(D_E)\subset 0\oplus T\R^{m_2}$ and $\ker (d\pi_{2})=T\Q^n_\eps\oplus 0$, we have $D_E\subset d\pi_{2}^{-1}(0\oplus T\R^{m_2})\subset T\Q^n_\eps\oplus 0\oplus T\R^{m_2}$. Since $D_E\subset D_2$ and $D_2\perp (T\Q^n_\eps\oplus 0)$, we have $D_E\subset 0\oplus T\R^{m_2}$.

By equation \eqref{eq:Azeta-basic}, $D_1\oplus\R\mathcal T\subset D_N$. 
Also, $df(D_N)=T\R^{m_1}\oplus 0$ gives $D_N\subset T\Q^n_\eps\oplus T\R^{m_1}\oplus0$.  Since for any $p\in L$, $\mathcal T_p\in D_N\subset T\Q^n_\eps \oplus T\R^{m_1}\oplus0$, we have $d\pi_{2}(\mathcal T_p)=-a\,d\pi_{2}(\nu_2|_p)\in T\R^{m_1}\oplus0$, which implies $d\pi_{2}(N_p)\in \R^{m_1}\oplus0$ and $N\in T\Q^n_\eps \oplus T\R^{m_1}\oplus0$. Since $T_{f(p)}\R^{m_1}\oplus 0\subset T_{f(p)}\widehat L$, we have $\nu_p^{\R^m}\widehat L=\widehat E_p\subset 0\oplus T\R^{m_2}$. Then by $\widehat E=d\pi_{2}(E)$, we have $E\subset d\pi_{2}^{-1}(0\oplus T\R^{m_2})=T\Q^n_\eps\oplus 0\oplus T\R^{m_2}$. Since $\ker (d\pi_{2})=T\Q^n_\eps\oplus 0$ and $E\perp (T\Q^n_\eps\oplus0)$, we have $E\subset 0\oplus T\R^{m_2}$.

To summarize the above computations, we have
\begin{equation}\label{eq:DEDNT-m1m2}
\begin{aligned}
    &D_E\subset 0\oplus T\R^{m_2},\, D_1\oplus\R\mathcal{T}\subset D_N\subset T\Q^n_\eps\oplus  T\R^{m_1}\oplus0,\, \R N\in T\Q^n_\eps \oplus T\R^{m_1}\oplus0,\\
    &E\subset 0\oplus T\R^{m_2},\,
    D_N\oplus \R N=(T\Q^n_\eps\oplus T\R^{m_1})|_L,\, D_E\oplus E=(T\R^{m_2})_L,\, \\
    &D_N\oplus\R N\oplus D_E\oplus E=(T\Q^n_\eps\oplus T\R^m)_L.
\end{aligned}
\end{equation}

\subsection{Lifting the splitting}

Consider the following ambient product decomposition and projections
\begin{equation}\label{eq:ambient-after-PT-new}
        Q^n_\eps\times{\mathbb{R}}^{m}
        =(Q^n_\eps\times{\mathbb{R}}^{m_1})\times{\mathbb{R}}^{m_2},
\end{equation}
\begin{equation}\label{eq:Pi-defs}
        \Pi_1:Q^n_\eps\times{\mathbb{R}}^{m}\to Q^n_\eps\times{\mathbb{R}}^{m_1},
        \qquad
        \Pi_2:Q^n_\eps\times{\mathbb{R}}^{m}\to{\mathbb{R}}^{m_2}
\end{equation}
By equation \eqref{eq:DEDNT-m1m2}, we have $D_E\subset\ker(d\Pi_1|_{TL}),\,D_N\subset\ker(d\Pi_2|_{TL})$. By $D_E\oplus D_N=TL$, we get
\begin{equation}\label{eq:kernel-identities-new}
        D_E=\ker(d\Pi_1|_{TL}),
        \qquad
        D_N=\ker(d\Pi_2|_{TL}).
\end{equation}

\begin{lemma}\label{lem:L1L2decomp}
    $L$ is an open subset of $L_1\times L_2$, where $L_1$ and $L_2$ are submanifolds in $\Q^n_\eps\times\R^{m_1}$ and $\R^{m_2}$, respectively. Moreover $TL_1\oplus0=\ker(d\Pi_2|_{TL})$, and $0\oplus TL_2=\ker(d\Pi_1|_{TL}).$
\end{lemma}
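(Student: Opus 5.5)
The plan mirrors the proof of Lemma \ref{lem:localproduct}, applied to the ambient splitting \eqref{eq:ambient-after-PT-new} instead of $\Q^n_\eps\times\R^m$. By \eqref{eq:kernel-identities-new} we have $TL=D_N\oplus D_E$ with $D_N=\ker(d\Pi_2|_{TL})$ and $D_E=\ker(d\Pi_1|_{TL})$. Both distributions have constant rank: $D_E$ is a fixed sum of curvature distributions, and $D_N$ is its orthogonal complement. Hence $\Pi_1|_L$ has constant rank $\rank D_N$ and $\Pi_2|_L$ has constant rank $\rank D_E$. The rank theorem \cite{lee2003smooth}*{Theorem 4.12} then gives, after shrinking $L$ around $p^0$, that $L_1^0:=\Pi_1(L)$ and $L_2^0:=\Pi_2(L)$ are embedded submanifolds of $\Q^n_\eps\times\R^{m_1}$ and $\R^{m_2}$, respectively.

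Next consider $F=(\Pi_1,\Pi_2)|_L:L\to L_1^0\times L_2^0$, which is just the inclusion of $L$ into the ambient space. Its differential at $p^0$ is $(d\Pi_1,d\Pi_2)$. This map is injective on $T_{p^0}L=D_N\oplus D_E$, because $d\Pi_1$ is injective on $D_N$ and kills $D_E$, while $d\Pi_2$ is injective on $D_E$ and kills $D_N$. It is surjective onto $T L_1^0\oplus TL_2^0$ because $d\Pi_1(T_{p^0}L)=d\Pi_1(D_N)=TL_1^0$, and similarly for $\Pi_2$. So $F$ is a local diffeomorphism near $p^0$. Choosing open sets $L_1\subset L_1^0$ and $L_2\subset L_2^0$ with $L_1\times L_2\subset F(U_1)$, a neighbourhood of $p^0$ in $L$ equals $L_1\times L_2$ as a subset of the ambient space. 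In this product picture, $TL_1\oplus 0$ is exactly the set of tangent vectors killed by $d\Pi_2$, and $0\oplus TL_2$ is the set killed by $d\Pi_1$. This gives the stated identities $TL_1\oplus0=\ker(d\Pi_2|_{TL})=D_N$ and $0\oplus TL_2=\ker(d\Pi_1|_{TL})=D_E$.

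It remains to match this $L_2$ with the $L_2$ of \eqref{eq:PT-Lhat-product}. Here $\Pi_2=\pi_{\R^{m_2}}\circ\pi_{\R^m}$, so $\Pi_2(L)$ is the $\R^{m_2}$-projection of $\widehat L$. Since $\widehat L$ is an open subset of $\R^{m_1}\times L_2$, that projection is an open subset of the compact isoparametric $L_2$, and the two agree up to shrinking.

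The main obstacle is the constant-rank and integrability input. Constancy of $\rank D_E$ relies on the curvature distributions $\mathcal E_i$ being smooth of constant rank on the shrunken $L$, which was arranged at \eqref{eq:simultaneous-eigendistribution}. Once both kernels have constant rank, no further integrability argument is needed: each kernel is the kernel of the differential of a constant-rank map, hence automatically integrable, and the rank theorem handles the rest. If constant rank failed at $p^0$, I would first restrict to an open dense set where the ranks are locally constant and then extend by the unique continuation used in Lemma \ref{lem:gluing}. By construction, however, the ranks are already constant.
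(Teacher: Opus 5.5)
Your proposal is correct and follows the paper's route. Like the paper, you use the constant-rank kernels $D_N=\ker(d\Pi_2|_{TL})$ and $D_E=\ker(d\Pi_1|_{TL})$, apply the rank theorem to make $\Pi_1(L)$ and $\Pi_2(L)$ submanifolds, and then note that $(\Pi_1,\Pi_2)|_L$, being the restriction of the inclusion, is a local diffeomorphism, just as in Lemma \ref{lem:localproduct}. Two steps the paper leaves implicit are handled well: you shrink so that the neighbourhood is a genuine product $L_1\times L_2$, and you identify $\Pi_2(L)$ with an open piece of the compact isoparametric $L_2$ from \eqref{eq:PT-Lhat-product}.
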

\begin{proof}
By equation \eqref{eq:kernel-identities-new}, $TL=\ker(d\Pi_1|_{TL})\oplus\ker(d\Pi_2|_{TL})$. There exists a neighborhood on $L$, still denoted by $L$, such that 
\begin{equation}
    \Pi_1|_L:L\to \Pi_1|_L(L)=:L_1,\quad \Pi_2|_L\to \Pi_2|_L(L)=L_2
\end{equation}have constant ranks. Then the map $(\Pi_1,\Pi_2)|_L:L\rightarrow L_1\times L_2$ is a restriction of the inclusion of $L$ into $\Q^n_\eps\times\R^{m}$. By our construction, the tangent map $d(\Pi_1,\Pi_2)|_L$ is an linear isomorphism at any point in $L$, therefore up to restricting $L$ to a smaller neighborhood around $p^0$, $(\Pi_1,\Pi_2)|_L:L\rightarrow L_1\times L_2$ is a diffeomorphism. Moreover, since it is the restriction of the inclusion of $L$ into $\Q^n_\eps\times\R^{m}$, it is also an isometry.
\end{proof}

\begin{lemma}
    $L_1$ is a hypersurface in $\Q^n_\eps\times\R^{m_1}$. $d\Pi_1|_L(N)$ is well-defined and is a unit normal vector field on $L_1$.
\end{lemma}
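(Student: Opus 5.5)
Since $L\cong L_1\times L_2$ by Lemma \ref{lem:L1L2decomp}, with $TL_1\oplus 0=D_N$ and $0\oplus TL_2=D_E$, the plan is a dimension count followed by a descent argument for $N$ in the spirit of Lemma \ref{prop:E-descends}.

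\emph{Dimension count.} We have $\dim L_1=\operatorname{rank} D_N$. By \eqref{eq:DEDNT-m1m2},
\[
D_N\oplus\R N=(T\Q^n_\eps\oplus T\R^{m_1})|_L,
\]
so $\operatorname{rank} D_N=n+m_1-1$. Hence $L_1$ is a hypersurface of $\Q^n_\eps\times\R^{m_1}$.

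\emph{$N$ as a normal at a single point.} By \eqref{eq:DEDNT-m1m2}, $N$ takes values in $T\Q^n_\eps\oplus T\R^{m_1}\oplus 0$, so $d\Pi_1$ restricted to $N$ is an isometry onto its image. Thus $d\Pi_1(N_q)$ is a unit vector in $T_{\Pi_1(q)}(\Q^n_\eps\times\R^{m_1})$. It is orthogonal to $d\Pi_1(D_N)=T_{\Pi_1(q)}L_1$, because $N\perp D_N$ and both lie in the first factor. So $d\Pi_1(N_q)$ is a unit normal of $L_1$ at $\Pi_1(q)$.

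\emph{Well-definedness.} We must show $d\Pi_1(N_q)$ depends only on $\Pi_1(q)$. The fibers of $\Pi_1|_L$ are the slices $\{x\}\times L_2$, which are connected after shrinking $L$. Their tangent spaces are $D_E$, so it suffices to show that $\bar\nabla_X N=0$ for all $X\in D_E$.
\begin{itemize}
\item The normal part of $\bar\nabla_X N$ vanishes because $N$ is parallel (Lemma \ref{lem:E-parallel}).
\item The tangential part is $-A_N X$. For $X\in\mathcal E_i\subset D_E$ we have $\mu_i\neq 0$, so Lemma \ref{lem:no-oblique} gives $h_i=0$. Therefore $A_N X=\langle N,\eta_i\rangle X=h_iX=0$.
\end{itemize}
Hence $\bar\nabla_X N=0$ along each fiber. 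Moving along the fiber only changes the $\R^{m_2}$-coordinate, and the factor $\Q^n_\eps\times\R^{m_1}$ is untouched. So the $(\Q^n_\eps\times\R^{m_1})$-component of $N$, identified across the fiber in the product, is constant. This is the same identification used in the proof of Lemma \ref{prop:E-descends}.

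Therefore $d\Pi_1(N)$ descends to a smooth unit normal vector field on $L_1$. Smoothness follows since $\Pi_1|_L$ is a submersion onto $L_1$ and we can compose with a local section.

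The main subtlety is the descent step. Its key input is $A_N|_{D_E}=0$, and this requires the oblique-normal exclusion of Lemma \ref{lem:no-oblique} together with the decomposition $TL=D_E\oplus D_N$.
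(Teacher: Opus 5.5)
Your proposal is correct and follows essentially the same route as the paper. The hypersurface claim is the dimension count from $D_N\oplus\R N=(T\Q^n_\eps\oplus T\R^{m_1})|_L$. Normality and unit length come from $N,D_N\subset T\Q^n_\eps\oplus T\R^{m_1}\oplus 0$ with $N\perp D_N$. Well-definedness comes from $\bar\nabla_Y N=-A_NY=0$ for $Y\in D_E$, using that $N$ is parallel and that $h_i=0$ on each $\mathcal E_i\subset D_E$ by Lemma~\ref{lem:no-oblique}, integrated along the connected fibers of $\Pi_1|_L$. Your side remarks fill in small points the paper leaves implicit: the fibers $\{x\}\times L_2$ are connected because $L\cong L_1\times L_2$ is connected, and the descended field is smooth via local sections of the submersion.
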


\begin{proof}
First compute the dimension of \(L_1\).  By equation \eqref{eq:PT-Lhat-product}, $df(D_N)=(T\R^{m_1}\oplus0)|_{\widehat L}$. Since $D_1\subset D_N$, $\ker(df|_{D_N})=\ker(df)=D_1$. Thus by equation \eqref{eq:DEDNT-m1m2} $\dim L_1=\dim D_N=\dim (\Q^n_\eps\times\R^{m_1})-\dim\R N=n+m_1-1.$
The ambient space \(\Q^n_\eps\times{\mathbb{R}}^{m_1}\) has dimension \(n+m_1\).  Hence \(L_1\) is a hypersurface.

We now prove that \(d\Pi_1|_L(N)\) is well-defined along \(L_1\). 

Let $Y\in D_E$. By parallel-ness of $N$, $\bar\nabla_{Y}N=-A_NY+\nabla^\perp_YN=-A_NY$. Suppose $Y$ is in an eigendistribution $\mathcal E_i\subset D_E$, by the definition \eqref{eq:input-DE-def}, the curvature normal $\eta_i=h_iN+\mu_i$ has $h_i=0$. Therefore by the characterization of curvature normals equation \eqref{eq:curvature-normal}, $A_NY=\langle N,\eta_i\rangle Y=0$ since $\R N\perp E$. Therefore
\begin{equation}\label{eq:ANY=0}
    \bar\nabla_YN=A_NY=0.
\end{equation}
For any point $\widehat q\in L_1$. Let \(\gamma:[0,1]\to L\) be any \(C^1\)-curve contained in $(\Pi_1|_L)^{-1}(\widehat q)$. Since $\ker (d\Pi_1|_L)=D_E$, tangent vectors of $\gamma$ lie in $D_E$. Since $N$ lies in $T\Q^n_\eps\oplus T\R^{m_1}\oplus 0$, at any point $p=(\widehat q,p_2)\in \Pi_1^{-1}(\widehat q)$, $N_p\in T_{\widehat q}(\Q^n_\eps\times \R^{m_1})\oplus 0_{q_2}$ can be identified naturally with $(d\Pi_1)_p\zeta_p\in T_{\widehat q}(\Q^n_\eps\times \R^{m_1})$.  By equation \eqref{eq:ANY=0}, $N\circ \gamma$, identified with $d\Pi_1(N\circ\gamma)\in T_{\widehat q}(\Q^n_\eps\times \R^{m_1})$, is constant on $\gamma$. Therefore, for any $q,q'\in (\Pi_1|_L)^{-1}(\widehat q)$, by the connectivity assumption on the fibers of $\Pi_1|_L$, we can choose $\gamma$ such that it connects $q,q'$ and conclude
\begin{equation}\label{eq:Pi-zeta-q=Pi-zeta-q'}
        d\Pi_1(\zeta(q'))=d\Pi_1(\zeta(q)).
\end{equation}
We conclude that $d\Pi_1(N)$ is a well-defined vector field on $L_1$.

For any point $p\in L$, by equation \eqref{eq:DEDNT-m1m2}, for any tangent vector \(X_p\in D_N\),
\begin{equation*}\label{eq:N1-normal-L1-new}
        \inner{d\Pi_1N_p}{d\Pi_1X_p}_{Q^n_\eps\times{\mathbb{R}}^{m_1}}
        =\inner{N_p}{X_p}_{Q^n_\eps\times{\mathbb{R}}^m}-\inner{d\Pi_2N_p}{d\Pi_2X_p}_{{\mathbb{R}}^{m_2}}=0.
\end{equation*}
Since $d\Pi_1(N_p)=d\Pi_1(N_q)$ for $p,q\in L$ with $\Pi_1(p)=\Pi_2(q)$, $d\Pi_1(N_p)^\perp=d\Pi_1(N_q)^\perp$. This together with the dimension of $D_N$ being $n-1+m_1$, proves $\nu^{\Q^n_\eps\times \R^{m_1}}L_1=\R d\Pi_1(N)$. By equation \eqref{eq:DEDNT-m1m2}, $N$ has no $\mathbb{R}^{m_2}$ component. Therefore $d\Pi_1(N)$ has unit length, since $N$ has unit length.
\end{proof}

By the definition of $D_N$ \eqref{eq:input-DE-def} and Lemma \ref{lem:no-oblique}, for any vector $X\in\mathcal E_i\subset D_N$, $A_N(X)=\langle N,\eta_i\rangle X\in D_N$. Therefore we have 
\begin{equation}\label{eq:ANDNsubsetDN}
    A_ND_N\subset D_N.
\end{equation}

For any vector $X\in D_N$, by the product structure \eqref{eq:ambient-after-PT-new}, we have $\nabla^{Q^n_\eps\times{\mathbb{R}}^{m_1}}_{d\Pi_1X}d\Pi_1(N)
        =d\Pi_1(\bar\nabla_XN)
        =-d\Pi_1(A_NX),$
        where the first equality is an elementary exercise \cite{doCarmo1992}*{Chapter 6 Exercise 1(a)} applying to $N$ since $N$ lies in $T(\Q^n_\eps\times\R^{m_1})\oplus0$, and the last equality is from the parallel-ness of $N$.
Therefore $A^{L_1}_{d\Pi_1(N)}(d\Pi_1X)=d\Pi_1(A_NX)$, since $d\Pi_1(N)$ is unit hence also parallel in $\nu^{\Q^n_\eps\times \R^{m_1}}L_1$. Here $d\Pi_1(X)$ is tangent in $L_1$ since $X\in D_N$. By equation \eqref{eq:ANY=0} ($A_ND_E=0$) and equation \eqref{eq:ANDNsubsetDN}, under the decomposition \(TL=D_N\oplus D_E= (TL_1\oplus TL_2)|_L\), we have
\begin{equation}\label{eq:shape-direct-sum-new}
        A_N^L=A_{d\Pi_1(N)}^{L_1}\oplus 0.
\end{equation}

It remains to prove that \(L_1\) is isoparametric.  $\nu^{\Q^n_\eps\times \R^{m_1}} L_1$ is flat since it is codimension $1$. Also since $L_1$ has codimension $1$, all sections of $L_1$ are normal geodesics hence totally geodesic.

Parallel normal vector fields on $L_1$ are of the form $\xi=c\cdot d\Pi_1(N)$ for some constant number $c$. Let $(x,y)$ be a point in $(Q^n_\eps\times{\mathbb{R}}^{m_1})\times{\mathbb{R}}^{m_2}$ with $x\in Q^n_\eps\times{\mathbb{R}}^{m_1}$ and $y\in{\mathbb{R}}^{m_2}$. By equation \eqref{eq:Jacobi} and equation \eqref{eq:shape-direct-sum-new}, we have that the operator $J(t)$ on $L$ along $cN$ satisfies
\begin{equation}\label{eq:Jacobi-direct-sum-final}
        J^{cN}_L((x,y),t)
        =J^{c\cdot d\Pi_1(N)}_{L_1}(x,t)\oplus I_{T_yL_2},
        \qquad
        \det J^{cN}_L((x,y),t)
        =\det J^{c\cdot d\Pi_1(N)}_{L_1}(x,t).
\end{equation}
By equation \eqref{eq:HLO-determinant-condition}, $\det J^{cN}_L((x,y),t)$ is independent of $(x,y)$, which again by equation \eqref{eq:HLO-determinant-condition}, implies that $L_1$ is almost isoparametric. Therefore $L_1$ is isoparametric.

At last, we prove that the angle function of $L_1$ satisfies that $-1<C<1$. Denote the natural projection from $\Q^n_\eps\times\R^{m_1}$ to the $\Q^n_\eps$ by $\Pi_{\Q^n_\eps}$, and the natural projection from $\Q^n_\eps\times\R^{m_1}$ to $\R^{m_1}$ by $\Pi_{\R^{m_1}}$. Since $d\Pi_1(N)$ is the normal vector field for $L_1$, we can compute the two components of it by $d\pi_{1}\circ d\Pi_1(N)=d\pi_{1}(N)= a\ne0$, and $d\Pi_{\R^{m_1}}\circ d\Pi_1(N)=d\pi_{\R^{m_1}}\circ d\pi_{\R^m_\eps}(N)=b\ne0$. On the other hand $a^2+b^2=|N|^2=1$. Therefore the angle function $C$ of $L_1$, as defined in equation \eqref{eq:angle-function}, which can be computed by $ a^2-b^2$, gives $-1<C<1$.

\section{Constant Angle Function}\label{sec:constant-angle}

The angle function $C$ defined by equation \eqref{eq:angle-function} has been studied in hypersurfaces of product spaces by Tojeiro \cite{Tojeiro2010}, Garnica--Palmas--Ruiz-Hern\'andez \cite{Garnica2012}, Urbano \cite{urbano2019hypersurfaces}, Gao--Ma--Yao \cites{gao2024hypersurfaces,gao2024isoparametric}, de Lima--Pipoli \cites{de2022isoparametric,delima2025}, and Tan--Xie--Yan \cite{TanXieYan2026}.  
The constancy of this function is useful for the classification of  isoparametric hypersurfaces in \(\Q^{n}_{\epsilon_1}\times \Q^{m}_{\epsilon_2}\).  Urbano \cite{urbano2019hypersurfaces} and Gao--Ma--Yao \cites{gao2024hypersurfaces,gao2024isoparametric} proved this property in low-dimensional products.  De Lima--Pipoli \cite{de2022isoparametric} proved it for \(\Hyp^n\times\mathbb R\) and \(\Sph^n\times\mathbb R\).  Tan--Xie--Yan \cite{TanXieYan2026} extended the result to \(\Hyp^n\times\mathbb R^m\) and \(\Sph^n\times\mathbb R^m\).  De Lima--Pipoli \cite{delima2025} proved it for products \(\Q^{n}_{\epsilon_1}\times \Q^{m}_{\epsilon_2}\).
De Lima--Pipoli and Tan--Xie--Yan  calculated the Jacobi matrices and used   Kac matrices  to prove that  $C$ is constant.  Here we follow their Jacobi matrices approach but give a  simpler proof.

\begin{theorem}\label{thm:constant-angle}
Let $L$ be an isoparametric hypersurface in $\Q^n_{\eps_1}\times \Q^m_{\eps_2}$. Then the angle function $C$ is constant on \(L\).
\end{theorem}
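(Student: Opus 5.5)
We prove that $C$ is constant by computing the Jacobi operator along the normal geodesics of $L$ and applying the determinant condition \eqref{eq:HLO-determinant-condition}, in the same spirit as Lemma \ref{lem:det-rigidity}. Let $U=\{z\in L: -1<C(z)<1\}$. Since $C$ is continuous and $\nabla C=-2A(\mathcal T)$ by Lemma \ref{lem:nabla-C}, it suffices to show that $C$ is locally constant on the open set $U$. On the interior of $L\setminus U$ we already have $C=\pm1$, so connectedness and continuity will then give constancy on all of $L$.

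\textbf{Jacobi operator.} Fix $z\in U$ and write $N=N_1+N_2$, with $|N_1|^2=(1+C)/2$ and $|N_2|^2=(1-C)/2$. Decompose
\[
T_zL=D_1\oplus\R\mathcal T\oplus D_2,
\]
where $D_i=T_zL\cap(\text{$i$-th factor})$, $\dim D_1=n-1$ and $\dim D_2=m-1$. By \eqref{eq:curvature-formula}, the operator $X\mapsto\bar R(X,N)N$ acts as follows:
\begin{itemize}
\item it equals $\eps_1|N_1|^2$ on $D_1$ and $\eps_2|N_2|^2$ on $D_2$;
\item it preserves the plane spanned by $\mathcal T$ and $N$, and on $\mathcal T$ it equals $\kappa\,\mathcal T$ with $\kappa=\eps_1|N_2|^2+\eps_2|N_1|^2$ (up to the normalization of $\mathcal T$).
\end{itemize}
All of these quantities are constant along the geodesic, because $P$ is parallel and $\gamma'$ is parallel. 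Hence, in a parallel frame adapted to this splitting, the curvature term is a constant diagonal matrix $\mathrm{diag}(k_1I,\kappa,k_2I)$, and
\[
J(t)=c(t)-s(t)A,
\]
where $c,s$ are the diagonal matrices of the functions $c_k,s_k$ from \eqref{eq:cs-functions} associated with the three constants. Consequently $\det J(t)$ is a finite sum of exponential-polynomial terms $t^re^{\lambda t}$. The frequencies $\lambda$ are drawn from $\pm\sqrt{-k_1},\pm\sqrt{-k_2},\pm\sqrt{-\kappa}$ (complex when the curvature constant is positive), and each coefficient is polynomial in the entries of $A$.

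\textbf{Frequency argument.} By \eqref{eq:HLO-determinant-condition}, $\det J(t)$ is independent of $z$. By Lemma \ref{lem:spectrum-unique}, the set of frequencies carrying nonzero coefficients, together with those coefficients, is then independent of $z$. The key step is to exhibit a term whose frequency is forced to be nonzero and which depends injectively on $C$. The natural candidate is the extreme term
\[
\det\bigl(c(t)-s(t)A\bigr)\approx\prod_k\tfrac12\Bigl(1\mp\tfrac{\lambda_k}{\sqrt{-k_\ast}}\Bigr)e^{\pm\sqrt{-k_\ast}t}.
\]
Its top exponent is $(n-1)\sqrt{-k_1}+(m-1)\sqrt{-k_2}+\sqrt{-\kappa}$ (taking appropriate branches), and this is a nonconstant function of $C$ whenever the leading coefficient survives. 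Concretely, split into cases according to the signs of $(\eps_1,\eps_2)$, so that $C$ enters through $|N_1|^2=(1+C)/2$ and $|N_2|^2=(1-C)/2$. Show that the leading coefficient is a nonzero multiple of a determinant of the form $\det(\mathrm{Id}-A/\sqrt{\cdot})$ or its conjugate. When the leading term vanishes, pass to the next-highest frequency: one of the two opposite extreme terms must survive, because the coefficients of $e^{\pm}$ cannot both vanish, as is already visible in \eqref{eq:spherical-factor} and \eqref{eq:hyperbolic-factor}. The realized frequency is therefore a strictly monotone function of $C$ on each open interval in which the set of surviving frequencies is fixed. Since this frequency is independent of $z$, $C$ is locally constant.

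\textbf{Main obstacle.} The hardest part is controlling cancellations between mixed terms: the off-diagonal blocks of $A$ couple $D_1$, $\mathcal T$ and $D_2$, and the frequencies from the three blocks could in principle coincide, for example $\sqrt{-k_1}=\sqrt{-\kappa}$ at special values of $C$. Our first attempt will be to isolate the highest real part of the exponent, or, in the spherical case, the highest imaginary frequency. We then show its coefficient equals a product over blocks of the diagonal Schur-complement coefficients, using a Schur complement as in \eqref{eq:Schur}. Degenerate coincidences of frequencies occur only at finitely many values of $C$, so they can be handled by continuity of $C$: a continuous function that is locally constant off a finite set of values is constant on connected sets.
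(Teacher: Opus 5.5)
Your overall strategy is the paper's: restrict to $\{-1<C<1\}$, use the $z$-independence of $\det J$, and read off $C$ from the surviving exponents via Lemma~\ref{lem:spectrum-unique}. It does work whenever a spherical factor is present. There the top imaginary frequency has a nonvanishing coefficient, since at $t=0$ it reduces to a determinant like $\det(I\pm\frac{\ii}{a}A')$ (or its coupled analogue) with $A'$ real symmetric.

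The genuine gap is your claim that some nonzero frequency must always survive. This is false in $\Hyp^n\times\R^m$ and $\Hyp^n\times\Hyp^m$. The display \eqref{eq:hyperbolic-factor} only shows that each \emph{single} factor keeps one of its two exponentials. In the product, the surviving $e^{at}$'s and $e^{-at}$'s coming from different eigenvalues can cancel completely. For instance, suppose $A$ preserves $D_1$, $\R\mathcal T$ and $D_2$, and $A|_{D_1}$ has eigenvalues $\pm a$ with equal multiplicities. Then in $\Hyp^n\times\R^m$
\[
\det J(t)=\det\Bigl(\cosh(at)I-\tfrac{\sinh(at)}{a}A|_{D_1}\Bigr)(1-th_{11})\det(I-tA|_{D_2})=(1-th_{11})\det(I-tA|_{D_2}).
\]
This is a polynomial containing no information about $a$ at all. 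In $\Hyp^n\times\Hyp^m$ the same happens if you also take eigenvalues $\pm b$ on $D_2$. So ``pass to the next-highest frequency'' never terminates. The point-independence of $\det J$ is then perfectly compatible with $C$ varying, and no amount of frequency bookkeeping closes the argument.

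What is missing is a separate treatment of this polynomial case, and it requires geometric input.
\begin{enumerate}
\item The paper first shows, by a Schur-complement analysis, that polynomiality of $\det J$ forces exactly the configuration above. Namely $AD_1\subset D_1$ and $(A|_{D_1})^2=a^2I$, with equal multiplicities of $\pm a$. This is Lemma~\ref{lem:det-rigidity} for $\Hyp^n\times\R^m$, and Lemmas~\ref{lem:S2=a2I}, \ref{lem:B=0}, \ref{lem:u=0v=0} for $\Hyp^n\times\Hyp^m$, after coincident exponents have been disposed of by finiteness of the possible ratios $a/b$.
\item It then excludes this configuration via Lemma~\ref{lem:cartanformula}. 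A leaf of $D_1$ lies in a slice $\Hyp^n\times\{y_0\}$ with shape operator $\frac1a A|_{D_1}$. It is therefore a hypersurface of $\Hyp^n$ with constant principal curvatures $1$ and $-1$ of equal positive multiplicity. Cartan's identity evaluated at $\lambda=1$ rules this out.
\end{enumerate}

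Separately, your $\kappa$ is wrong: it is $0$. The vectors $N$ and $\mathcal T$ span the same plane as $N_1,N_2$, which is flat in the product, so $\bar R(\mathcal T,N)N=0$. Hence the mixed row is $\delta_{1j}-th_{1j}$, there is no frequency $\pm\sqrt{-\kappa}$, and the top exponent is $(n-1)\sqrt{-k_1}+(m-1)\sqrt{-k_2}$.

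Also, in $\Sph^n\times\Sph^m$ the top frequency $(n-1)a+(m-1)b$ is not monotone in $C$. It is only finite-to-one on $a^2+b^2=1$, and that together with continuity is what actually gives local constancy.
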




If there is no $p\in L$ with $-1<C(p)<1$, then \(C\) only takes the values \(1\) and \(-1\). Since \(L\) is connected and \(C\) is continuous, \(C\) is constant.  Hence we may assume there exists $p\in L$ with $-1<C(p)<1$. Note that by continuity of $C$, for a small neighborhood $U\ni p$ and any $q\in U$, $-1<C(q)<1$. By connectivity of $L$, we only need to prove that $C$ is locally constant on $U$. 

There is a neighborhood of $p$ in $U$, still denoted by $U$, such that a  unit normal field $N$  is defined on $U$. Here $N=N_1+N_2$   by  equation (\ref{eq:N=N1+N2}).
Let
\begin{equation}\label{eq:defn-of-a-b}
 a=||N_1||,\qquad
b=||N_2||. 
\end{equation}
Then
$a>0, 
b>0, 
a^2+b^2=1$.
Recall the tangential part of \(P(N)\) is
$${\mathcal T}=P(N)-CN=(1-C)N_{1}-(1+C)N_{2}\in \Gamma(TL) $$ which is defined by equation \eqref{eq:T-defn}.
Denote 
\begin{equation}\label{eq:E1}
    E_1:=\frac{{\mathcal T}}{|{\mathcal T}|},
\end{equation}
 called the mixed direction. 

Choose orthonormal vectors $X_1,\ldots,X_{n-1}\in (T_{\pi_{1}(p)}\Q_{\eps_1}^n\oplus 0)$ such that $X_i\perp N_1$ and $Y_1,\ldots,Y_{m-1}\in (0\oplus T_{\pi_{2}(p)}\Q_{\eps_2}^m)$ such that  $ Y_j\perp N_2$ for $i=1,2,\cdots,n-1$, $j=1,2,\cdots m-1$.
Then $\{E_1,
 X_1,\ldots,X_{n-1},
 Y_1,\ldots,Y_{m-1}\}$,
is an orthonormal basis of \(T_pL\).
We call \(X_i\)'s pure first-factor vectors, and \(Y_\alpha\)'s pure second-factor vectors.
For convenience, we rewrite the basis as 
\begin{equation}
   E_1, E_{i+1}=X_i\ (1\le i\le n-1), E_{n+i}=Y_i\ (1\le i\le m-1).
\end{equation}
Denote the shape operator $A_N$ of $L$ along $N$ by $A$. The matrix of $A$ at \(p\) with respect to the basis $\{E_i\}_{i=1,...,n+m-1}$ by 
\begin{equation*}
    A:=(h_{i j})_{i,j=1,...,n+m-1}.
\end{equation*}

We define  $\gamma_p(r)= \exp_p(rN(p))$ for $r\ge0$ small. Parallel transport the basis $\{E_{i}\}|_{i=1}^{n+m-1}$ along $\gamma_p$ to get the frame  $\{E_1(r),E_2(r),\dots,E_{n+m-1}(r)\}$. Since $P_1$ and $P_2$ are parallel, $P_{1}(E_i),P_{2}(E_{i})$ are parallel along $\gamma_p$ for $i=1,2,\cdots,n+m-1$. Thus the first-factor vectors remain first-factor and the second-factor vectors remain second-factor. 

For each $i\in\{1,2,...,n+m-1\}$, let \(J_i(r)\) be the Jacobi field along \(\gamma_p\) determined by
$J_j(0)=E_j,
J_j'(0)=-AE_j$.
Write 
\begin{equation*}
 J_j(r)=\sum_{i=1}^{n+m-1}B_{i j}(r)E_i(r), \qquad  B_p(r)=(B_{i j}(r))_{1\le i,j\le n+m-1}.
\end{equation*}
 Using curvature formula (\ref{eq:curvature-formula}) of simply connected space forms, a standard Jacobi field computation gives the following Lemma. An explicit computation can be looked up in \cite{delima2025}*{Equations (27),(30)}.
\begin{lemma}\label{lem:jacobi-matrix-row}
  For matrix $B_{p}(r)$, we have
  
(i) $B_{1j}(r)=\delta_{1j}-rh_{1j}$,

(ii) for \(2\le i\le n\), $B_{ij}(r)=\delta_{ij}c_{+}(r)-h_{ij}s_{+}(r)$,

(iii)for \(n+1\le i\le n+m-1\),
$B_{ij}(r)=\delta_{ij}c_{-}(r)-h_{ij}s_{-}(r)$.

Here we define
 \begin{align}\label{eq:c+s+}
  c_+(r)=\begin{cases}
    \cos(ar), &\text{if }\epsilon_{1}=1\\
    1,    &\text{if }\epsilon_{1}=0\\
    \cosh(ar), &\text{if }\epsilon_{1}=-1
\end{cases},\quad s_+(r)=\begin{cases}
    \dfrac{\sin(ar)}{a}, &\text{if }\epsilon_{1}=1\\
    r, &\text{if }\epsilon_{1}=0\\
    \dfrac{\sinh(ar)}a, &\text{if }\epsilon_{1}=-1,
\end{cases} 
\end{align}
\begin{align}\label{eq:c-s-}
  c_-(r)=\begin{cases}
    \cos(br), &\text{if }\epsilon_{2}=1\\
    1, &\text{if }\epsilon_{2}=0\\
    \cosh(br), &\text{if }\epsilon_{2}=-1
\end{cases},\quad s_-(r)=\begin{cases}
    \dfrac{\sin(br)}{b}, &\text{if }\epsilon_{2}=1\\
    r, &\text{if }\epsilon_{2}=0\\
    \dfrac{\sinh(br)}b, &\text{if }\epsilon_{2}=-1
\end{cases} 
\end{align}
\end{lemma}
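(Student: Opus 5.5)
The plan is to solve the Jacobi equation explicitly along $\gamma_p$ in the parallel frame $\{E_i(r)\}$. Write $J_j(r)=\sum_i B_{ij}(r)E_i(r)$. Since the frame is parallel, the Jacobi equation \eqref{eq:jacobi-equation} becomes the linear ODE system
\[
B_{ij}''(r)+\sum_k \langle \bar R(E_k,\gamma_p')\gamma_p',E_i\rangle B_{kj}(r)=0,
\]
with initial data $B_{ij}(0)=\delta_{ij}$ and $B_{ij}'(0)=-h_{ij}$. The main task is to show that the curvature operator $\mathcal R(X):=\bar R(X,\gamma_p')\gamma_p'$ is diagonal in this frame, with constant diagonal entries $0$, $\eps_1 a^2$, $\eps_2 b^2$ on the three blocks. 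Once that is established, each row decouples into a scalar constant-coefficient ODE.

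\textbf{Step 1: the tangent vector along $\gamma_p$.} Since $\gamma_p'(0)=N=N_1+N_2$ and $P_1,P_2$ are parallel, we have $\gamma_p'(r)=N_1(r)+N_2(r)$. Here $N_1(r)=P_1\gamma_p'(r)$ and $N_2(r)=P_2\gamma_p'(r)$ are parallel along $\gamma_p$ with constant lengths $a$ and $b$. In fact, $\gamma_p$ is the product of geodesics in each factor, with speeds $a$ and $b$.

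\textbf{Step 2: compute $\mathcal R$ on each block using \eqref{eq:curvature-formula}.}

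For a pure first-factor $X=E_i(r)$ with $2\le i\le n$: $X$ is perpendicular to $N_1(r)$ because parallel transport preserves the inner product $\langle X_i,N_1\rangle=0$. Hence
\[
\mathcal R(X)=\eps_1(|N_1|^2X-\langle X,N_1\rangle N_1)=\eps_1a^2X.
\]

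For a pure second-factor vector, the same argument gives $\mathcal R(X)=\eps_2b^2X$.

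For the mixed direction, $E_1\parallel (1-C)N_1-(1+C)N_2$, so $P_1E_1\parallel N_1$ and $P_2E_1\parallel N_2$. Each factor of the curvature term is then of the form $\eps(\langle v,v\rangle u-\langle u,v\rangle v)$ with $u\parallel v$, which vanishes. Hence $\mathcal R(E_1(r))=0$.

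This also confirms that $P_iE_1$ stays parallel to $N_i(r)$ for all $r$, which is the one point needing care. Nothing else is delicate.

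\textbf{Step 3: solve the decoupled scalar ODEs.} The system reduces to $B_{ij}''+\kappa_iB_{ij}=0$, where $\kappa_1=0$, $\kappa_i=\eps_1a^2$ for $2\le i\le n$, and $\kappa_i=\eps_2b^2$ for $i\ge n+1$. The initial data are $B_{ij}(0)=\delta_{ij}$ and $B_{ij}'(0)=-h_{ij}$. The unique solutions are $\delta_{ij}-rh_{ij}$, $\delta_{ij}c_+(r)-h_{ij}s_+(r)$, and $\delta_{ij}c_-(r)-h_{ij}s_-(r)$ respectively, since $c_\pm(0)=1$, $c_\pm'(0)=0$, $s_\pm(0)=0$, $s_\pm'(0)=1$, and $c_\pm,s_\pm$ solve $y''+\eps a^2y=0$ (respectively $\eps b^2$).

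This gives (i)–(iii) of the lemma. It agrees with \cite{delima2025}*{Equations (27),(30)}, which can be cited as a check.
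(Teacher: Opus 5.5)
Your proposal is correct and is essentially the ``standard Jacobi field computation'' that the paper invokes without giving details (it defers to de Lima--Pipoli): you show that $X\mapsto \bar R(X,\gamma_p')\gamma_p'$ is diagonal in the parallel frame with constant eigenvalues $0$, $\eps_1a^2$, $\eps_2b^2$, so the rows of $B_p(r)$ decouple into constant-coefficient scalar ODEs. One point is left tacit, both in your write-up and in the paper's notation $J_j=\sum_i B_{ij}E_i$: that $J_j$ has no $\gamma_p'$-component, which your decoupling argument also gives, since that component $c(r)$ satisfies $c''=0$ with $c(0)=c'(0)=0$.
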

We call the first row (which consist of entries in (i) ) the mixed row of $B_p(r)$, the rows consist of entries in (ii)  the first-factor rows, and the rows consist of entries in (iii) the second-factor rows. Denote 
\begin{equation}\label{eq:Dp=detBp}
   D_p(r):=\det B_p(r) 
\end{equation}
for $r\ge0$ small.  Note that $D_{p}(r)$  is the Jacobi determinant along $N$ at $p$ as in equation 
\eqref{eq:HLO-determinant-condition}. For any $q \in U$, the domain of the Jacobi determinant $D_{q}(r)$ can be extended to complex  plane $\C$.  By equation \eqref{eq:Dp=detBp} and Lemma \ref{lem:jacobi-matrix-row},  $D_{q}(r)$ is   a holomorphic  function.  Moreover,  we have the following. 

\begin{lemma}\label{lem:Dp=Dq}
     $D_p(z)=D_q(z)$ for any $q\in U$ and any $z\in\C$.
\end{lemma}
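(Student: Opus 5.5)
Since $L$ is an isoparametric hypersurface, it is in particular almost isoparametric with flat normal bundle. The normal parallel fields are $tN$ with $t$ constant. By equation \eqref{eq:HLO-determinant-condition}, the Jacobi determinant along $N$ satisfies $D_p(r)=D_q(r)$ for all $q\in U$ (after shrinking $U$) and all sufficiently small real $r\ge 0$. The plan is to upgrade this equality on a real interval to all of $\C$ by analytic continuation.

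First I will check that each $D_q$ is an entire function of $r$. By Lemma \ref{lem:jacobi-matrix-row}, every entry of $B_q(r)$ is of the form $\delta_{ij}-rh_{ij}$, $\delta_{ij}c_\pm(r)-h_{ij}s_\pm(r)$. The functions $c_\pm,s_\pm$ in \eqref{eq:c+s+}--\eqref{eq:c-s-} extend to entire functions: $\cos(az)$, $\sin(az)/a$, $\cosh(az)$, $\sinh(az)/a$, $1$ and $z$ are all entire, with $a,b>0$ on $U$. Hence $D_q(z)=\det B_q(z)$ is a polynomial in entire functions, and so is entire.

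One subtlety needs care. The identity \eqref{eq:HLO-determinant-condition} is stated for $r\ge0$ small, and the admissible range may depend on the neighborhood. Corollary/Proposition \ref{prop:VolConst} gives, for each point, a neighborhood $U'$ and an $r_0>0$. Covering a path from $p$ to $q$ in the connected set $U$ by finitely many such neighborhoods, I obtain $D_p=D_{q'}$ on a nonempty interval $[0,r_1)$ for each consecutive pair $q'$. This interval has accumulation points, so the identity theorem for holomorphic functions gives $D_p\equiv D_{q'}$ on $\C$, and chaining along the path gives $D_p\equiv D_q$.

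The only step needing any care is this local-to-global bookkeeping on intervals of $r$. Once equality holds on any real interval, the identity theorem finishes the proof immediately.
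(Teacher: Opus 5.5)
Your proposal is correct and follows essentially the paper's route: obtain $D_p(r)=D_q(r)$ for small real $r$, then extend the equality to all of $\C$ by the identity theorem, since each $D_q$ is entire. The only difference is that you take the small-$r$ equality directly from \eqref{eq:HLO-determinant-condition}, where $D_q(0)=1$ and continuity let you drop the absolute value; the paper instead rederives it from the ODE $\frac{d}{dr}D_q=-H_rD_q$ with $H_r$ constant on $L_r$, and your path-chaining is a more careful version of the paper's tacit shrinking of $U$.
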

\begin{proof}
Let $L_r$ be the local parallel submanifold $L_r:=\{\exp_{p} rN(p)| p\in U \}$ of $L$. Let $A_{r,p}$ be the shape operator of $L_r$ at $\gamma_p(r)$ along $\gamma_{p}'(r)$, and $H_r(p)=\tr A_{r,p}$ be the mean
curvature of $L_r$ at $\gamma_p(r)$. Standard Jacobi field theory \cite{BCO}*{Theorem 10.2.1} gives
$A_{r,p}=-B_p'(r)B_p(r)^{-1}$. Taking trace of $A_{r,p}$ and by \cite{HLO}*{Proof of Proposition 2.1}, we have
$H_r(p)= -\frac{d}{dr}\log D_p(r)$.
Since $L$ is isoparametric, $H_r$ is constant on $L_r$ for
small $r$. For any $q\in U$, we can define $B_q$ from some choice of tangential basis at $q$. Plug in $r=0$, we have $B_q(0)=Id$ does not depend on the choice of tangential basis. Thus we have $D_q(0)=1$ for every $q\in U$.
The  differential equations 
$\frac{d}{dr}D_q(r)=-H_rD_q(r)$ for all $q\in U$
have the same initial value and the same coefficients. Therefore $D_q(r)$ is independent of $q$ in $U$, that is,  $D_p(r)=D_q(r)$ for any sufficiently small $r$ and $q\in U$. Since $D_{p}(r)$ is holomorphic for any $p\in U$, then we have $D_{p}(r)=D_{q}(r)$ for $r\in \C$ by classical complex analysis results (see \cite{rudin1974real}*{Theorem 10.18}).

\end{proof}

Now we prove Theorem \ref{thm:constant-angle} case by case.

\subsection{Case I: \texorpdfstring{\(\Sn^n\times\R^m\)}{S x R} and \texorpdfstring{\(\Sn^n\times\Hn^m\)}{S x H}}
In this case \(\eps_1=1\), \(\eps_2\in\{0,-1\}\).  We prove that \(a\) (which is $||N_1||$) is locally constant, which leads to that $C=2||N_1||^2-1$ is locally constant. Plug  \(z=\ii t\) into equation \eqref{eq:Dp=detBp}, where $t\in \R$, and $\ii$ is the imaginary number with $\ii^2=-1$. For $k=1,2,...,n+m-1$, denote the $k$-th row of $B_p(\ii t)$ by $b_k(\ii t)$, the $k$-th row of the identity matrix $Id_{n+m-1}$ by $e_k$, the $k$-th row of the shape operator matrix $A=\{h_{kj}\}$ by $h_{k*}$. By Lemma \ref{lem:jacobi-matrix-row},
we have
\begin{align}
 &b_1(\ii t)=e_1-\ii t h_{1*},\label{eq:B1} \\
 &b_k(\ii t):
=\cosh(at)e_k-\ii\frac{\sinh(at)}a h_{k*}
=\frac{e^{at}}2\left(e_k-\frac{\ii}{a}h_{k*}\right)
 +\frac{e^{-at}}2\left(e_k+\frac{\ii}{a}h_{k*}\right)\label{eq:B2-n} \\
 &b_j(\ii t)=\begin{cases}
      e_j\cos(bt)-\ii h_{j*}\dfrac{\sin(bt)}{b}, &\text{if} \quad\epsilon_{2}=-1,\label{eq:Bn+m-1} \\
      e_k-\ii th_{j*},  &\text{if} \quad \epsilon_{2}=0.
  \end{cases} 
\end{align}
For $k=2,3,...,n$, and  $j=n+1,n+2,...,n+m-1$.
For simplicity,  we write
\begin{equation}\label{eq:defn-of-Bk1Bk2}
    b_k(\ii t)=\frac{e^{at}}{2}b_k^1+\frac{e^{-at}}{2}b^2_k, \quad\text{where } b_k^1:=e_k-\frac{\ii}{a}h_{k*},\quad\text{and } b_k^2:=e_k+\frac{\ii}{a}h_{k*}.
\end{equation}
By equation \eqref{eq:defn-of-Bk1Bk2} and linear properties of determinants, we have
\begin{align} \nonumber
D_p(\ii t)&=\det\begin{pmatrix}
    b_1(it) \\
    b_2(it) \\
    ...      \\
    b_{n}(it)\\
    ...
\end{pmatrix}=\det\begin{pmatrix}
    b_1(it) \\
    \frac{e^{at}}{2}b_2^1+\frac{e^{-at}}{2}b^2_2\\
    ...\\
    \frac{e^{at}}{2}b_n^1+\frac{e^{-at}}{2}b^2_n\\
    ...
\end{pmatrix}\\ \nonumber
&=\frac{e^{at}}{2}\det\begin{pmatrix}
    b_1(it) \\
    b_2^1 \\
    ...\\
    \frac{e^{at}}{2}b_n^1+\frac{e^{-at}}{2}b^2_n\\
    ...
\end{pmatrix}+\frac{e^{-at}}{2}\det\begin{pmatrix}
    b_1(it) \\
    b_2^2 \\
    ...\\
    \frac{e^{at}}{2}b_n^1+\frac{e^{-at}}{2}b^2_n\\
    ...
\end{pmatrix}\\ \nonumber
&=\cdots\\ 
&=\frac{e^{(n-1)at}}{2^{n-1}}\det\begin{pmatrix}
    b_1(it) \\
    b_2^1 \\
    ...\\
    b_n^1\\
    ...
\end{pmatrix}+\cdots, \label{eq:Dp-it}
\end{align}

i.e.,
$$D_p(\ii t)=2^{-n+1}\sum_{j=0}^{n-1}e^{(n-1-2j)at}P_j(t,\cos(bt),\sin(bt)),$$
where $P_0=\det\begin{pmatrix}
    b_1(\ii t)\\
    b_2^1 \\
    ...\\
    b_n^1 \\
    ...
\end{pmatrix}$ and $P_{j}$ are polynomials by equations \eqref{eq:B1},\eqref{eq:B2-n} and \eqref{eq:Bn+m-1}, for $j=0,1,\cdots,n-1$.
Plug in $t=0$, $P_0(0,\cos(b\cdot0),\sin(b\cdot0))=\det\begin{pmatrix}
    1 &  &  \\
     & Id_{n-1}-\frac{\ii}{a}A' &  \\
    && Id_{m-1}
\end{pmatrix}$ where $A'$ is $A$ restricted to the block with rows $2\le k\le n$ and columes $2\le j\le n$. Since $A$ is a real symmetric matrix, then the real part of eigenvalues of  $Id_{n-1}-\frac{\ii}{a}A'$ does not vanish. Thus \begin{equation}\label{eq:P00ne0}
    P_0(0,\cos(b\cdot0),\sin(b\cdot0))\ne0.
\end{equation} Since $P_0$ is a polynomial of $\cos t$, $\sin t$, and $t$, for $t_j=\frac{2j\pi}{b}$, $P_0(t_j,\cos(bt_{j}),\sin(bt_{j}))=P_{0}(t_{j},1,0)$ does not vanish for large $j$ since $P_{0}(t,1,0)$ is a non-vanishing polynomial by (\ref{eq:P00ne0}). Therefore we have
\begin{align*}
    \lim_{j\rightarrow\infty}\frac1{t_j}\log|D_p(\ii t_j)|=&\lim_{j\rightarrow\infty}\frac1{t_j}\log(2^{-n+1}|e^{(n-1)at_j}P_0+e^{(n-3)at_j}P_1+...|)\\
    =&\lim_{j\rightarrow\infty}\frac1{t_j}\log(2^{-n+1}|e^{(n-1)at_j}P_0|)=\lim_{j\rightarrow\infty}\frac1{t_j}\log e^{(n-1)at_j}=(n-1)a.
\end{align*}
Since $D_p(z)=D_q(z)$ for any $q\in U$, we have $a(p)=a(q)$, which finishes the proof.

\subsection{Case II: \texorpdfstring{\(\Sn^n\times\Sn^m\)}{S x S}}

In this case $\epsilon_{1}=\epsilon_{2}=1$.
The proof is essentially the same as the previous case except for minor modifications. We again plug $r=\ii t$ into $B_p$. 
By Lemma \ref{lem:jacobi-matrix-row},
we have
\begin{align}
 &b_1(\ii t)=e_1-\ii t h_{1*},\label{eq:S-B1} \\
 &b_k(\ii t)
=\cosh(at)e_k-\ii\frac{\sinh(at)}a h_{k*}
=\frac{e^{at}}2\left(e_k-\frac{\ii}{a}h_{k*}\right)
 +\frac{e^{-at}}2\left(e_k+\frac{\ii}{a}h_{k*}\right)\label{eq:S-B2-n} \\
 &b_j(\ii t)=\cosh(bt)e_j-\ii\frac{\sinh(bt)}b h_{j*}
=\frac{e^{bt}}2\left(e_j-\frac{\ii}{b}h_{j*}\right)
 +\frac{e^{-bt}}2\left(e_j+\frac{\ii}{b}h_{j*}\right)\label{eq:S-B-n+m}
\end{align}
for $k=2,3,...,n$, and  $j=n+1,n+2,...,n+m-1$.
For simplicity,  we write
\begin{equation}\label{eq:bk-it-S-S}
    b_k(\ii t)=\frac{e^{at}}{2}b_k^1+\frac{e^{-at}}{2}b^2_k,  \quad b_j(\ii t)=\frac{e^{bt}}{2}b_j^1+\frac{e^{-bt}}{2}b^2_j
\end{equation}
where $b_k^1:=e_k-\frac{\ii}{a}h_{k*}, b_k^2:=e_k+\frac{\ii}{a}h_{k*}$ and $b_j^1:=e_j-\frac{\ii}{b}h_{j*}, b_j^2:=e_j+\frac{\ii}{b}h_{j*}$.
By equation \eqref{eq:bk-it-S-S},  linear properties of determinants,  and  calculations similar to equation \eqref{eq:Dp-it},  we have


$$D_p(\ii t)=2^{-n-m+2}\sum_{l=0}^{m-1}\sum_{j=0}^{n-1}e^{(n-1-2j)at+(m-1-2l)bt}P_{jl}(t),$$
for some polynomials $P_{jl}$. 

Here $P_{00}=\det\begin{pmatrix}
    b_1(\ii t)\\
    b_2^1 \\
    b_3^1 \\
    ...
\end{pmatrix}$. Plug in $t=0$ and by equations \eqref{eq:S-B1}, \eqref{eq:S-B2-n} and \eqref{eq:S-B-n+m},  $P_{00}(0)=\det\begin{pmatrix}
    1 &  &  \\
     & Id_{n-1}-\frac{\ii}{a}A' &  \\
    && Id_{m-1}-\frac{\ii}{b}A''
\end{pmatrix}$ where $A'$ is $A$ restricted to the block with rows $2\le k\le n$ and columes $2\le j\le n$, and $A''$ is $A$ restricted to the block with rows $n+1\le k\le n+m-1$ and columes $n+1\le j\le n+m-1$. Since $A$ is a real symmetric matrix, Then the real part of eigenvalues of   $Id_{n-1}-\frac{\ii}{a}A'$, $Id_{m-1}-\frac{\ii}{b}A''$ don't vanish. Hence $P_{00}(0)$ is not $0$, and $P_{00}$ is non-zero for large $t$. Since the leading term is non-zero and has the highest exponential in $t$, we have 
\begin{align*}
    \lim_{t\rightarrow\infty}\frac1{t}\log|D_p(\ii t)|=&\lim_{t\rightarrow\infty}\frac1{t}\log(2^{-n-m+2}|e^{(n-1)at+(m-1)bt}P_{00}+...|)=(n-1)a+(m-1)b
\end{align*}
Since the left hand side is independent of $p$ by Lemma \ref{lem:Dp=Dq}, then $(n-1)a+(m-1)b$ is also independent of $p$. This together with $a^2+b^2=1$ establish that $a$ is a constant.

\subsection{Case III: \texorpdfstring{\(\Hyp^n\times\R^m\)}{H x R}}
Different from the above two cases, here we study the real function $D_{p}(r)$, i.e., $r\in \R$.

Assume that there exists nonzero exponent terms in $D_{p}(r)$.
 For each first-factor row, Lemma \ref{lem:jacobi-matrix-row} (ii) gives
 \begin{equation}\label{eq:b-i-H-R}
  b_i(r)=\frac{e^{ar}}{2}\left(e_i-\frac{1}{a}h_{i*}\right)
       +\frac{e^{-ar}}{2}\left(e_i+\frac{1}{a}h_{i*}\right)   
 \end{equation}
for $2\leq i\leq n$,
where $e_i$  and $h_{i*}$ are defined the same as in  Case I. By Lemma \ref{lem:jacobi-matrix-row} (i) and (iii),  the mixed row and the
second-factor rows are polynomial in $r$ with degree $1$.  By equation \eqref{eq:b-i-H-R},   linear properties of determinants,  and  calculations similar to equation \eqref{eq:Dp-it} , we have 
$$D_p(r)=2^{-(n-1)}\sum_{j=0}^{n-1}
        e^{(n-1-2j)ar}P_{j}(r),$$
where each $P_{j}$ is a polynomial.  
If  $D_{p}(r)$ has a  term with an exponent that is not zero, i.e., $(n-1-2j)a(p)\neq 0$ for some $j\in \{0,1,\cdots,n-1\}$, then by  Lemma \ref{lem:Dp=Dq} and Lemma \ref{lem:spectrum-unique}, we have $(n-1-2j)a(p)=(n-1-2j)a(q)$ for any $q\in U$.  As a consequence,   $a$ is constant. 


Below we assume that $D_{p}(r)$ only have zero-exponent terms, in other words, it is a polynomial. 
Note that   $-1<C<1$ implies that $a,b\ne0$. Therefore the   point  $p$ that we have fixed and $N$ satisfies the  assumption on $p^0$ in Theorem \ref{thm:reduction} and the assumption on $\zeta$ in Lemma \ref{lem:det-rigidity}.
By Lemma \ref{lem:det-rigidity},   $A D_{1}\subset D_{1}$ and $A|_{D_1}$ has eigenvalues $a$ and $-a$ with the same multiplicities, where $D_{1}$ is defined by equation \eqref{eq:D1D2-definitions}. By Lemma \ref{lem:cartanformula}, such $L$ is not possible.

  This shows that  $D_{p}(r)$ must have non-zero exponent, and hence $a$ must be a constant.

\subsection{Case IV: \texorpdfstring{\(\Hyp^n\times \Hyp^m\)}{H x H}}


In this case, first factor and second factor are both hyperbolic space.  By Lemma \ref{lem:jacobi-matrix-row} (iv) and (v), we have  $B_i(r)=\frac{e^{ar}}{2}\left(e_i-\frac{1}{a}h_{i*}\right)
       +\frac{e^{-ar}}{2}\left(e_i+\frac{1}{a}h_{i*}\right)$,
for $2\leq i\leq n$, and $B_j(r)=\frac{e^{br}}{2}\left(e_j-\frac{1}{b}h_{j*}\right)
       +\frac{e^{-br}}{2}\left(e_j+\frac{1}{b}h_{j*}\right)$,
for $n+1\leq j\leq n+m-1$
where $e_i$ and  $h_{i*}$ are defined same as case 1. By Lemma \ref{lem:jacobi-matrix-row} (i),  the mixed row is polynomial in $r$ with degree $1$.  With the similar calcalations, we have 
\begin{equation}\label{equ:HH-spectrum}
D_p(r)=2^{-(n+m-2)}\sum_{i=0}^{n-1}\sum_{j=0}^{m-1} e^{[(n-1-2i)a+(m-1-2j)b]r}P_{ij}(r),
\end{equation}
where every $P_{ij}(r)$ is a polynomial of degree at most one. 

Similar to the case $H^{n}\times \R^m$,  if $D_{p}(r)$ has a  term with an exponent that is not zero, i.e., $(n-1-2i)a(p)+(m-1-2j)b(p)\neq 0$ for some $i\in \{0,1,\cdots,n-1\}$, $j\in \{0,1, \cdots, m-1\}$, then by Lemma \ref{lem:Dp=Dq} and Lemma \ref{lem:spectrum-unique}, we have 
\begin{equation*}
    (n-1-2i)a(p)+(m-1-2j)b(p)=(n-1-2i)a(q)+(m-1-2j)b(q)
\end{equation*}
for any $q\in U$, where $U$ is a neighborhood of $p$. Together with $a^2+b^2=1$,   $a$ is constant. 

Below assume that all terms in $D_{p}(r)$ do not have non-zero exponent.
Let $\Lambda_{n-1}=\{n-1,n-1-2,\ldots,-(n-1)\}$ and $\Lambda_{m-1}=\{m-1,m-1-2,\ldots,-(m-1)\}$.  Exponents of all term in equation \eqref{equ:HH-spectrum} have the form $ka+\ell b$ with $(k,\ell)\in\Lambda_{n-1}\times\Lambda_{m-1}$. We give a lemma for later use.

\begin{lemma}\label{lem:kl=k'l'}
  If  two different pairs  $(k,\ell),(k',\ell')\in\Lambda_{n-1}\times\Lambda_{m-1}$ give the same exponent:
$ka+\ell b=k'a+\ell'b$, then $a$ is constant on $L$.
\end{lemma}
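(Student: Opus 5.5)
The idea is that a coincidence $ka+\ell b=k'a+\ell'b$ between two different pairs is itself a linear relation between $a$ and $b$, so it pins down the ratio $a/b$. Rewrite it as
\[
(k-k')\,a+(\ell-\ell')\,b=0,
\]
where $(k-k',\ell-\ell')\neq(0,0)$. The proof then splits according to which differences vanish.

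\emph{Case 1: $k\neq k'$ and $\ell\neq \ell'$.} Then $a/b=-(\ell-\ell')/(k-k')$ at the point in question. The relevant pairs depend only on the integers $n,m$, so the same relation should hold at every point of $U$. This needs justification: the coincidence is a pointwise statement about $a(p),b(p)$, and could in principle hold at $p$ but fail nearby. I would handle this as follows.
\begin{itemize}
\item Fix the finite set of relations $(k-k')a+(\ell-\ell')b=0$ over all pairs in $\Lambda_{n-1}\times\Lambda_{m-1}$.
\item Each nontrivial relation, combined with $a^2+b^2=1$ and $a,b>0$, determines $(a,b)$ uniquely. So the set of admissible values of $(a,b)$ is finite.
\item If the hypothesis is that the coincidence holds identically on $L$, then $(a,b)$ is fixed on $L$ and hence $a$ is constant.
\item If the hypothesis is only pointwise, I would argue by continuity instead. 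The set of points where some coincidence occurs is a finite union of closed level sets of $a$. Off this set, all exponents are distinct, and the earlier argument via Lemma \ref{lem:Dp=Dq} and Lemma \ref{lem:spectrum-unique} (distinct exponents of $D_p$ must match those of $D_q$) applies. In that situation $D_p$ is a polynomial, so the nonzero exponents must cancel; when the exponents are distinct each coefficient $P_{ij}$ must vanish individually.
\end{itemize}

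\emph{Case 2: $k=k'$ (so $\ell\neq\ell'$).} Then $(\ell-\ell')b=0$ with $b>0$, which is impossible.

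\emph{Case 3: $\ell=\ell'$ (so $k\neq k'$).} Then $(k-k')a=0$ with $a>0$, which is again impossible.

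So only Case 1 can occur. In Case 1 the ratio $a/b$ is a fixed rational number $q=-(\ell-\ell')/(k-k')>0$. Together with $a^2+b^2=1$ this gives
\[
a=\frac{q}{\sqrt{1+q^2}},
\]
which is constant wherever the relation holds.

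The main obstacle is making the "wherever the relation holds" step global. I would first try to read the lemma as holding on the connected neighborhood $U$ where the coincidence is assumed. The rigorous route to constancy on all of $U$ is that $a$ is continuous and takes values in a finite set, so $a$ is locally constant. Connectedness of $U$ (and then of $L$) then makes $a$ constant.
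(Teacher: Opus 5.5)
Your argument is essentially the paper's proof. You rewrite the coincidence as $(k-k')a+(\ell-\ell')b=0$ and conclude that $a/b$ lies in a finite set of positive rationals. With $a^2+b^2=1$, the continuous function $a$ then takes only finitely many values and is constant by connectedness. Your Cases 2 and 3, which use $a,b>0$, supply the reason the paper leaves unstated for why both $k-k'$ and $\ell-\ell'$ are nonzero. The digression on the pointwise reading, with $D_p$ a polynomial and the $P_{ij}$ vanishing, is not needed. Like the paper, your final step assumes that some coincidence holds at every point of the connected set, and that is exactly what the lemma uses.
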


\begin{proof}
Rearranging gives
$(k-k')a+(\ell-\ell')b=0$.
Since \((k,\ell)\ne(k',\ell')\), then $|k-k'|,|\ell-\ell'|$ are both non-zero. As a consequence,  we have
\[
\frac{a}{b}=\left|\frac{\ell-\ell'}{k-k'}\right|.
\]
Moreover, the differences \(k-k'\) and \(\ell-\ell'\) are even integers, with
$0<|k-k'|\le2(n-1),
0<|\ell-\ell'|\le2(m-1)$.
This means
\begin{equation*}\label{eq:HH-detailed-finite-ratio-set}
\frac{a}{b}\in
\mathcal R_{(n-1),(m-1)}:=
\left\{
\frac{|v|}{|u|}:
 u,v\in2\mathbb Z,
\ 0<|u|\le2(n-1),
\ 0<|v|\le2(m-1)
\right\}.
\end{equation*}
This is a finite set.
If \(a/b\in\mathcal R_{(n-1),(m-1)}\), then
$a^2+b^2=1$
gives
$a=\frac{\rho}{\sqrt{1+\rho^2}}$
for some \(\rho\in\mathcal R_{(n-1),(m-1)}\).  Thus \(a\) belongs to a finite set, and by connectivity of $L$, \(a\) is  constant.
\end{proof}

Below we assume 
\begin{equation}\label{eq:HH-detailed-no-collision}
\frac{a}{b}\notin\mathcal R_{(n-1),(m-1)}.
\end{equation}
Let 
\begin{equation}\label{eq:x,y}
   x=e^{ar},y=e^{br}.
\end{equation}
Under equation \eqref{eq:HH-detailed-no-collision}, different monomials
$x^k y^\ell$
give different exponent \(ka+\ell b\).  

Split
$TL=\R \mathcal T\oplus D_{1}\oplus D_{2}$,
where
$D_{1}|_{p}=T_pL\cap (T\Hn^n\oplus 0),
D_{2}|_{p}=T_pL\cap(0\oplus T\Hn^m)$.

Write the shape operator as
\begin{equation}\label{eq:HH-detailed-A-block}
A=
\begin{pmatrix}
h_{11}&u^T&v^T\\
u&S&B\\
v&B^T&T
\end{pmatrix}.
\end{equation}
using an orthonormal basis
with respect to the decomposition of   $\R \mathcal T|_{p}\oplus D_{1}|_{p}\oplus D_{2}|_{p}$. Since  $S,T$ are symmetric   matrices, in particular, we choose orthonormal basis such that $S,T$ are diagonal    matrices.   By Lemma \ref{lem:jacobi-matrix-row}, the Jacobi matrix is
\begin{equation}\label{eq:Bpr-with-entries}
    B_{p}(r)=
    \begin{pmatrix}
1-rh_{11}&-ru^T&-rv^T\\
-\frac{\sinh(ar)}{a}u&\cosh(ar)I_{n-1}- \frac{\sinh(ar)}{a}S&-\frac{\sinh(ar)}{a} B\\
-\frac{\sinh(br)}{b}v&-\frac{\sinh(br)}{b}B^T&\cosh(br)I_{m-1}- \frac{\sinh(br)}{b}T
\end{pmatrix}.
\end{equation}

We have the following results on $B_p(r)$.
\begin{lemma}\label{lem:S2=a2I}
    $S^2=a^2 I$ with $\tr S=0$, and $T^{2}=b^2I$ with $\tr T=0$.
\end{lemma}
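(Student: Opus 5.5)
We will extract the conclusion from the constant-term (in $r$) analysis of $D_p(r)$, using that under the standing assumption every term of \eqref{equ:HH-spectrum} has zero exponent, and that by \eqref{eq:HH-detailed-no-collision} the monomials $x^k y^\ell$ with $x=e^{ar}$, $y=e^{br}$ are distinct functions. The key observation is that the mixed row of \eqref{eq:Bpr-with-entries} is $e_1+r(\cdots)$, so every contribution involving a mixed-row entry other than the leading $1$ carries a factor of $r$. Likewise the off-diagonal first column entries $-\frac{\sinh(ar)}{a}u$ and $-\frac{\sinh(br)}{b}v$ only enter through cofactor expansions that pair them with an entry of the mixed row, which contributes a factor $r$. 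Hence the $r^0$-part of $D_p(r)$, viewed as a Laurent polynomial in $x,y$ with coefficients independent of $r$, is
\[
Q(x,y):=\det\begin{pmatrix}\cosh(ar)I_{n-1}-\frac{\sinh(ar)}{a}S&-\frac{\sinh(ar)}{a}B\\ -\frac{\sinh(br)}{b}B^T&\cosh(br)I_{m-1}-\frac{\sinh(br)}{b}T\end{pmatrix}.
\]
More precisely, we expand $D_p(r)=\sum_{k,\ell}x^ky^\ell P_{k\ell}(r)$ with polynomial $P_{k\ell}$ and isolate the constant coefficient of each $P_{k\ell}$. Polynomiality of $D_p$, together with the linear independence from Lemma \ref{lem:spectrum-unique}, forces all monomials $x^ky^\ell$ with $(k,\ell)\neq(0,0)$ to cancel in $Q$.

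Next we look at the extreme monomials of $Q$. Writing $\cosh(ar)I-\frac{\sinh(ar)}{a}S=\frac{x}{2}(I-S/a)+\frac{x^{-1}}{2}(I+S/a)$ and similarly for the $T$-block, with the $B$ entries also of the form $\frac{x-x^{-1}}{2a}$ or $\frac{y-y^{-1}}{2b}$, we find that the coefficient of $x^{n-1}y^{m-1}$ in $Q$ is $2^{-(n+m-2)}\det\begin{pmatrix}I-S/a&-B/a\\-B^T/b&I-T/b\end{pmatrix}$. Setting $y$-degree aside, the coefficient of $x^{n-1}$ at extreme $y$-degree isolates the first block. The plan is to apply the mechanism of Lemma \ref{lem:S2=a2I=a-a-eigenvalues} (i.e.\ Lemma \ref{lem:S2=a2I-eigenvalues-a--a} from the appendix, already used in Lemma \ref{lem:det-rigidity}) separately in each variable.

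For this, we specialize $y$ via the no-collision assumption: monomials with distinct $(k,\ell)$ are independent, so the polynomial identity $Q(x,y)=$ const holds formally in the two independent variables $x,y$. Because $x$ and $y$ are independent, we may treat $Q$ as a genuine Laurent polynomial identity and set $y=1$, i.e.\ formally send $b$-block factors to $\cosh 0=1$, $\sinh 0=0$, replacing the second block by $I$ and killing the $B^T$ block. Making this substitution legitimate, rather than just evaluating at $r=0$, is the main obstacle: we must check that every entry is a Laurent polynomial in $(x,y)$ with $r$-free coefficients once the factor $1/a$, $1/b$ is kept. This gives $\det(\cosh(ar)I-\frac{\sinh(ar)}{a}S)$ as a polynomial in $x^{\pm1}$ that must be constant. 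With $S$ diagonal, the appendix lemma then yields eigenvalues $\pm a$ with equal multiplicity, i.e.\ $S^2=a^2I$ and $\tr S=0$. Symmetrically, setting $x=1$ gives $T^2=b^2I$ and $\tr T=0$.

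If the substitution $y=1$ cannot be justified directly, we would instead compare the coefficients of $x^{k}y^{m-1}$ for all $k$. That family equals $\frac{y^{m-1}}{2^{m-1}}\det(I-T/b)$ times the $x$-polynomial of the first block, corrected by $B$-terms that carry strictly lower top $y$-degree. We would then argue that either $\det(I-T/b)\neq0$, which yields the same conclusion, or else pass to the opposite extreme $y^{-(m-1)}$ with $\det(I+T/b)$. Both determinants cannot vanish unless $T$ has eigenvalues $b$ and $-b$, and that case is handled by the symmetric argument first applied to $T$.
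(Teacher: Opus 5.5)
Your argument is correct and matches the paper's proof. You isolate the $r^0$-part of $D_p(r)$ as the determinant of the lower-right block and write it as a Laurent polynomial $F_p(x,y)$ in $x=e^{ar}$, $y=e^{br}$. You then use the no-collision assumption and polynomiality of $D_p$ to conclude $F_p$ is constant, set $y=1$ (resp.\ $x=1$), and apply Lemma~\ref{lem:S2=a2I-eigenvalues-a--a}. The ``obstacle'' you flag is immediate, since $\cosh(br)=\tfrac{y+y^{-1}}{2}$ and $\tfrac{\sinh(br)}{b}=\tfrac{y-y^{-1}}{2b}$ make every entry a Laurent polynomial with $r$-free coefficients, so your fallback paragraph is unnecessary. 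Its claim is also inaccurate: the $B$-corrections to the $y^{m-1}$-coefficient do not have lower $y$-degree, because the entries $-\tfrac{y-y^{-1}}{2b}B^T$ contribute to the top $y$-degree as well.
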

\begin{proof}
    By equation \eqref{eq:Bpr-with-entries}, the coefficient of \(r^0\) in \(D_p(r)=\det B_{p}(r)\) is 
\begin{equation*}
    \det\begin{pmatrix}
       \cosh(ar)I_{n-1}- \frac{\sinh(ar)}{a}S&-\frac{\sinh(ar)}{a} B\\
-\frac{\sinh(br)}{b}B^T&\cosh(br)I_{m-1}- \frac{\sinh(br)}{b}T
    \end{pmatrix}.
\end{equation*}
Moreover, the coefficient of \(r^0\) is just 
$2^{-(n+m-2)}F_{p}(x,y)$, where $x,y$ are defined in equation \eqref{eq:x,y} and $F_p(x,y)$ is defined by
\begin{equation}\label{eq:HH-detailed-pure-F}
F_{p}(x,y):=
\det\begin{pmatrix}
 x(I-S/a)+x^{-1}(I+S/a)&-(x-x^{-1})B/a\\[2pt]
 -(y-y^{-1})B^T/b&y(I-T/b)+y^{-1}(I+T/b)
\end{pmatrix}.
\end{equation}
  By equation \eqref{eq:HH-detailed-no-collision}, different monomials of  $x,y$ give different exponents of $r$. Since  all terms of \(D_{p}(r)\) do not have nonzero exponent of $r$, every nonconstant monomial of $x,y$ in \(F_{p}\) must vanish.  Hence
\begin{equation}\label{eq:HH-detailed-F-constant}
F_{p}(x,y)\text{ is a constant polynomial of $x,y$.}
\end{equation}

Set \(y=1\) in equation \eqref{eq:HH-detailed-pure-F}.  Then
$y-y^{-1}=0,
y(I-T/b)+y^{-1}(I+T/b)=2I_{m-1}$.
So
\[
F_{p}(x,1)=2^{m-1}\det\left(x(I-S/a)+x^{-1}(I+S/a)\right).
\]
Since \(F_{p}\) is constant, the one-variable determinant above has only the monomial \(x^0\).  Applying the Lemma \ref{lem:S2=a2I-eigenvalues-a--a} to  \(F_{p}(x,1)\) gives
\begin{equation}\label{eq:HH-detailed-S-resonance}
S^2=a^2I,
\qquad
\tr S=0.
\end{equation}
Set \(x=1\).  By the same calculations, we get
\begin{equation}\label{eq:HH-detailed-T-resonance}
T^2=b^2I,
\qquad
\tr T=0.
\end{equation}
\end{proof}

\begin{lemma}\label{lem:B=0}
    $B=0$.
\end{lemma}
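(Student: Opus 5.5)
We keep working with the constant polynomial $F_p(x,y)$ from \eqref{eq:HH-detailed-pure-F}. By Lemma \ref{lem:S2=a2I} we may choose the orthonormal bases of $D_1|_p$ and $D_2|_p$ so that
\[
S=\diag(aI_+,-aI_-),\qquad T=\diag(bI'_+,-bI'_-),
\]
with equal-sized blocks in each pair. The diagonal blocks then simplify. Since $x(I-S/a)+x^{-1}(I+S/a)=2(x^{-1}I_+ + xI_-)$ (with the obvious meaning on the two eigenspaces), the upper-left block is $X:=2\,\diag(x^{-1}I,xI)$. Likewise the lower-right block is $Y:=2\,\diag(y^{-1}I,yI)$.

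Because the $\pm$ eigenspaces have equal dimension, $\det X=2^{n-1}$ and $\det Y=2^{m-1}$. We then apply the Schur complement, as in the proof of Lemma \ref{lem:det-rigidity}:
\[
F_p(x,y)=\det X\cdot\det\Bigl(Y-\tfrac{(x-x^{-1})(y-y^{-1})}{ab}\,B^TX^{-1}B\Bigr).
\]
Split $B$ into row blocks $B_+$ and $B_-$ according to $D_1^\pm$. This gives
\[
B^TX^{-1}B=\tfrac12\bigl(xB_+^TB_+ + x^{-1}B_-^TB_-\bigr).
\]
Next factor out $Y$. Since $Y^{-1}=\tfrac12\diag(yI,y^{-1}I)$, we obtain
\[
F_p=2^{n+m-2}\det\bigl(I-\tfrac{(x-x^{-1})(y-y^{-1})}{4ab}\,Y'(xB_+^TB_+ + x^{-1}B_-^TB_-)\bigr),
\qquad Y'=\diag(yI,y^{-1}I).
\]
Write the argument of this determinant as $I-M$. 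Its expansion is $1-\tr M+(\text{terms of order}\ \ge2\ \text{in}\ B)$.

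The main step is to isolate a monomial that only the trace term can produce. The highest joint power available in $\tr M$ is $x^2y^2$, with coefficient $-\tfrac{1}{4ab}\tr(Y'_+B_+^TB_+)$ restricted to the $D_2^+$ columns, i.e.\ $-\tfrac1{4ab}\|B_{++}\|^2$. Here $B_{++}$ denotes the block of $B$ mapping $D_2^+\to D_1^+$. The difficulty is that the higher-order terms in $B$ can reach higher powers, so $x^2y^2$ is not obviously isolated.

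To handle this, I would compare total degrees. Every factor of $M$ has $(x,y)$-bidegree at most $(2,2)$, and a $k$-th order term has bidegree up to $(2k,2k)$. Instead of looking at $x^2y^2$ directly, I would grade by the lowest degree. Alternatively, substitute $x=y^{\lambda}$ with a generic $\lambda$ and use Lemma \ref{lem:spectrum-unique}. I expect the cleanest route is to take the extreme corner: the top monomial $x^{2k}y^{2k}$ with $k$ maximal comes from $\det\bigl(-\tfrac{1}{4ab}\,x^2y^2B_{++}^TB_{++}\bigr)$ on the appropriate minors. By Cauchy--Binet, its coefficient is a sum of squares of minors of $B_{++}$, up to sign, and it must vanish whenever the degree is positive. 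Hence every minor of $B_{++}$ of size $\ge1$ vanishes, which gives $B_{++}=0$. Repeating the argument with the corners $x^{-2}y^{-2}$, $x^2y^{-2}$ and $x^{-2}y^2$ kills $B_{--}$, $B_{+-}$ and $B_{-+}$. Each corner picks out one block, because the powers of $x$ and $y$ attached to the four blocks are $(\pm1\text{ from }X,\ \pm1\text{ from }Y')$ multiplied by $(x-x^{-1})(y-y^{-1})$. The no-collision assumption \eqref{eq:HH-detailed-no-collision} guarantees that these monomials are genuinely distinct exponents in $D_p(r)$. Together the four vanishing blocks give $B=0$.

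The main obstacle is verifying carefully that each extreme monomial receives contributions only from the single block claimed, so that positivity (a sum of squares) forces that block to vanish. If the corner bookkeeping becomes messy, the fallback is to restrict to $y=x^{\lambda}$ with irrational $\lambda$ and read off the coefficient of the largest exponent.
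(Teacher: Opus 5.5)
Your reduction up to
\[
F_p=2^{n+m-2}\det(I-M),\qquad M=\alpha N,\qquad N=Y'\bigl(xB_+^TB_++x^{-1}B_-^TB_-\bigr),\qquad \alpha=\frac{(x-x^{-1})(y-y^{-1})}{4ab},
\]
is correct and is exactly the paper's Schur-complement step. The gap is the corner extraction that follows. We have $\det(I-M)=\sum_j(-\alpha)^j e_j(N)$, and $(-\alpha)^j$ is supported on every $x^{s}y^{t}$ with $|s|,|t|\le j$ of the right parity. So a term of order $j>k$ in $B$ can land on $x^{2k}y^{2k}$. Moreover, the coefficient of $x^jy^j$ in $(-\alpha)^j$ is $(-1)^j(4ab)^{-j}$, so these contributions alternate in sign and involve all four blocks. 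Hence the claims that the diagonal extreme monomial sees only $B_{++}$, and that ``each corner picks out one block'', are false.

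Concretely, take $n-1=m-1=2$ (all four eigenspaces one-dimensional) and $B=2\sqrt{ab}\,I_2$. Then $\det(I-M)=1-\tr M+\alpha^2(\det B)^2$. The coefficients of $x^{2}y^{2}$ and $x^{-2}y^{-2}$ are both $-\frac{4ab}{4ab}+\frac{(4ab)^2}{16a^2b^2}=0$, although $B_{++},B_{--}\neq0$. The coefficients of $x^{2}y^{-2}$ and $x^{-2}y^{2}$ are both $1$, although $B_{+-}=B_{-+}=0$. So the diagonal corners carry no sum-of-squares information, and your fallback $y=x^\lambda$ has the same defect when $\lambda>0$.

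What is missing is a device that isolates the part linear in $B^TB$. The paper does this locally at $(x,y)=(1,1)$. There $\alpha$ vanishes on $\{x=1\}\cup\{y=1\}$, so $\det(I-M)=1-\alpha\tr N+O(\alpha^2)$, and the $O(\alpha^2)$ part has zero mixed derivative at $(1,1)$. At that point $\alpha_x=\alpha_y=0$, $\alpha_{xy}=\frac1{ab}$ and $N(1,1)=B^TB$, hence $0=\partial_x\partial_yF_p(1,1)=-\frac{2^{n+m-2}}{ab}\tr(B^TB)$.

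Your corner idea can also be repaired, but only with the anti-diagonal directions. The coefficient of $x^jy^{-j}$, and of $x^{-j}y^{j}$, in $(-\alpha)^j$ is $(4ab)^{-j}>0$ for every $j$. By Cauchy--Binet,
\[
e_j(N)=\sum_{|R|=|I|=j}(\det B_{R,I})^2\,x^{|R_+|-|R_-|}\,y^{|I_+|-|I_-|},
\]
where $R_\pm$, $I_\pm$ are the indices of $R$, $I$ lying in $D_1^\pm|_p$, $D_2^\pm|_p$, has nonnegative coefficients. Hence the top part of $\det(I-M)-1$ in the direction maximizing $\deg_x-\deg_y$ (resp.\ $\deg_y-\deg_x$) is a sum of nonnegative contributions and cannot cancel. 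A nonzero entry of $B$ outside $B_{-+}$ (resp.\ in $B_{-+}$) already makes the $j=1$ term reach a positive level in that direction. This produces a nonconstant monomial with positive coefficient, contradicting the constancy of $F_p$.
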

\begin{proof}

By equations \eqref{eq:HH-detailed-S-resonance} and \eqref{eq:HH-detailed-T-resonance},  decompose the  spaces $D_1|_p,D_2|_p$ into eigenspaces:
$D_1|_p=D_1|_p^+\oplus D_1|_p^-,
D_2|_p=D_2|_p^+\oplus D_2|_p^-$,
where
$S=aI\text{ on }D_1|_p^+,
S=-aI\text{ on }D_1|_p^-$,
and
$T=bI\text{ on }D_2|_p^+,
T=-bI\text{ on }D_2|_p^-$.
The trace conditions imply
\begin{equation} \label{eq:X+=X-,Y+=Y-}
 \dim D_1|_p^+=\dim D_1|_p^- ,
\dim D_2|_p^+=\dim D_2|_p^- .  
\end{equation}

Let $P_X=x^{-1}I_{D_1|_p^+}+ xI_{D_1|_p^-},
P_Y=y^{-1}I_{D_2|_p^+}+ yI_{D_2|_p^-}$, where $I_{D_i|_{p}^{\pm}}$ is a diagonal matrix with entries $1$ at rows corresponding to $D_i|_{p}^{\pm}$ and entries $0$ at rows rorresponding to $D_i|_{p}^{\mp}$ for $i=1,2$.   By equation \eqref{eq:X+=X-,Y+=Y-},  $\det P_{X}=\det P_{Y}=1$, in particular, $P_{X}, P_{Y}$ are both invertible.
From equations \eqref{eq:HH-detailed-pure-F}, \eqref{eq:HH-detailed-S-resonance} and \eqref{eq:HH-detailed-T-resonance}, we have
\begin{equation}\label{eq:HH-detailed-pure-matrix-after-resonance}
F_p(x,y)=
2^{n+m-2}\det\begin{pmatrix}
P_X&-\dfrac{x-x^{-1}}{2a}B\\[4pt]
-\dfrac{y-y^{-1}}{2b}B^T&P_Y
\end{pmatrix}.
\end{equation}
Since \(P_X\) is invertible, by Schur Complement formula (\cite{horn2005basic}*{Theorem 1.1}) 
\[
F_p(x,y)=2^{n+m-2}\det(P_X)\det\left(P_Y-\frac{(x-x^{-1})(y-y^{-1})}{4ab}B^TP_X^{-1}B\right).
\]
Factoring out \(P_Y\),
\begin{equation}\label{eq:HH-detailed-B-schur}
F_p(x,y)=2^{n+m-2}\det(P_X)\det(P_Y)
\det\left(I-\frac{(x-x^{-1})(y-y^{-1})}{4ab}P_Y^{-1}B^TP_X^{-1}B\right).
\end{equation}

Since $\det(P_X)=1,
\det(P_Y)=1$, 
we obtain
\begin{equation*}
F_p(x,y)=
2^{n+m-2}\det\left(I-\alpha(x,y)G(x,y)\right),
\end{equation*}
where
\[
\alpha(x,y)=\frac{(x-x^{-1})(y-y^{-1})}{4ab},
\qquad
G(x,y)=P_Y^{-1}B^TP_X^{-1}B.
\]

Since \(F_p\) is constant,
then $\frac{\partial^2F_p}{\partial x\partial y}(1,1)=0$.
But
$\alpha(1,1)=0,
\alpha_x(1,1)=0,
\alpha_y(1,1)=0,
\alpha_{xy}(1,1)=\frac1{ab}$,
where $\alpha_{x},\alpha_{y},\alpha_{xy}$ means the partial derivatives of $\alpha$ with respect to $x,y,xy$.
Also
$G(1,1)=B^TB$.
For a small scalar \(\epsilon\),
\[
\det(I-\epsilon G)=1-\epsilon\tr G+O(\epsilon^2).
\]
Therefore, at \((1,1)\), only the mixed derivative of \(\alpha\) contributes to the mixed derivative of \(F_p\), and we get
$\frac{\partial^2F_p}{\partial x\partial y}(1,1)
=-\frac{2^{n+m-2}}{ab}\tr(B^TB)$, which is $0$ by the fact that $F_{p}$ is constant.
Thus
$\tr(B^TB)=0$.
Since
$\tr(B^TB)=|B|^2$, we have  
$B=0$.
\end{proof}

\begin{lemma}\label{lem:u=0v=0}
    $u=0,v=0.$
\end{lemma}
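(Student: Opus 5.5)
With $B=0$ already established in Lemma~\ref{lem:B=0}, I would compute $D_p(r)=\det B_p(r)$ in closed form by taking a Schur complement with respect to the large lower-right block of \eqref{eq:Bpr-with-entries}. Then I would read off the coefficients of the terms $re^{\pm 2ar}$ and $re^{\pm 2br}$. Since we are in the case where $D_p(r)$ is a polynomial in $r$, these coefficients must vanish, and they will turn out to be (up to sign and a positive factor) the squared norms of the eigencomponents of $u$ and $v$.

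\textbf{Step 1: the lower-right block.} By Lemma~\ref{lem:S2=a2I}, Lemma~\ref{lem:B=0} and the eigenspace decomposition in its proof, this block is block diagonal. Call it $M(r)=\operatorname{diag}(M_1,M_2)$, with
\[
M_1=\cosh(ar)I_{n-1}-\tfrac{\sinh(ar)}{a}S,\qquad M_2=\cosh(br)I_{m-1}-\tfrac{\sinh(br)}{b}T .
\]
In the eigenbases, $M_1=e^{-ar}I_{D_1|_p^+}+e^{ar}I_{D_1|_p^-}$ and $M_2=e^{-br}I_{D_2|_p^+}+e^{br}I_{D_2|_p^-}$. By \eqref{eq:X+=X-,Y+=Y-}, $\det M=1$, and $M^{-1}$ is explicit.

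\textbf{Step 2: the Schur complement.} The Schur complement formula \cite{horn2005basic}*{Theorem 1.1} gives
\[
D_p(r)=1-rh_{11}-r\Big(\tfrac{\sinh(ar)}{a}\,u^TM_1^{-1}u+\tfrac{\sinh(br)}{b}\,v^TM_2^{-1}v\Big).
\]
Split $u=u^++u^-$ and $v=v^++v^-$ along the eigenspaces. Using $\sinh(ar)e^{ar}=\tfrac12(e^{2ar}-1)$ and $\sinh(ar)e^{-ar}=\tfrac12(1-e^{-2ar})$, this becomes
\[
D_p(r)=1-rh_{11}-\tfrac{r}{2a}\big(|u^+|^2(e^{2ar}-1)+|u^-|^2(1-e^{-2ar})\big)-\tfrac{r}{2b}\big(|v^+|^2(e^{2br}-1)+|v^-|^2(1-e^{-2br})\big).
\]

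\textbf{Step 3: reading off coefficients.} Since $a,b>0$, each of the exponents $\pm2a,\pm2b$ is nonzero. Because $D_p(r)$ is a polynomial, Lemma~\ref{lem:spectrum-unique} (linear independence of the $t^re^{\lambda t}$) forces the total coefficient of $re^{\lambda r}$ to vanish for every $\lambda>0$ and every $\lambda<0$.

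The only possible coincidence among these exponents is $a=b$. In that case the coefficient of $re^{2ar}$ is $-\big(|u^+|^2+|v^+|^2\big)/(2a)$, a sum of nonpositive terms, and similarly for $re^{-2ar}$. So in every case we get $u^\pm=0$ and $v^\pm=0$, that is, $u=0$ and $v=0$. Degenerate cases where $n=1$ or $m=1$ simply drop the corresponding block and run the same way.

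\textbf{Main obstacle.} The main care point is Step~2: checking the signs in the Schur complement so that the surviving coefficients are genuinely of one sign (sums of squares) rather than differences that could cancel. I would verify this by redoing the computation directly in the eigenbasis, where $M$ is diagonal.
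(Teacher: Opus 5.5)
Your proposal is correct and follows essentially the same route as the paper: a Schur complement against the block-diagonal lower-right block (using $\det P_X=\det P_Y=1$ from the equal multiplicities), expansion into $re^{\pm 2ar}$ and $re^{\pm 2br}$ terms, and vanishing of their coefficients $\mp|u_\pm|^2/(2a)$ and $\mp|v_\pm|^2/(2b)$. The only difference is that you rule out cancellation when $a=b$ by a same-sign argument, whereas the paper excludes $a=b$ through its standing no-collision assumption \eqref{eq:HH-detailed-no-collision}.
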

\begin{proof}

Now use the  Jacobi matrix $B_{p}(r)$, not only the coefficient of \(r^0\).  Because \(B=0\),  the  Jacobi matrix is
\[
\begin{pmatrix}
1-rh_{11}&-ru^T&-rv^T\\[2pt]
-\dfrac{\sinh(ar)}a u&P_X&0\\[6pt]
-\dfrac{\sinh(br)}b v&0&P_Y
\end{pmatrix}.
\]
Since \(\det P_X=\det P_Y=1\), by Schur Complement formula (\cite{horn2005basic}*{Theorem 1.1}), we obtain
\begin{equation}\label{eq:HH-detailed-mixed-schur}
D_p(r)=1-rh_{11}
-r\frac{\sinh(ar)}a u^TP_X^{-1}u
-r\frac{\sinh(br)}b v^TP_Y^{-1}v.
\end{equation}
Now decompose
$u=u_++u_-,
v=v_++v_-$, where $u_{\pm}=u\cdot I_{D_1|_p^{\pm}} $ and $v_{\pm}=v\cdot I_{D_2|_p^{\pm}} $.   Since $P_X^{-1}=xI_{D_1|_p^+}\oplus x^{-1}I_{D_1|_p^-},
P_Y^{-1}=yI_{D_2|_p^+}\oplus y^{-1}I_{D_2|_p^-}$,
we get
\[
\frac{\sinh(ar)}a u^TP_X^{-1}u
=\frac{x^2-1}{2a}|u_+|^2+\frac{1-x^{-2}}{2a}|u_-|^2,
\]
and
\[
\frac{\sinh(br)}b v^TP_Y^{-1}v
=\frac{y^2-1}{2b}|v_+|^2+\frac{1-y^{-2}}{2b}|v_-|^2.
\]
Therefore
\begin{equation*}
D_p(r)=1-rh_{11}
-r\left[\frac{x^2-1}{2a}|u_+|^2+\frac{1-x^{-2}}{2a}|u_-|^2+\frac{y^2-1}{2b}|v_+|^2+\frac{1-y^{-2}}{2b}|v_-|^2\right].
\end{equation*}
Under the  assumption equation \eqref{eq:HH-detailed-no-collision}, the monomials
$x^2,
x^{-2},
y^2,
y^{-2}$
have distinct nonzero exponents.  Since  all terms in \(D_{p}(r)\) have no nonzero exponents, the coefficients of these four monomials must vanish.  Hence
$|u_+|^2=|u_-|^2=|v_+|^2=|v_-|^2=0$.
Thus
$u=0,
v=0$.
\end{proof}

By Lemma \ref{lem:B=0} and Lemma \ref{lem:u=0v=0}, we have $A D_{1}|_p\subset D_{1}|_p$,
$A D_{2}|_p\subset D_{2}|_p$. Since $p$ is chosen to be any point in $L$ with $-1<C(p)<1$, we have $A D_{1}\subset D_{1}$,
$A D_{2}\subset D_{2}$ on the neighborhood $U$ of $p$ with $-1<C<1$. Also by Lemma \ref{lem:S2=a2I}, $S^2=a^2I$.
Applying Lemma \ref{lem:cartanformula}, we conclude that such $L$ is not possible, which implies the assumption \eqref{eq:HH-detailed-no-collision} is not possible. Therefore $a$ must be constant.

\section{Classification of Isoparametric Hypersurfaces in \texorpdfstring{$\mathbb Q_{\epsilon_{1}}^{n}\times \mathbb Q^m_{\epsilon_{2}}$}{Q e1 n x Qm e2}}\label{sec:classification-hypersurface}




Urbano \cite{urbano2019hypersurfaces} and  Gao--Ma--Yao \cites{gao2024hypersurfaces,gao2024isoparametric} obtain the classifications of  isoparametric hypersurfaces in \(Q^{2}_{\epsilon_1}\times Q^{2}_{\epsilon_2}\).  De Lima--Pipoli \cite{delima2025} classify the isoparametric hypersurfaces in \(Q^{n}_{\epsilon_1}\times Q^{m}_{\epsilon_2}\) with a one-point condition.  Tan--Xie--Yan \cite{TanXieYan2026} give a classification of isoparametric hypersurfaces in $\Sph^n\times\R^m$ and $\Hyp^n\times \R^m$. In this section, we use a simpler approach to obtain the classifications of isoparametric hypersurfaces in $\Sph^n\times\R^m$ and $\Hyp^n\times \R^m$. The method also gives the classification of isoparametric hypersurfaces in $\Sph^n\times \Hyp^m$.


\begin{theorem}\label{thm:global-dem-SRH}
Let $L\subset \mathbb Q_{\epsilon_{1}}^{n}\times \mathbb Q^m_{\epsilon_{2}}$ be an isoparametric hypersurface.

(i) If $\epsilon_{1}=1, \epsilon_{2}\in\{0,-1\}$, then the angle function satisfies $C\equiv 1$ or $C\equiv -1$.  Moreover, $L$ is an open subset of $L_1\times \mathbb Q^m_{\epsilon_{2}}$  or $\mathbb Q_{\epsilon_{1}}^n\times L_2$, where $L_1$, $L_2$ are isoparametric hypersurfaces of  $ \mathbb Q^n_{\epsilon_{1}}$,$\mathbb Q^m_{\epsilon_{2}}$, respectively.

(ii) If $\epsilon_{1}=-1, \epsilon_{2}=0$, then \(L\) is an open subset of  one of the following:

\qquad (a) A product
$L_1\times \Q_{0}^{m}$,
where \(L_1\subset \Q_{-1}^{n}\) is an isoparametric
hypersurface;

\qquad (b) A product
$L=\Q_{-1}^{n}\times L_2$,
where \(L_2\subset \Q_{0}^{m}\) is an 
isoparametric hypersurface;  

\qquad (c) A flat-horospherical hypersurface.  
\end{theorem}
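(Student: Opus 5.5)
By Theorem \ref{thm:constant-angle}, the angle function $C$ is constant on $L$. The plan is to split into the cases $C\equiv\pm1$ and $-1<C<1$ and treat each with the tools already developed.

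\emph{Case $C\equiv\pm1$.} If $C\equiv 1$ then $N=N_1$ everywhere, so $\nu_pL=\m_1$ at every point. This is type (iii) of Theorem \ref{thm:lie-triple-stable} when $\eps_2\neq0$, and type (i) with $\m=\m_1$ when $\eps_2=0$. In both situations $\nu L$ lies in the first factor. Hence $TL=D_1\oplus D_2$ with $D_2=0\oplus T\Q^m_{\eps_2}$, and $D_1$ is the orthogonal complement of $N$ in the first factor; both are of constant rank. Lemma \ref{lem:localproduct} and Lemma \ref{lem:gluing} then show that $L$ is an open subset of $L_1\times\Q^m_{\eps_2}$ with $L_1$ an isoparametric hypersurface of $\Q^n_{\eps_1}$. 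The case $C\equiv-1$ is symmetric and gives $\Q^n_{\eps_1}\times L_2$. This disposes of (ii)(a), (ii)(b), and the "moreover" part of (i).

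\emph{Case $-1<C<1$ with $\eps_1=1$.} We must show this case cannot occur. Here $a,b\neq0$ are constant and $N=a\nu_1+b\nu_2$ is parallel. For $\eps_2=0$, Theorem \ref{thm:reduction} applies with $E=0$. The Jacobi operator along $N$ is then given by \eqref{eq:Jacobi}, and the argument of Lemma \ref{lem:E-parallel} suggests comparing $\det J(t)$ with its expansion. The point is that the isoparametric condition alone does not force polynomiality, so an extra input is needed. The one we have is constancy of $C$ and Lemma \ref{lem:nabla-C}: $\nabla C=0$ gives $A\mathcal T=0$. With $\mathcal T$ a null direction of $A$, we use the Codazzi equation with $X\in D_1$, $Y=\mathcal T$ together with the curvature formula \eqref{eq:curvature-formula}. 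A short computation of $\bar\nabla_X\mathcal T$ (via $\bar\nabla P=0$, $\mathcal T=P N-CN$) gives
\[
\nabla_X\mathcal T=-P(AX)^T+CAX,
\]
and the curvature term $(\bar R(X,\mathcal T)N)^T$ is a multiple $\eps_1 ab\,X$ (up to sign and normalization). Projecting the Codazzi identity onto $D_1$ then yields a quadratic relation, roughly $(A|_{D_1})^2\propto \eps_1 a^2 I$ plus an $A_{D_1D_2}$ cross term. The analogous computation on $D_2$ gives a relation with coefficient $\eps_2 b^2$. Combining these with symmetry of $A$ and taking traces, for $\eps_1=1$ we should obtain a sum of squares equal to a negative multiple of $a^2$, which is a contradiction. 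This Codazzi computation is the main obstacle; I expect the bookkeeping of the mixed block $D_1\to D_2$ to need care. For $\eps_2=-1$ the same trace identity, with $\eps_1=1$ entering with the wrong sign, should again force a contradiction. This establishes (i).

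\emph{Case $-1<C<1$ with $\eps_1=-1$, $\eps_2=0$.} Theorem \ref{thm:reduction} (with $E=0$, $m_2=m_3=0$) together with Lemma \ref{lem:no-oblique}, plus the Codazzi relation above, gives $A\mathcal T=0$, $A(D_2)=0$, and $(A|_{D_1})^2=a^2I$. Lemma \ref{lem:cartanformula} excludes the possibility that eigenvalues $\pm a$ both occur. Hence $A|_{D_1}=\pm aI$. Then every leaf of $D_1$ is a totally umbilic hypersurface of $\Hyp^n\times\{z\}$ with principal curvature $\pm1$, i.e.\ a horosphere. Next, $D_2\oplus\R\mathcal T$ is totally geodesic in the flat direction, since $A$ vanishes there. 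We would integrate this to show that $\pi_{\R^m}(L)$ is foliated by parallel hyperplanes $W_{\alpha t}$, with the horospheres moving by the normal flow $H_t$. Matching with the parametrization $\Phi_\alpha$ of Example \ref{ex:flat-horosphere}, where $\alpha=\pm b/a$, identifies $L$ locally with $L_\alpha$. Finally, unique continuation as in Lemma \ref{lem:gluing} upgrades this local identification to the statement that $L$ is an open subset, giving (ii)(c).
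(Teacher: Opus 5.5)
\noindent Your proposal has a genuine gap in the step that carries both tilted cases. The Codazzi equation with $Y=\mathcal T$, after using $A\mathcal T=0$ and $\nabla_X\mathcal T=-(P-C)AX$, does not give an algebraic relation for $A|_{D_1}$. The term $(\nabla_{\mathcal T}A)X$ survives, and what you obtain is exactly the Riccati equation of Lemma~\ref{lem:Riccati-equ}:
\[
\mathcal T(A_D)=A_D(P|_D-CI)A_D+\eps_1\tfrac{1-C^2}{2}\Pi_1-\eps_2\tfrac{1-C^2}{2}\Pi_2 .
\]
This is an evolution equation along $\mathcal T$. It holds for every hypersurface with constant angle function, so by itself it cannot encode the isoparametric condition. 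You may only drop the left-hand side for quantities already known to be constant along $\mathcal T$. In your argument the only such quantity is the mean curvature $\tr A_D$ of $L$, and its trace identity is
\[
(1-C)\|A_1\|^2-2C\|A_2\|^2-(1+C)\|A_3\|^2=-\tfrac{1-C^2}{2}\bigl(\eps_1(n-1)-\eps_2(m-1)\bigr).
\]
This is not sign-definite: the $A_3$-block enters with a negative coefficient and the mixed block with an indefinite one. So no contradiction follows for $\eps_1=1$.

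The same gap affects case (ii). Theorem~\ref{thm:reduction} and Lemma~\ref{lem:no-oblique} are vacuous for a hypersurface: $E=0$, there are no $\mu_i$, and $\pi_{\R^m}(L)$ is open in $\R^m$. So nothing you cite yields $A(D_2)=0$ or $(A|_{D_1})^2=a^2I$, and your horosphere/hyperplane assembly rests on these unproved claims.

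What is missing is the use of the whole Jacobi determinant, not just the constancy of $C$. Since $A\mathcal T=0$, one has
\[
D_p(r)=c_+(r)^{n-1}c_-(r)^{m-1}F_p(x(r),y(r)),\qquad F_p(x,y)=\det\bigl(I-\diag(xI_{n-1},yI_{m-1})A_D\bigr).
\]
The functions $x(r)=s_+/c_+$ and $y(r)=s_-/c_-$ are algebraically independent (Lemma~\ref{lem:polynomial-identity}). Combined with Lemma~\ref{lem:Dp=Dq}, this shows that $F_p$ has $p$-independent coefficients as a polynomial in two variables. The argument then runs as follows.
\begin{enumerate}
\item The eigenvalues of $A_D$ are constant, and on $\ker A_D$ the Riccati equation gives $\eps_1|X_1|^2=\eps_2|X_2|^2$.
\item On $\widetilde D=(\ker A_D)^\perp$, the inverse $M=\widetilde A^{-1}$ has constant block invariants $\det(M_1-xI)$ and $\det(M_3-yI)$. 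These are leading coefficients of $\det(M-X_{x,y})=F_p/\det\widetilde A$.
\item Inverting the Riccati equation gives
\[
\mathcal T(M)=CI-P-\tfrac{1-C^2}{2}M(\eps_1\Pi_1-\eps_2\Pi_2)M .
\]
\item When $\eps_1=1$ this gives $\mathcal T(\tr M_1)<0$, contradicting the constancy of $\tr M_1$.
\item When $\eps_1=-1,\eps_2=0$ it gives $\mathcal T(\tr M_3)>0$ unless $\widetilde D_2=0$. That forces $A_2=A_3=0$.
\item Only then does $\langle\mathcal T(A_1)X,X\rangle=0$, applied to eigenvectors of the now-constant eigenvalues, give $\lambda=\pm a$.
\end{enumerate}

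Two smaller slips. Lemma~\ref{lem:cartanformula} as stated assumes equal multiplicities, so you should invoke Cartan's formula directly. Also, matching with Example~\ref{ex:flat-horosphere} gives $|\alpha|=a/b$, not $b/a$.
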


 By Theorem \ref{thm:constant-angle}, the angle function is constant. Note that $C\equiv1$ and $C\equiv-1$ correspond to $|N_2|=0$ or $|N_1|=0$, respectively. In these cases $TL$ decomposes as $D_1\oplus D_2$, then by Lemma \ref{lem:localproduct} and Lemma \ref{lem:gluing}, $L$ decomposes, i.e., $L$ is an open subset of product of two isoparametric submanifold of  $ \mathbb Q^n_{\epsilon_{1}}$,$\mathbb Q^m_{\epsilon_{2}}$. Since the codimension of $L$ is $1$, $L$ is either $L_1\times \Q^m_{\eps_2}$ or $\mathbb Q^n_{\eps_1}\times L_2$ where $L_1$ and $L_2$ are isoparametric hypersurfaces in $\mathbb Q^n_{\eps_1}$ and $\Q^m_{\eps_2}$, respectively. Therefore we only need to consider the case $-1<C<1$. More concretely, we will show the case $-1< C<1 $ in (i) of Theorem \ref{thm:global-dem-SRH} is impossible and the case $-1< C<1 $ in (ii) of Theorem \ref{thm:global-dem-SRH} is corresponding to the classification (c). For the rest of the section, assume $-1<C<1$.


\subsection{Derivatives of Shape Operator}
Derivatives of shape operator along the direction of ${\mathcal T}$ in product space of simply connected space forms are derived by de Lima and Pipoli \cite{delima2025} using Codazzi equations. We set the notations and recall their results here.

 For any $p\in L$,  there is a neighborhood $U$ of $p$, such that a  unit normal field $N$  is defined on $U$. Here $N=N_1+N_2$   by  equation (\ref{eq:N=N1+N2}). Set $a=||N_1||= \sqrt{(1+C)/2}$, $b=||N_2||=\sqrt
{(1-C)/2}$ as in equation (\ref{eq:defn-of-a-b}). By Theorem \ref{thm:constant-angle}, $a,b$ are constant.

Recall that ${\mathcal T}=P(N)-CN$, where $P$ is defined by \eqref{equ:P-operator}. Equation \eqref{equ:T-length} yield that  $|{\mathcal T}|=\sqrt{1-C^2}$ and is constant. 

Let $D$ be the distribution ${\mathcal T}^\perp$ on $TL|_U$.  For a tangent vector $X\in D$, by equation \eqref{eq:P-property}, 

we have 
\begin{align*}
   \langle P(X),N\rangle=\langle X,P(N)\rangle=\langle X,{\mathcal T}\rangle=0,
\end{align*}
\begin{align*}
   \langle P(X),{\mathcal T}\rangle=\langle X, P({\mathcal T})\rangle=\langle X,P(P(N)-CN)\rangle=\langle X,(1-C^2) N-C{\mathcal T}\rangle=0.
\end{align*}
Therefore $P$ preserves $D$. By equation \eqref{eq:P-property},  we denote the eigenspace splitting of $P|_D$ by
\begin{equation}\label{eq:D+D-}
  D=D_1\oplus D_2,  
\end{equation}
 where $D_{1}|_{p}=D_{p}\cap (T_{\pi_{1}(p)}\mathbb Q_{\epsilon_{1}}^n\oplus 0)$,  $D_2|_{p}=D_{p}\cap (0\oplus T_{\pi_{2}(p)}\mathbb Q^m_{\epsilon_{2}})$ and  $\dim D_1=n-1$, $\dim D_2=m-1$, for any $p\in U$. Since $C$ is constant on  $L$ and  Lemma \ref{lem:nabla-C}, we have $A{\mathcal T}=0$ for ${\mathcal T}=P(N)-CN$. Since $A{\mathcal T}=0$ and $A$ is self-adjoint, we have  $A(D)\subset D$. Consider an orthonormal frame of $D_{1}\oplus D_{2}$ constructed from an orthonormal frame of $D_1$ and an orthonormal frame of $D_2$. In this orthonormal frame, write 
\begin{align*}
    A_D=A|_D=
\begin{pmatrix}
A_1 & A_2\\
A_2^t & A_3
\end{pmatrix},
\end{align*}
where $A_1=A_1^t$, $A_3=A_3^t$, and $A_2$ is rectangular. And  we have the following equations.

\begin{lemma}(Equation 18 in \cite{delima2025})\label{lem:Riccati-equ}
    With some notations above,  we have
\[
{\mathcal T}(A_1)=(1-C)A_1^2-(1+C)A_2A_2^t+\epsilon_{1}\frac{1-C^2}{2}I_{n-1},
\]
\[
{\mathcal T}(A_3)=(1-C)A_2^tA_2-(1+C)A_3^2+-\epsilon_{2}\frac{1-C^2}{2}I_{m-1},
\]
\[
{\mathcal T}(A_2)=(1-C)A_1A_2-(1+C)A_2A_3.
\]
These equations can also be written as 
\[
{\mathcal T}(A_D)=A_D(P|_D-CI)A_D+\epsilon_{1}\frac{1-C^2}{2}\Pi_1-\epsilon_{2}\frac{1-C^2}{2}\Pi_2,
\]
where $\Pi_1$ and $\Pi_2$ are $\begin{pmatrix}
    I_{n-1} & 0 \\
    0 & 0
\end{pmatrix}$ and $\begin{pmatrix}
    0 & 0 \\
    0 & I_{m-1}
\end{pmatrix}$, respectively, that is, the projections of $D$ onto $D_1$ and $D_2$, respectively. 
\end{lemma}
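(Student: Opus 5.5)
The plan is to differentiate the identity $A\mathcal T=0$ along $\mathcal T$ and apply the Codazzi equation \eqref{eq:codazzi} with $X=\mathcal T$ and $Y\in D$. The derivative $\mathcal T(A_D)$ is understood as $\nabla_{\mathcal T}A$ restricted to $D$, expressed in a frame of $D_1\oplus D_2$ adapted to $P$.

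\textbf{Step 1: covariant derivatives of $N$, $\mathcal T$ and the splitting.} Since $a$ and $b$ are constant by Theorem \ref{thm:constant-angle}, $C$ is constant. By Lemma \ref{lem:nabla-C}, $A\mathcal T=0$, and hence $A(D)\subset D$.
\begin{itemize}
\item First I compute $\bar\nabla_X\mathcal T$ for $X\in TL$. From $\mathcal T=P(N)-CN$ and $\bar\nabla P=0$,
\[
\bar\nabla_X\mathcal T=-P(AX)+C\,AX .
\]
For $X\in D$ we have $AX\in D$, and $P$ preserves $D$. So this vector is tangent and lies in $D$:
\[
\nabla_X\mathcal T=-(P-C)AX \quad (X\in D),\qquad \nabla_{\mathcal T}\mathcal T=0 .
\]
\item Next I check how the eigen-splitting $D_1\oplus D_2$ of $P|_D$ moves along $\mathcal T$. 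The $D$-parts of $\nabla_{\mathcal T}$ applied to $D_1$-fields stay in $D_1$, because $P$ is parallel and $\langle \nabla_{\mathcal T}Y,\mathcal T\rangle=-\langle Y,\nabla_{\mathcal T}\mathcal T\rangle=0$.
\item I therefore choose frames of $D_1$ and $D_2$ that are parallel along $\mathcal T$ within $D$. In such frames the matrix of $\nabla_{\mathcal T}A|_D$ is exactly the entrywise derivative $\mathcal T(A_D)$.
\end{itemize}

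\textbf{Step 2: Codazzi and the curvature term.} For $Y\in D$, the Codazzi equation gives
\[
(\nabla_{\mathcal T}A)Y=(\nabla_YA)\mathcal T-(\bar R(\mathcal T,Y)N)^T .
\]
\begin{itemize}
\item Using $A\mathcal T=0$, the first term is $(\nabla_YA)\mathcal T=-A\nabla_Y\mathcal T=A(P-C)AY$.
\item For the curvature term I write $N=N_1+N_2$ and $\mathcal T=(1-C)N_1-(1+C)N_2$, and apply \eqref{eq:curvature-formula}. For $Y\in D_1$, only the first factor contributes, and
\[
\bar R(\mathcal T,Y)N=\eps_1(1-C)\bigl(\langle Y,N_1\rangle N_1-|N_1|^2Y\bigr)=-\eps_1(1-C)\tfrac{1+C}{2}Y ,
\]
since $Y\perp N_1$ and $|N_1|^2=\frac{1+C}{2}$.
\item For $Y\in D_2$ the analogous computation gives $+\eps_2(1+C)\frac{1-C}{2}Y$.
\item Taking the tangential part (these vectors already lie in $D$) and applying the minus sign yields the inhomogeneous terms $\eps_1\frac{1-C^2}{2}\Pi_1-\eps_2\frac{1-C^2}{2}\Pi_2$.
\end{itemize}

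\textbf{Step 3: reading off the block equations.} Combining the two steps gives
\[
\mathcal T(A_D)=A_D(P|_D-CI)A_D+\eps_1\tfrac{1-C^2}{2}\Pi_1-\eps_2\tfrac{1-C^2}{2}\Pi_2 .
\]
Writing $P|_D-CI=\operatorname{diag}\bigl((1-C)I_{n-1},\,-(1+C)I_{m-1}\bigr)$ and multiplying out the blocks recovers the three displayed formulas for $\mathcal T(A_1)$, $\mathcal T(A_2)$ and $\mathcal T(A_3)$.

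\textbf{Main obstacle.} The delicate point is the justification in Step 1 that $\mathcal T(A_D)$ equals $\nabla_{\mathcal T}A$ in the chosen frame. This means verifying that a $D_1$-frame can be chosen with $\nabla_{\mathcal T}$ preserving $D_1$. It follows from parallelity of $P$, the fact that $\nabla_{\mathcal T}$ preserves $D$ (because $\nabla_{\mathcal T}\mathcal T=0$ and $|\mathcal T|$ is constant), and the fact that $D_1$ is the $+1$-eigenbundle of $P|_D$.
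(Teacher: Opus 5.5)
Your proposal is correct and is essentially the derivation the paper points to: the paper gives no proof of its own and cites de Lima--Pipoli's Codazzi-equation computation. That computation is yours, taking $X=\mathcal T$ and $Y\in D$, and using $A\mathcal T=0$, $\nabla_X\mathcal T=-(P-C)AX$, and the product curvature formula. You also correctly read $\mathcal T(A_D)$ as $\nabla_{\mathcal T}A|_D$ in a frame of $D_1\oplus D_2$ parallel along $\mathcal T$, which matches how the paper later uses it, and your curvature term fixes the typo ``$+-\epsilon_2$'' in the $A_3$ equation to $-\epsilon_2$.
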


\subsection{Jacobi Matrix}
Here we recall the Jacobi Matrix from the previous section and tailor it to our needs.

For $p\in L$, let $\gamma_p(r)=\exp_p(rN_p)$ and let $L_r$ be the local parallel hypersurface.  
For simplicity of notations, we define 
\begin{align}\label{eq:xr,yr}
    x(r)=s_+(r)/c_+(r),\ y(r)=s_-(r)/c_-(r), 
\end{align}
where $c_{+}(r), s_{+}(r), c_{-}(r),s_{-}(r)$ are defined by equations \eqref{eq:c+s+},\eqref{eq:c-s-}.

By Lemma \ref{lem:jacobi-matrix-row}, we can write down a Jacobi matrix $B_p(r)$. Since we have $A{\mathcal T}=0$, the first row as well as the first column only contains zero entries. We eliminate the first row and the first column to get the following reduced Jacobi matrix.
\[
J_p(r)=
\begin{pmatrix}
c_+(r)I_{n-1}-s_+(r)A_1 & -s_+(r)A_2\\
-s_-(r)A_2^t & c_-(r)I_{m-1}-s_-(r)A_3
\end{pmatrix}.
\]


Taking the determinant and by equation \eqref{eq:xr,yr} gives
\begin{align*}
    D_{p}(r)=c_+(r)^{n-1}c_-(r)^{m-1}F_p(x,y),
\end{align*}
where 
\begin{align}\label{eq:F-p-(x,y)}
  F_p(x,y)=  \det
\begin{pmatrix}
I_{n-1}-xA_1 & -xA_2\\
-yA_2^t & I_{m-1}-yA_3
\end{pmatrix}.
\end{align}

Note that factors $c_+(r), c_-(r)$ depends only on $r$ and the constant angle funtion $C$, not on $p$. Since $D_{p}(r)$ is independent of $p$ in $U$ by Lemma \ref{lem:Dp=Dq}, then $F_p(x(r),y(r))$ is also independent of $p$ in $U$ for sufficiently small $r$.


\begin{lemma}\label{lem:polynomial-identity}
Let  function pair $(\psi(r),\varphi(r))$ be $(\dfrac{\tan(ar)}{a}, r),\,  (\dfrac{\tan(ar)}{a}, \dfrac{\tanh(br)}{b})$ or $ (\dfrac{\tanh(ar)}{a}, r)$.
If a polynomial $P(x,y)\in\mathbb R[x,y]$ satisfies $P(\psi(r),\varphi(r))=0$  for all real $r$ near $0$, then $P=0$ as a polynomial. In particular, if two real coefficient polynomials $P_1$ and $P_2$ satisfies $P_1(\psi(r),\varphi(r))=P_2(\psi(r),\varphi(r))$  for all real $r$ near $0$, then $P_1=P_2$ as polynomials.

\end{lemma}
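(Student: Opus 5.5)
The plan is to show that the pair $(\psi,\varphi)$ is algebraically independent over $\R$, i.e.\ that the curve $r\mapsto(\psi(r),\varphi(r))$ near $0$ lies on no nonzero algebraic curve. The second assertion is then immediate: apply the first to $P=P_1-P_2$.

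Suppose $P\neq 0$ and $P(\psi(r),\varphi(r))=0$ for all real $r$ near $0$. Both $\psi$ and $\varphi$ are real-analytic near $0$ and extend meromorphically to $\C$. By the identity theorem, $P(\psi(z),\varphi(z))\equiv 0$ as a meromorphic function on the connected domain $\C$ minus the poles. The argument then splits according to the pair.

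\emph{Pair $(\tan(ar)/a,\ r)$.} Here we have $P(\tan(az)/a,\,z)=0$. Write $P(x,y)=\sum_k p_k(y)x^k$ with polynomial coefficients $p_k$. The function $\tan(az)$ is periodic with period $\pi/a$. Substituting $z+j\pi/a$ for $j\in\mathbb Z$ shows that, for each fixed $x_0=\tan(az_0)/a$ with $z_0$ generic, the polynomial $y\mapsto P(x_0,y)$ vanishes at infinitely many points $z_0+j\pi/a$. Hence $P(x_0,\cdot)\equiv0$ for $x_0$ in an open set, so every coefficient of $P$ viewed as a polynomial in $y$ vanishes there. Those coefficients are polynomials in $x$ vanishing on an open set, so they are $0$. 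The pair $(\tanh(ar)/a,\ r)$ is handled identically, using the imaginary period $\pi\ii/a$ of $\tanh(az)$.

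\emph{Pair $(\tan(ar)/a,\ \tanh(br)/b)$.} Periodicity alone does not suffice here, since the two periods ($\pi/a$ real and $\pi\ii/b$ imaginary) do not match. Instead, fix $y_0=\tanh(bz_0)/b$ and use the imaginary period $\pi\ii/b$ of $\tanh(bz)$. Then $P(\tan(a(z_0+j\pi\ii/b))/a,\,y_0)=0$ for all $j\in\mathbb Z$. Along the imaginary direction, $\tan(a(z_0+\ii t))\to \ii$ as $t\to+\infty$, and these values are pairwise distinct for generic $z_0$. This gives infinitely many roots of $x\mapsto P(x,y_0)$, so $P(\cdot,y_0)\equiv0$ for $y_0$ in an open set, and hence $P=0$.

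The main obstacle is verifying distinctness of these values. This follows from the injectivity of $\tan$ on horizontal strips modulo $\pi$: the points $a(z_0+j\pi\ii/b)$ all have the same real part and pairwise distinct imaginary parts, so they are never congruent mod $\pi$ and their tangents differ. One must also choose $z_0$ off the countable set of poles. An alternative approach for all three cases is to compare growth or singularities: $P(\psi,\varphi)=0$ forces $\psi$ to be algebraic over $\C(\varphi)$. This is impossible because $\tan$ has infinitely many poles in a bounded-$\varphi$ region, whereas an algebraic function has only finitely many poles over any compact set of $\varphi$-values.
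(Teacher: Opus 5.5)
Your proof is correct, but it uses a different mechanism from the paper's.

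\textbf{The paper's route.} The paper writes $P=\sum_{i\le d}p_i(y)x^i$ and continues the identity $P(\psi(z),\varphi(z))=0$ to $\C$. It then takes the limit of $(z-z_k)^dP(\psi(z),\varphi(z))$ at the simple poles $z_k$ of $\psi$: the real poles of $\tan(az)$, or the imaginary poles $\ii\pi(k+\tfrac12)/a$ of $\tanh(az)$. Since $\varphi$ is holomorphic there, this forces $p_d(\varphi(z_k))=0$ at infinitely many distinct values $\varphi(z_k)$, so the leading coefficient vanishes. One uniform computation covers all three pairs.

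\textbf{Your route.} You use periodicity of one coordinate together with injectivity of the other along the period lattice. Freezing one variable then yields a one-variable polynomial with infinitely many roots. This avoids the limit computation and the leading-coefficient bookkeeping. The cost is a case-dependent choice of which variable to freeze. In the mixed case you also need that $\tan w=\tan w'$ only when $w-w'\in\pi\mathbb Z$, which you justify correctly.

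\textbf{Two small remarks.}

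\emph{The mixed case.} Periodicity does suffice there exactly as in your first case, contrary to your comment. Freeze $x_0=\tan(az_0)/a$ with $z_0$ real near $0$ and use the real points $z_0+j\pi/a$. At these points the values $\tanh(bz_0+bj\pi/a)/b$ are pairwise distinct, because $\tanh$ is strictly increasing on $\R$.

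\emph{Your closing ``alternative approach''.} It is misphrased. For $\varphi=r$ the poles of $\psi$ do not lie over a bounded set of $\varphi$-values, since $\varphi(z_k)=z_k\to\infty$. What is actually needed is that $\psi$ has poles over infinitely many distinct $\varphi$-values, while $p_d$ has only finitely many zeros. That is precisely the paper's argument.
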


\begin{proof}

We write $P$ as a polynomial of $x$: 
\begin{align}\label{eq:P(x,y)}
    P(x,y)=p_d(y)x^d+p_{d-1}(y)x^{d-1}+\cdots+p_{1}(y)x+p_{0}(y),
\end{align}
where $p_i$ is a polynomial for $i=0,1,\cdots, d$ and $d$ is the highest degree of $x$.
The leading coefficient $p_d(y)$ is non-vanishing if $P$ is non-vanishing. Substituting with $x=\psi(z), y=\varphi(z)$, we obtain a meromorphic function of $z$ which vanishes on a real interval, hence vanishes everywhere. 

For $\psi(r)=\dfrac{\tan(ar)}{a}$, 
we choose the poles $z_k=(\pi/2+k\pi)/a$ of $\tan(az)$.  The function $\varphi$ is holomorphic at $z_{k}$.
Fix a $k$ for now, multiply  $(z-z_k)^d$  in both side of equation \eqref{eq:P(x,y)} and take $z\to z_k$, the limit gives 
\begin{align*}
   0&=\lim_{z\to z_k}(\sum_{i=0}^d p_i(\varphi(z))(\frac{\tan(az)}{a})^i(z-z_k)^i)\\
   &=\lim_{z\rightarrow z_k}(p_d(\varphi(z))(\frac{\tan(az)}{a})^d(z-z_k)^d)=p_d(\varphi(z_k))\frac{(-1)^d}{a^{2d}},
\end{align*}
i.e., $p_d(\varphi(z_k))=0$.
Note that $\varphi$ is monotonely increasing. Therefore, for any $k$, $\varphi(z_k)$ is a zero point for the non-vanishing polynomial $p_d$, which is not possible. Hence $p_d$ vanishes, and $P$ vanishes.

For $\psi(r)=\dfrac{\tanh(ar)}{a}$, we choose the poles $ z_k=\frac{i\pi}{a}\left(k+\frac12\right)$ of \(\tanh(az)\).
Multiplying by \((z-z_k)^d\) in both side of equation \eqref{eq:P(x,y)}  and taking \(z\to z_k\), the limit gives
\begin{align*}
   0&=\lim_{z\to z_k}(\sum_{i=0}^d p_i(z)(\frac{\tanh(az)}{a})^i(z-z_k)^i)\\
   &=\lim_{z\rightarrow z_k}(p_d(z)(\frac{\tanh(az)}{a})^d(z-z_k)^d)=\frac{p_d(z_k)}{a^{2d}},
\end{align*}
i.e., \(p_d(z_k)=0\) for
infinitely many \(k\).  A nonzero polynomial cannot have infinitely many zeros.
Thus \(p_d=0\), a contradiction.  Therefore \(P=0\).
\end{proof}

\begin{corollary}\label{cor:F-p-constant-coeffi}
   The coefficients  of $F_{p}(x,y)$ are independent of $p$ in $U$.
\end{corollary}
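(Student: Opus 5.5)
The plan is to combine the $p$-independence of $F_p(x(r),y(r))$ for small real $r$ with the polynomial identity principle of Lemma \ref{lem:polynomial-identity}.

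First I will fix $p,q\in U$. By Lemma \ref{lem:Dp=Dq} and the factorization $D_p(r)=c_+(r)^{n-1}c_-(r)^{m-1}F_p(x(r),y(r))$, we have
\[
F_p(x(r),y(r))=F_q(x(r),y(r))
\]
for all sufficiently small real $r$. The factors $c_\pm(r)$ depend only on $r$ and on the constants $a,b$ (constant by Theorem \ref{thm:constant-angle}), and they are nonzero near $r=0$, so they can be divided out.

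Next I will note from \eqref{eq:F-p-(x,y)} that $F_p(x,y)$ and $F_q(x,y)$ are real polynomials in $x,y$, since the entries of $A_1,A_2,A_3$ are real numbers at each fixed point. Set $P:=F_p-F_q\in\R[x,y]$. Then $P(x(r),y(r))=0$ for all small real $r$.

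It remains to identify $(x(r),y(r))$ with one of the admissible pairs in Lemma \ref{lem:polynomial-identity}. From \eqref{eq:c+s+} and \eqref{eq:c-s-} we get:
\begin{itemize}
\item for $\Sph^n\times\R^m$, $(x,y)=(\tan(ar)/a,\,r)$;
\item for $\Sph^n\times\Hyp^m$, $(x,y)=(\tan(ar)/a,\,\tanh(br)/b)$;
\item for $\Hyp^n\times\R^m$, $(x,y)=(\tanh(ar)/a,\,r)$.
\end{itemize}
These are exactly the three pairs covered by the lemma, and they are the cases of Theorem \ref{thm:global-dem-SRH} under consideration. Here $a,b>0$ because $-1<C<1$. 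The lemma then gives $P=0$, so every coefficient of $F_p$ equals the corresponding coefficient of $F_q$.

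The only potential obstacle is a bookkeeping one: checking that the case list matches the lemma exactly, and that the division by $c_\pm$ is legitimate near $r=0$. Both are immediate, so the corollary follows directly.
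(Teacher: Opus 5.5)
Your proposal is correct and follows essentially the same route as the paper. The paper likewise observes that $F_{p_1}(x(r),y(r))-F_{p_2}(x(r),y(r))$ vanishes for small $r$ (via Lemma~\ref{lem:Dp=Dq} and the factorization of $D_p(r)$) and then applies Lemma~\ref{lem:polynomial-identity}. Your additional checks are details the paper leaves implicit: that $c_\pm(r)\neq 0$ near $r=0$, that $a,b$ are constant so $x(r),y(r)$ are the same functions at both points, and that the three pairs $(x(r),y(r))$ are exactly those covered by the lemma.
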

\begin{proof}
  For two points $p_1,p_2\in U$, the difference $F_{p_1}(x(r),y(r))-F_{p_2}(x(r),y(r))$ vanishes for all small $r$.  The lemma \ref{lem:polynomial-identity} implies $F_{p_1}=F_{p_2}$ as polynomials.   
\end{proof}

\subsection{Kernel of the Reduced Shape Operator}

Setting $x=y=t$ in equation \eqref{eq:F-p-(x,y)} gives 
\begin{align}\label{eq:F-t-t}
  F_p(t,t)=\det(I-tA_D).
\end{align}
 By Corollary \ref{cor:F-p-constant-coeffi}, the characteristic polynomial of $A_D$ is independent of $p$ in $U$, which means eigenvalues along with their multiplicities are independent of $p$. 
Therefore the dimension of $\ker A_D$ is locally constant. 

If $\ker A_D\neq 0$, take a unit vector field $X=X_1+X_2 \in \ker A_D$, where $X_1, X_2$ denote the component of $X$ in $D_1, D_2$. Then $A_{D}X=0$.  Differentiating $\langle A_DX,X\rangle=0$ along ${\mathcal T}$ gives 
\begin{align}\label{eq:T-A-D}
  0=\langle A_D\nabla_{\mathcal T}X,X\rangle+\langle {\mathcal T}(A_D)X,X\rangle+\langle A_DX,\nabla_{\mathcal T}X\rangle  
\end{align}
Since $A_DX=0$ and $A_{D}$ is symmetric, the first term and third term of right side of equation \eqref{eq:T-A-D} vanish. Then we have 
\begin{align}\label{eq:T-A-D-0}
    \langle {\mathcal T}(A_D)X,X\rangle=0
\end{align}
on the other hand, by $A_DX=0$, the Lemma \ref{lem:Riccati-equ} gives 
\begin{align}\label{eq:T-A-D-X-X}
    \langle {\mathcal T}(A_D)X,X\rangle=\epsilon_{1}\frac{1-C^2}{2}\|X_1\|^2-\epsilon_{2}\frac{1-C^2}{2}\|X_2\|^2.
\end{align}

\subsection{Proof of Case (i) of  Theorem \ref{thm:global-dem-SRH} }\label{subsec:Case-i-of-Thm}


We will show it is impossible for $-1<C<1$. In this case, $\epsilon_{1}=1$.
Consider the equation \eqref{eq:T-A-D-X-X},
 if $\epsilon_{2}=-1$, this is impossible unless $X=0$, so $\ker A_D=0$ and $A_D$ is invertible in the hyperbolic product.  If $\epsilon_{2}=0$, it implies only $X_1=0$.  Hence the  kernel lies entirely in $D_{2}$. In both cases we can now define the reduced space 
 \begin{equation}\label{eq:tildeD=D1oplusD2}
     \widetilde D=D_{1}\oplus \widetilde D_{2},
 \end{equation} where $\widetilde D_{2}=D_{2}$ if $\eps_2=-1$, and $\widetilde D_{2}=(\ker A_D)^\perp\cap D_{2}$ if $\eps_2=0$.  Let $\tilde m=\dim \widetilde D_{2}$.  The reduced operator 
 \begin{equation}\label{eq:tildeA=AD}
     \widetilde A:=A_D|_{\widetilde D}
 \end{equation} is symmetric and invertible, since $A(\ker A_D)=0$. We have, for the characteristic polynomials of $\widetilde A$ and $A_D$,
 \begin{equation}\label{eq:det=det}
     \det(I_{n+m-2-\tilde m}-t\widetilde A)=\det(I_{n+m-2}-tA_D),
 \end{equation} since $\widetilde A=A_D|_{\tilde D}$ and $A_D|_{\ker A_D}=0$.

Set $M=\widetilde A^{-1}$ and write it in $D_{1}\oplus \widetilde D_{2}$ blocks as $M=
\begin{pmatrix}
M_1 & M_2\\
M_2^t & M_3
\end{pmatrix}$.
Differentiating $M=\widetilde A^{-1}$ and using the reduced form of Lemma \ref{lem:Riccati-equ}  yields
\begin{align*}
    {\mathcal T}(M)=&-M{\mathcal T}(\widetilde A)M=M\left(\widetilde A(CI-P|_{\widetilde D})\widetilde A-\frac{1-C^2}{2}\Pi_1-\epsilon_{2}\frac{1-C^2}{2}\Pi_{\widetilde D_{2}}\right)M\\
    =&CI-P|_{\widetilde D}-M\left(\frac{1-C^2}{2}\Pi_1-\epsilon_{2}\frac{1-C^2}{2}\Pi_{\widetilde D_{2}}\right)M
\end{align*}
where $\Pi_{\widetilde D_{2}}$ is projection of $\widetilde D$ onto $\widetilde D_{2}$.
Taking the $D_{1}$ block and using $P|_{D_{1}}=I_{n-1}$ on $D_{1}$, we have ${\mathcal T}(M_1)=(C-1)I_{n-1}-\frac{1-C^2}{2}(M_1^2-\epsilon_{2} M_2M_2^t)$.
Taking trace for the $D_{1}$ block gives
${\mathcal T}(\tr M_1)=(C-1)(n-1)-\frac{1-C^2}{2}\bigl(\|M_1\|^2-\epsilon_{2}\|M_2\|^2\bigr)$.
Here  $C-1<0$, $\epsilon_{2}\in \{0,-1\}$, so ${\mathcal T}(\tr M_1)<0$.

On the other hand, the determinant computation gives a contradicting conclusion. Set $X_{x,y}=\operatorname{diag}(xI_{n-1},yI_{\tilde m})$.
By equation \eqref{eq:F-p-(x,y)}, $F_{p}(x,y)=\det(I-X_{x,y}\widetilde A)$. 
Expand $\det(I_{n+m-2-\tilde m}-t\widetilde A)$ as a polynomial of $t$, the highest degree term has non-zero coefficient $\det\widetilde A$, which by Corollary \ref{cor:F-p-constant-coeffi} and equation \eqref{eq:det=det}, is independent of $p$ in $U$.
Then 
\begin{align*}
 \det(M-X_{x,y})=\frac{1}{\det(\widetilde A)}\det(I-X_{x,y}\widetilde A)= \frac{1}{\det(\widetilde A)}F_{p}(x,y)  
\end{align*}
has coefficients independent of $p$ in $U$, by Corollary \ref{cor:F-p-constant-coeffi}.  Regard $\det(M-X_{x,y})$ as a polynomial in $y$.  The coefficient of $y^{\tilde m}$ is $(-1)^{\tilde m}\det(M_1-xI_{n-1})$.  Therefore $\det(M_1-xI_{n-1})$ is independent of $p$ in $U$, and in particular $\tr M_1$ is constant on $L$, which contradicts ${\mathcal T}(\tr M_1)<0$.  Hence the tilted case $-1<C<1$ cannot occur i.e., $C\equiv 1$ or $C\equiv -1$.

\subsection{Proof of the Case (ii) of Theorem \ref{thm:global-dem-SRH}}

We give a lemma first.

\begin{lemma} \label{lem:flat-horosphere}
    Let $L_{1}, L_{2}$ be  flat-horospherical hypersurfaces (defined by equation \eqref{eq:flat-horo-repre}), i.e., $L_{1}=\cup_{t\in \mathbb R} H_{t}\times W_{\alpha t} $, $\widetilde L_{1}=\cup_{t\in \mathbb R} \widetilde H_{t}\times \widetilde W_{\widetilde\alpha t}$, where $H_{0},\widetilde H_{0}$ are horospheres in $\Hyp^{n}$, $W_{0},\widetilde W_{0}$ are hyperplanes in $\mathbb R^{m}$, $\alpha, \widetilde \alpha $ are constant. If $L_{1}\cap \widetilde L_{1}\neq \varnothing $ is an open subset of $L_1$, then $L_{1}=\widetilde L_{1}.$
\end{lemma}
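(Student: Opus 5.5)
The plan is to show that the geometric data defining a flat horospherical hypersurface (the horosphere foliation in $\Hyp^n$, the hyperplane direction $\xi$ in $\R^m$, and the slope $\alpha$) can all be read off from any open piece of $L_1$. Agreement on an open set will then force agreement of these data, and hence $L_1=\widetilde L_1$.

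First, I compute the unit normal of $L_1$ from the parametrization $\Phi_\alpha$. The tangent space at $\Phi_\alpha(p,q,t)$ is spanned by three pieces: $T H_t\oplus 0$, $0\oplus TW$, and $\partial_t\Phi_\alpha=(\eta_t,\alpha\xi)$. Here $\eta_t$ denotes the parallel unit normal of the horosphere $H_t$, which is the velocity of the normal geodesics. It follows that the unit normal is
\[
N=\frac{1}{\sqrt{1+\alpha^2}}\bigl(-\alpha\,\eta_t,\ \xi\bigr),
\]
up to sign. From this I read off three facts.
\begin{itemize}
\item The angle function is $C=(\alpha^2-1)/(1+\alpha^2)$, which determines $\alpha^2$.
\item $d\pi_2(N)$ is the constant vector $\xi/\sqrt{1+\alpha^2}$, which determines $\xi$ up to sign.
\item $D_1=TL_1\cap(T\Hyp^n\oplus0)$ integrates to the slices $H_t\times\{z\}$.
\end{itemize}

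Now suppose $L_1\cap\widetilde L_1$ contains an open set $V$ of $L_1$. Both hypersurfaces are smooth there, so they share tangent spaces and normals on $V$, and the following data agree on $V$.
\begin{itemize}
\item Comparing $C$ gives $\alpha^2=\widetilde\alpha^2$.
\item Comparing $d\pi_2(N)$ gives $\xi=\pm\widetilde\xi$. After flipping the sign of $\widetilde N$, we may take $\xi=\widetilde\xi$ and $\alpha=\widetilde\alpha$. Here I use that the sign of $\alpha$ is tied to $\eta$: replacing $(\eta,\alpha)$ by $(-\eta,-\alpha)$ gives the same family.
\item The leaves of $D_1$ agree, so some open piece of a horosphere $H_t$ coincides with an open piece of some $\widetilde H_s$.
\end{itemize}
By analyticity (unique continuation) of horospheres in $\Hyp^n$, a horosphere is determined by any open piece of itself together with its normal. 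Therefore $H_t=\widetilde H_s$ with $\eta=\pm\widetilde\eta$, and the sign is fixed by comparing the first component of $N$. Consequently the two equidistant families $\{H_t\}$ and $\{\widetilde H_{t}\}$ coincide up to a shift of parameter, $\widetilde H_{t}=H_{t+c}$.

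Finally, I match the Euclidean part. At a common point, the $\R^m$-coordinates satisfy $\langle z,\xi\rangle=\langle q_0,\xi\rangle+\alpha t$ for the first hypersurface and $\langle \widetilde q_0,\xi\rangle+\alpha(t-c)$ for the second. Equality at one common point shows that the affine relations agree for all $t$. Since $L_1=\{(x,z): x\in H_t,\ \langle z,\xi\rangle=\text{const}+\alpha t\}$, and likewise for $\widetilde L_1$, the two sets are equal globally.

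The main obstacle I expect is the bookkeeping of the sign ambiguities in $(\eta,\alpha,\xi)$ and the parameter shift. The step needing the most care is the assertion that a horosphere, together with its whole parallel family, is determined by an open piece. For that I would invoke the rigidity of totally umbilical hypersurfaces in $\Hyp^n$ (\cite{BCO}*{Section 1.6.1}).
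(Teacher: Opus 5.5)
Your proposal is correct and follows essentially the same route as the paper. The angle function $C=(\alpha^2-1)/(\alpha^2+1)$ fixes $\alpha$ up to sign. The shared tangent and normal data on the overlap then identify the horospherical leaves: the paper uses the leaves $H_t\times W_{\alpha t}$ of $D=E_1^\perp$, while you use the $D_1$-slices together with $d\pi_2(N)$ to recover $\xi$. Rigidity of horospheres then globalizes the conclusion. Your explicit sign normalization and the closing level-set description $L_1=\{(x,z):\langle z,\xi\rangle-\alpha\,t(x)=\mathrm{const}\}$ spell out the step that the paper compresses into ``as a consequence''.
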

\begin{proof}
Let $C_{i}$ be the angle function for $L_{i}$ for $i=1,2$.
By Theorem \ref{thm:constant-angle}, the angle functions $C_1,C_2$  are constant.

Let  $U=L_{1}\cap \widetilde L_{1}$. For flat-horospherical hypersurface,  we have  $C_1=\frac{\alpha^2-1}{\alpha^2+1}$ and $C_2=\frac{\widetilde\alpha^2-1}{\widetilde\alpha^2+1}$. Since  in $U$, $C_1=C_2$. As a consequence, $\alpha=\pm\widetilde \alpha$. Let $TL_{1}=\R E_{1}\oplus D$, and $T\widetilde L_{1}= \R \widetilde E_{1}\oplus \widetilde D$, where $E_{1}, \widetilde E_{1}$ are defined in equation \eqref{eq:E1},

$D, \widetilde D$ are defined as $E_{1}^{\perp}$ in $TL$, $\widetilde E_{1}^{\perp}$ in $T\widetilde L$ respectively.
   In $U$, we can choose for $L_{1}$, $\widetilde L_{1}$  the same normal vector field ($N=\tilde N$), and they share the  same tangent bundle ($TL=T\widetilde L$). Then $\mathcal T=PN-C_{1}N=P\widetilde N-C_{2} \widetilde N=\widetilde {\mathcal{T}} $.  Hence we have $E_{1}=\widetilde E_{1}$

   and $D=\widetilde D$. 
   Then at any $x$ in $U$, the leaf of $D$ through $x$, $H_{t_0}\times W_{\alpha t_0}$ must be leaf of $\widetilde D$ through $x$, $\widetilde H_{t_1}\times \widetilde W_{\widetilde \alpha t_1}$, where $t_0,t_1\in \R$. As a consequence, $L_{1}=\widetilde L_{1}$.

\end{proof}



We will show case $-1< C<1 $ in (ii) of Theorem \ref{thm:global-dem-SRH} is corresponding to the case (c) in Theorem \ref{thm:global-dem-SRH}.
In this case, $\epsilon_{1}=-1$ and $\epsilon_{2}=0$. Combing the equation \eqref{eq:T-A-D-0} and equation \eqref{eq:T-A-D-X-X}, we have
\[
        0=-\frac{1-C^2}{2}\|X_1\|^2.
\]
Thus \(X_1=0\), and
\begin{equation}\label{eq:ker-in-Dminus}
        \Ker A_D\subset D_{2}.
\end{equation}

Let  
\[
        \widetilde D_{2}=D_{2}\cap(\Ker A_D)^\perp,
        \qquad
        \widetilde m=\dim\widetilde D_{2}.
\]
Since \(A_D\) is self-adjoint, then \((\Ker A_D)^\perp\) is $A_{D}$- invariant.  Thus
\[
        \widetilde D=D_{1}\oplus\widetilde D_{2}
\]
is also \(A_D\)-invariant, and
\[
        \widetilde A=A_D|_{\widetilde D}
\]
is  invertible.
\begin{lemma}\label{lem:tilde-m=0}
    $\Ker A_D= D_{2}$, i.e., $\widetilde m=0$.
\end{lemma}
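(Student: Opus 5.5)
The plan is to argue by contradiction, in the same way as the proof of Case (i) in Subsection \ref{subsec:Case-i-of-Thm}, but working with the $\widetilde D_2$-block of the inverse of $\widetilde A$ rather than its $D_1$-block. Assume $\widetilde m>0$.

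The first step is to set up the objects. By \eqref{eq:F-t-t} and Corollary \ref{cor:F-p-constant-coeffi}, the characteristic polynomial of $A_D$ is independent of $p$ in $U$. Hence $\dim\Ker A_D$, and therefore $\widetilde m$, is constant. Moreover, $\det\widetilde A$ is, up to sign, the top nonzero coefficient of $\det(I-tA_D)$, so it is a nonzero constant. Put $M=\widetilde A^{-1}$, written in $D_1\oplus\widetilde D_2$ blocks as $\begin{pmatrix}M_1&M_2\\M_2^t&M_3\end{pmatrix}$.

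The second step is the Riccati computation for $M$. Restrict Lemma \ref{lem:Riccati-equ} to $\widetilde D$, exactly as in the reduced form used in Case (i); this is legitimate because $A_D$ vanishes on $\Ker A_D\subset D_2$ and $\widetilde D$ is $A_D$-invariant. With $\eps_1=-1$ and $\eps_2=0$ it reads
\[
\mathcal T(\widetilde A)=\widetilde A(P|_{\widetilde D}-CI)\widetilde A-\tfrac{1-C^2}{2}\Pi_1 .
\]
Then $\mathcal T(M)=-M\mathcal T(\widetilde A)M$ gives
\[
\mathcal T(M)=CI-P|_{\widetilde D}+\tfrac{1-C^2}{2}M\Pi_1M .
\]
On $\widetilde D_2$ we have $P=-I$, so the $\widetilde D_2$-block is
\[
\mathcal T(M_3)=(1+C)I_{\widetilde m}+\tfrac{1-C^2}{2}M_2^tM_2 .
\]
Taking traces gives $\mathcal T(\tr M_3)=(1+C)\widetilde m+\tfrac{1-C^2}{2}\|M_2\|^2>0$, because $C>-1$ and $\widetilde m>0$. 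The $D_1$-block, which was used in Case (i), is useless here since its trace has no definite sign when $\eps_1=-1$. This choice of block is the main point of the argument.

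The third step produces the contradiction from the determinant identity. With $X_{x,y}=\operatorname{diag}(xI_{n-1},yI_{\widetilde m})$, we have $\det(M-X_{x,y})=F_p(x,y)/\det\widetilde A$. The right-hand side has coefficients independent of $p$, by Corollary \ref{cor:F-p-constant-coeffi} and the constancy of $\det\widetilde A$. The coefficient of $x^{n-1}$ in this polynomial is $(-1)^{n-1}\det(M_3-yI_{\widetilde m})$, so $\tr M_3$ is constant on $U$. This contradicts $\mathcal T(\tr M_3)>0$, since $\mathcal T\neq0$. Hence $\widetilde m=0$, and together with \eqref{eq:ker-in-Dminus} this gives $\Ker A_D=D_2$.

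The main obstacle I expect is justifying the restricted Riccati equation on $\widetilde D$. The distribution $\widetilde D$ need not be $\nabla_{\mathcal T}$-parallel, so differentiating $\widetilde A$ blockwise needs care. I would handle this as Case (i) implicitly does. Since the kernel has constant rank and $A_D\Ker A_D=0$, the equation $\langle\mathcal T(A_D)X,Y\rangle$ for $X,Y\in\widetilde D$ equals $\mathcal T\langle A_DX,Y\rangle$ minus terms involving $A_D$ applied to covariant derivatives. For the computation of $\mathcal T(M)$, the only terms that are not already in the Riccati form are those involving the $\Ker A_D$-components of these derivatives. Those terms are killed by $A_D$, so the restricted equation holds.
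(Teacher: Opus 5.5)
Your proposal is correct and follows the paper's proof essentially step for step. You assume $\widetilde m>0$, use the reduced Riccati equation to get $\mathcal T(\tr M_3)=(1+C)\widetilde m+\tfrac{1-C^2}{2}\|M_2\|^2>0$ for the $\widetilde D_2$-block of $M=\widetilde A^{-1}$, and contradict this with the constancy of $\det(M_3-yI_{\widetilde m})$, read off from the $x^{n-1}$ coefficient of $\det(M-X_{x,y})=F_p(x,y)/\det\widetilde A$; your extra justification of the restricted Riccati equation on $\widetilde D$ (the $\Ker A_D$-components of covariant derivatives are annihilated by $A_D$) fills in a point the paper passes over with the phrase ``reduced form''.
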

\begin{proof}
If not, for contradiction,
assume that \(\widetilde m>0\). With the same calculations, we can also obtain equation \eqref{eq:det=det}. By Corollary \ref{cor:F-p-constant-coeffi} and equation \eqref{eq:F-t-t}, we know $\det \widetilde A$ is independent of $p$ in $U$.

  Write
\[
        M=\widetilde A^{-1}=
        \begin{pmatrix}
        M_1&M_2\\
        M_2^t&M_3
        \end{pmatrix}
\]
with respect to \(D_{1}\oplus\widetilde D_{2}\).  Differentiating $M=\widetilde A^{-1}$ and using the reduced form of Lemma \ref{lem:Riccati-equ}  yields
\begin{align*}
    {\mathcal T}(M)=&-M{\mathcal T}(\widetilde A)M=M\left(\widetilde A(CI-P|_{\widetilde D})\widetilde A+\frac{1-C^2}{2}\Pi_1\right)M\\
    =&CI-P|_{\widetilde D}+\frac{1-C^2}{2}M\Pi_1M
\end{align*}

Taking the \(\widetilde D_{2}\)-block and using $P|_{\widetilde D_{2}}=-I_{\widetilde m}$ yields
\[
        {\mathcal T}(M_3)=(1+C)I_{\widetilde m}+\frac{1-C^2}{2}M_2^tM_2.
\]
Therefore
\begin{equation}\label{eq:trace-positive}
        {\mathcal T}(\tr M_3)=(1+C)\widetilde m+\frac{1-C^2}{2}\|M_2\|^2>0.
\end{equation}

On the other hand, set
\[
        X_{x,y}=\diag(xI_{n-1},yI_{\widetilde m}).
\]
Then 
\[
\begin{aligned}
        \det(M-X_{x,y})
        &=\frac{1}{\det\widetilde A}\det(I-X_{x,y}\widetilde A)
\end{aligned}
\]
has coefficients independent of $p$ in $U$.  Regard
\(\det(M-X_{x,y})\) as a polynomial in \(x\).  The coefficient of \(x^{n-1}\) is
\[
        (-1)^{n-1}\det(M_3-yI_{\widetilde m}).
\]
Thus \(\det(M_3-yI_{\widetilde m})\) is constant on \(L\). We note that $(-1)^{\tilde{m}-1} \tr M_3$ is exactly the coefficient of the $y^{\tilde{m}-1}$ term.   In particular,     \(\tr M_3\) is also  constant on \(L\).

This contradicts equation \eqref{eq:trace-positive}.  Hence
\(\widetilde m=0\), and therefore
\[
        \Ker A_D=D_{2}.
\]
\end{proof}

By Lemma \ref{lem:tilde-m=0},
\begin{equation}\label{eq:A2A3zero}
        A_2=0,
        \qquad
        A_3=0.
\end{equation}

By equation \eqref{eq:A2A3zero},
\[
        A_D=A_1\oplus 0.
\]
\begin{lemma}
   The eigenvalues of $A_{1}$ is $a$ or $-a$. 
\end{lemma}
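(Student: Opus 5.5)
The plan is to combine the Riccati-type equation of Lemma \ref{lem:Riccati-equ} with the fact that the eigenvalues of $A_1$ are constant. This is the same mechanism that produced equations \eqref{eq:T-A-D-0} and \eqref{eq:T-A-D-X-X} for $\ker A_D$, now applied to every eigenvalue of $A_1$ rather than only to $0$.

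\emph{Step 1: specialize the Riccati equation.} We are in the case $\epsilon_1=-1$, $\epsilon_2=0$, with $-1<C<1$ constant, and equation \eqref{eq:A2A3zero} gives $A_2=0$, $A_3=0$. So the first equation of Lemma \ref{lem:Riccati-equ} reduces to
\[
{\mathcal T}(A_1)=(1-C)A_1^2-\frac{1-C^2}{2}I_{n-1}.
\]

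\emph{Step 2: the eigenvalues of $A_1$ are constant.} By equation \eqref{eq:F-t-t} and Corollary \ref{cor:F-p-constant-coeffi}, the polynomial $\det(I-tA_D)=\det(I_{n-1}-tA_1)$ is independent of $p\in U$. Hence the eigenvalues of $A_1$, together with their multiplicities, are constant on $U$. Consequently the eigendistributions of $A_1$ inside $D_1$ have constant rank and are smooth. So for each eigenvalue $\lambda$ we may choose a smooth local unit vector field $X\in D_1$ with $A_DX=A_1X=\lambda X$.

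\emph{Step 3: differentiate along ${\mathcal T}$.} Differentiate $\langle A_DX,X\rangle=\lambda$ along ${\mathcal T}$, exactly as in equation \eqref{eq:T-A-D}. The right-hand side is constant, so its derivative is $0$. On the left, the two terms in which $\nabla_{\mathcal T}X$ appears combine by symmetry of $A_D$ into
\[
2\langle A_DX,\nabla_{\mathcal T}X\rangle=2\lambda\langle X,\nabla_{\mathcal T}X\rangle=\lambda\,{\mathcal T}|X|^2=0 .
\]
Hence $\langle {\mathcal T}(A_D)X,X\rangle=0$. Inserting the formula from Step 1 then gives
\[
(1-C)\lambda^2-\frac{1-C^2}{2}=0,
\]
so $\lambda^2=\frac{1+C}{2}=a^2$ (using $C<1$), that is, $\lambda=\pm a$.

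\emph{Expected obstacle.} The main point needing care is Step 3: ${\mathcal T}(A_D)$ in Lemma \ref{lem:Riccati-equ} must be interpreted consistently as the covariant derivative $(\nabla_{\mathcal T}A)|_D$ (equivalently, the derivative in a frame adapted to $D_1\oplus D_2$). Under that reading, the identity $\langle(\nabla_{\mathcal T}A)X,X\rangle={\mathcal T}\langle AX,X\rangle-2\langle AX,\nabla_{\mathcal T}X\rangle$ holds. Since $D$ is $A$-invariant and $A{\mathcal T}=0$, components of $\nabla_{\mathcal T}X$ leaving $D$ contribute nothing, because $\langle AX,\cdot\rangle$ only sees $D$. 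Smoothness of the eigenvector field $X$ is supplied by the constancy of multiplicities in Step 2.
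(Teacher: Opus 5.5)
Your proposal is correct and follows the paper's proof essentially step for step. Specialize Lemma \ref{lem:Riccati-equ} with $A_2=A_3=0$ to get ${\mathcal T}(A_1)=(1-C)(A_1^2-a^2I_{n-1})$, get constant eigenvalues from Corollary \ref{cor:F-p-constant-coeffi} and \eqref{eq:F-t-t}, and differentiate $\langle A_1X,X\rangle=\lambda$ along ${\mathcal T}$ to obtain $(1-C)(\lambda^2-a^2)=0$. Your remarks on the smoothness of the eigenvector field and on reading ${\mathcal T}(A_D)$ as a covariant derivative only make explicit what the paper uses implicitly.
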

\begin{proof}
By Lemma \ref{lem:Riccati-equ}, equation \eqref{eq:A2A3zero} and the fact that $a^2=(1+C)/2$, we have
\begin{equation}\label{eq:A1reduced}
        {\mathcal T}(A_1)=(1-C)\left(A_1^2-a^2I_{n-1}\right).
\end{equation}

By Corollary \ref{cor:F-p-constant-coeffi} and equation \eqref{eq:F-t-t}, the eigenvalues of $A_{1}$ which is also $A_{D}$ are constant.
Let \(\lambda\) be a constant eigenvalue of \(A_1\), and let \(X\) be a local
unit eigenvector field  with respect to $\lambda$.

Since \(\lambda=\langle A_1X,X\rangle\) is constant, then $0={\mathcal T}(\lambda)={\mathcal T} \langle A_1X,X\rangle$. On the other hand, 
\begin{align*}
   {\mathcal T} \langle A_1X,X\rangle
   &=\langle {\mathcal T}(A_{1})X,X\rangle+\langle A_{1}\nabla_{{\mathcal T}}X,X\rangle+ \langle A_{1}X,\nabla_{{\mathcal T}}X\rangle\\
   &=\langle {\mathcal T}(A_{1})X,X\rangle, 
\end{align*}
where the second $"="$ holds since $A_{1}$ is symmetric, $A_{1}X=\lambda X$ and $X$ is unit. As a consequence, 
\begin{align}
    \langle {\mathcal T}(A_{1})X,X\rangle=0.
\end{align}
By equation \eqref{eq:A1reduced},
\[
        0=\langle {\mathcal T}(A_1)X,X\rangle=(1-C)(\lambda^2-a^2).
\]
Thus every eigenvalue of \(A_1\) is \(a\) or \(-a\).
\end{proof}

Since $D_{1}=\ker d\pi_{2}|_{TL}$ and $D_{2}=\ker d\pi_{1}|_{TL}$,
The distributions \(D_{1}\) and \(D_{2}\) are integrable. 
\begin{lemma}\label{lem:flat-horospheric-proof}
Any leaf of $D_{1}$ is an open subset of horosphere in $\Hyp^{n}$, any leaf of $D_{2}$ is an open subset of hyperplane in $\R^m$.
\end{lemma}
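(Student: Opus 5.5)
Here is how I would prove Lemma~\ref{lem:flat-horospheric-proof}: treat the leaves of $D_1$ and of $D_2$ as hypersurfaces inside the totally geodesic slices $\Hyp^n\times\{y\}$ and $\{x\}\times\R^m$. Then I read off their shape operators from $A$ and the normal $N=N_1+N_2$, where $a=|N_1|$ and $b=|N_2|$ are constants by Theorem~\ref{thm:constant-angle}. Write $N_1=a\nu_1$ and $N_2=b\nu_2$ with $\nu_1,\nu_2$ unit. A leaf $M_1$ of $D_1=\ker d\pi_2|_{TL}$ lies in a slice $\Hyp^n\times\{y_0\}$, and its unit normal in that slice is $\nu_1$, since $D_1\perp N_1$ and $\dim D_1=n-1$. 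Similarly, a leaf $M_2$ of $D_2$ lies in $\{x_0\}\times\R^m$ with unit normal $\nu_2$.

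\textbf{Leaves of $D_2$.} For $X,Y\in D_2$ I use that $\bar\nabla_X N_1=0$, because $N_1$ is a first-factor vector differentiated in a second-factor direction. This gives
\[
\langle A X,Y\rangle=-\langle \bar\nabla_X(N_1+N_2),Y\rangle=-b\langle\bar\nabla_X\nu_2,Y\rangle=b\langle A^{M_2}X,Y\rangle .
\]
Since $A_2=A_3=0$ by \eqref{eq:A2A3zero} and $b\neq 0$, the leaf $M_2$ is totally geodesic in $\R^m$. It is therefore an open subset of an affine hyperplane.

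\textbf{Leaves of $D_1$.} The same computation as in the proof of Lemma~\ref{lem:cartanformula} gives $A|_{D_1}=aA^{M_1}$. By the preceding lemma, the principal curvatures of $M_1\subset\Hyp^n$ then lie in $\{1,-1\}$, and they are constant. So $M_1$ is an isoparametric hypersurface of $\Hyp^n$ with at most two principal curvatures $\pm1$. If both values occurred, Cartan's formula, used exactly as in Lemma~\ref{lem:cartanformula}, would give $-m_{-1}=0$, which is a contradiction. Hence all principal curvatures are $1$, or all are $-1$. Then $M_1$ is totally umbilical with $|\lambda|=1$, and by the classification of umbilical hypersurfaces of $\Hyp^n$ (e.g.\ \cite{BCO}) it is an open subset of a horosphere. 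If $n-1=0$ the statement about $D_1$ is vacuous; when $n=2$ the leaf is a curve of geodesic curvature $\pm1$, and the umbilic classification still yields a horocycle.

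\textbf{Main obstacle.} The delicate point is justifying that each leaf really sits in a slice, and that $\nu_1$, resp.\ $\nu_2$, restricted to the leaf is its unit normal in that slice, with the sign conventions matching. This I expect to handle as in the proof of Lemma~\ref{lem:cartanformula}: $D_1$ is the kernel of $d\pi_2|_{TL}$, so $\pi_2$ is constant along a leaf. For the normal, $\nu_1$ has constant length $1$, is orthogonal to $D_1$, and lies in $T\Hyp^n$. The sign of $\lambda$ plays no role, since horospheres appear with either orientation.
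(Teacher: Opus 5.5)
Your proof is correct and follows the paper's argument. Both proofs place the leaves in the totally geodesic slices and read off their shape operators as $\frac{1}{a}A_1$ and $\frac{1}{b}A_3=0$. Both then use the eigenvalues $\pm a$ of $A_1$ together with Cartan's formula to exclude mixed signs, and conclude (the paper via Ryan's theorem, you via the umbilical classification) that $D_1$-leaves are horospheres and $D_2$-leaves are affine hyperplanes.

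One small fix: for $X\in D_2$, the identity $\bar\nabla_X N_1=0$ does not follow merely from $N_1$ being first-factor and $X$ second-factor. What holds in general is that $\bar\nabla_X N_1=P_1\bar\nabla_X N$ is first-factor and hence orthogonal to $Y\in D_2$, which is all your computation uses. In this setting it actually vanishes, because $AX=0$.
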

\begin{proof}
Let \(M_+\) be any  leaf of $D_{1}$.  Since all
vectors in \(D_{1}\) have zero \(\Q_0^m\)-component, \(M_+\) lies in a slice
\(\Q_{-1}^{n}\times{q_0}\), where $q_{0}$ is a point in $\Q_0^m$. For \(X\in D_{1}\),

$\bar\nabla_XN=-A_1X$.
 Since $a$ is constant on $L$,
taking the \(\Q_{-1}^{n}\)-component gives
$a\nabla_X^{\Q_{-1}^{n}\times {q_0}}\nu_1=-A_1X$.

Therefore the shape operator of \(M_+\subset\Q_{-1}^{n}\times {q_0}\), with normal \(\nu_1\),
is
$A^{M_+}=\frac{1}{a}A_1$.
Its principal curvatures are \(+1\) or \(-1\).  Cartan's formula in
\(\Q_{-1}^{n}\) forbids the simultaneous occurrence of \(+1\) and \(-1\) with
positive multiplicities (same as the analysis in Section 7.3.4) .  Hence all principal curvatures are \(+1\), or all are
\(-1\). By \cite{ryan1971hypersurfaces}*{Theorem 1}, \(M_+\) is an open subset 
of a horosphere.

Similarly, for \(Y\in D_{2}\), \(A_DY=0\) and
$\bar\nabla_YN=0$.
The \(\Q_0^m\)-component gives
$\nabla_Y^{p_{0}\times\Q_0^{m}}\nu_2=0$,
where $p_{0}$ is a point in $\mathbb Q^{n}_{-1}$.

Therefore the \(D_{2}\)-leaves are totally geodesic and thus are open subset of affine hyperplanes in \(p_{0}\times\Q_0^m\).
\end{proof}

Now consider an integral curve $\gamma(s)$ of the unit field $E_1$, so
$\gamma'(s)=E_1$.

Because $AE_1=0$, the Weingarten formula gives
$\nab_{E_1}N=-AE_1=0$,
where $N=a\nu_1 + b\nu_2$.
Since $a,b$ are constant,
\[
0=\nab_{E_1}N=a\nab_{E_1}\nu_1 + b\nab_{E_1}\nu_2.
\]
The two terms lie in different factors of the product, then each term must vanish 
$\nab_{E_1}\nu_1=0,
\nab_{E_1}\nu_2=0$.
Thus $\nu_1$ and $\nu_2$ are parallel along the $E_1$-curves.

Now differentiate
$E_1=b\nu_1-a\nu_2$.
We get
\[
\nab_{E_1}E_1=b\nab_{E_1}\nu_1-a\nab_{E_1}\nu_2=0.
\]
Therefore
$\nab_{\gamma'}\gamma'=0$.
Hence the integral curves of $E_1$ are  geodesics in $\mathbb Q^n_{-1}\times \Q^m_0$.

Choose one $D_{1}$ leaf $H\subset \mathbb Q^n_{-1}\times q_{0}$

 which is a horosphere with its unit normal $\eta=\nu_1$ and choose one $D_{2}$ leaf 
$W\subset p _{0}\times  \Q^m_0$
which is an affine hyperplane with  its constant unit normal 
$\xi=\nu_2$, where $p_{0}, q_{0}$ are points in $\mathbb Q^n_{-1}, \Q^m_0$ respectively.
Define the parallel families
$f_t(p)=\exp^{\Q_{-1}^{n}\times q_{0}}_p(t\eta_p),
p\in H$,
and
$g_t(q)=q+t\xi,
q\in W$.
Here $f_t(H)$ is the horosphere parallel to $H$ at  distance $t$, and $g_t(W)$ is the affine hyperplane parallel to $W$ at  distance $t$.

Let $s$ be arclength parameter along an $E_1$-curve starting at $(p,q)\in H\times W$. Since
$E_1=b\nu_1-a\nu_2$,
the $\mathbb Q^n_{-1}$-component of the velocity is $b\nu_1$ and the $\Q^m_0$-component is $-a\nu_2$. Also, the fields $\nu_1$ and $\nu_2$ are parallel along this curve. Therefore the curve is
\[
s\longmapsto \bigl(f_{bs}(p),g_{-as}(q)\bigr).
\]
Consequently, the hypersurface is locally
$\Psi(p,q,s)=\bigl(f_{bs}(p),g_{-as}(q)\bigr)$. After parametrization, it is  a flat-horosphere hypersurface defined   by equation \eqref{eq:flat-horo-repre}. By Lemma \ref{lem:flat-horosphere},
we finish the proof.







\section{Main Theorem}\label{sec:theorem}
We prove the main Theorem.

\maintheorem*

\begin{proof}
    By Theorem \ref{thm:lie-triple-stable} and the paragraph following it, case (ii) and case (iii) of Theorem~\ref{thm:lie-triple-stable} give decomposition $TL=D_1\oplus D_2$ where $D_1\subset T\Q^n_{\eps_1}\oplus 0$ and $D_2\subset 0\oplus TQ^m_{\eps_2}$. By Lemma \ref{lem:localproduct}, any $x\in L$ is contained in $L_{1,x}\times L_{2,x}$, where $L_{1,x}, L_{2,x}$ are isoparametric submanifolds of $\Q^{n}_\eps$, $\R^{m}$ respectively. We have $L=\cup_{x\in L} (L_{1,x}\times L_{2,x})$.  By Lemma \ref{lem:gluing}, $L\subset L_1\times L_2$ for $L_1=\cup_{x\in L} L_{1,x}$ and $L_2=\cup_{x\in L}L_{2,x}$.

    In case (i) of Theorem \ref{thm:lie-triple-stable}, by Lemma~\ref{lem:no-type-a-decompose}, if there is no point in $L$ with type (a) Lie triple system, $L$ is decomposable into $L_1\times L_2$ for $L_1\subset \Q^n_{\eps_1}$ and $L_2\subset\Q^m_{\eps_2}$ both isoparametric. Otherwise, by Theorem \ref{thm:reduction}, for any $x\in L$ with type (a) Lie triple system, there exists a neighborhood $L_x$ of $x$ such that $L_x$ is an open subset of $L_{1,x}\times L_{2,x}$, where $L_{1,x}$ is an isoparametric hypersurface in $\Q^n_\eps\times\R^{m_1}$ with angle function satisfying $-1<C<1$, $L_{2,x}$ is an isoparametric submanifold in $\R^{m_2+m_3}$, and $m_1+m_2+m_3=m$. Theorem \ref{thm:global-dem-SRH} (i) rules out the $\eps_{1}=1$ case by the angle function result. Thus by Theorem~\ref{thm:global-dem-SRH}(ii), $\eps_{1}$ can only be $-1$ and each $L_{1,x}$ can only be a flat-horospherical hypersurface. For any $x,y\in L$ satisfying the assumption in Theorem \ref{thm:reduction}, if $(L_{1,x}\times L_{2,x})\cap(L_{1,y}\times L_{2,y})\ne\varnothing$, $L_{1,x}\cup L_{1,y}$ is a flat-horospherical hypersurface by Lemma \ref{lem:flat-horosphere}, and $L_{2,x}\cup L_{2,y}$ is an isoparametric submanifold by unique continuation of isoparametric submanifolds in Euclidean spaces \cite{terng1987submanifold}*{Theorem 3.4}. By Theorem \ref{thm:constant-angle}, the angle function is constant on $L_{1,x}\cup L_{2,x}$. Note that this implies that $\theta$ is constant on $(L_{1,x}\times L_{2,x})\cap(L_{1,y}\times L_{2,y})$ where $\theta$ is defined in equation \eqref{eq:diagonal-normal}. On the other hand, assume there exists $p\in L$ with type (b) or type (c) Lie triple system, $\theta=0$ or $\frac{\pi}{2}$ by equation \eqref{eq:diagonal-normal}. By connectivity of $L$, type (b) and (c) are not possible on $L$, since there is already a point on $L$ with type (a) and $\theta$ is constant. Therefore $L$ is a subset of $L_1\times L_2$ where $L_1=\cup_{x\in L} L_{1,x}$ is a flat-horospherical hypersurface and $L_2=\cup_{x\in L}L_{2,x}$ is an isoparametric submanifold.

    In case (v) of Theorem \ref{thm:lie-triple-stable}, Theorem \ref{prop:no-higher-codim-diagonal} shows that there is no such $L$.

    Case (vi) is the only case that is not fully classified yet. If $\eps_1=-\eps_2$, Theorem \ref{thm:global-dem-SRH}(i) shows that there is no non-decomposable $L$. The rest is exactly the case in which $\eps_1=\eps_2$ and $\text{codim}L=1$.
\end{proof}

The remaining cases are $\eps_1=\eps_2$ and codim$L=1$, that is, isoparametric hypersurfaces in $\Sph^n\times \Sph^m$ and $\Hyp^n\times \Hyp^m$ are not classified by Theorem \ref{main}.

\begin{appendices}

\section{Algebraic Results}\label{app:algebraic}

The following elementery Lemmas will also be used in the paper.
\begin{lemma}[Theorem 3.7 of \cite{teschl2012}]\label{lem:spectrum-unique}
The functions of $r$ of the form $r^ke^{\mu r}$ are linear independent in $\mathbb C$ for different $(k,\mu)$, where $k$ is any integer and $\mu$ is any complex number.
\end{lemma}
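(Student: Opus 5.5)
We argue by induction on the number of distinct exponents. First we reduce to polynomial coefficients. Suppose a finite linear combination $\sum c_{k,\mu} r^k e^{\mu r}$ vanishes identically on some interval of real (or complex) $r$, with pairwise distinct pairs $(k,\mu)$. If some $k$ are negative, we restrict to an interval avoiding $r=0$ and multiply the whole relation by $r^K$ for $K$ large. This shifts every $k$ by the same amount, so distinct pairs stay distinct, and all exponents of $r$ become nonnegative. Grouping terms with the same $\mu$, the relation takes the form
\[
\sum_{j=1}^{s} p_j(r)\, e^{\mu_j r}=0
\]
with distinct $\mu_1,\dots,\mu_s\in\C$ and polynomials $p_j$. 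It then suffices to show that every $p_j$ is the zero polynomial, since the coefficients of $p_j$ are exactly the $c_{k,\mu_j}$. All functions involved are entire, so vanishing on an interval implies vanishing everywhere; we may therefore work on all of $\C$ (or $\R$).

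We induct on $s$. For $s=1$, $e^{\mu_1 r}$ never vanishes, so $p_1\equiv 0$ on an interval, hence $p_1=0$. For the inductive step, multiply the relation by $e^{-\mu_1 r}$ to get
\[
p_1(r)+\sum_{j\ge 2} p_j(r)e^{(\mu_j-\mu_1)r}=0,
\]
and then apply $d^{d+1}/dr^{d+1}$, where $d=\deg p_1$, to annihilate $p_1$. The key observation is
\[
\frac{d}{dr}\bigl(q(r)e^{\nu r}\bigr)=\bigl(q'(r)+\nu q(r)\bigr)e^{\nu r}.
\]
For $\nu=\mu_j-\mu_1\neq 0$, the map $q\mapsto q'+\nu q$ preserves degree and multiplies the leading coefficient by $\nu$. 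Hence after $d+1$ differentiations we obtain a relation $\sum_{j\ge2}\tilde p_j e^{(\mu_j-\mu_1)r}=0$ in which $\tilde p_j$ is nonzero whenever $p_j$ is, since its leading coefficient is $\nu_j^{d+1}$ times that of $p_j$. The exponents $\mu_j-\mu_1$ for $j\ge 2$ are distinct, so the induction hypothesis gives $\tilde p_j=0$, and therefore $p_j=0$ for $j\ge2$. Substituting back, the original relation reduces to $p_1e^{\mu_1 r}=0$, so $p_1=0$ as well.

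There is no real obstacle. The only points needing care are the degree-preservation of $q\mapsto q'+\nu q$ for $\nu\ne0$, and the harmless reduction for negative $k$.
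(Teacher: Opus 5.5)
Your proof is correct. It takes a different route from the paper, which gives no argument of its own and simply cites Teschl's ODE text. There, independence of $t^k e^{\alpha t}$ with $k\ge 0$ comes from the theory of constant-coefficient linear equations: these functions form a fundamental system of solutions for an equation whose characteristic polynomial has the roots $\alpha$ with suitable multiplicities.

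Your argument is instead a self-contained induction on the number of distinct exponents. The key step is that for $\nu\neq 0$ the map $q\mapsto q'+\nu q$ preserves degree and multiplies the leading coefficient by $\nu$, so it is injective. Applying $\frac{d}{dr}$ exactly $\deg p_1+1$ times therefore kills the $e^{0\cdot r}$ block without killing any other block.

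Your route buys the following:
\begin{itemize}
\item It works verbatim on an arbitrary open interval, since differentiating an identity on an interval gives an identity on the same interval. No analytic continuation is needed. This is the form the paper actually uses, because identities such as the polynomiality of $\det J(t)$ are first obtained only for small $t$.
\item Your shift by $r^K$ on an interval avoiding $0$ covers negative $k$. The lemma as stated allows negative $k$, but the cited textbook result only addresses $k\ge 0$; the paper itself only ever needs $k\ge 0$.
\end{itemize}
The citation buys brevity.

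The only cosmetic point is the degenerate case $p_1=0$, where $\deg p_1$ is undefined. There you simply skip the differentiation and apply the induction hypothesis directly.
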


\begin{lemma}\label{lem:S2=a2I-eigenvalues-a--a}
    Let $S$ be a symmetric matrix, $I$ be the identity matrix, and $c_1$ and $c_2$ be constant numbers where $c_1\ne0$. If $F(x)=\det(x(c_1I-c_2S)+x^{-1}(c_1I+c_2S))$ is a constant function on $x$, then $S^2=\frac{c_2^2}{c_1^2}I$ and $S$ only has eigenvalues $\frac{c_2}{c_1}$ and $-\frac{c_2}{c_1}$, and their multiplicities are the same.
\end{lemma}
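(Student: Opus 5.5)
The plan is to diagonalize $S$ and reduce the constancy of $F$ to a statement about a product of one-variable Laurent binomials. Since $S$ is real symmetric, choose an orthogonal matrix $Q$ with $Q^TSQ=\diag(\lambda_1,\ldots,\lambda_d)$. Conjugating the matrix inside the determinant by $Q$ does not change $F$, so
\[
F(x)=\prod_{k=1}^d\bigl((c_1-c_2\lambda_k)x+(c_1+c_2\lambda_k)x^{-1}\bigr)=x^{-d}\prod_{k=1}^d\bigl((c_1-c_2\lambda_k)x^2+(c_1+c_2\lambda_k)\bigr).
\]
If $F$ equals a constant $K$, then the polynomial $G(w):=\prod_k\bigl((c_1-c_2\lambda_k)w+(c_1+c_2\lambda_k)\bigr)$, with $w=x^2$, satisfies $G(x^2)=Kx^d$ for all $x\neq0$.

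The key step is to show that each linear factor of $G$ is a monomial. Each factor is nonzero as a polynomial, because $c_1-c_2\lambda_k=c_1+c_2\lambda_k=0$ would force $c_1=0$. Hence $G\neq0$, so $K\neq0$, $d$ is even, and $G(w)=Kw^{d/2}$. By unique factorization in $\R[w]$, every linear factor is a constant multiple of $w$ or a nonzero constant. So for each $k$, either $c_1+c_2\lambda_k=0$ (the factor is $w$ times a constant) or $c_1-c_2\lambda_k=0$ (the factor is a constant). In particular $c_2\neq0$ whenever $d\ge1$, since otherwise each factor $c_1w+c_1$ is neither form. Degree counting then finishes the multiplicities: exactly $d/2$ factors are proportional to $w$. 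So the two eigenvalues $\lambda$ with $c_1\mp c_2\lambda=0$ each occur with multiplicity $d/2$, and since $S$ is diagonalizable with eigenvalues $\pm\lambda_0$ we get $S^2=\lambda_0^2I$.

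The main obstacle, which I would flag explicitly, is matching this outcome with the constants as worded. The computation forces $|\lambda_0|=|c_1/c_2|$, not $|c_2/c_1|$. So the argument proves the statement exactly when $c_1^2=c_2^2$, and otherwise proves the version in which the roles of $c_1,c_2$ in the ratio are swapped. A direct check with $d=2$, $c_1=1$, $c_2=2$, $S=\diag(\tfrac12,-\tfrac12)$ gives $F\equiv4$, while $S^2=\tfrac14I\neq4I$, so I do not expect to prove the ratio $c_2/c_1$ literally.

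I would therefore present the proof with the conclusion ``eigenvalues $\pm c_1/c_2$ with equal multiplicity, $S^2=(c_1/c_2)^2I$.'' I would check that this is what the applications use. In Lemma~\ref{lem:det-rigidity}, from \eqref{eq:hyperbolic-factor} with $x=e^{ht}$, the constants are $c_1=\tfrac12$ and $c_2=\tfrac1{2h}$, and the paper concludes eigenvalues $\pm h=\pm c_1/c_2$. In Lemma~\ref{lem:S2=a2I}, $c_2/c_1=1/a$, and the paper concludes $S^2=a^2I$. Both agree with the $c_1/c_2$ reading, so I would state the lemma in that form, which is the form the paper actually uses.
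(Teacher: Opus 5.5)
Your proof is correct and follows the same route as the paper's proof: diagonalize $S$, write $F$ as a product of binomials $(c_1-c_2\lambda_k)x+(c_1+c_2\lambda_k)x^{-1}$, force each factor to be a monomial, and count degrees. Your unique-factorization step, together with the remark that no factor vanishes identically because $c_1\neq0$, fills in what the paper only asserts.

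You are also right about the constants. The paper's final line ``$\lambda_k=\pm\frac{c_2}{c_1}$'' is a slip for $\pm\frac{c_1}{c_2}$, and your $d=2$ example is a genuine counterexample to the statement as written. Both applications use the corrected $c_1/c_2$ form: Lemma~\ref{lem:det-rigidity} concludes eigenvalues $\pm h$, and Lemma~\ref{lem:S2=a2I} concludes $S^2=a^2I$.
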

\begin{proof}
    Let $S'=P^{-1}SP$ be the diagonalized $S$, with the same eigenvalues as $S$ being $\lambda_1,...,\lambda_l$. Then we can rewrite $F(x)$ as $\det(P^{-1}(x(c_1I-c_2S)+x^{-1}(c_1I+c_2S))P)$ which is $\det(x(c_1I-c_2S')+x^{-1}(c_1I+c_2S'))$. Therefore $F(x)=\prod_{k=1}^l(\alpha_k x+\beta_k x^{-1})$, where $\alpha_k=c_1-c_2\lambda_k$ and $\beta_k=c_1+c_2\lambda_k$. Multiplying with $x^l$, for $F(x)$ to be a constant function on $x$, $x^lF(x)=\prod_{k=1}^l(\alpha_k x^2+\beta_k )$ must be a monomial of $x$, which implies $\alpha_k=0$ or $\beta_k=0$ for all $k$. Also, the degree of $x$ in $\prod_{k=1}^l(\alpha_k x^2+\beta_k )$ must be $k$, which implies that the number $\#\{k:\alpha_k=0\}$ equals the number $\#\{k:\beta_k=0\}$. Therefore $\lambda_k=\pm \frac{c_2}{c_1}$, and the two signs have equal multiplicity.
\end{proof}

\section{Statement on the Use of Artificial Intelligence Tools}\label{app:AI}

During the preparation of this manuscript, the authors used ChatGPT (GPT-5.5 Pro model) as an auxiliary tool for exploratory computations and literature navigation.

Specifically, ChatGPT was used to assist with preliminary checks of the Codazzi equations in Lemma \ref{lem:HH-computations-3} and eigenvalue calculations in Lemma \ref{lem:S2=a2I-eigenvalues-a--a}. All AI-generated outputs were independently examined, selected, and verified by the authors, who take full responsibility for the accuracy of these computations.

ChatGPT was also used to assist in recalling and identifying the following mathematical tool from the literature: 
Inversion method for Riccati-type equations used in Section \ref{subsec:Case-i-of-Thm}. 
The authors independently assessed the relevance and applicability of this tool, consulted the cited sources, and verified all mathematical arguments before incorporating them into the manuscript.

\end{appendices}


\vspace{30pt} 
\noindent
Jinxuan Chen  \\
Beijing International Center for Mathematical Research, \\
Peking University, Beijing, China. \\
Email: {\it jxchen@bicmr.pku.edu.cn}
\\[\baselineskip]
Xiaobo Liu \\
School of Mathematical Sciences \& \\
Beijing International Center for Mathematical Research, \\
Peking University, Beijing, China. \\
Email: {\it xbliu@math.pku.edu.cn}
\\[\baselineskip]
Wanxu Yang \\
School of Mathematical Sciences, \\
Peking University, Beijing, China. \\
Email: {\it yangwanxu@stu.pku.edu.cn}

\end{document}